\title{Measure equivalence rigidity among the Higman groups}
\author{Camille Horbez and Jingyin Huang\thanks{Both authors thank the Institut Henri Poincaré (UAR 839 CNRS-Sorbonne Université) for its hospitality and support (through LabEx CARMIN, ANR-10-LABX-59-01) during the trimester program \emph{Groups acting on fractals, Hyperbolicity and Self-similarity}. The first named author acknowledges support from the Agence Nationale de la Recherche under Grants ANR-16-CE40-0006 DAGGER and ANR-22-ERCS-0011-01 Artin-Out-ME-OA.}}
\begin{document}
	\maketitle
	\newtheorem{de}{Definition} [section]
	\newtheorem{theo}[de]{Theorem} 
	\newtheorem{prop}[de]{Proposition}
	\newtheorem{lemma}[de]{Lemma}
	\newtheorem{cor}[de]{Corollary}
	\newtheorem{propd}[de]{Proposition-Definition}
	\newtheorem{conj}[de]{Conjecture}
	\newtheorem{claim}{Claim}
	\newtheorem*{claim2}{Claim}
	
	\newtheorem{theointro}{Theorem}
	\newtheorem*{defintro}{Definition}
	\newtheorem{corintro}[theointro]{Corollary}
	\newtheorem{questionintro}[theointro]{Question}
	\newtheorem{propintro}[theointro]{Proposition}

	\theoremstyle{remark}
	\newtheorem{rk}[de]{Remark}
	\newtheorem{ex}[de]{Example}
	\newtheorem{question}[de]{Question}
	\newtheorem*{rkintro}{Remark}

	\normalsize
	
	\newcommand{\cala}{\mathcal{A}}
	\newcommand{\calf}{\mathcal{F}}
	\newcommand{\calg}{\mathcal{G}}
	\newcommand{\calh}{\mathcal{H}}
	\newcommand{\cali}{\mathcal{I}}
	\newcommand{\calp}{\mathcal{P}}
	\newcommand{\call}{\mathcal{L}}
	\newcommand{\calk}{\mathcal{K}}
	\newcommand{\calq}{\mathcal{Q}}
	\newcommand{\calr}{\mathcal{R}}
	\newcommand{\bx}{\bar{x}}
	\newcommand{\be}{\bar{e}}
	
	\newcommand{\dunion}{\sqcup}
	\newcommand{\Stab}{\mathrm{Stab}}
	\newcommand{\Prob}{\mathrm{Prob}}
	\newcommand{\Aut}{\mathrm{Aut}}
	\newcommand{\actson}{\curvearrowright}
	\newcommand{\BS}{\mathrm{BS}}
	\newcommand{\lk}{\mathrm{lk}}
	\newcommand{\fix}{\mathrm{Fix}}
	\newcommand{\CAT}{\operatorname{CAT}}
	\newcommand{\Inc}{\mathrm{Inc}}
	\newcommand{\bdd}{\mathrm{bdd}}
	\newcommand{\cals}{\mathcal{S}}
	\newcommand{\Hig}{\mathrm{Hig}}
	\newcommand{\Out}{\mathrm{Out}}
	\newcommand{\Comm}{\mathrm{Comm}}
	\newcommand{\Cay}{\mathrm{Cay}}
	
	\makeatletter
	\edef\@tempa#1#2{\def#1{\mathaccent\string"\noexpand\accentclass@#2 }}
	\@tempa\rond{017}
	\makeatother

	\begin{abstract}
		We prove that all (generalized) Higman groups on at least $5$ generators are superrigid for measure equivalence. More precisely, let $k\ge 5$, and let $H$ be a group with generators $a_1,\dots,a_k$, and Baumslag-Solitar relations given by $a_ia_{i+1}^{m_i}a_i^{-1}=a_i^{n_i}$, with $i$ varying in $\mathbb{Z}/k\mathbb{Z}$ and nonzero integers $|m_i|\neq |n_i|$ for each $i$. We prove that every countable group which is measure equivalent to $H$, is in fact virtually isomorphic to $H$. A key ingredient in the proof is a general statement providing measured group theoretic invariants for groups acting acylindrically on $\mathrm{CAT}(-1)$ polyhedral complexes with control on vertex and edge stabilizers.	
		
		Among consequences of our work, we obtain rigidity theorems for generalized Higman groups with respect to lattice embeddings and automorphisms of their Cayley graphs. We also derive an orbit equivalence and $W^*$-superrigidity theorem for all free, ergodic, probability measure-preserving actions of generalized Higman groups.
	\end{abstract}

	\section{Introduction}
	
	\subsection{General context and statement of the main theorem}
	
	In 1951, Higman introduced the groups \[\Hig_k=\langle a_1,\dots,a_k|\{a_ia_{i+1}a_i^{-1}=a_{i+1}^{2}\}_{i\in\mathbb{Z}/k\mathbb{Z}}\rangle\] with $k\ge 4$, as the first examples of infinite finitely presented groups without any nontrivial finite quotient \cite{Hig}. These groups have then received a lot of attention, for example they played a role in the work of Platonov and Tavgen' \cite{PT} giving a counterexample to a question of Grothendieck regarding profinite completions. They are also sometimes considered as potential candidates of non-sofic or non-hyperlinear groups; relatedly, see e.g.\ Thom's work on their metric approximations \cite{Tho}, see also \cite{HJ}. They are known to be SQ-universal (i.e.\ every countable group embeds in some quotient of $\Hig_k$) by work of Schupp \cite{Sch}, and even acylindrically hyperbolic by work of Minasyan and Osin \cite{MO}. They are fundamental examples of ``polygons of groups'' \cite{stallings1991non,BH}, which generalize the classical notion of graphs of groups. Their actions on $\mathrm{CAT}(0)$ cube complexes have been studied by Martin \cite{Mar,Mar2}, who obtained rigidity results regarding their automorphism groups that have been a source of inspiration for the present work. We also refer the reader to \cite{Mon,KKR} for closely related examples on the aforementioned themes, based on variations and generalizations of Higman's construction.
	
	In the present paper, we study Higman groups from the viewpoint of measured group theory, and prove the measure equivalence rigidity of $\Hig_k$ whenever $k\ge 5$. We will in fact consider the following larger family of groups, also allowing two consecutive generators to generate a non-amenable Baumslag-Solitar group. Let $k\ge 4$, and let $\sigma=((m_1,n_1),\dots,(m_k,n_k))$ be a $k$-tuple of pairs of non-zero integers with $|m_i|\neq |n_i|$ for every $i\in\{1,\dots,k\}$; the \emph{generalized Higman group} $\Hig_\sigma$ is the group defined by the following presentation: \[\Hig_\sigma=\langle a_1,\dots,a_k|\{a_ia_{i+1}^{m_i}a_i^{-1}=a_{i+1}^{n_i}\}_{i\in\mathbb{Z}/k\mathbb{Z}}\rangle.\]    
	
The notion of \emph{measure equivalence}, due to Gromov \cite{Gro}, is a measurable analogue of quasi-isometry. It has a geometric flavour in that it generalizes the concept of lattices in locally compact groups, and it is also strongly related to the notion of \emph{orbit equivalence} coming from ergodic theory of group actions. These aspects will be exploited in the present paper. Recall that two countable groups $\Gamma_1,\Gamma_2$ are \emph{measure equivalent} if there exists a standard measure space $\Sigma$, equipped with an action of the product $\Gamma_1\times\Gamma_2$ by measure-preserving Borel automorphisms, such that for every $i\in\{1,2\}$, the action of the factor $\Gamma_i$ on $\Sigma$ is free and has a fundamental domain of finite measure. Such a space $\Sigma$ is called a \emph{measure equivalence coupling} between $\Gamma_1$ and $\Gamma_2$. An important example is that any two lattices in the same locally compact second countable group $G$ are always measure equivalent, through their left/right action by multiplication on $G$, preserving the Haar measure. Also any two countable groups $\Gamma_1,\Gamma_2$ which are \emph{virtually isomorphic} (i.e.\ there exist finite-index subgroups $\Gamma_i^0\subseteq\Gamma_i$ and finite normal subgroups $F_i\unlhd\Gamma_i^0$ such that $\Gamma_1^0/F_1$ and $\Gamma_2^0/F_2$ are isomorphic) are automatically measure equivalent. Our main theorem is the following. 
	
	\begin{theo}\label{theointro:main}
		Let $k\ge 5$, and let $\sigma=((m_1,n_1),\dots,(m_k,n_k))$ be a $k$-tuple of pairs of non-zero integers, with $|m_i|\neq |n_i|$ for every $i\in\{1,\dots,k\}$.
		
		Then the generalized Higman group $\Hig_\sigma$ is superrigid for measure equivalence: if a countable group $G$ is measure equivalent to $\Hig_\sigma$, then $G$ is virtually isomorphic to $\Hig_\sigma$.
	\end{theo}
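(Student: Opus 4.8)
The plan is to run a measured–group–theoretic rigidity argument built on the natural action of $\Hig_\sigma$ on a $\CAT(-1)$ polyhedral complex, in the spirit of Kida's work on mapping class groups and its adaptations to Artin-type groups. First I would realize $\Hig_\sigma$ as the fundamental group of a $k$-gon of groups: the vertex groups are the Baumslag--Solitar subgroups $\langle a_i,a_{i+1}\rangle\cong\BS(m_i,n_i)$ and the edge groups are the cyclic subgroups $\langle a_i\rangle\cong\mathbb{Z}$, glued cyclically around the polygon. Metrizing each $2$-cell as a regular hyperbolic $k$-gon, the vertex links are bipartite (girth $4$), so Gromov's link condition requires $4\theta>2\pi$ for the corner angle $\theta$, which can be arranged precisely when $k\ge 5$; this is where the hypothesis enters, upgrading the merely $\CAT(0)$ geometry available at $k=4$ to strict $\CAT(-1)$. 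On the development $X$ the action is acylindrical (compatibly with the acylindrical hyperbolicity of \cite{MO}), edge stabilizers are virtually cyclic hence amenable, and each vertex stabilizer $\BS(m_i,n_i)$ carries a canonical commensurated metabelian, hence amenable, normal core. This places $\Hig_\sigma\actson X$ exactly in the framework of the general theorem announced in the abstract.

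Given a countable group $G$ measure equivalent to $\Hig_\sigma$, I would fix an ergodic coupling $\Sigma$ and pass to the associated measured groupoid, equivalently a stable orbit equivalence between probability measure-preserving actions. The general statement then transports the $\CAT(-1)$ combinatorial data across the coupling: it produces a measurable field recognizing, on the $G$-side, the analogues of the vertex and edge stabilizers of $X$. The mechanism is to detect the cyclic edge groups and the amenable cores of the vertex groups as \emph{maximal amenable} measurable subgroupoids, while the acylindrical hyperbolicity---through the Gromov boundary of $X$ and the resulting measurable boundary map---pins down their incidence pattern, that is, the polygon itself. Forming the self-coupling $\Sigma\times_G\bar\Sigma$ reduces matters to showing that every ergodic self-coupling of $\Hig_\sigma$ is essentially of finite type, governed by the abstract commensurator.

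The combinatorial rigidity supplies, on the $G$-side, a measurable polygon-of-groups structure isomorphic to that of $\Hig_\sigma$. To untwist this into an honest virtual isomorphism I would invoke rigidity of the automorphisms of the underlying complex---the measured analogue of Martin's results on $\Aut(\Hig_\sigma)$ \cite{Mar}---together with smallness of the abstract commensurator $\Comm(\Hig_\sigma)$. Concretely, the measurable cocycle $G\to\Hig_\sigma$ attached to the coupling must preserve the recognized structure, so after a measurable conjugation it agrees almost everywhere with a genuine homomorphism having finite kernel and finite-index image; this produces the finite-index subgroups and finite normal subgroups realizing the virtual isomorphism between $G$ and $\Hig_\sigma$.

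I expect the crux to be the measurable recognition of the vertex groups in the \emph{generalized} setting. When $|m_i|,|n_i|\ge 2$ the group $\BS(m_i,n_i)$ is non-amenable, so one cannot simply equate vertex groups with maximal amenable subgroupoids; instead one must detect each $\BS(m_i,n_i)$ through the pair formed by its amenable metabelian core together with the two cyclic edge groups it contains, and verify that this characterization is invariant under measure equivalence. Controlling how these pieces assemble around the cycle---and ruling out exotic self-couplings not arising from $\Comm(\Hig_\sigma)$---is where the $\CAT(-1)$ geometry and the acylindricity of the action must be exploited most forcefully, and is the most delicate part of the argument.
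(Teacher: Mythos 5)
Your overall architecture matches the paper's: realize $\Hig_\sigma$ as a polygon of groups acting on a $\CAT(-1)$ developed complex $X$ (this is where $k\ge 5$ enters), reduce measure equivalence rigidity via Furman's technique to a statement about self-couplings, and prove that statement by measurably recognizing vertex stabilizers inside an ambient measured groupoid. However, two essential ingredients are missing or wrong. First, you treat the recognition of vertex/edge groups as ``maximal amenable measurable subgroupoids'' as if acylindricity and a boundary map settle it, but this is precisely the crux the paper has to work hardest on: maximal amenable subgroupoids come in two kinds, the \emph{elliptic} ones (coming from vertex stabilizers) and the \emph{$\partial_\infty X$-elementary} ones (coming from subgroups fixing boundary points, e.g.\ axes of loxodromics), and by the Ornstein--Weiss theorem these are intrinsically isomorphic, so nothing distinguishes them ``from inside.'' Kida's normalizer trick fails here because in Higman groups all (quasi-)centralizers of amenable subgroups are amenable. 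The paper's new idea is to distinguish them by their \emph{intersection pattern}: an infinite intersection of three pairwise non-commensurable maximal amenable subgroupoids forces ellipticity (three fixed boundary points would yield an interior fixed point by a barycenter argument), and the non-isolation hypothesis on $X$ guarantees vertex stabilizers do intersect enough other maximal amenable subgroupoids. Without this criterion (or a substitute), your proposed recognition step does not go through. Your treatment of the non-amenable vertex groups also leans on a nonexistent object: for $|m|,|n|\ge 2$ and $|m|\neq|n|$, $\BS(m,n)$ has no amenable ``metabelian normal core''; what is true, and what the paper uses, is that $\langle a\rangle$ is an infinite \emph{commensurated} (not normal) cyclic subgroup, exploited through quasi-normalized amenable subgroupoids.

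Second, your ``untwisting'' step invokes Martin's computation of $\Aut(\Hig_\sigma)$ together with ``smallness of the abstract commensurator $\Comm(\Hig_\sigma)$.'' This is both insufficient and circular. Insufficient because what the self-coupling argument requires is rigidity of the \emph{intersection graph} (equivalently, of the complex $X_\sigma$): one needs $\Aut(\Theta_\sigma)\cong\widehat{\Hig}_\sigma$, and rigidity of the group's automorphisms is strictly weaker than rigidity of $\Aut(X_\sigma)$ --- e.g.\ for $m_i=n_i=1$ the group has finite outer automorphism group while $\Aut(X_\sigma)$ is enormous, and the ME rigidity conclusion fails. Proving $\Aut(\Theta_\sigma)\cong\widehat{\Hig}_\sigma$ is a substantial combinatorial theorem in its own right, resting on a partial rigidity result for line-preserving bijections of Baumslag--Solitar Cayley graphs and on the interlocking pattern whereby the non-rigid $a$-lines of one BS subgroup are the rigid $t$-lines of the next; none of this is available off the shelf. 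Circular because the computation $\Comm(\Hig_\sigma)\cong\widehat{\Hig}_\sigma$ is itself a \emph{consequence} of the measure equivalence rigidity proved in the paper, not a prior input --- Martin's theorem only gives $\Aut$, and automorphism rigidity does not imply commensurator rigidity.
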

	
As explained later, this theorem follows from a rigidity statement about self measure equivalence couplings of $\Hig_\sigma$ (Theorem~\ref{theointro:main-2}). The analogous (open) question for quasi-isometry would be to compute the group of self quasi-isometries of $\Hig_\sigma$.  
	
	The above rigidity theorem is far from the norm: a celebrated theorem of Ornstein and Weiss \cite{OW}, generalizing earlier work of Dye \cite{Dye1,Dye2}, asserts that all countably infinite amenable groups are measure equivalent. The first measure equivalence rigidity theorem was proved by Furman \cite{Fur}, who showed that any countable group which is measure equivalent to a lattice in a higher-rank simple Lie group with trivial center, is virtually isomorphic to a lattice in the same ambient Lie group. Kida proved that, except for the obvious low-complexity cases, mapping class groups of finite-type, connected, orientable surfaces are superrigid for measure equivalence, in the same sense as in the above theorem \cite{Kid-me}. His work paved the way towards other measure equivalence superrigidity theorems, e.g.\ for the Torelli group (Chifan-Kida \cite{CK}), for the mapping class group of a $3$-dimensional handlebody of genus at least $3$ (Hensel-Horbez \cite{HeH}), and for $\Out(F_N)$, the outer automorphism group of a free group of rank $N\ge 3$ (Guirardel-Horbez \cite{GH}). 
	
	In earlier works, we studied the rigidity/flexibility of several classes of Artin groups from the viewpoint of measured group theory \cite{HH1,HH2,HH3}. While some large-type Artin groups are superrigid for measure equivalence, this is no longer true of the right-angled ones. In fact, using the aforementioned theorem of Ornstein and Weiss and an argument of Gaboriau, we proved that any right-angled Artin group (which is a graph product of infinite cyclic groups), is measure equivalent to the graph product of any countably infinite amenable groups over the same defining graph \cite[Corollary~5]{HH2}. In particular, if one takes $|m_i|=|n_i|=1$ in Theorem~\ref{theointro:main}, one obtains a right-angled Artin group which is far from being rigid for measure equivalence.
	It is quite remarkable that replacing the commutation relations used in right-angled Artin groups, by (possibly solvable) Baumslag-Solitar relations, suddenly yields the strongest possible form of rigidity in measure equivalence.
	
A unifying theme for our previous works on Artin groups, and the present work on Higman groups, is to identify measure equivalence invariants for the class of groups which admit nice actions on non-positively curved (or negatively curved) complexes. Because of the diversity of this class, it might be hopeless to expect a uniform treatment. For instance, several key ingredients in our proof for Artin groups break down completely for Higman groups. It is however worth mentioning that in the course of our proof, we will establish a theorem giving measure equivalence invariants in a general context of groups acting acylindrically on $\CAT(-1)$ complexes, see Theorem~\ref{theo:combination} below.

\paragraph{Several open questions.}
We suspect that Theorem~\ref{theointro:main} should also hold when $k=4$, though one might need a substantial new insight to treat this situation.  
Let us also mention that we do not know about the case where $|m_i|=|n_i|\ge 2$ in Theorem~\ref{theointro:main}.

In the light of quasi-isometric rigidity and flexibility theorems for Baumslag-Solitar groups \cite{FM1,FM2,Why}, we ask the same question for generalized Higman groups.

\begin{question}
	Let $k\ge 4$, and let $\sigma=((m_1,n_1),\dots,(m_k,n_k))$ be a $k$-tuple of pairs of non-zero integers, with $|m_i|\neq |n_i|$ for every $i\in\{1,\dots,k\}$. Let $G$ be a finitely generated group quasi-isometric to $\Hig_\sigma$. Is it true that $G$ is virtually isomorphic to $\Hig_\sigma$?
\end{question}

Theorem~\ref{theo:crigidity0} from the present work reduces this question to the following: if one can prove that any self-quasi-isometry $q:\Hig_\sigma\to \Hig_\sigma$ maps every left coset of a Baumslag-Solitar subgroup to another such coset, up to finite Hausdorff distance, then the answer to the above question is positive.

	\subsection{Some consequences}
	
	In this section, we record various consequences of our work. Most of them are general consequences of the rigidity of self measure equivalence couplings of $\Hig_\sigma$, but the application to $W^*$-superrigidity theorems requires an extra argument. 
	
	\paragraph*{Lattice embeddings and automorphisms of the Cayley graphs.} 
	Let $F_\sigma$ be the finite group consisting of all translations $\tau$ of $\mathbb{Z}/k\mathbb{Z}$ with the property that for every $i\in\mathbb{Z}/k\mathbb{Z}$, one either has $m_{\tau(i)}=m_i$ and $n_{\tau(i)}=n_i$, or $m_{\tau(i)}=-m_i$ and $n_{\tau(i)}=-n_i$. Then $F_\sigma$ naturally acts by automorphisms on $\Hig_\sigma$, and we let $\widehat{\Hig}_\sigma=\Hig_\sigma\rtimes F_\sigma$.

As a first application, let us mention that our work extends a theorem of Martin \cite[Theorem~C]{Mar} regarding the automorphism group of the Higman group, showing that the natural map from $\widehat{\Hig}_\sigma$ to $\Aut(\Hig_\sigma)$ (or even to the abstract commensurator $\Comm(\Hig_\sigma)$) is an isomorphism. Notice however that the abstract commensurator does not always give more information than the automorphism group: in the case of the classical Higman groups, $\Hig_k$ does not contain any proper finite-index subgroup.

We will now focus on applications for which the viewpoint of measure equivalence is used in a more essential way. Recall that given a group $G$, with a generating set $S$, the \emph{Cayley graph} $\Cay(G,S)$ is the simple graph having one vertex per element of $G$, where two vertices $g,h$ are joined by an edge if there exists $s\in S^{\pm 1}$ such that $h=gs$. 
	
	\begin{cor}[{see Corollary~\ref{cor:consequences}}]\label{corintro:lattice}
		Let $k\ge 5$, and let $\sigma=((m_1,n_1),\dots,(m_k,n_k))$ be a $k$-tuple of pairs of non-zero integers, with $|m_i|\neq |n_i|$ for every $i\in\{1,\dots,k\}$.
		\begin{enumerate}
			\item For every locally compact second countable group $G$ and every lattice embedding $\alpha:\Hig_\sigma\to G$, there exists a continuous homomorphism $\pi:G\to\widehat{\Hig}_\sigma$ with compact kernel such that $\pi\circ\alpha=\mathrm{id}$.
			\item For every finite generating set $S$ of $\Hig_\sigma$, the inclusion $\Hig_\sigma\hookrightarrow\widehat{\Hig}_\sigma$ extends to an injective homomorphism $\Aut(\Cay(\Hig_\sigma,S))\hookrightarrow \widehat{\Hig}_\sigma$. 
		\end{enumerate}
	\end{cor}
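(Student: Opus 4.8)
The plan is to deduce both statements from the rigidity of self measure equivalence couplings of $\Hig_\sigma$ (Theorem~\ref{theointro:main-2}), following the now-standard strategy of Furman \cite{Fur} and Kida \cite{Kid-me}. Throughout, write $\Lambda=\Hig_\sigma$ and $\widehat\Lambda=\widehat{\Hig}_\sigma$, and recall that $\Lambda\trianglelefteq\widehat\Lambda$ with finite quotient $F_\sigma$. The common input is that every self-coupling of $\Lambda$ admits a $\Lambda\times\Lambda$-equivariant Borel map to the \emph{tautological coupling} $\widehat\Lambda$ (with $\Lambda\times\Lambda$ acting by left/right translation through $\Lambda\hookrightarrow\widehat\Lambda$), with the essential uniqueness that accompanies such a statement.

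For part (1), I would first turn the lattice embedding into a self-coupling. Given $\alpha:\Lambda\to G$ with discrete image and finite covolume, I equip $G$ with its Haar measure and let $\Lambda\times\Lambda$ act by $(\lambda_1,\lambda_2)\cdot g=\alpha(\lambda_1)\,g\,\alpha(\lambda_2)^{-1}$. Both factor actions are free (by discreteness of $\alpha(\Lambda)$); since a group admitting a lattice is unimodular, both $\alpha(\Lambda)\backslash G$ and $G/\alpha(\Lambda)$ carry finite measure, so $G$ is a self measure equivalence coupling of $\Lambda$. Theorem~\ref{theointro:main-2} then yields an essentially unique $\Lambda\times\Lambda$-equivariant Borel map $\Phi:G\to\widehat\Lambda$. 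I would then exploit the residual action of the full group $G$ on the coupling $G$ by left translation, which commutes with the right $\Lambda$-action: transporting this action through $\Phi$ and invoking essential uniqueness should produce, for a.e.\ $g$, an element $\pi(g)\in\widehat\Lambda$ with $\Phi(g\cdot x)=\pi(g)\,\Phi(x)$ for a.e.\ $x$. The cocycle identity for $\Phi$ forces $\pi:G\to\widehat\Lambda$ to be a homomorphism modulo null sets, and the left $\Lambda$-equivariance of $\Phi$ gives $\pi\circ\alpha=\mathrm{id}$. As $\pi$ is Haar-measurable with countable discrete target, it is automatically continuous; and $\ker\pi$ is a closed subgroup acting on the coupling while preserving the fibres of $\Phi$, whose finite covolume forces $\ker\pi$ to be compact.

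The hard part is this middle step: untwisting the extra $G$-action into an honest homomorphism. The obstruction is that left translation by $g\in G$ commutes with the right $\Lambda$-action but \emph{not} with the left one, so one cannot directly apply uniqueness among $\Lambda\times\Lambda$-equivariant maps. The correct formulation realizes $G$ as a group of automorphisms of the tautological coupling and identifies the latter's automorphism group with $\widehat\Lambda$ acting by left translations; this, together with deducing compactness of $\ker\pi$ from properness and finite covolume, is where the real content lies and is carried out exactly as in \cite{Fur,Kid-me}.

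For part (2), I would specialize (1) to $G=\Aut(\Cay(\Hig_\sigma,S))$ equipped with the permutation topology, a locally compact second countable group in which $\Lambda$ sits as a \emph{uniform} lattice via its free, vertex-transitive left-multiplication action $\alpha$ (so the hypotheses of (1) hold and both fundamental domains are compact). Part (1) provides a continuous homomorphism $\pi:\Aut(\Cay(\Hig_\sigma,S))\to\widehat\Lambda$ with compact kernel $K$ and $\pi\circ\alpha=\mathrm{id}$, so it remains to prove $K=1$. Since $K$ is normal and $\Lambda$ acts vertex-transitively while normalizing $K$, the fixed-vertex set $\fix(K)$ is $\Lambda$-invariant, hence empty or all of the vertex set $V=\Lambda$; in the latter case $K=1$ as a graph automorphism fixing every vertex is trivial. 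If instead $\fix(K)=\emptyset$, then compactness of $K$ makes all its vertex-orbits finite, of a common size $d$ by $\Lambda$-transitivity on the orbit set, and the induced left-$\Lambda$-invariant partition of $\Lambda$ is the coset partition of a finite subgroup of order $d$. As $\Hig_\sigma$ is torsion-free, $d=1$, contradicting $\fix(K)=\emptyset$. Hence $K=1$, so $\pi$ is an injective homomorphism extending the inclusion $\Lambda\hookrightarrow\widehat\Lambda$, as required.
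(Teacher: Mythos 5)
Your proposal follows essentially the same route as the paper. The paper's actual proof of Corollary~\ref{cor:consequences} packages the coupling rigidity of Theorem~\ref{theointro:main-2} as ``rigidity with respect to action-like cocycles'' in the sense of \cite[Definition~4.1]{GH}, and then invokes \cite[Theorem~4.7]{GH} for the lattice statement and \cite[Corollary~4.8(2)]{GH} (using torsion-freeness, Lemma~\ref{lemma:Higman property}) for the Cayley graph statement; what you sketch is precisely what those cited results do internally. In particular, your self-contained derivation of part~(2) from part~(1) --- realizing $\Hig_\sigma$ as a uniform lattice in $\Aut(\Cay(\Hig_\sigma,S))$ with the permutation topology, and then showing the compact normal kernel $K$ is trivial because $\fix(K)$ is $\Hig_\sigma$-invariant and a nonempty-fixed-point-free alternative would produce a left-invariant partition of $\Hig_\sigma$ into cosets of a nontrivial finite subgroup, contradicting torsion-freeness --- is complete, correct, and is exactly the argument behind \cite[Corollary~4.8(2)]{GH}.

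The one substantive ingredient your sketch of part~(1) elides is where the ``essential uniqueness'' comes from. Theorem~\ref{theointro:main-2} asserts only the \emph{existence} of a $(\Hig_\sigma\times\Hig_\sigma)$-equivariant map $\Sigma\to\widehat{\Hig}_\sigma$; uniqueness almost everywhere is what makes $\pi(g)$ well defined in the untwisting step, and it is not automatic: it fails whenever $\widehat{\Hig}_\sigma$ admits a nontrivial element centralizing a finite-index subgroup (such an element produces a second, distinct bi-equivariant map on the tautological coupling). This is exactly why the paper proves that $\widehat{\Hig}_\sigma$ is ICC (Lemma~\ref{lemma:icc+}); that property is a hypothesis of the Furman--Kida machinery in \cite[Section~4]{GH} (and of the corresponding statements in \cite{Fur,Kid-me}) that you are deferring to. To make your argument complete, you should add this verification: $\widehat{\Hig}_\sigma$ is ICC, hence any two equivariant maps from a self-coupling to $\widehat{\Hig}_\sigma$ agree almost everywhere, and the untwisting, continuity, and compact-kernel steps then go through as you describe.
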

	
	Notice that it is possible to choose $\sigma$ so that $F_\sigma=\{\mathrm{id}\}$, in which case  $\Aut(\Cay(\Hig_\sigma,S))$  coincides with $\Hig_\sigma$ (for every finite generating set $S$). The application of measure equivalence rigidity to automorphisms of Cayley graphs has already been observed in other settings; see e.g.\ \cite[Section~6]{dlST} for a general reference and other examples.

	\paragraph{Orbit equivalence rigidity and $W^*$-rigidity.} Measure equivalence is intimately related to orbit equivalence of probability measure-preserving group actions \cite{Fur-oe}. Recall that two free, ergodic, measure-preserving actions $G\actson (X,\mu)$ and $H\actson (Y,\nu)$ of countable groups on standard probability spaces are \emph{orbit equivalent} if there exists a measure space isomorphism $f:X\to Y$ such that for $\mu$-almost every $x\in X$, one has $f(G\cdot x)=H\cdot f(x)$. The two actions are \emph{stably orbit equivalent} if there exist positive measure Borel subsets $U\subseteq X$ and $V\subseteq Y$, and a measure-scaling isomorphism $f:U\to V$ (in other words $f$ induces an isomorphism between the rescaled probability spaces $\frac{1}{\mu(U)}U$ and $\frac{1}{\nu(V)}V$) such that for almost every $x\in U$, one has $f(G\cdot x)\cap U= (H\cdot f(x))\cap V$. By a crucial observation of Furman \cite{Fur-oe} (see also \cite[Proposition~6.2]{Gab-l2}), two countable groups are measure equivalent if and only if they admit free, ergodic, measure-preserving actions on standard probability spaces which are stably orbit equivalent.
	
	The actions $G\actson X$ and $H\actson Y$ are \emph{conjugate} if there exist a group isomorphism $\alpha:G\to H$ and a measure space isomorphism $f:X\to Y$ such that for every $g\in G$ and a.e.\ $x\in X$, one has $f(gx)=\alpha(g)f(x)$. There is a weaker notion of \emph{virtual conjugation}, taking into account the possibility of passing to a finite-index subgroup or taking the quotient by a finite normal subgroup, for which we refer to \cite[Definition~1.3]{Kid-oe}.
	
	To every action $G\actson (X,\mu)$ as above, one also associates a von Neumann algebra $L(G\actson X)$ via the Murray-von Neumann cross-product construction \cite{MvN}. The actions $G\actson X$ and $H\actson Y$ are \emph{$W^*$-equivalent} if $L(G\actson X)$ and $L(H\actson Y)$ are isomorphic; this is a weaker notion than orbit equivalence. They are \emph{stably $W^*$-equivalent} if the associated von Neumann algebras have isomorphic amplifications.
	
	\begin{cor}[{see Corollaries~\ref{cor:oe} and~\ref{cor:w}}]\label{cor:oe-w}
		Let $k\ge 5$, and let $\sigma=((m_1,n_1),\dots,(m_k,n_k))$ be a $k$-tuple of pairs of non-zero integers, with $|m_i|\neq |n_i|$ for every $i\in\{1,\dots,k\}$. Let $\Hig_\sigma\actson X$ be a free, ergodic, probability measure-preserving action of $\Hig_\sigma$ on a standard probability space $X$. Let $\Gamma\actson Y$ be a free, ergodic, probability measure-preserving action of a countable group $\Gamma$ on a standard probability space $Y$.
		\begin{enumerate}
		\item If the actions $\Hig_\sigma\actson X$ and $\Gamma\actson Y$ are stably orbit equivalent, or merely stably $W^*$-equivalent, then they are virtually conjugate.
		\item If $\Hig_\sigma=\Hig_k$ belongs to the classical family of Higman groups, and if the actions $\Hig_\sigma\actson X$ and $\Gamma\actson Y$ are orbit equivalent, or merely $W^*$-equivalent, then they are conjugate.
		\end{enumerate}
	\end{cor}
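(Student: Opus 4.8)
The plan is to deduce Corollary~\ref{cor:oe-w} from the main measure equivalence rigidity theorem (Theorem~\ref{theointro:main}), together with the standard dictionary between orbit equivalence, stable orbit equivalence, measure equivalence, and $W^*$-equivalence. The logical backbone has two layers: first passing from the analytic/ergodic hypotheses to virtual isomorphism of the acting groups via measure equivalence, then upgrading from abstract virtual isomorphism to virtual conjugacy (and in the classical case, genuine conjugacy) of the actions themselves.

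\medskip

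For part (1), I would argue as follows. Suppose first that the actions $\Hig_\sigma\actson X$ and $\Gamma\actson Y$ are stably orbit equivalent. By Furman's observation recalled in the introduction (\cite{Fur-oe}, see also \cite[Proposition~6.2]{Gab-l2}), stable orbit equivalence of free, ergodic, p.m.p.\ actions implies that $\Hig_\sigma$ and $\Gamma$ are measure equivalent. Theorem~\ref{theointro:main} then yields that $\Gamma$ is virtually isomorphic to $\Hig_\sigma$. To pass from this to virtual conjugacy of the actions, the key point is that the measure equivalence coupling produced is \emph{not arbitrary}: the strong rigidity statement about self-couplings (Theorem~\ref{theointro:main-2}) must be used to control the coupling up to the canonical one, so that the resulting stable orbit equivalence is implemented by a (virtual) isomorphism on the nose. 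This is precisely the mechanism in Kida's and Furman's work: the abstract rigidity of the coupling forces the orbit equivalence to be \emph{cocycle rigid}, i.e.\ the Zimmer cocycle is cohomologous to a homomorphism, which is exactly what virtual conjugacy encodes in the sense of \cite[Definition~1.3]{Kid-oe}. For the $W^*$ case, the additional input is that $\Hig_\sigma$ satisfies a suitable rigidity property (it is an ICC group that is acylindrically hyperbolic, hence belongs to a class for which $W^*$-equivalence can be promoted to stable orbit equivalence); here one invokes the now-standard machinery (in the spirit of Popa's deformation/rigidity theory and the works of Chifan--Ioana and Ioana--Popa--Vaes on unique crossed-product Cartan subalgebras) to recover the orbit equivalence relation from the von Neumann algebra, after which the orbit equivalence argument applies verbatim. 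I expect this upgrade from $W^*$-equivalence to stable orbit equivalence to be the main obstacle, as it requires an \emph{extra argument} beyond the purely measure equivalence theoretic part, precisely as flagged in the introduction; the crucial ingredient is a $C$-rigidity / unique Cartan subalgebra result for crossed products by $\Hig_\sigma$.

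\medskip

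For part (2), specialized to the classical Higman groups $\Hig_k$, the strengthening from \emph{virtual} conjugacy to genuine conjugacy (and from \emph{stable} to honest orbit equivalence) rests on the group-theoretic rigidity already recorded in the excerpt: $\Hig_k$ has \emph{no proper finite-index subgroup} and (being infinite and, as an acylindrically hyperbolic group with no nontrivial finite normal subgroup, ICC) admits no nontrivial finite normal subgroup. Consequently, in the definition of virtual isomorphism the finite-index subgroups $\Gamma_i^0$ and finite normal subgroups $F_i$ are all forced to be trivial, so virtual isomorphism collapses to genuine isomorphism, and likewise virtual conjugacy collapses to genuine conjugacy. Moreover, the absence of proper finite-index subgroups pins the index/compression constant of the stable orbit equivalence to $1$, turning stable orbit equivalence into orbit equivalence. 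Thus part (2) follows from part (1) once one checks these two structural facts about $\Hig_k$. The step I would treat with most care here is verifying that $\Hig_k$ has trivial finite radical (no nontrivial finite normal subgroup): this follows from acylindrical hyperbolicity together with the fact that $\Hig_k$ is not virtually cyclic, via \cite{MO}, and ensures the ``$F_i$ trivial'' conclusion needed to rule out the quotient-by-finite ambiguity in virtual conjugacy.
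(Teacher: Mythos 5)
Your skeleton for the orbit equivalence part is the same as the paper's: stable orbit equivalence gives measure equivalence, and the self-coupling rigidity (Theorem~\ref{theointro:main-2}) is fed into the Furman--Kida machinery (\cite[Lemma~4.18]{Fur-survey}, \cite[Theorems~1.1 and~1.2(ii)]{Kid-oe}) to produce virtual conjugacy; this is exactly Corollary~\ref{cor:oe}. The genuine gap is in the $W^*$ step. You claim that $\Hig_\sigma$, being ICC and acylindrically hyperbolic, ``belongs to a class for which $W^*$-equivalence can be promoted to stable orbit equivalence.'' No such theorem exists: uniqueness of $L^\infty(X)$ as a Cartan subalgebra of $L(G\actson X)$ for \emph{all} free ergodic p.m.p.\ actions is known only for rather special classes of groups (hyperbolic and weakly amenable biexact groups after Popa--Vaes and Chifan--Sinclair, amalgamated free products satisfying Ioana's hypotheses, etc.), and it is an open problem for acylindrically hyperbolic groups in general. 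The extra ingredient the paper actually supplies is Lemma~\ref{lemma:von-neumann}: a substantive geometric argument (cutting the developed complex along ``trees of type $(i,i+2)$'' and analyzing the resulting Bass--Serre tree) producing a splitting $\Hig_\sigma=G_1*_A G_2$ with $A$ of infinite index in both factors and with $A\cap gAg^{-1}=\{1\}$ for some $g$. Only with this splitting in hand does Ioana's theorem \cite[Theorem~1.1]{Ioa} yield the unique-Cartan statement, after which Singer's observation \cite{Sin} converts (stable) $W^*$-equivalence into (stable) orbit equivalence and Corollary~\ref{cor:oe} applies (this is Corollary~\ref{cor:w}). Without this splitting, or some substitute for it, your argument for the $W^*$ assertions does not get off the ground.

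Two secondary issues. First, for part (2) the paper does not ``collapse'' virtual conjugacy after the fact; it observes that since $\Hig_k$ has no proper finite-index subgroup, every ergodic action is automatically aperiodic, and then invokes the aperiodic case of Kida's orbit equivalence theorem (Corollary~\ref{cor:oe}(2)), which directly outputs genuine conjugacy. Your collapse argument as written is incomplete on the $\Gamma$-side: virtual conjugacy in the sense of \cite[Definition~1.3]{Kid-oe} allows passing to a finite-index subgroup $\Lambda_2$ of a finite quotient of $\Gamma$ and to an ergodic component of its action, and knowing that $\Hig_k$ itself has no proper finite-index subgroup and no nontrivial finite normal subgroup does not by itself force $\Lambda_2$ to be the whole group or the component to be the whole space. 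Second, triviality of the finite radical of $\Hig_k$ does not follow from acylindrical hyperbolicity (which only guarantees a well-defined maximal finite normal subgroup); the paper gets it for free from torsion-freeness and the ICC property (Lemma~\ref{lemma:Higman property}).
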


Popa's deformation/rigidity theory has led to many examples of $W^*$-superrigid actions, starting with the works of Peterson \cite{Pet} and Popa and Vaes \cite{PV}; but knowing that all actions of a given group are $W^*$-superrigid is a phenomenon that has only been established in very few cases so far \cite{HPV,CIK,HH1}. Our $W^*$-superrigidity theorems are obtained as a consequence of orbit equivalence rigidity and of the uniqueness of $L^\infty(X)$ as a Cartan subalgebra of $L(\Hig_\sigma\actson X)$, up to unitary conjugacy. The latter is obtained from the existence of decompositions of $\Hig_\sigma$ as an amalgamated free product, using a theorem of Ioana \cite{Ioa}. 

The second statement in the corollary remains true for any generalized Higman group $\Hig_\sigma$ if we further assume that the action $\Hig_\sigma\actson X$ is ergodic in restriction to every finite-index subgroup of $\Hig_\sigma$. When $\Hig_\sigma=\Hig_k$, this extra assumption is trivially satisfied, simply because $\Hig_k$ does not have any proper finite-index subgroup \cite{Hig}. To our knowledge, the groups $\Hig_k$ with $k\ge 5$ are the first instances of groups, all of whose free, ergodic, probability measure-preserving actions are $W^*$-superrigid in the sense of Corollary~\ref{cor:oe-w}(2).
	
	\subsection{Proof ingredients}

We define a \emph{BS subgroup} of $H_\sigma$ to be a conjugate of a subgroup of $H_\sigma$ generated by two consecutive generators $a_i,a_{i+1}$. These subgroups play a central role in our proof of Theorem~\ref{theointro:main}. We now explain the main ingredients, in the (perhaps most interesting) case where all BS subgroups involved in the definition of $H_\sigma$ are amenable. 
	
Using techniques that originate in the work of Furman \cite{Fur}, measure equivalence rigidity of $\Hig_\sigma$ reduces to the following theorem about self-couplings of $\Hig_\sigma$.

\begin{theo}\label{theointro:main-2}
Let $k\ge 5$, and let $\sigma=((m_1,n_1),\dots,(m_k,n_k))$ be a $k$-tuple of pairs of non-zero integers, with $|m_i|\neq |n_i|$ for every $i\in\{1,\dots,k\}$.

For every self measure equivalence coupling $\Sigma$ of $\Hig_\sigma$, there exists a Borel $(\Hig_\sigma\times\Hig_\sigma)$-equivariant map $\Sigma\to\widehat{\Hig}_\sigma$, where the action on $\widehat{\Hig}_\sigma$ is via $(h_1,h_2)\cdot h=h_1hh_2^{-1}$.
\end{theo}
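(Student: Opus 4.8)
The plan is to encode the geometry of the Higman group's action on its CAT($-1$) polyhedral complex into measure-equivalence invariants, following the general strategy pioneered by Kida for mapping class groups and refined by Guirardel--Horbez for $\Out(F_N)$. The group $\Hig_\sigma$ is a polygon of groups, so it acts on a naturally associated CAT($-1$) complex $X$ (a Bass--Serre-type complex where edge/vertex stabilizers are BS subgroups and cyclic subgroups). The first step is to invoke Theorem~\ref{theo:combination}, which supplies measured group-theoretic invariants for groups acting acylindrically on such complexes: in practice this means that any self-coupling $\Sigma$ carries a measurable equivariant structure recording, almost everywhere, a ``point at infinity'' or a canonical vertex/edge datum of $X$. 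Concretely, I would first produce from $\Sigma$ a Borel $(\Hig_\sigma\times\Hig_\sigma)$-equivariant map from $\Sigma$ to the set of BS subgroups of $\Hig_\sigma$ (viewed as a countable $\Hig_\sigma$-set), asserting that the coupling ``remembers'' the collection of BS subgroups up to the coupling's two commuting actions.

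**Next I would** upgrade this subgroup-level rigidity to a rigidity of the whole combinatorial structure. The defining polygon has a rigid combinatorial skeleton: consecutive BS subgroups $\langle a_i,a_{i+1}\rangle$ and $\langle a_{i+1},a_{i+2}\rangle$ share the cyclic edge group $\langle a_{i+1}\rangle$, and the cyclic pattern of $k\ge 5$ such overlaps forms a $k$-cycle. The hypothesis $k\ge 5$ is what guarantees enough girth/acylindricity for the complex to be genuinely CAT($-1$) and for the overlap pattern to be reconstructible; this is precisely where $k=4$ would fail. So the plan is to show that the equivariant map on BS subgroups respects incidence: it sends adjacent BS subgroups to adjacent ones and preserves the shared edge groups. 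This realizes, almost everywhere on $\Sigma$, an automorphism of the incidence graph of the polygon of groups, hence (after checking it preserves the labeling data $(m_i,n_i)$ up to the symmetries encoded in $F_\sigma$) an element of the graph automorphism group. I would then argue that such an element is induced by conjugation by a group element, using that the polygon of groups is \emph{developable} and that $\Hig_\sigma$ acts as its own ``automorphism group via inner-type maps'' modulo $F_\sigma$—this is the measure-theoretic analogue of Martin's rigidity \cite{Mar}.

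**The output of these two steps** is a Borel $(\Hig_\sigma\times\Hig_\sigma)$-equivariant map $\Sigma\to\Aut(X)$, and since the relevant automorphisms of the labeled complex are exactly $\widehat{\Hig}_\sigma=\Hig_\sigma\rtimes F_\sigma$ acting by the appropriate conjugation-type action, this yields the desired map $\Sigma\to\widehat{\Hig}_\sigma$ with the stated equivariance $(h_1,h_2)\cdot h=h_1 h h_2^{-1}$. The technical heart is verifying that the reconstructed combinatorial automorphism is canonical and measurable—i.e.\ that there is no residual ambiguity that would leave the cocycle valued in a larger group than $\widehat{\Hig}_\sigma$. This amounts to showing that the pointwise stabilizer of the whole incidence structure in the relevant automorphism group is trivial, so that the equivariant assignment is single-valued a.e.

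**The main obstacle** I anticipate is the first step: extracting the equivariant map to the set of BS subgroups from the abstract coupling. This requires showing that BS subgroups are \emph{measure-equivalence invariant} features, which is delicate because BS subgroups may be amenable (when $|m_i|$ or $|n_i|$ equals $1$), and amenable subgroups are notoriously \emph{not} preserved by measure equivalence in general (by Ornstein--Weiss). The resolution must exploit the specific \emph{position} of BS subgroups inside $\Hig_\sigma$—their being vertex groups of an acylindrical action on a CAT($-1$) complex with malnormality-type properties—rather than any intrinsic property of the groups in isolation. This is exactly what Theorem~\ref{theo:combination} is designed to deliver, so the real work is in checking that $\Hig_\sigma$ satisfies its hypotheses (acylindricity of the action, control on vertex and edge stabilizers, and the CAT($-1$) condition, all of which rely on $k\ge 5$) and in translating its conclusion into the clean equivariant statement about BS subgroups that feeds the combinatorial reconstruction above.
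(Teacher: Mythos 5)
Your outline follows the same two-step architecture as the paper's actual proof: feed the action $\Hig_\sigma\actson X_\sigma$ into a general coupling/groupoid theorem to obtain a $(\Hig_\sigma\times\Hig_\sigma)$-equivariant Borel map $\Sigma\to\Aut(\Theta_\sigma)$, then identify $\Aut(\Theta_\sigma)$ with $\widehat{\Hig}_\sigma$. However, two of your steps have genuine gaps.

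First, you invoke Theorem~\ref{theo:combination}, whose first hypothesis is that \emph{every vertex stabilizer is amenable}. The vertex stabilizers here are the Baumslag--Solitar groups $\BS(m_i,n_i)$, which are nonamenable (they contain nonabelian free subgroups) as soon as $|m_i|,|n_i|\ge 2$. So Theorem~\ref{theo:combination} does not apply to the statement you are proving except in the special case where each relation is solvable; the general case requires Theorem~\ref{theo:full}, the variant allowing nonamenable vertex groups, and this is what the paper uses. Its extra hypotheses are not free: one must check that each vertex stabilizer contains a commensurated infinite amenable subgroup (here $\langle a_{i+1}\rangle$ inside $\langle a_i,a_{i+1}\rangle$) and the chain condition on common stabilizers of vertex chains, both supplied by Lemma~\ref{lemma:intersection of vertex stabilizer}. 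Your discussion of the ``main obstacle'' focuses on the amenable BS subgroups, but the case your cited theorem actually fails to cover is the opposite, nonamenable one.

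Second, your identification of graph automorphisms with elements of $\widehat{\Hig}_\sigma$ is mis-justified. You propose to deduce it from developability together with Martin-type rigidity of $\Aut(\Hig_\sigma)$, i.e.\ that the group is its own automorphism group modulo $F_\sigma$. That deduction fails: rigidity of $\Aut(G)$ does not imply rigidity of $\Aut(X_\sigma)$ or $\Aut(\Theta_\sigma)$. The paper itself points out that when $m_i=n_i=1$ (the right-angled Artin case) the outer automorphism group is finite while $\Aut(X_\sigma)$ is much larger, and Remark~\ref{rk:example} exhibits a group with $|m_i|\neq|n_i|$ but a disrupted interlocking pattern for which $\Aut(X)$ contains automorphisms not induced by any group element. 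The step you need is exactly Theorem~\ref{theo:crigidity0}, a substantial new result whose proof runs through the partial order-rigidity of line-preserving bijections of Baumslag--Solitar groups (Proposition~\ref{prop:combinatorial rigid BS}) and a Gauss--Bonnet argument matching induced $k$-cycles in $\Theta_\sigma$ with $2$-cells of $X_\sigma$; it cannot be cited away as an analogue of Martin's theorem. Relatedly, the automorphism of $\Theta_\sigma$ produced by the groupoid machinery carries no a priori label or type information, so your plan of ``checking it preserves the labeling data $(m_i,n_i)$'' has no obvious starting point; the strength of Theorem~\ref{theo:crigidity0} is precisely that it applies to arbitrary graph automorphisms, making any such check unnecessary. (A smaller point: your intermediate object, a $(\Hig_\sigma\times\Hig_\sigma)$-equivariant map from $\Sigma$ to the countable set of BS subgroups, is not well-formed, since the product group does not act on that set; the correct target is $\Aut(\Theta_\sigma)$, on which the two factors act by post- and pre-composition.)
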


Writing $G=\Hig_\sigma$, proving Theorem~\ref{theointro:main-2} is related to studying stable orbit equivalence between two free measure-preserving actions $\alpha_1:G\to\Aut(Y_1)$ and $\alpha_2:G\to \Aut(Y_2)$ on standard probability spaces. 

Our strategy has two main steps: this general structure is inspired by Kida's work on measure equivalence rigidity of mapping class groups \cite{Kid-me}, even though the way in which each step is carried differs significantly. 

In the first step, we consider the common orbit equivalence relation $\mathcal{R}$ (on a base space $Y$), arising from restricting $\alpha_1$ and $\alpha_2$ to appropriate positive measure subsets of $Y_1$ and $Y_2$. We show that subrelations of $\mathcal{R}$ arising from restricting $\alpha_1$ to a BS subgroup, also correspond (up to a countable partition of the base space) to restricting $\alpha_2$ to a BS subgroup. And two BS subgroups that intersect give subrelations that intersect. Informally, this means that $\mathcal{R}$ completely encodes the BS-subgroups of $\Hig_\sigma$ and their intersection pattern. 

In the second step, we show that $\Hig_\sigma$ is completely recovered by the intersection pattern of its BS subgroups. More precisely, this intersection pattern is encoded in a combinatorial object, Martin's  \emph{intersection graph} $\Theta$, and we show that the natural map from $\widehat{\Hig}_\sigma$ to $\Aut(\Theta)$ is an isomorphism (see Section~\ref{subsec:combinatorial}).
	
The first step is in fact carried in a broader context, inspired by the following. The group $\Hig_\sigma$ has the structure of a complex of groups. This naturally yields an action of $\Hig_\sigma$ on a polyhedral complex $X_\sigma$, which carries an invariant $\mathrm{CAT}(-1)$ metric because we have assumed that $k\ge 5$. This complex was first introduced by Bridson and Haefliger \cite{BH}, and extensively studied by Martin in \cite{Mar,Mar2}. It has one vertex per left coset of a Baumslag-Solitar subgroup $\langle a_i,a_{i+1}\rangle$, one edge per left coset of a subgroup $\langle a_i\rangle$, and one $2$-cell (isomorphic to a $k$-gon) per left coset of the trivial subgroup, where a cell $\tau$ is identified to a face of a cell $\tau'$ if the coset associated to $\tau$ contains the coset associated to $\tau'$. Martin proved in \cite{Mar,Mar2} that the action of $H_\sigma$ on $X_\sigma$ is \emph{weakly acylindrical}: there exist $L,N>0$ such that for any two points $x,y\in X_\sigma$ with $d(x,y)\ge L$, one has $|\Stab_{\Hig_\sigma}(\{x,y\})|\le N$.

To any action of a group $G$ on a polyhedral complex $X$, we associate an \emph{intersection	graph}, considered by Martin in \cite{Mar} in the case of the Higman group: it has the same vertex set as $X$, and two vertices are joined by an edge if their $G$-stabilizers (for the $G$-action on $X$) have a nontrivial intersection. 

The following theorem recasts the first step of our proof at the level of the self measure equivalence coupling of $G$.

\begin{theo}\label{theo:combination}
Let $X$ be a connected $\mathrm{CAT}(-1)$ piecewise hyperbolic polyhedral complex with countably many cells in finitely many isometry types. Let $G$ be a torsion-free countable group acting by cellular isometries on $X$ without any cell inversion, and let $\Theta$ be the intersection graph of the action $G\actson X$. Assume that:
		\begin{enumerate}
			\item[1)] The stabilizer of every vertex of $X$ is amenable.
			\item[2)] Edge stabilizers have infinite index in the incident vertex stabilizers.
			\item[3)] The $G$-action on $X$ is weakly acylindrical.
			\item[4)] For each vertex $v\in X^{(0)}$, there exist two distinct vertices $v_1,v_2\in X^{(0)}\setminus\{v\}$ such that $\Stab_G(\{v,v_1,v_2\})$ is infinite.
		\end{enumerate}
		Then for every self measure equivalence coupling $\Sigma$ of $G$, there exists a $(G\times G)$-equivariant measurable map $\Sigma\to\Aut(\Theta)$, where the $(G\times G)$-action on $\Aut(\Theta)$ is via $(g_1,g_2)\cdot f(x)=g_1f(g_2^{-1}x)$.
	\end{theo}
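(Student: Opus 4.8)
The plan is to encode the intersection graph $\Theta$ entirely in terms of the measured groupoid (equivalently, the orbit equivalence relation) associated to the self-coupling $\Sigma$, and then recognize that data across the coupling. By Furman's reformulation, $\Sigma$ gives rise to a stable orbit equivalence between two free p.m.p.\ actions $\alpha_1,\alpha_2$ of $G$ on probability spaces, with a common orbit equivalence relation $\mathcal{R}$ on a base space $Y$. The vertices of $\Theta$ are the cells of $X$, which correspond to cosets of vertex stabilizers; the key is therefore to characterize, in a measure-theoretically invariant way, the subrelations of $\mathcal{R}$ that arise from restricting $\alpha_1$ (or $\alpha_2$) to a vertex stabilizer $\Stab_G(v)$, and to recover the adjacency relation of $\Theta$ (nontrivial intersection of stabilizers).

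First I would set up the geometric/structural dictionary. Using hypotheses (1)--(3), the vertex stabilizers $\Stab_G(v)$ form a family of amenable subgroups with strong malnormality-type properties coming from weak acylindricity: any two distinct vertices have finite common stabilizer (for vertices at bounded distance one uses acylindricity directly; for far-apart vertices this is exactly the $N$-bound), and edge stabilizers sit as infinite-index subgroups inside the vertex stabilizers. The goal of the first main technical step is a measure-theoretic rigidity statement: after restricting to a positive-measure piece and passing to a countable partition of the base space, every amenable subrelation of $\mathcal{R}$ that is ``maximal of vertex-stabilizer type'' for $\alpha_1$ coincides with one of vertex-stabilizer type for $\alpha_2$. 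The engine here is the interplay between amenability of the vertex stabilizers (so the associated subrelations are amenable, detected intrinsically via the Ornstein--Weiss/Connes--Feldman--Weiss amenability of equivalence relations) and the negative curvature of $X$ together with weak acylindricity, which forces these amenable subrelations to be essentially rigid: two distinct vertex-type subrelations can only share a subrelation of finite orbits, by the finiteness of common stabilizers. This is where I would invoke a boundary-theoretic or reduction-cocycle argument, constructing a $\mathcal{R}$-equivariant map to the space of vertices of $X$ (or to the Gromov boundary) by exploiting the action of the measured groupoid on the $\mathrm{CAT}(-1)$ complex and amenability of stabilizers to get an equivariant assignment of a point, then upgrading it via acylindricity to land on actual vertices rather than boundary points.

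The second step is combinatorial recovery. Hypothesis (4)---the existence, for each vertex $v$, of two further vertices $v_1,v_2$ with $\Stab_G(\{v,v_1,v_2\})$ infinite---is the nondegeneracy condition that lets me distinguish vertices from edges (or lower cells) purely from the intersection pattern: a vertex stabilizer intersects infinitely many others in an infinite group, whereas the finiteness properties of edge/common stabilizers behave differently, so the adjacency relation of $\Theta$ together with these infinite-triple-intersection configurations pins down which subrelations correspond to vertices. Once the first step identifies vertex-type subrelations for $\alpha_1$ with those for $\alpha_2$, and adjacency (nontrivial intersection of stabilizers) is matched up---two vertex stabilizers intersect nontrivially iff the corresponding subrelations have a common nontrivial subrelation after restriction---the two copies of the vertex set of $X$ inside the coupling get identified in an $(\mathcal{R}$-invariant) way that respects adjacency, i.e.\ yields a graph isomorphism of $\Theta$. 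Assembling this over the coupling and untwisting the stable orbit equivalence back into honest $G\times G$-equivariance produces the desired Borel map $\Sigma\to\Aut(\Theta)$ with the stated bi-action.

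I expect the main obstacle to be the first step, specifically proving that the amenable vertex-type subrelations of $\mathcal{R}$ are rigid, i.e.\ that a vertex-type subrelation for $\alpha_1$ cannot be ``smeared'' across several vertex-type subrelations for $\alpha_2$ (and conversely). Amenability alone gives existence of equivariant maps to the complex or its boundary, but the delicate point is \emph{rigidity and uniqueness}: one must rule out that the groupoid-equivariant point could wander, and show that weak acylindricity (only a finiteness condition on stabilizers, weaker than genuine acylindricity) is strong enough to force the boundary/vertex assignment to be measurably constant on the relevant subrelations. Controlling the transition from boundary-valued maps to vertex-valued maps, and handling the countable partition of the base space uniformly, is where the technical heart---and the place where the general $\mathrm{CAT}(-1)$ hypotheses must be used most carefully---will lie.
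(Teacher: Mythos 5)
Your proposal has the same overall architecture as the paper's proof (recognize vertex-stabilizer subrelations in a cocycle-free way, then reconstruct $\Theta$ and untwist the coupling), but the mechanism you propose for the first, crucial step has a genuine gap. You suggest that amenability of a vertex-type subrelation yields an equivariant map to the complex or its boundary, and that one can then ``upgrade it via acylindricity to land on actual vertices rather than boundary points.'' No such upgrade is possible. What the geometry actually gives (the paper's Lemmas~\ref{lemma:elliptic-loxodromic} and~\ref{lemma:elliptic vs elementary}) is a \emph{dichotomy}: any amenable subrelation splits into a stably $X$-elliptic part and a $\partial_\infty X$-elementary part, and weak acylindricity makes these two types mutually \emph{exclusive}; it does not convert one into the other. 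Moreover, boundary-elementary maximal amenable subrelations genuinely exist (e.g.\ pullbacks of cyclic subgroups generated by loxodromic isometries), and by Ornstein--Weiss they are abstractly isomorphic to the vertex-stabilizer ones, so no intrinsic property---and no amount of ``rigidity of the equivariant point,'' which is where you locate the main obstacle---can separate the two classes. The obstacle is not that the equivariant boundary point might wander; it is that ellipticity versus elementarity cannot be detected from the subrelation alone.

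The paper's key idea, which your proposal misses, is that this distinction must be made \emph{extrinsically}, through intersection patterns with other maximal amenable subrelations: Assumption~4 guarantees that each vertex-stabilizer subrelation has nowhere-trivial intersection with two other independent maximal amenable subrelations, while a $\mathrm{CAT}(-1)$ barycenter argument (Lemma~\ref{lemma:intersection of four gropuoids}) shows that an amenable subrelation sitting inside three pairwise-independent maximal amenable ones must be elliptic---if it were elementary, the three distinct boundary pairs would produce at least three fixed boundary points, hence a fixed point in $X$. This yields the cocycle-free characterization of vertex-stabilizer-type subgroupoids (Lemma~\ref{lemma:alternative-version}), which is what makes the identification across $\rho_1$ and $\rho_2$ possible. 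You do invoke hypothesis~(4) and ``infinite-triple-intersection configurations,'' but at the wrong stage and for the wrong dichotomy: you use them after the identification, to distinguish vertices from ``edges (or lower cells)'' (incidentally, the vertices of $\Theta$ are the vertices of $X$, not all its cells, and hypothesis~(4) provides two auxiliary vertices, not infinitely many). In the correct argument the triple-intersection property \emph{is} the identification mechanism itself; without it, your first step cannot be completed.
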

	
For the Higman groups with amenable BS subgroups, our main theorems follow because $\Aut(\Theta)$ coincides with $\widehat{\Hig}_\sigma$ (see Theorem~\ref{theo:rigidity} below). Let us also mention that Theorem~\ref{theo:full} of the article gives a variation that allows for non-amenable vertex groups, which is used to treat the general case.

\subsubsection{On the proof of Theorem~\ref{theo:combination}}\label{subsec:groupoid-argument}
		
At the level of stable orbit equivalence, proving Theorem~\ref{theo:combination} amounts to the following. Keeping the notation from the previous section, we have two cocycles $\rho_1,\rho_2:\calr\to G$, coming from the two actions $\alpha_1,\alpha_2$ of $G$. Our goal is to prove that the two cocycles are \emph{conjugate} (or \emph{cohomologous}) inside $\Aut(\Theta)$, i.e.\ that there exists a Borel map $\varphi:Y\to\Aut(\Theta)$ such that for almost every $(x,y)\in\calr$, one has $\rho_2(x,y)=\varphi(y)\rho_1(x,y)\varphi(x)^{-1}$. In the body of the paper, we work in the slightly more general framework of measured groupoids.
	
An amenable subgroup of $G$ can act on $X$ in two different ways: either it is \emph{elliptic}, i.e.\ it fixes a point in $X$, or it is \emph{$\partial_\infty X$-elementary}, i.e.\ it fixes a point in $\partial_\infty X$. Our weak acylindricity assumption ensures that these two possibilities are mutually exclusive. It also ensures that $\partial_\infty X$-elementary infinite subgroups fix at most two points in $\partial_\infty X$: if they fixed three points, a barycenter argument would also yield a fixed point inside $X$, which is not possible. 

In a similar way, given a cocycle $\rho:\calg\to G$, an amenable subrelation $\cala$ of $\calr$ is called \emph{elliptic} with respect to $\rho$ if $\rho(\cala)$ fixes a point in $X$, and \emph{$\partial_\infty X$-elementary} with respect to $\rho$ if there is an $(\cala,\rho)$-equivariant measurable map from $Y$ to the set of nonempty subsets of $\partial_\infty X$ of cardinality at most $2$. Using an argument of Adams \cite{Ada} and some $\mathrm{CAT}(-1)$ geometry, we prove that, up to a countable partition of the base space $Y$, any amenable subrelation of $\calr$ has one of these two forms. We use an extra argument, based on the weak acylindricity, to prove that these two forms are mutually exclusive.

Our main task is to find a way to distinguish these two types of amenable subrelations, with no reference to the cocycle $\rho$. That way, a maximal elliptic  subrelation for $\rho_1$ will also be elliptic for $\rho_2$. In other words, subrelations of $\calr$ coming from restricting the $\alpha_1$-action of $G$ on $Y_1$ to some vertex stabilizer for $G\actson X$, are also of the same form for the $\alpha_2$-action on $Y_2$ (up to a countable partition of $Y$). This is the crucial point to define the map $Y\to\Aut(\Theta)$.	

At first glance, the above task might seem hopeless: by the Ornstein-Weiss theorem, all amenable subrelations are isomorphic. The key idea is that they are not distinguished intrinsically, but by the way in which they intersect other amenable subrelations. This is where the fourth assumption of Theorem~\ref{theo:combination} is crucial: it ensures that elliptic subrelations intersect sufficiently many other maximal amenable subrelations (in comparison, this would fail if $G$ were, say, a free product of amenable groups, acting on the associated Bass-Serre tree).

At the group-theoretic level, if $H_1,H_2,H_3$ are three maximal amenable subgroups of $G$, no two of which share a common finite-index subgroup, and such that $H_1\cap H_2\cap H_3$ is infinite, then all $H_i$ have to be elliptic. Indeed, this intersection cannot be both elliptic and $\partial_\infty X$-elementary, so either all $H_i$ are elliptic, or all $H_i$ are $\partial_\infty X$-elementary. The latter is not possible, because $H_1\cap H_2\cap H_3$ would have at least $3$ fixed points in $\partial_\infty X$, and a barycenter argument would also yield a fixed point in $X$.

We can similarly characterize maximal elliptic amenable subrelations (with respect to either $\rho_1$ or $\rho_2$) in terms of their non-isolation in the ambient relation $\calr$. An extra ingredient in the groupoid-theoretic setting is the amenability of the $G$-action on $\partial_\infty X$, established in our earlier work \cite[Theorem~1]{HH1}.

Let us here mention the key difference between our approach and Kida's proof of the measure equivalence rigidity of mapping class groups \cite{Kid-me}. Given a measured equivalence relation coming from an action of the mapping class group, a key point in his proof was to recognize amenable subrelations coming from Dehn twist subgroups. Again he needed to distinguish these amenable subrelations with an amenable subrelation which is $\partial_\infty\mathcal C$-elementary with respect to the action of the group on the curve complex $\mathcal C$. This was done by observing that the former have a non-amenable normalizer (this corresponds to the group theoretic fact that the centralizer of a Dehn twist is a maximal nonamenable subgroup of the mapping class group with infinite center), while the latter only have amenable normalizer. However, this breaks down for classical Higman groups $\Hig_k$, where all centralizers (or even quasi-centralizers) of amenable subgroups are amenable, so we needed a new argument, in terms of intersection patterns of amenable subrelations. On the other hand, when the involved BS subgroups are nonamenable, we do exploit the presence of nonamenable quasi-normalizers in a crucial way.
	
 When $k=4$, it is still true that one can characterize BS subgroups of $\Hig_k$ as maximal amenable subgroups which are not isolated. While there are several arguments to prove this, we do not see a way to adjust them to the groupoid setting, which is significantly more involved. In particular, the above strategy does not apply, as we do not know a hyperbolic space where the Higman groups acts, where the action on the boundary is amenable, and at the same time some form of acylindricity holds.

	\subsubsection{A combinatorial/geometric rigidity theorem}
	\label{subsec:combinatorial}

The second main tool in our proof is the following combinatorial rigidity theorem.	

	\begin{theo}[{see Theorem~\ref{theo:crigidity0}}]
		\label{theo:rigidity}
	Let $k\ge 4$, and let $\sigma=((m_1,n_1),\dots,(m_k,n_k))$ be a $k$-tuple of pairs of non-zero integers, with $|m_i|<|n_i|$ for every $i\in\{1,\dots,k\}$. Let $X_\sigma$ be as above, and let $\Theta_\sigma$ be its intersection graph.
		
	Then the natural maps $\widehat{\Hig}_\sigma\to\Aut(X_\sigma)$ and $\widehat{\Hig}_\sigma\to\Aut(\Theta_\sigma)$ are isomorphisms.
	\end{theo}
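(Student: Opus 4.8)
The plan is to prove that the two natural maps $\widehat{\Hig}_\sigma \to \Aut(X_\sigma)$ and $\widehat{\Hig}_\sigma \to \Aut(\Theta_\sigma)$ are isomorphisms, and it is natural to handle them in tandem, since the intersection graph $\Theta_\sigma$ is built directly from the stabilizer data of the $\Hig_\sigma$-action on $X_\sigma$. First I would verify injectivity: an element of $\widehat{\Hig}_\sigma$ acting trivially on $X_\sigma$ (or on $\Theta_\sigma$) must fix every vertex, hence lies in the intersection of all the Baumslag-Solitar subgroup cosets and their conjugates, which one checks is trivial; the semidirect factor $F_\sigma$ acts faithfully by construction since a nontrivial translation $\tau$ of $\mathbb{Z}/k\mathbb{Z}$ genuinely permutes the generators and hence the standard vertices. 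The more substantial work is surjectivity, and here I would exploit the rigidity of the combinatorial structure rather than the metric.

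For surjectivity onto $\Aut(\Theta_\sigma)$, the strategy is to show that any automorphism $\phi$ of $\Theta_\sigma$ is induced by an element of $\widehat{\Hig}_\sigma$. The key observation is that the vertices of $\Theta_\sigma$ are in bijection with the left cosets of the BS subgroups $\langle a_i,a_{i+1}\rangle$, and an edge records a nontrivial intersection of stabilizers, i.e.\ the existence of a common conjugate of some $\langle a_j\rangle$. I would analyze the local structure of $\Theta_\sigma$ around a vertex---the link, or equivalently the pattern of how the edges incident to a fixed BS-coset are organized---and argue that this local structure remembers the cyclic combinatorics of the defining polygon (the $\mathbb{Z}/k\mathbb{Z}$ indexing). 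Concretely, for each vertex $v$ corresponding to $\langle a_i, a_{i+1}\rangle$, its neighbors split according to which generator subgroup $\langle a_i \rangle$ or $\langle a_{i+1}\rangle$ mediates the intersection, and the condition $|m_i| < |n_i|$ breaks the symmetry between the two ``sides,'' allowing one to reconstruct an orientation and labeling that $\phi$ must respect up to the finite ambiguity encoded precisely by $F_\sigma$. After normalizing $\phi$ by a group element so that it fixes a base vertex and its link setwise in a structure-preserving way, I would propagate this agreement along adjacent vertices using connectivity of $\Theta_\sigma$ and a rigidity-of-links argument, concluding that $\phi$ agrees with the normalizing element everywhere.

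To transfer the result to $\Aut(X_\sigma)$, I would use that $X_\sigma$ and $\Theta_\sigma$ carry the same vertex set and that the cell structure of $X_\sigma$ (edges = cosets of $\langle a_i\rangle$, $2$-cells = cosets of the trivial subgroup) is recoverable from the stabilizer intersection data: two BS-cosets sharing an edge in $X_\sigma$ correspond to adjacency in $\Theta_\sigma$ via a specific type of intersection, and the $k$-gons are recovered as the minimal cycles realizing the full cyclic pattern around a trivial-subgroup coset. Thus an automorphism of $X_\sigma$ induces one of $\Theta_\sigma$ and conversely, reducing the surjectivity of $\widehat{\Hig}_\sigma \to \Aut(X_\sigma)$ to the already-established surjectivity onto $\Aut(\Theta_\sigma)$, provided one checks that the combinatorial isomorphism $X_\sigma$-automorphisms $\cong$ $\Theta_\sigma$-automorphisms is genuine and not merely a one-way map.

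The main obstacle I anticipate is the link-reconstruction step: proving that an abstract graph automorphism of $\Theta_\sigma$ cannot scramble the incidence pattern of BS-subgroups in a way not coming from the group. This is where the hypotheses $|m_i| \neq |n_i|$ (strengthened to $|m_i| < |n_i|$) and $k \ge 4$ must be used essentially---the inequality rules out an extra ``flip'' symmetry of individual Baumslag-Solitar pieces that would otherwise enlarge $\Aut(\Theta_\sigma)$ beyond $\widehat{\Hig}_\sigma$, and $k \ge 4$ guarantees the polygon of groups is developable with the rigidity needed for the propagation argument to close up consistently around cycles. I expect this to follow Martin's analysis in \cite{Mar,Mar2} closely, and the real content is in showing the combinatorial data of $\Theta_\sigma$ is rich enough to pin down both the cyclic order and the orientation of the defining relations.
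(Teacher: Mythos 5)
There is a genuine gap, in fact two, and they sit exactly at the steps your proposal treats as routine.

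First, your transfer between $\Aut(\Theta_\sigma)$ and $\Aut(X_\sigma)$ assumes that $X^{(1)}_\sigma$-adjacency and the $k$-gons of $X_\sigma$ can be read off inside $\Theta_\sigma$ ``via a specific type of intersection'' and as ``minimal cycles.'' But by Lemma~\ref{lemma:intersection of vertex stabilizer}(2), an edge of $\Theta_\sigma$ arises in two distinct ways: either the two vertices are adjacent in $X_\sigma$, or both are adjacent to a third vertex $x$ with the two edges oriented away from $x$; in both cases the intersection of stabilizers is infinite cyclic, and $\Theta_\sigma$ records only nontriviality of the intersection, so the two cases cannot be distinguished locally. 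The paper's substitute for your assertion is Lemma~\ref{lemma:cycle} --- every induced $k$-cycle of $\Theta_\sigma$ bounds a $2$-cell of $X_\sigma$ --- and its proof is genuinely geometric (reduced disk diagrams and Gauss--Bonnet, the Flat Quadrilateral Theorem when $k=4$), not a combinatorial inspection of $\Theta_\sigma$. This is also why the paper proves rigidity of $\Aut(X_\sigma)$ \emph{first} (Proposition~\ref{prop:combinatorial rigid X}) and only then transfers to $\Theta_\sigma$; your reversed order forces you to do the link analysis directly in $\Theta_\sigma$, where the conflation above makes it strictly harder.

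Second, and more fundamentally, your ``link rigidity plus propagation'' mechanism misidentifies the source of rigidity. The danger is not a ``flip symmetry'' of individual BS pieces ruled out by $|m_i|<|n_i|$; it is that line-preserving bijections of a Baumslag--Solitar group are genuinely \emph{not} rigid: Example~\ref{ex:12} exhibits line-preserving bijections of $\BS(1,2)$ scrambling the order along $a$-lines. Proposition~\ref{prop:combinatorial rigid BS} gives only a \emph{partial} rigidity (order preserved along $t$-lines, not along $a$-lines), and the proof of Proposition~\ref{prop:combinatorial rigid X} closes up only because of the interlocking pattern: the non-rigid $a$-direction of one vertex group is the rigid $t$-direction of the adjacent one. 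In particular the stabilizer of a vertex in $\Aut(X_\sigma)$ is a priori large --- it is only after fixing, in addition, a $2$-cell containing that vertex that one gets pointwise fixing of the closed star, which is what actually propagates. That no purely local argument of your kind can suffice is shown by Remark~\ref{rk:example}: changing a single relation produces a group in which every vertex group is still a BS group with $|m|<|n|$ and every link looks the same, yet the conclusion of the theorem fails. So the hypotheses you invoke ``essentially at the link level'' are not sufficient at the link level; the missing core of the proof is the arithmetic of Section~\ref{sec:BS} together with the global use of the interlocking pattern.
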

	
Combinatorial rigidity theorems have a long history. One cornerstone is Tits's work on spherical buildings \cite{Tit}, which inspired analogous theorems in different settings. Combinatorial rigidity theorems are often used in proofs of measure equivalence rigidity. For instance, Kida crucially exploited Ivanov's theorem stating that all automorphisms of the curve graph of the surface come from the action of the extended mapping class group \cite{Iva}.  Other instances where these were successfully exploited include: Crisp's $\Theta$-graph for large-type Artin groups, a variation over the free splitting graph for $\Out(F_N)$, the disk graph for handlebody groups, or (in a weaker sense) Kim and Koberda's extension graph for our previous measure equivalence classification theorem among right-angled Artin groups \cite{HH2}. 

The key reason which leads to the rigidity phenomenon in Theorem~\ref{theo:rigidity} is different from previous work -- it crucially relies on the interlocking pattern of BS subgroups. More precisely, if we set $m_i=n_i=1$ for all $i$, then the theorem fails -- this follows from \cite[Section 11]{BKS}. Actually, we suspect the theorem fails as long as $|m_i|=|n_i|$ for some $i$. If we still demand $|m_i|\neq |n_i|$, but disrupt the interlocking pattern by changing the $i$-th relation from, say $a_ia_{i+1}a_i^{-1}=a_{i+1}^{2}$, to $a_{i+1}a_{i}a_{i+1}^{-1}=a_{i}^{2}$ and keeping other relations, then the theorem fails again, see Remark~\ref{rk:example}. 

Let us point out that from this viewpoint, Theorem~\ref{theo:rigidity} is also much more subtle than rigidity theorems regarding the automorphism group of the underlying group $\Hig_\sigma$. For instance, when all $m_i=n_i=1$, and $k\ge 5$, the group still has finite outer automorphism group \cite{Ser}, even though the automorphism group of $X_\sigma$ is much larger.

	A key point in the proof of the rigidity theorem for $\Aut(X_\sigma)$ is a partial combinatorial rigidity property of the Cayley graph of a Baumslag-Solitar group $\BS(m,n)=\langle a,t|ta^mt^{-1}=a^n\rangle$ with $|m|<|n|$, namely: if a set-theoretical bijection $\phi:\BS(m,n)\to \BS(m,n)$ is \emph{line-preserving}, i.e.\ $\phi$ preserves horizontal lines (left cosets of $\langle a \rangle$) and vertical lines (left cosets of $\langle t\rangle$), then $\phi$ respects the order along the vertical lines coming from the orientation of edges of Cayley graphs. However, the order along the horizontal lines might not be preserved -- this is why we call it a partial rigidity result. The connection to rigidity of $X_\sigma$ is that an automorphism of $X_\sigma$ that fixes a vertex $x$ induces such a line-preserving bijection of the Baumslag-Solitar subgroup associated to $x$, because this subgroup is in natural bijection with all $2$-cells that contain $x$. Thus we have a local partial rigidity for automorphisms of $X_\sigma$. In $\Hig_\sigma$ the non-rigid horizontal lines of one Baumslag-Solitar group correspond to the rigid vertical lines of another Baumslag-Solitar group. This allows to propagate the local partial rigidity to obtain a global rigidity.
	To transfer the combinatorial rigidity from $X_\sigma$ to $\Theta_\sigma$, the key is to show there is a 1-1 correspondence between induced $k$-cycles in $\Theta_\sigma$ and 2-cells in $X_\sigma$; our proof of this fact exploits the non-positive curvature of $X_\sigma$ through the use of van Kampen diagrams to analyze these cycles (compare \cite[Section~6]{BKS}). 
	
\begin{rk}
An interested reader may want to consult \cite{Cri} for comparison. There are analogies between the spaces we consider: our intersection graph corresponds to Crisp's $\Theta$-graph, and our $X_\sigma$ corresponds to his modified Deligne complex. But as explained above, the use of the interlocking pattern of Baumslag-Solitar groups is a new ingredient in the present paper that does not appear in \cite{Cri}. The argumentation in \cite{Cri}, and the reason why the relevant objects are rigid, are in fact quite different. 
\end{rk}

\paragraph*{Structure of the paper.} Section~\ref{sec:background} contains background on generalized Higman groups and the polyhedral  complexes they act on. The next two sections aim at proving our combinatorial/geometric rigidity theorem (Theorem~\ref{theo:rigidity}): Section~\ref{sec:BS} establishes the required partial rigidity theorem for Baumslag-Solitar groups, and the proof of Theorem~\ref{theo:rigidity} is carried in Section~\ref{sec:combinatorial-rigidity}. The measured group-theoretic arguments are carried in Section~\ref{sec:me}, where our main theorem is proved, as well as Theorems~\ref{theointro:main-2} and~\ref{theo:combination}, and Corollary~\ref{corintro:lattice}. The arguments leading to the $W^*$-rigidity of free, ergodic, probability measure-preserving actions of Higman groups (Corollary~\ref{cor:oe-w}) are carried in Section~\ref{sec:w}. Finally, the paper contains two appendices, which contain the relevant background and terminology regarding polyhedral complexes (Appendix~\ref{subsec:polyhedral complex}) and measured groupoids (Appendix~\ref{sec:appendix-groupoids}).
	
	\paragraph*{Acknowledgments.} We thank Vincent Guirardel for interesting discussions around this project. We also thank the anonymous referee for their careful reading of our paper and valuable suggestions.

	\section{Background on Higman groups}\label{sec:background}

In this section, we review generalized Higman groups and polyhedral complexes on which they act. We refer to Appendix~\ref{subsec:polyhedral complex} or \cite{BH} for some general language of polyhedral complexes and their links.

	\subsection{Generalized Higman groups and associated complexes}
	\label{subsec:Higman background}
	
	Let $k\ge 4$, and let $\sigma=((m_1,n_1),\dots,(m_k,n_k))$ be a $k$-tuple of pairs of non-zero integers, with $|m_i|<|n_i|$ for every $i\in\{1,\dots,k\}$. Recall from the introduction that the \emph{generalized Higman group} $\Hig_\sigma$ is defined by the following presentation:
	\[\Hig_\sigma=\langle\{a_i\}_{i\in\mathbb Z/k\mathbb Z}\mid \{a_ia^{m_i}_{i+1}a^{-1}_{i}=a^{n_i}_{i+1}\}_{i\in\mathbb Z/k\mathbb Z}\rangle.\] Notice that we are not losing any generality by assuming $|m_i|<|n_i|$, instead of just $|m_i|\neq |n_i|$ as we did in the introduction: indeed, replacing the pair $(m_i,n_i)$ by $(n_i,m_i)$ in $\sigma$ yields an isomorphic group (via the isomorphism sending the generator $a_i$ to $a_i^{-1}$).  We also mention that replacing any of the tuples $(m_i,n_i)$ by $(-m_i,-n_i)$ yields an isomorphic group, because the defining relation $a_ia_{i+1}^{m_i}a_i^{-1}=a_{i+1}^{n_i}$ is equivalent to $a_ia_{i+1}^{-m_i}a_i^{-1}=a_{i+1}^{-n_i}$. In the rest of the paper, we will always adopt these conventions regarding the possible values of $\sigma$.
	 From now on we will fix $k\ge 4$ and $\sigma$ as above, and write $G=\Hig_\sigma$.
	
	Let $K$ be the 2-dimensional $k$-gon with its natural cell structure. We label its vertices cyclically by $\{\bx_i\}_{i\in\mathbb Z/k\mathbb Z}$, and let the edge between $\bx_i$ and $\bx_{i+1}$ be $\be_i$. To each vertex $\bx_i$ of $K$, we associate a \emph{vertex group} $G_{\bx_i}$, with presentation $\langle \alpha_{i-1},\alpha_{i}\mid \alpha_{i-1}\alpha^{m_i}_{i}\alpha^{-1}_{i-1}=\alpha^{n_i}_{i}\rangle$. To each edge $\be_i$ of $K$, we associate an \emph{edge group} $G_{\be_i}$, isomorphic to $\mathbb Z=\langle\beta_i\rangle$. There are natural embeddings  $G_{\be_i}\to G_{\bx_i}$ and $G_{\be_i}\to G_{\bx_{i+1}}$, sending the generator $\beta_i$ of $\mathbb{Z}$ to $\alpha_{i}$ on both sides.
	
	\begin{lemma}\label{lem:injective}
 For every $i\in\{1,\dots,k\}$, the natural homomorphism $G_{\bx_i}\to G$ (sending $\alpha_{i-1},\alpha_{i}$ to $a_{i-1},a_{i}$) is injective.
	\end{lemma}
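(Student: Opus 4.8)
The plan is to recognise $\Hig_\sigma$ as the fundamental group of the polygon of groups $G(K)$ over the $k$-gon $K$, with vertex groups the $G_{\bx_i}$, edge groups the $G_{\be_i}\cong\mathbb Z$ together with the embeddings $\beta_i\mapsto\alpha_i$ described above, and trivial face group. Writing out the presentation of $\pi_1(G(K))$ — generated by the vertex groups modulo the amalgamation relations along the edges of $K$ — and identifying each $\alpha_i$ with $a_i$, one recovers exactly the presentation of $\Hig_\sigma$; under this identification the map $G_{\bx_i}\to G$ in the statement is precisely the canonical homomorphism from the local group at $\bx_i$ to $\pi_1(G(K))$. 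So it is enough to prove that this canonical homomorphism is injective.

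For this I would use developability: for a developable complex of groups every canonical map from a local group to the fundamental group is injective, so the whole problem reduces to showing that $G(K)$ is developable. First I would observe that $G(K)$ is a \emph{simple} complex of groups, which only requires the edge homomorphisms to be injective; this is immediate since $\alpha_i$ has infinite order in the Baumslag--Solitar groups $G_{\bx_i}$ and $G_{\bx_{i+1}}$. I would then invoke the criterion of Bridson--Haefliger \cite{BH} that a non-positively curved complex of groups is developable, reducing the statement to equipping $G(K)$ with a non-positively curved metric.

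The curvature verification is where the hypothesis $k\ge 4$ genuinely enters, and it carries all the content. Metrising $K$ as a regular Euclidean $k$-gon (hyperbolic for $k\ge 5$, giving the $\CAT(-1)$ metric), one checks Gromov's link condition for the local developments: the link of $\bx_i$ is the bipartite graph with sides the cosets of $\langle\alpha_{i-1}\rangle$ and of $\langle\alpha_i\rangle$ in $G_{\bx_i}$ and one edge of length $\theta=(k-2)\pi/k$ per element of $G_{\bx_i}$. Since the graph is bipartite it has no odd cycles, and it has no bigon precisely because $\langle\alpha_{i-1}\rangle\cap\langle\alpha_i\rangle=\{1\}$ in $G_{\bx_i}$ (Britton's lemma, as one of the two subgroups is generated by the stable letter and the other by the base); hence its girth is at least $4$, and every embedded loop has length at least $4\theta\ge 2\pi$ exactly when $k\ge 4$. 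This is the computation carried out in \cite{BH,Mar,Mar2}, which identifies $X_\sigma$ with the development of $G(K)$; I would simply quote it.

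With developability in hand the canonical maps $G_{\bx_i}\to\pi_1(G(K))=\Hig_\sigma$ are injective, which is the assertion (equivalently, once $X_\sigma$ is the development, $G_{\bx_i}$ is realised faithfully as the $G$-stabiliser of a vertex of $X_\sigma$). The main obstacle is precisely this curvature step, and it cannot be bypassed by one-dimensional Bass--Serre theory: deleting one boundary edge exhibits $\Hig_\sigma$ as a quotient of the visibly-injective tree of groups $G_{\bx_1}*_{\langle a_1\rangle}\cdots *_{\langle a_{k-1}\rangle}G_{\bx_k}$ by the single relation closing the cycle, and this relation, which corresponds to the $2$-cell, could a priori collapse the vertex groups; it is non-positive curvature that rules this out.
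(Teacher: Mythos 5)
Your proof is correct and takes essentially the same route as the paper: both realise $\Hig_\sigma$ as the group attached to a simple polygon of groups over $K$ (vertex groups $G_{\bx_i}$, edge groups $\mathbb{Z}$, trivial face group), reduce injectivity of the local maps $G_{\bx_i}\to G$ to developability, and deduce developability from Bridson--Haefliger's non-positive-curvature criterion, with the curvature verified by the vertex link condition (bipartite link of girth at least $4$, edge lengths at least $\pi/2$), which is exactly where $k\ge 4$ enters. The only differences are cosmetic: the paper metrizes the barycentric subdivision by right-angled isoceles triangles and cites the local-development analysis of \cite{BH}, whereas you metrize $K$ as a regular Euclidean $k$-gon and spell out the girth computation, ruling out bigons via $\langle\alpha_{i-1}\rangle\cap\langle\alpha_i\rangle=\{1\}$ -- the same loxodromic-versus-elliptic argument the paper uses elsewhere.
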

	
	Here it is important to have assumed that $k\ge 4$: the lemma is not true when $k=3$, see \cite{Hig}.
	
	We will explain the proof of this lemma after the statement of Lemma~\ref{lem:sc}. From now on, we will treat vertex groups $G_{\bar x_i}$ and edge groups $G_{\bar e_i}$ as subgroups of $G$ via the above embedding.
	Let $\mathcal P$ be the poset (ordered by inclusion) of all left cosets of an edge group, a vertex group, or a trivial subgroup of $G$. Note that \emph{a priori} left cosets of two different vertex groups (or edge groups) might give the same subset of $G$, in which case they are viewed as the same element of $\mathcal P$ (though we will see later in Lemma~\ref{lemma:fundamental-domain} that this actually does not happen).
	Let $X_\Delta$ be the \emph{geometric realization} of $\mathcal P$, i.e.\ the simplicial complex whose vertices are the elements of $\mathcal P$, and whose simplices correspond to chains in $\mathcal P$. There is a natural simplicial action $G\actson X_\Delta$.  Note that the stabilizer of each vertex with respect to this action is a conjugate of a vertex group, or an edge group, or the trivial subgroup. Denoting by $K_\Delta$ the barycentric subdivision of $K$, there is a natural simplicial map $K_\Delta\to X_\Delta$, sending the vertex $\bar x_i$ to the vertex corresponding to $G_{\bar x_i}$, the midpoint of $\bar e_i$ to the vertex corresponding to $G_{\bar{e}_i}$, and the center of the $2$-cell of $K$ to the vertex corresponding to $\{1\}$. 
	
	\begin{lemma}\label{lemma:fundamental-domain}
		The natural map $K_\Delta\to X_\Delta$ is an embedding whose image is a strict fundamental domain for the $G$-action on $X_\Delta$, i.e.\ $K_\Delta$ meets each orbit in exactly one point.
	\end{lemma}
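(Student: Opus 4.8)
The statement asks for two things: that $K_\Delta\to X_\Delta$ is an embedding, and that its image is a strict fundamental domain. My plan is to verify these by directly tracking how the cells of $K_\Delta$ map into the poset $\mathcal P$, using the simplicial structure and the group action. The natural map sends vertices of $K_\Delta$ to the three cosets $G_{\bar x_i}$, $G_{\bar e_i}$, and $\{1\}$, which are all distinct elements of $\mathcal P$ (they are proper nested subgroups, the smallest being trivial). First I would record the chains of inclusions: in $K$, the vertex $\bar x_i$, the midpoints of the two incident edges $\bar e_{i-1},\bar e_i$, and the barycenter are related by the nesting $\{1\}\subseteq G_{\bar e_i}\subseteq G_{\bar x_i}$ and $\{1\}\subseteq G_{\bar e_i}\subseteq G_{\bar x_{i+1}}$, which is exactly the chain structure of $\mathcal P$ that the map is built to respect. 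So the map is simplicial by construction.

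\textbf{Injectivity on the fundamental domain.}
To show the map is an embedding, the essential point is that the three types of vertices land on genuinely distinct cosets, and that no two distinct simplices of $K_\Delta$ collapse. The danger is that two vertex groups or edge groups might coincide as subsets of $G$, or that cosets collapse; this is precisely the caveat flagged before the lemma. I would rule this out by distinguishing the cosets $G_{\bar e_{i-1}}=\langle a_{i-1}\rangle$ and $G_{\bar e_i}=\langle a_i\rangle$ (distinct infinite cyclic subgroups, since the $a_j$ are distinct generators that do not collapse by Lemma~\ref{lem:injective}), and the vertex groups $G_{\bar x_i}=\langle a_{i-1},a_i\rangle$, using injectivity from Lemma~\ref{lem:injective} to see these are honestly different subgroups. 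Once the vertices are separated, injectivity on chains follows because a chain in $\mathcal P$ is determined by its underlying set of cosets. This gives that $K_\Delta$ embeds.

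\textbf{The fundamental domain property.}
The heart of the argument is showing $K_\Delta$ meets each $G$-orbit exactly once. Every vertex of $X_\Delta$ is a left coset $gG_{\bar x_i}$, $gG_{\bar e_i}$, or $g\{1\}$ of one of the distinguished subgroups. Acting by $g^{-1}$ carries such a coset to $G_{\bar x_i}$, $G_{\bar e_i}$, or $\{1\}$ respectively, i.e.\ into the image of $K_\Delta$ — this shows the action is transitive on each of the three vertex \emph{types}, so every orbit meets $K_\Delta$. For the ``exactly one'' direction I would argue that two vertices of $K_\Delta$ lying in the same type cannot be $G$-equivalent: the stabilizer of the coset-vertex $G_{\bar x_i}$ is $G_{\bar x_i}$ itself, and $G_{\bar x_i}\cdot G_{\bar x_j}$ equals $G_{\bar x_i}$ only when $i=j$ since distinct vertex cosets of the identity coset are distinct as elements of $\mathcal P$. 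The same reasoning applies to edge-type and to the single barycenter vertex. Extending from vertices to higher simplices is then automatic, since a simplex of $X_\Delta$ is a chain, its stabilizer fixes the maximal element, and transitivity plus the vertex analysis pins it down to a unique simplex of $K_\Delta$.

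\textbf{Main obstacle.}
The step I expect to be most delicate is verifying that distinct edge cosets $\langle a_{i-1}\rangle$ and $\langle a_i\rangle$, and distinct vertex groups, really give \emph{distinct} elements of $\mathcal P$ — in other words, that the a priori collapsing warned about in the setup does not occur. This is where Lemma~\ref{lem:injective} is genuinely used: it guarantees the embedded vertex groups are faithful copies, so that the generators $a_i$ remain distinct and the cyclic subgroups $\langle a_i\rangle$ do not accidentally coincide or nest. Everything else is bookkeeping about cosets and chains, but this non-collapsing input is what makes the strict fundamental domain claim — as opposed to a mere fundamental domain up to identifications — hold.
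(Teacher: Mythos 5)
Your bookkeeping correctly isolates where the content lies: the transitivity part (every orbit meets $K_\Delta$) is indeed immediate, and both the embedding claim and the ``exactly once'' claim reduce to the non-collapsing of cosets in $\mathcal P$. But at precisely that point there is a genuine gap: you assert that Lemma~\ref{lem:injective} implies that distinct vertex groups and distinct edge groups give \emph{distinct} subgroups of $G$ (hence distinct cosets), and it does not. Lemma~\ref{lem:injective} only says that each local group embeds faithfully into $G$; it gives no control on how the images of \emph{different} local groups sit relative to one another. For adjacent data one can sometimes argue inside a common vertex group (e.g.\ $\langle a_{i-1}\rangle\neq\langle a_i\rangle$ because in $G_{\bar x_i}$ one generator is loxodromic and the other elliptic on the Bass--Serre tree), but for far-apart indices there is no common vertex group containing both: nothing in Lemma~\ref{lem:injective}, nor in the defining relations that can be interpreted inside a single vertex group, rules out a coincidence such as $\langle a_k,a_1\rangle=\langle a_3,a_4\rangle$ when $k\ge 6$. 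This distinctness is exactly the \emph{a priori} collapsing that the paper flags before the lemma and explicitly describes as a \emph{consequence} of Lemma~\ref{lemma:fundamental-domain}; so your argument assumes (a substantial part of) what is to be proven. A sanity check that more input is needed: the only way $k\ge 4$ enters your proof is through Lemma~\ref{lem:injective}, and bridging from that single faithfulness statement to global non-collapsing is where all the geometry hides.

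The paper's proof supplies this missing input through the theory of simple complexes of groups: one checks that the complex of groups $\mathcal K$ over $K$ is non-positively curved when $k\ge 4$ (analysis of the local developments, \cite[II.12.28--II.12.29]{BH}), deduces strict developability, and then identifies $X_\Delta$ with the development of $\mathcal K$ with respect to the canonical morphism $\mathcal K\to G$, for which $K_\Delta$ is a strict fundamental domain by the Basic Construction \cite[Theorem~II.12.18(2)]{BH}. In that construction cells are indexed by pairs (coset, type), so no identification can occur by fiat, and the distinctness of cosets as subsets of $G$ is then extracted from the $\mathrm{CAT}(0)$ geometry of the development: a common stabilizer of two distinct vertices fixes the geodesic between them, hence fixes an edge or a $2$-cell pointwise, hence is at most cyclic, whereas vertex groups are non-cyclic Baumslag--Solitar groups. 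If you want to salvage an argument in your style, you would have to first build this typed development, prove it is $\mathrm{CAT}(0)$ (this is where $k\ge 4$ is used), and only then run your coset comparisons; that is, in effect, the paper's route.
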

	
	Again we will explain the proof of Lemma~\ref{lemma:fundamental-domain} after the statement of Lemma~\ref{lem:sc}. From $X_\Delta$, we define another cell complex $X$, homeomorphic to $X_\Delta$, as follows. 
	
	The 1-skeleton $X^{(1)}$ of $X$ is homeomorphic to the full subgraph of $X^{(1)}_\Delta$ spanned by all vertices of $X_\Delta$ that correspond to left cosets of vertex groups or edge groups; however, only vertices of $X^{(1)}_\Delta$ corresponding to left cosets of vertex groups are vertices of $X^{(1)}$, while vertices of $X^{(1)}_\Delta$ corresponding to left cosets of edge groups give midpoints of edges of $X^{(1)}$.
	
	Notice that the closure of every connected component of $X_\Delta\setminus X^{(1)}$ is then homeomorphic to a closed disk, which comes with the natural structure of a polygon isomorphic to $K$. These polygons are the $2$-cells of $X$: more precisely, $X$ is obtained from $X^{(1)}$ by adding one $2$-cell, isomorphic to $K$, per element of $G$ (corresponding to a vertex $v\in X_\Delta^{(0)}$), and gluing its edges onto the finite subgraph of $X_\Delta^{(1)}$ spanned by all vertices at distance $1$ from $v$. The simplicial complex $X_{\Delta}$ is then just the barycentric subdivision of $X$. Moreover, there is an order-reversing isomorphism between the poset of closed cells in $X$ (under containment) and $\mathcal P$. We will denote by $\hat K$ the $2$-cell corresponding to the subgroup $\{\mathrm{id}\}$ under this isomorphism. By Lemma~\ref{lemma:fundamental-domain}, $G\backslash X$ is isomorphic to $K$, which induces a quotient map $\pi:X\to K$. We call $X$ the \emph{developed complex} of $G$.
	
	\begin{lemma}
		\label{lem:sc}
		The simplicial complex $X_\Delta$ is simply connected. Hence $X$ is simply connected.
	\end{lemma}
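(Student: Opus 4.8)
The plan is to recognize $X_\Delta$ as the development of a complex of groups over the polygon $K$ and to invoke the general principle that the development of a developable complex of groups, taken with respect to its own fundamental group, is simply connected (Bridson--Haefliger, III.$\mathcal{C}$). Concretely, the assignment of the vertex groups $G_{\bar x_i}$, the edge groups $G_{\bar e_i}$, and the trivial group on the $2$-cell, together with the prescribed inclusions $G_{\bar e_i}\hookrightarrow G_{\bar x_i}$ and $G_{\bar e_i}\hookrightarrow G_{\bar x_{i+1}}$, defines a complex of groups $G(K)$ over $K$. By Lemma~\ref{lem:injective} all its local groups inject into $G$, so $G(K)$ is developable, and by Lemma~\ref{lemma:fundamental-domain} the action $G\actson X_\Delta$ realizes $K_\Delta$ as a strict fundamental domain with the local groups as cell stabilizers. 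Thus $X_\Delta$ is $G$-equivariantly identified with the development $D(G,G(K))=(G\times K_\Delta)/{\sim}$, where $(g,p)\sim(g',p')$ exactly when $p=p'$ and $g^{-1}g'$ lies in the local group of the smallest cell containing $p$: indeed, a vertex of $X_\Delta$ is a coset such as $gG_{\bar x_i}$, which is the $G$-image of $\bar x_i$ recorded up to its stabilizer $G_{\bar x_i}$, and similarly for edge- and face-cosets.

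First I would compute the fundamental group $\Gamma=\pi_1(G(K))$. Since the underlying polygon $K$ has simply connected geometric realization, $\Gamma$ is the colimit of the local groups over the face poset of $K$; as the trivial group on the $2$-cell contributes nothing, this colimit is the group generated by the vertex groups $G_{\bar x_i}$ subject to identifying, along each edge $\bar e_i$, the two images of $G_{\bar e_i}$. Writing out this amalgam with one generator for the common image of the edge generator shared by $G_{\bar x_i}$ and $G_{\bar x_{i+1}}$, and one Baumslag--Solitar relation per vertex group, one recovers the defining presentation of $\Hig_\sigma$. Hence the canonical homomorphism $\Gamma\to G$ induced by the inclusions of local groups is an isomorphism.

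To conclude, the universal development $D(\Gamma,G(K))$ is simply connected by the general theory of complexes of groups, and the isomorphism $\Gamma\cong G$ identifies it $G$-equivariantly with $D(G,G(K))\cong X_\Delta$; therefore $X_\Delta$ is simply connected. Since $X_\Delta$ is the barycentric subdivision of $X$, the two spaces are homeomorphic, and so $X$ is simply connected as well. Equivalently, one may phrase the last step via covering theory: the natural map $D(\Gamma,G(K))\to D(G,G(K))=X_\Delta$ induced by $\Gamma\to G$ is a covering whose deck group is the kernel of $\Gamma\to G$, so the isomorphism $\Gamma\cong G$ forces $X_\Delta$ to be simply connected.

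The main point to verify carefully is the identification of the concrete coset complex $X_\Delta$ with the abstract development, which amounts to checking that the stabilizer of each representative cell in $K_\Delta$ is exactly the corresponding local group and not something larger; this is precisely where Lemmas~\ref{lem:injective} and~\ref{lemma:fundamental-domain} are used. The colimit computation is then routine once the edge identifications are tracked correctly. An alternative, more hands-on route would avoid the complex-of-groups machinery and instead show directly that every edge loop in $X_\Delta$ bounds, by writing the loop as a word in the generators and filling it using the Baumslag--Solitar relations realized by the $2$-cells; but invoking the established theory is cleaner and makes transparent that simple connectivity is equivalent to $\Gamma\to G$ being injective, i.e.\ to the absence of hidden relations beyond those of the presentation.
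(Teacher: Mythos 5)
Your proof is correct and follows essentially the same route as the paper: both identify $X_\Delta$ with the development of the simple complex of groups over $K$ with respect to the canonical morphism to the colimit of its local groups, observe that this colimit is precisely $\Hig_\sigma$ by comparing presentations, and invoke Bridson--Haefliger's theorem that such a development is simply connected (the paper cites Proposition~II.12.20(4), which is the simple-complex-of-groups form of the universal-cover statement you quote from Chapter~III.$\mathcal{C}$). The only cosmetic difference is that the paper obtains developability from non-positive curvature of the local developments (because it proves Lemma~\ref{lem:injective} simultaneously with the present lemma), whereas you may legitimately derive it from Lemma~\ref{lem:injective} itself, which is stated beforehand and whose proof does not rely on simple connectivity.
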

	
	The proofs of Lemmas~\ref{lem:injective},~\ref{lemma:fundamental-domain} and~\ref{lem:sc} rely on the language of complexes of groups, and follow from the literature in the case where all vertex groups are isomorphic to $\BS(1,n)$, see \cite{BH}, also \cite{stallings1991non,Mar}. The more general case follows exactly the same proof. Now we give more precise references.
	
	We refer to \cite[Chapter II.12]{BH} for the definition of simple complexes of groups and simple morphisms \cite[Definition~II.12.11]{BH}, their strict developability \cite[II.12.15]{BH} and their developments \cite[Theorem~II.12.18]{BH}.
	Let $\mathcal{Q}$ be the poset of closed cells of $K$. We consider a simple complex of groups $\mathcal K$ over $\mathcal{Q}$, such that the local group at $\be_i$ (resp.\ $\bx_i$) is $G_{\be_i}$ (resp.\ $G_{\bx_i}$), and the local group $G_K$ at the 2-cell of $K$ is the trivial group. Let $\widehat{G(\mathcal K)}$ be the group obtained from a direct limit of its local groups, see \cite[Definition II.12.12]{BH}. Then $\widehat{G(\mathcal K)}$ is isomorphic to the generalized Higman group $G$.
	
	Let $K_\Delta$ be the barycentric subdivision of $K$. Then $K_\Delta$ is isomorphic to the geometric realization $|\mathcal{Q}|$ of $\mathcal{Q}$. The \emph{rank} of a vertex of $|\mathcal{Q}|=K_\Delta$ is the dimension of the element of $\mathcal{Q}$ associated with this vertex. We metrize $|\mathcal{Q}|$ in such a way that each triangle in $|\mathcal{Q}|$ is a right-angled isoceles triangle with flat metric such that the angle at the rank 1 vertex is $\pi/2$ and the long side of the triangle has length 1. Now one analyzes the local development as in \cite[II.12.29]{BH} to prove that the complex of groups $\calk$ over $\calg$ with this metric is non-positively curved when $k\ge 4$. One can thus apply \cite[Theorem~II.12.28]{BH} to deduce that the complex of groups $\mathcal K$ is strictly developable when $k\ge 4$, in particular there is a group $G'$ and a simple morphism from $\mathcal K$ to $G'$ which is injective on local groups. This together with \cite[II.12.13]{BH} imply Lemma~\ref{lem:injective}. As $X_\Delta$ is the development of $K_\Delta$ with respect to the simple morphism $\mathcal K\to \widehat{G(\mathcal K)}=G$, Lemma~\ref{lemma:fundamental-domain} follows from \cite[Theorem~II.12.18(2)]{BH}, and Lemma~\ref{lem:sc} follows from \cite[Proposition II.12.20(4)]{BH}.

	\subsection{Metric on $X$ and some consequences}\label{sec:metric}
	
	We endow $X$ with a piecewise Euclidean (or hyperbolic) metric such that each copy of $K$ in $X$ is a right-angled regular polygon in a Euclidean plane (when $k=4$) or in a hyperbolic plane (when $k>4$) -- in the Euclidean case we further impose all edge lengths to be equal to $1$. The $G$-action on $X$ is by isometries. Moreover, the action \emph{does not have inversion}, i.e.\ if an element in $G$ preserves a cell setwise, then it fixes the cell pointwise. It follows from standard arguments in complexes of groups that $X$ is $\CAT(0)$ when $k=4$ and is $\CAT(-1)$ when $k=5$. Now we give an alternative explanation for readers not comfortable with complexes of groups, assuming Lemma~\ref{lem:sc}.
	
	Let $x\in X^{(0)}$ be a vertex corresponding to a left coset of a vertex group $G_{\bx}$ with $\bx\in K$. Let $\be_1$ and $\be_2$ be the edges of $K$ containing $\bx$.  We use $\lk(x,X)$ to denote the link of $x$ in $X$, equipped with the angular metric. The vertices in $\lk(x,X)$ are in 1-1 correspondence with left cosets of $G_{\be_1}$ and $G_{\be_2}$ in $G_{\bx}$, and two vertices of $\lk(x,X)$ are joined by an edge if and only if the corresponding left cosets have non-empty intersection. Note that $\lk(x,X)$ is a bipartite graph (by considering vertices of $\lk(x,X)$ corresponding to left cosets of $G_{\be_1}$, and vertices corresponding to left cosets of $G_{\be_2}$). The edges of $\lk(x,X)$ all have length $\pi/2$. Thus the girth of $\lk(x,X)$ as a  simplicial graph is at least 4, and each embedded cycle in $\lk(x,X)$ has length at least $2\pi$. Then $X$ is $\CAT(0)$ when $k=4$ and $\CAT(-1)$ when $k>4$ (cf. Lemma~\ref{lemma:link condition}). 
	
	As in \cite{Mar}, we orient edges of $K$ such that $\bar{e}_i$ is oriented from $\bx_{i}$ to $\bx_{i+1}$. We pull back the edge orientation of $K$ to $X$ via $X\to K\cong G\backslash X$.
	Let $c\subset X$ be a vertex or an edge. We define $G_c$ to be the stabilizer of $c$ under $G\actson X$. Then $G_c$ is a conjugate of a vertex group, or of an edge group of $G$. Thus $G_c$ actually fixes $c$ pointwise. We record below some basic properties of intersections of cell stabilizers.

	\begin{lemma}[{see \cite[Section 2.1]{Mar}}]
		\label{lemma:intersection of vertex stabilizer}
		The following assertions hold.
		\begin{enumerate}
			\item Let $x\in X^{(0)}$ be a vertex, and let $e\in X^{(1)}$ be an edge incident on $x$ and oriented towards $x$. Then for any edge $e'\neq e$ incident on $x$, one has $G_e\cap G_{e'}=\{1\}$.
			\item For two vertices $x_1\neq x_2$ of $X$, one has $G_{x_1}\cap G_{x_2}\neq\{1\}$ if and only if one of the following situations happens:
			\begin{enumerate}
				\item $x_1$ and $x_2$ are adjacent;
				\item both $x_1$ and $x_2$ are adjacent to a vertex $x$ such that the edge $\overline{xx_i}$ connecting $x$ and $x_i$ is oriented away from $x$ for every $i\in\{1,2\}$.
			\end{enumerate}
			\item Given any distinct integers $s\neq s'$ and any $i\in\mathbb{Z}/k\mathbb{Z}$, the intersection $(a^s_{i-1}G_{\bx_{i+1}}a^{-s}_{i-1})\cap (a^{s'}_{i-1}G_{\bx_{i+1}}a^{-s'}_{i-1})$ is infinite cyclic.	
			\item There is a bound on the size $m$ of a chain $x_1,\dots,x_m$ of vertices of $X$ such that for every $i\in\{1,\dots,m-1\}$, the pointwise stabilizer of $\{x_1,\dots,x_{i+1}\}$ has infinite index in the pointwise stabilizer of $\{x_1,\dots,x_i\}$.
		\end{enumerate}
	\end{lemma}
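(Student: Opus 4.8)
The plan is to push every assertion down to the internal structure of the vertex groups $G_{\bx_i}=\langle a_{i-1},a_i\mid a_{i-1}a_i^{m_i}a_{i-1}^{-1}=a_i^{n_i}\rangle\cong\BS(m_i,n_i)$, using the description of the links $\lk(x,X)$ and the $\CAT(0)$ metric from Section~\ref{sec:metric}. After moving a given vertex $x$ by an element of $G$, I may assume $G_x=G_{\bx_i}$; then the edges incident to $x$ that are oriented \emph{towards} $x$ correspond to left cosets of the stable-letter subgroup $\langle a_{i-1}\rangle$, the edges oriented \emph{away} from $x$ correspond to cosets of the base subgroup $\langle a_i\rangle$, and the stabilizer of such an edge is the corresponding conjugate of $\langle a_{i-1}\rangle$ or $\langle a_i\rangle$ inside $G_{\bx_i}$. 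Two facts about $\BS(m,n)$ will carry the load. First, the height homomorphism $\tau\colon\BS(m,n)\to\mathbb Z$, $a\mapsto 0$, $t\mapsto 1$, kills every conjugate of the base $\langle a\rangle$, while $\langle t\rangle\cap\ker\tau=\{1\}$. Second, on the Bass--Serre tree $T$ of the HNN splitting $\BS(m,n)=\langle a\rangle\,*_{a^m=a^n}$, the base $\langle a\rangle$ is a vertex stabilizer, and the pointwise stabilizer of \emph{any} geodesic segment of $T$ is infinite cyclic: indeed, running along the segment, each successive intersection is an intersection of two infinite subgroups of one of the $\mathbb Z$-vertex groups, which stays infinite.

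For part (1), reduce as above to $G_e=\langle a_{i-1}\rangle$. If $e'$ is oriented away from $x$ then $G_{e'}$ is a conjugate of $\langle a_i\rangle\subseteq\ker\tau$, so $G_e\cap G_{e'}\subseteq\langle a_{i-1}\rangle\cap\ker\tau=\{1\}$. If instead $e'$ is also oriented towards $x$ but $e'\neq e$, then $G_{e'}=g\langle a_{i-1}\rangle g^{-1}$ with $g\notin\langle a_{i-1}\rangle$, and a nontrivial common element, after applying $\tau$, forces a relation $a_{i-1}^k=g\,a_{i-1}^k\,g^{-1}$, i.e.\ $g\in C(a_{i-1}^k)$. \textbf{This centralizer computation is the technical heart of the lemma, and the only place where $|m_i|\neq|n_i|$ is used.} I would prove $C(a_{i-1}^k)=\langle a_{i-1}\rangle$ by examining the axis $A$ of the hyperbolic element $a_{i-1}$ in $T$: the normalized assumption $|m_i|<|n_i|$ makes the ratio $(m_i/n_i)^j$ tend to $0$, which forces the pointwise stabilizer of $A$ to be trivial; since the action on $T$ is simplicial the oriented axis-stabilizer injects into $\mathbb Z$ with integer translation lengths, and as $\tau(a_{i-1})=1$ this stabilizer is exactly $\langle a_{i-1}\rangle$. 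Hence $g\in\langle a_{i-1}\rangle$, a contradiction, so $G_e\cap G_{e'}=\{1\}$.

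Parts (2) and (3) rest on a single uniform mechanism. If $x_1\neq x_2$ are both adjacent to a common vertex $x$ by edges oriented away from $x$, then the two directions at $x$ are distinct vertices on the \emph{same side} of the bipartite graph $\lk(x,X)$, hence at angular distance $\ge\pi$; so the edge-path $x_1,x,x_2$ is a local geodesic and therefore, by $\CAT(0)$-ness, the genuine geodesic $[x_1,x_2]$. Any $g$ fixing $x_1,x_2$ fixes $[x_1,x_2]\ni x$, whence $G_{x_1}\cap G_{x_2}=G_{e_1}\cap G_{e_2}$ where $e_1,e_2$ are the two edges. Now $G_{e_1},G_{e_2}$ are stabilizers of two vertices of the Bass--Serre tree of $G_x$ (outgoing edges correspond to cosets of the base), so their intersection is the pointwise stabilizer of a geodesic segment, which is infinite cyclic by the second fact above; this proves the ``if'' direction of (2)(b) and, taking $x=G_{\bx_i}$ and the two vertices $a_{i-1}^s\!\cdot\! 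G_{\bx_{i+1}}$ and $a_{i-1}^{s'}\!\cdot\! G_{\bx_{i+1}}$, yields part (3). The ``if'' direction of (2)(a) is immediate since adjacent vertex groups share their (infinite cyclic) edge group. For the ``only if'' direction, take $g\neq 1$ in $G_{x_1}\cap G_{x_2}$ and a shortest edge-path $x_1=y_0,\dots,y_\ell=x_2$ inside the convex subcomplex $\fix(g)$ (which is connected, $g$ being elliptic with $\fix(g)$ a union of pointwise-fixed cells). At each interior vertex $y_j$ both incident path-edges are fixed by $g\neq1$, so by part (1) neither is oriented towards $y_j$; thus both point away from $y_j$. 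But for an edge $[y_j,y_{j+1}]$ joining two interior vertices this is contradictory, so $\ell\le 2$: $\ell=1$ gives case (a) and $\ell=2$ gives case (b).

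Finally, part (4) follows by combining (2) with elementary subgroup theory. By (2), the stabilizer of any two distinct vertices is trivial or infinite cyclic, so every term $\Stab_G(\{x_1,\dots,x_i\})$ with $i\ge 2$ is a subgroup of such a group, hence cyclic. Since a proper infinite-index subgroup of $\mathbb Z$ is trivial, the chain of stabilizers can drop by infinite index at most twice before becoming trivial (from the vertex group $\BS(m_i,n_i)$ to an infinite cyclic group, then to $\{1\}$), and one cannot descend by infinite index below the trivial group. Therefore $m\le 3$, which is the desired bound.
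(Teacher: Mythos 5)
Your proof is correct and takes essentially the same route as the paper's: all four assertions are reduced to the Bass--Solitar structure of the vertex groups and their Bass--Serre trees, where $|m_i|\neq|n_i|$ forces triviality of pointwise axis/ray stabilizers (part~(1)), the bipartite link and $\CAT(0)$ geodesic concatenation handle parts~(2) and~(3), and the observation that the common stabilizer of two distinct vertices fixes an edge (hence is trivial or infinite cyclic) gives part~(4) with the bound $m\le 3$. The only differences are cosmetic: you use the height homomorphism and a centralizer computation where the paper invokes loxodromic-versus-elliptic and malnormality of the stable-letter subgroup, and you spell out the ``only if'' direction of~(2) via fixed-point sets where the paper cites Martin's Corollaries~2.2 and~2.3 -- these are minor variants of the same underlying arguments.
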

	
	\begin{proof}
	The first assertion extends \cite[Lemma~2.1]{Mar}, with a similar proof. It is a consequence of the following two facts. First, $\langle a_i\rangle$ does not intersect any conjugate of $\langle a_{i+1}\rangle$ in $G_i=\langle a_i,a_{i+1}|a_{i}a_{i+1}^{m_i}a_{i}^{-1}=a_{i+1}^{n_i}\rangle$: this is because $a_i$ is loxodromic in the Bass-Serre tree $T_i$ of $G_i$, while $a_{i+1}$ is elliptic. Second (and this crucially uses our assumption that $|m_i|<|n_i|$), the subgroup $\langle a_{i}\rangle$ is malnormal in $G_i$. We now prove this second fact. Let $g\in G_i$ be such that $g\langle a_i\rangle g^{-1}\cap\langle a_i\rangle$ is nontrivial; we aim to prove that $g\in\langle a_i\rangle$. Then $g$ preserves the axis $\ell$ of $a_i$ in the Bass-Serre tree $T_i$ of $G_i$. We first consider the case where $g$ acts elliptically on $T_i$. In fact in this case $g$ fixes $\ell$ pointwise (it cannot permute its endpoints): this is because edges of $T_i$ have an orientation coming from the projection map from the Cayley graph of $G_i$ to $T_i$, and the $G_i$-action preserves this orientation.  
	 But our assumption that $|m_i|\neq |n_i|$ ensures that $\ell$ contains at least a half-ray $[x_0,\xi)$ along which the stabilizers of the subsegments $[x_0,x_n]$ (where $x_n$ denote the successive vertices along the ray) form a strictly decreasing sequence of cyclic subgroups, so the pointwise stabilizer of this half-ray is trivial. Therefore $g=1$ in this case. 
		
		Now, if $g$ acts loxodromically on $T_i$, then there exists $k\in\mathbb{Z}$ such that $ga_i^{-k}$ fixes the axis of $a_i$ pointwise, and by the above it follows that $g=a_i^k$. 
		This completes our proof of Assertion~1.
		
		For the if direction of assertion 2, the only thing to observe in case (b) is that any two edges incident on $x$ and oriented away from $x$ have commensurate stabilizers (because $\langle a_{i+1}\rangle$ is a commensurated subgroup in $\langle a_i,a_{i+1}\rangle$). The only if direction is a consequence of Assertion~1, see \cite[Corollaries~2.2 and~2.3]{Mar}. 
		
	Before proving the last two assertions, we make the following observation: if $v\neq v'$, then $\Stab_G(v)\cap \Stab_G(v')$ is either infinite cyclic or trivial. Indeed $\Stab_G(v)\cap \Stab_G(v')$ fixes the geodesic segment $s$ from $v$ to $v'$ pointwise, hence fixes any cell of $X$ whose interior contains a point of $s$. Note that if a subgroup fixes a cell of $X$, then it fixes the cell pointwise. As $v\neq v'$, we know $\Stab_G(v)\cap \Stab_G(v')$ fixes an edge of $X$. The observation follows because edge stabilizers of $X$ are infinite cyclic. 
		
		Assertion 4 (with $m=3$) is an immediate consequence of the above observation. For Assertion~3, note that $(a^s_{i-1}G_{\bx_{i+1}}a^{-s}_{i-1})\cap (a^{s'}_{i-1}G_{\bx_{i+1}}a^{-s'}_{i-1})$ is the common stabilizer of the vertices $a_{i-1}^s\bx_{i+1}$ and $a_{i-1}^{s'}\bx_{i+1}$. As $s\neq s'$, these two vertices are distinct, so the observation made in the previous paragraph ensures that $(a^s_{i-1}G_{\bx_{i+1}}a^{-s}_{i-1})\cap (a^{s'}_{i-1}G_{\bx_{i+1}}a^{-s'}_{i-1})$ is at most cyclic.  
		On the other hand, this intersection contains a finite index subgroup of $\langle a_i\rangle$, so Assertion~3 follows.
	\end{proof}
	
	\begin{rk}\label{rk:X}
		Note that in Assertion~2.(b) of Lemma~\ref{lemma:intersection of vertex stabilizer}, the edges $\overline{xx_1}$ and $\overline{xx_2}$ are not contained in the same 2-cell by the description of $\lk(x,X)$ as above. Thus $\angle_x(x_1,x_2)\ge\pi$; hence $\overline{x_1x}$ and $\overline{xx_2}$  concatenate to a geodesic.
	\end{rk}
	
	\begin{lemma}
		\label{lemma:Higman property}
		The group $G$ is torsion-free and ICC, i.e.\ each non-trivial conjugacy class in $G$ is infinite.
	\end{lemma}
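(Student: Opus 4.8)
The plan is to treat the two assertions separately, exploiting the action $G\actson X$ on the complete $\CAT(0)$ (for $k=4$) or $\CAT(-1)$ (for $k\ge 5$) complex constructed above, together with the control on cell stabilizers recorded in Lemma~\ref{lemma:intersection of vertex stabilizer}.

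For torsion-freeness, I would argue that any finite-order element has a fixed point. Indeed, a finite subgroup $F\le G$ has a bounded orbit $F\cdot x$, whose circumcenter is an $F$-fixed point (Bridson--Haefliger, Corollary~II.2.8). Since $G$ acts cellularly without inversion, a point fixed by $g$ lies in an open cell that is stabilized, hence fixed pointwise, by $g$; thus $g$ lies in a cell stabilizer $G_c$. But every cell stabilizer is either a vertex group (isomorphic to $\BS(m_i,n_i)$), an edge group ($\cong\mathbb Z$), or trivial, and all of these are torsion-free. Hence $g=1$.

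For the ICC property, suppose toward a contradiction that some $z\neq 1$ has a finite conjugacy class, so that $C_G(z)$ has finite index in $G$. Since $z$ acts as a cellular isometry of a complex with finitely many isometry types of cells, $z$ is semisimple, i.e.\ elliptic or hyperbolic. Suppose first that $z$ is elliptic, so $\fix(z)$ is a nonempty closed convex subcomplex, preserved setwise by $C_G(z)$. If $\fix(z)$ is bounded, its circumcenter $p$ is fixed by $C_G(z)$, so $C_G(z)$ is contained in the stabilizer of the open cell containing $p$; but that stabilizer has infinite index in $G$ (each vertex and edge orbit is infinite, since edge groups have infinite index in vertex groups and $X$ is infinite), contradicting finite index. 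If $\fix(z)$ is unbounded, then, since $z\neq 1$ cannot fix any $2$-cell (those have trivial stabilizer), $\fix(z)$ is an unbounded connected subcomplex of $X^{(1)}$, hence contains an embedded edge-path $e_0,e_1,e_2$ with internal vertices $v_1,v_2$. As $z\in G_{e_j}$ for each $j$, Lemma~\ref{lemma:intersection of vertex stabilizer}(1) forces, at each internal vertex, both incident path-edges to point away from that vertex; applied at $v_1$ and $v_2$ this orients $e_1$ in both directions, a contradiction. So the elliptic case is impossible.

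It remains to rule out $z$ hyperbolic; this is the main obstacle. In the $\CAT(-1)$ case ($k\ge 5$) the argument is clean: the endpoints $z^{\pm}\in\partial_\infty X$ are joined by a unique geodesic axis $A$, and $C_G(z)$ preserves $\{z^+,z^-\}$ hence $A$, giving a homomorphism $C_G(z)\to\mathrm{Isom}(A)\cong\mathrm{Isom}(\mathbb R)$ whose kernel fixes $A$ pointwise. Choosing two points of $A$ at distance at least $L$ and invoking the weak acylindricity of the action (Martin~\cite{Mar}) together with torsion-freeness, this kernel is trivial; hence $C_G(z)$ embeds in $\mathrm{Isom}(\mathbb R)$ and is virtually cyclic. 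Since $G$ contains a vertex group $\BS(m_i,n_i)$ with $|m_i|\neq|n_i|$, it is not virtually cyclic (indeed not virtually abelian), so $C_G(z)$ cannot have finite index, a contradiction. For $k=4$ the space is only $\CAT(0)$ and the minimal set $\mathrm{Min}(z)$ may a priori split as $\mathbb R\times Y$ with $Y$ nondegenerate; the extra work is to use weak acylindricity to collapse this flat-strip phenomenon (so that parallel axes cannot be stabilized by an infinite family of elements), after which the same conclusion that $C_G(z)$ is virtually cyclic applies. I expect this flat-strip exclusion to be the only genuinely delicate point; the elliptic case and torsion-freeness are routine given Lemma~\ref{lemma:intersection of vertex stabilizer}.
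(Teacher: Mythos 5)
Your torsion-freeness argument is essentially the paper's own (finite subgroups fix points, no inversion, cell stabilizers are torsion-free), and your treatment of the elliptic case of ICC is a correct variant: where the paper simply quotes Lemma~\ref{lemma:intersection of vertex stabilizer}(2) to get boundedness of the fixed-point set of a nontrivial element, you rule out an unbounded fixed set by the orientation argument with Lemma~\ref{lemma:intersection of vertex stabilizer}(1) along a three-edge path; both work. In the hyperbolic case for $k\ge 5$ there is a small slip: a subgroup of $\mathrm{Isom}(\mathbb{R})$ need \emph{not} be virtually cyclic (e.g.\ $\mathbb{Z}^2$ embeds densely in $\mathbb{R}$), only virtually abelian. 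Since you do observe that $G$ is not even virtually abelian (it contains $\BS(m_i,n_i)$ with $|m_i|\neq|n_i|$), the contradiction survives, so this part is repairable on the spot.

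The genuine gap is the case $k=4$, which is covered by the statement (the standing assumption of Section~\ref{sec:background} is $k\ge 4$), and which you leave as an expectation. Moreover, the mechanism you sketch is doubtful: weak acylindricity constrains \emph{common stabilizers of pairs of points}, and in the flat-strip situation $\mathrm{Min}(z)\cong\mathbb{R}\times Y$ the relevant elements (powers of $z$, or other elements translating the parallel axes) fix no points at all, so weak acylindricity has no direct purchase on collapsing $Y$; nor does it exclude flat strips in a $\CAT(0)$ square complex. The paper sidesteps any analysis of $\mathrm{Min}(z)$: since $H=\langle z\rangle$ is normalized by the finite-index subgroup $G'$, which acts by semisimple isometries, \cite[Theorem~II.7.1(5)]{BH} yields a finite-index subgroup $G''\le G'$ containing $H$ as a direct factor. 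One then picks an infinite-order elliptic element $b\in G''$ (a proper power of some generator $a_i$ lies in any finite-index subgroup); its fixed-point set $F_b$ is bounded by Lemma~\ref{lemma:intersection of vertex stabilizer}(2), and is $H$-invariant because $z$ commutes with $b$, contradicting that $z$ is hyperbolic. This argument is uniform over $\CAT(0)$, hence over all $k\ge 4$, and is the ingredient your proof is missing.
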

	
	\begin{proof}
		Assume towards a contradiction that $G$ contains a torsion element $g$.  Being a torsion element acting on a $\CAT(0)$ space, the element $g$ fixes a point in $X$ (\cite[Theorem~II.6.7]{BH}). Hence $g$ fixes a vertex as the action $G\actson X$ is without inversion. But this is impossible as each vertex stabilizer is a Baumslag-Solitar group, hence torsion-free. 
		
		To prove the ICC property, suppose that there exists a nontrivial element $g\in G$ whose conjugacy class is finite. Then $G$ has a finite index subgroup $G'$ such that $H:=\langle g\rangle$ is normalized by $G'$. As $G$ acts on $X$ by  cellular isometries and $X$ has finite shapes, the action of $G$ is semi-simple \cite[Exercise~II.6.6(2)]{BH}. By the classification of isometries of $\CAT(0)$ spaces in \cite[Chapter~II.6]{BH}, the action of $H$ is either elliptic (i.e.\ $H$ fixes a point) or hyperbolic (i.e.\ there is an $H$-invariant axis on which $H$ acts by translations).
		If $H$ fixes a point, then the fix point set $F$ of $H$ is bounded by Lemma~\ref{lemma:intersection of vertex stabilizer}(2). Then $F$ is $G'$-invariant, which is a contradiction as $G$ acts with unbounded orbits (and therefore so does $G'$). If $H$ translates an axis in $X$, then $G'$ has a finite index subgroup $G''$ containing $H$ as a direct factor by \cite[Theorem II.7.1(5)]{BH}. This is again impossible for the following reason. As $G''$ is finite index in $G$, we can find an infinite order element $b\in G''$ which fixes a vertex of $X$. Again the fix point set $F_b$ of $\langle b\rangle$ is bounded by Lemma~\ref{lemma:intersection of vertex stabilizer}(2). As $H$ commutes with $\langle b\rangle$, the set $F_b$ is $H$-invariant, which contradicts that $H$ acts hyperbolically on $X$.
	\end{proof}

	\subsection{Weak acylindricity of the action}
	
	Let $H$ be a group acting on a metric space $Z$. The $H$-action on $Z$ is said to be \emph{weakly acylindrical} if there exist $L>0,N>0$ such that for any two points $x,y\in Z$ with $d(x,y)\ge L$, the common stabilizer of $x$ and $y$ has cardinality at most $N$.
	
	Note that if $Z$ is an $M_{\kappa}$-polyhedral complex with finite shapes and $H$ acts by cellular isometries, then it suffices to check the special case when $x$ and $y$ are vertices when we prove that the action of $H$ is weakly acylindrical. Thus Lemma~\ref{lemma:higman acylindrical} is a direct consequence of Lemma~\ref{lemma:intersection of vertex stabilizer}(2). 
	
	\begin{lemma}
		\label{lemma:higman acylindrical}
		The action of a generalized Higman group $G$ on $X$ is weakly acylindrical.
		\qed
	\end{lemma}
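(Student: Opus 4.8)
The plan is to combine the two ingredients already prepared in this subsection: the reduction principle recorded in the remark just before the statement — that for an $M_\kappa$-polyhedral complex with finite shapes on which $G$ acts cellularly, weak acylindricity only needs to be checked on pairs of \emph{vertices} — together with the precise description of nontrivial common vertex stabilizers furnished by Lemma~\ref{lemma:intersection of vertex stabilizer}(2). Since that lemma says that two vertices with nontrivial common stabilizer must lie in one of two ``short-range'' configurations, the task is essentially to convert those configurations into a uniform distance threshold.

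First I would justify the reduction to vertices, which is where the only genuine care is needed. Because $X$ has finitely many isometry types of cells, there is a uniform bound $D>0$ on the diameter of every closed cell, and a uniform bound $\ell>0$ on edge lengths. Given arbitrary points $x,y\in X$ and an element $g\in\Stab_G(\{x,y\})$, the square $g^2$ fixes both $x$ and $y$, so it fixes the (unique, by $\CAT(0)$-ness) geodesic $[x,y]$ pointwise. For any point $p$ in the interior of $[x,y]$, the element $g^2$ fixes the minimal closed cell $c_p$ carrying $p$; since the $G$-action is without inversion, $g^2$ fixes $c_p$ pointwise, hence fixes every vertex of $c_p$. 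Choosing such interior points close to each endpoint and selecting a vertex of the corresponding cells yields vertices $v_1,v_2\in X^{(0)}$ with $g^2\in G_{v_1}\cap G_{v_2}$ and $d(v_1,v_2)\ge d(x,y)-4D$.

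Next I would feed this into Lemma~\ref{lemma:intersection of vertex stabilizer}(2). If $v_1\neq v_2$ and $G_{v_1}\cap G_{v_2}\neq\{1\}$, then by that lemma either $v_1,v_2$ are adjacent, giving $d(v_1,v_2)\le\ell$, or both are adjacent to a common vertex, giving $d(v_1,v_2)\le 2\ell$ by the triangle inequality. Hence $d(v_1,v_2)>2\ell$ forces $G_{v_1}\cap G_{v_2}=\{1\}$. Setting $L:=2\ell+4D+1$, whenever $d(x,y)\ge L$ the vertices $v_1,v_2$ above are distinct with $d(v_1,v_2)>2\ell$, so $g^2=1$; as $G$ is torsion-free (Lemma~\ref{lemma:Higman property}) this gives $g=1$. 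Therefore $\Stab_G(\{x,y\})$ is trivial for all such pairs, and one may take $N=1$.

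I do not expect a substantive obstacle: the entire quantitative content lives in Lemma~\ref{lemma:intersection of vertex stabilizer}(2). The only delicate point is the passage from arbitrary points to vertices, where finiteness of shapes must be used to make the threshold $L$ independent of $x$ and $y$, and the absence of inversions must be used to upgrade ``$g^2$ fixes an interior point of a cell'' to ``$g^2$ fixes the vertices of that cell.'' Torsion-freeness then lets us pass from the pointwise statement to the setwise stabilizer $\Stab_G(\{x,y\})$, since an element swapping the two points would be a nontrivial involution.
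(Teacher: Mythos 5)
Your proposal is correct and takes essentially the same route as the paper: the paper also reduces weak acylindricity to pairs of vertices (using that $X$ is an $M_\kappa$-polyhedral complex with finite shapes) and then invokes Lemma~\ref{lemma:intersection of vertex stabilizer}(2), treating the lemma as an immediate consequence. Your write-up merely fills in the details the paper leaves implicit -- passing to $g^2$, using the no-inversion property to fix vertices of cells along the geodesic, and using torsion-freeness (Lemma~\ref{lemma:Higman property}) to handle the setwise stabilizer -- all of which is sound.
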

	
	\begin{lemma}
		\label{lemma:weakly acylindrical}
		Suppose $H$ is a torsion-free group acting on a $\CAT(-1)$ piecewise hyperbolic complex $Z$ with finite shapes by cellular isometries. If the action is weakly acylindrical, then for any $z\in Z$ and $\xi\in \partial_\infty Z$, there exists a neighborhood $U_{\xi,z}$ of $\xi$ (with respect to the cone topology on $\partial_\infty Z$) such that any non-trivial element $g\in \Stab_H(z)$ satisfies $gU_{\xi,z}\cap U_{\xi,z}=\emptyset$.
	\end{lemma}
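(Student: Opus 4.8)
The plan is to reformulate the conclusion as a statement about the orbit of $\xi$ and argue by contradiction. Fix a neighbourhood basis of $\xi$ given by the standard cone-topology sets $U_m=U(\xi,R_m,\epsilon_m)$ based at $z$, with $R_m\to\infty$ and $\epsilon_m\to 0$; here $U(\xi,R,\epsilon)$ consists of all $\eta\in Z\cup\partial_\infty Z$ with $d(z,\eta)>R$ and $d(p_R(\eta),p_R(\xi))<\epsilon$, where $p_R(\cdot)$ denotes the point at distance $R$ on the geodesic from $z$. If the lemma failed, then for each $m$ there would be a nontrivial $g_m\in\Stab_H(z)$ and a point $\eta_m$ with $\eta_m,g_m\eta_m\in U_m$. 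My goal is to extract a single nontrivial element of $\Stab_H(z)$ that fixes some point $q$ with $d(z,q)\ge L$: this is impossible, since being torsion-free the element would generate an infinite subgroup contained in $\Stab_H(z)\cap\Stab_H(q)$, contradicting weak acylindricity. The same mechanism already handles the boundary case, showing that no nontrivial $g\in\Stab_H(z)$ fixes $\xi$ itself (it would fix the ray $[z,\xi)$ pointwise and hence fix arbitrarily distant points).

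The first analytic step is a displacement estimate. Since $g_m$ fixes $z$, it maps the geodesic $[z,\eta_m)$ to $[z,g_m\eta_m)$, so $g_m p_{R_m}(\eta_m)=p_{R_m}(g_m\eta_m)$; as both $p_{R_m}(\eta_m)$ and $p_{R_m}(g_m\eta_m)$ lie within $\epsilon_m$ of $p_{R_m}(\xi)$, we get $d(g_mq_m,q_m)<2\epsilon_m$ for $q_m:=p_{R_m}(\eta_m)$. Now fix a radius $R_1\ge L$. Because $[z,q_m]$ and $g_m[z,q_m]=[z,g_mq_m]$ are geodesics issuing from the common point $z$, $\CAT(0)$ convexity bounds the displacement at radius $R_1$ by $\tfrac{R_1}{R_m}\,d(g_mq_m,q_m)<\tfrac{R_1}{R_m}2\epsilon_m\to 0$; that is, $g_m$ moves $p_{R_1}(\eta_m)$ by $o(1)$. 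Since the $U_m$ shrink to $\xi$ we have $\eta_m\to\xi$, hence $p_{R_1}(\eta_m)\to q^*:=p_{R_1}(\xi)$ by continuity of $\eta\mapsto p_{R_1}(\eta)$, and combining the two facts gives $d(g_mq^*,q^*)\to 0$. Thus $g_m$ \emph{almost} fixes the fixed far point $q^*$.

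I expect the main obstacle to be promoting ``almost fixes $q^*$'' to ``fixes $q^*$ exactly'', and this is where the cellular structure must be used. The delicate point is that $X$ (and a general $Z$ as in Theorem~\ref{theo:combination}) need not be locally finite, so one cannot simply invoke discreteness of orbits. Instead I would argue through the carrier cell $\sigma$ of $q^*$ (the unique cell with $q^*\in\mathrm{relint}(\sigma)$) and its open star $\mathrm{st}(\sigma)$, an open neighbourhood of $q^*$. For large $m$ one has $g_mq^*\in\mathrm{st}(\sigma)$; as $g_m$ is cellular it carries carriers to carriers, so the carrier $g_m\sigma$ of $g_mq^*$ has $\sigma$ as a face, and equality of dimensions forces $g_m\sigma=\sigma$. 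Then $g_m|_\sigma$ lies in the finite isometry group of the shape of $\sigma$; passing to a subsequence on which this restriction equals a fixed symmetry $\tau$, the relation $g_mq^*=\tau(q^*)$ together with $g_mq^*\to q^*$ yields $\tau(q^*)=q^*$, so that in fact $g_mq^*=q^*$.

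With exact fixing in hand the argument closes: for such $m$, the nontrivial element $g_m$ fixes both $z$ and $q^*$ with $d(z,q^*)=R_1\ge L$, so $\langle g_m\rangle$ is infinite (torsion-freeness) and contained in $\Stab_H(z)\cap\Stab_H(q^*)$, contradicting $|\Stab_H(z)\cap\Stab_H(q^*)|\le N$ from weak acylindricity. This contradiction proves the lemma. Only the passage from approximate to exact fixing in the third step is genuinely delicate, and it is precisely to avoid any local-finiteness hypothesis that I route it through the carrier/open-star argument rather than through discreteness of orbits.
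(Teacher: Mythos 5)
Your proposal is correct and follows essentially the same route as the paper's proof: both pick a point on the ray $[z,\xi)$ at distance at least $L$ from $z$, use the finite-shapes hypothesis to rule out arbitrarily small nonzero displacements of that point by nontrivial elements of $\Stab_H(z)$ (your carrier/open-star/finite-symmetry argument is precisely an expansion of the paper's one-line appeal to finitely many isometry types), and conclude via torsion-freeness combined with weak acylindricity. The only differences are presentational: the paper argues directly, establishing a uniform displacement lower bound $\epsilon$ at the chosen point and taking a single cone neighborhood, whereas you argue by contradiction along a shrinking neighborhood basis and use $\CAT(0)$ convexity of the metric to transfer estimates down to a fixed radius $R_1\ge L$.
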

	
	\begin{proof}
		Let $L$ be the constant in the definition of a weakly acylindrical action. Take a point $y$ in the geodesic ray from $z$ to $\xi$ such that $d(y,z)> L$. As $H$ is torsion-free, any non-trivial element of $\Stab_H(z)$ does not fix $y$. As $Z$ has finite shapes, there exists $\epsilon>0$ such that for any non-trivial element $g\in\Stab_H(z)$, we have $d(gy,y)>\epsilon$. Now the lemma follows from the definition of the cone topology.
	\end{proof}
	
	\subsection{A finite extension of $\Hig_\sigma$}\label{sec:finite-extension}
	 
Let $F_\sigma$ be the finite group consisting of all translations $\tau$ of $\mathbb{Z}/k\mathbb{Z}$ with the property that for every $i\in\mathbb{Z}/k\mathbb{Z}$, either $m_{\tau(i)}=m_i$ and $n_{\tau(i)}=n_i$, or else $m_{\tau(i)}=-m_i$ and $n_{\tau(i)}=-n_i$. Then $F_\sigma$ naturally acts by automorphisms on $\Hig_\sigma$ (by letting $\tau$ send $a_i$ to $a_{\tau(i)}$), and we let $\widehat{\Hig}_\sigma=\Hig_\sigma\rtimes F_\sigma$.
	
	\begin{lemma}\label{lemma:icc+}
		The group $\widehat\Hig_\sigma$ is ICC.
	\end{lemma}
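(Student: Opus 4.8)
The plan is to reduce the \textbf{ICC} property of $\widehat{\Hig}_\sigma$ to a purely combinatorial statement about how the edge‑stabilizers $\langle a_i\rangle$ sit inside $G=\Hig_\sigma$, using the $G$‑invariant colouring of the edges of $X$ by the $k$ orbits $\{\bar e_i\}$. Since $[\widehat{\Hig}_\sigma:G]=|F_\sigma|<\infty$, it suffices to show that for every $\gamma\in\widehat{\Hig}_\sigma\setminus\{1\}$ the $G$‑conjugacy orbit $\{h\gamma h^{-1}:h\in G\}$ is infinite, i.e.\ that $C_G(\gamma)=\{h\in G:h\gamma=\gamma h\}$ has infinite index in $G$. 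Write $\gamma=(g,\tau)$ with $g\in G$, $\tau\in F_\sigma$; a direct computation in the semidirect product shows that conjugation by $\gamma$ acts on the normal subgroup $G$ as the automorphism $\psi=\mathrm{inn}_g\circ\tau$, where $\mathrm{inn}_g(h)=ghg^{-1}$ and $\tau(a_l)=a_{\tau(l)}$, so that $C_G(\gamma)=\fix(\psi)$. Arguing by contradiction, I would suppose $\fix(\psi)$ has finite index and let $N\unlhd G$ be its normal core (finite index and normal in $G$). Then $\psi|_N=\mathrm{id}$, that is, $\tau(n)=g^{-1}ng$ for all $n\in N$.

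Now split into two cases. If $\tau=\mathrm{id}$, the identity $g^{-1}ng=n$ for all $n\in N$ shows $N\subseteq C_G(g)$, so the conjugacy class of $g$ in $G$ is finite; since $G$ is ICC (Lemma~\ref{lemma:Higman property}) this forces $g=1$, hence $\gamma=1$, a contradiction. This case also absorbs the a priori possibility that $\gamma$ centralizes all of $G$. If $\tau\neq\mathrm{id}$, pick $i\in\mathbb Z/k\mathbb Z$ with $\tau(i)\neq i$. As $N$ has finite index, $\langle a_i\rangle\cap N=\langle a_i^{r}\rangle$ for some $r\ge 1$, so $a_i^{r}\in N$; applying $\tau(a_i^r)=g^{-1}a_i^r g$ and $\tau(a_i^r)=a_{\tau(i)}^r$ gives
\[
a_{\tau(i)}^{\,r}=g^{-1}a_i^{\,r}g .
\]
Thus $a_{\tau(i)}^{r}$ fixes the edge $g^{-1}e_i$, where $e_i$ is the fundamental‑domain edge with $\Stab_G(e_i)=\langle a_i\rangle$; since the colouring $c(e):=\pi(e)\in\{\bar e_1,\dots,\bar e_k\}$ coming from $\pi:X\to K\cong G\backslash X$ (Lemma~\ref{lemma:fundamental-domain}) is $G$‑invariant, $g^{-1}e_i$ has colour $i$. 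The contradiction will come from the following key monochromaticity statement, which I would isolate as a lemma.

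\emph{Key Lemma.} For every $l\in\mathbb Z/k\mathbb Z$ and every $r\neq 0$, all edges of $X$ fixed by $a_l^{\,r}$ have colour $l$. Granting this with $l=\tau(i)$, the edge $g^{-1}e_i$ of colour $i$ would have colour $\tau(i)$, forcing $\tau(i)=i$, the desired contradiction. To prove the Key Lemma I would use that $a_l^{\,r}$ is a nontrivial elliptic element (it fixes $e_l$, and $a_l$ has infinite order by torsion‑freeness, Lemma~\ref{lemma:Higman property}), so its fixed set $\fix(a_l^{\,r})$ is a nonempty convex, hence connected, subcomplex of $X$ (the action is without inversions). If $\fix(a_l^{\,r})$ contained edges of two distinct colours, then along an edge‑path inside its connected $1$‑skeleton the colour would change at some vertex $v$; writing $v$ over $\bar x_p$, the orientation convention $\bar e_j:\bar x_j\to\bar x_{j+1}$ shows the two colours of edges incident to $v$ are $p-1$ (those oriented toward $v$) and $p$ (those oriented away), so two incident fixed edges of different colours would consist of one incoming and one outgoing edge at $v$. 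Then Lemma~\ref{lemma:intersection of vertex stabilizer}(1), applied to the incoming edge, gives that the stabilizers of the two edges intersect trivially, whence $a_l^{\,r}=1$ — a contradiction. Therefore $\fix(a_l^{\,r})$ is monochromatic, and since it contains $e_l$ its colour is $l$.

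The only genuinely geometric input is the Key Lemma, and I expect its proof to be \textbf{the main obstacle}, precisely because of a subtlety one must respect: a power $a_l^{\,r}$ can legitimately fix several edges of $X$ (for instance $a_l^{\,n_{l-1}}$ fixes both $e_l$ and $a_{l-1}e_l$, since $(a_{l-1}a_l a_{l-1}^{-1})^{m_{l-1}}=a_l^{n_{l-1}}$). Thus one cannot hope for a unique fixed edge, and the correct invariant is the \emph{colour} rather than the edge itself; the argument must be arranged so that the orientation of $X$ together with Lemma~\ref{lemma:intersection of vertex stabilizer}(1) rules out colour changes while tolerating multiplicity within a single colour. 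Note that this route uses only the strict fundamental domain, the edge orientation, and the triviality of intersections at incoming edges, so it applies uniformly for all $k\ge 4$ and does not require any boundary dynamics.
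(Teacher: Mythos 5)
Your proof is correct and follows essentially the same route as the paper's: write $\gamma=(g,\tau)$, show that conjugation by $\gamma$ fixing a finite-index subgroup of $\Hig_\sigma$ forces nontrivial powers of $a_i$ and $a_{\tau(i)}$ to be conjugate (hence $\tau=\mathrm{id}$), and then invoke the ICC property of $\Hig_\sigma$ (Lemma~\ref{lemma:Higman property}) to force $g=1$. The only difference is that where the paper dispatches the non-conjugacy of powers of distinct generators as ``a consequence of Lemma~\ref{lemma:fundamental-domain}'', you prove it in full via your Key Lemma (monochromaticity of the fixed edge set of $a_l^{\,r}$, obtained from convexity of fixed-point sets, the absence of inversions, and Lemma~\ref{lemma:intersection of vertex stabilizer}(1)), which is a correct and welcome elaboration of that step.
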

	
	\begin{proof}
		Let $g\in\widehat{\Hig}_\sigma$ be centralized by a finite-index subgroup $G'$ of $\widehat{\Hig}_\sigma$; we aim to prove that $g=\mathrm{id}$. Write $g=\tau h$, with $h\in\Hig_\sigma$ and $\tau\in F_\sigma$. Then for every $g'\in G'\cap\Hig_\sigma$, we have $\tau h g' h^{-1}\tau^{-1}=g'$. In other words, viewing $\tau$ as an automorphism of $\Hig_\sigma$, we have $\tau(hg'h^{-1})=g'$ for every $g'\in G'$, in particular $\tau$ sends $g'$ to a conjugate of itself. But $\tau(a_i^n)=a_{\tau(i)}^n$ for every $n\in\mathbb{Z}$, and nontrivial powers of $a_i$ and $a_{\tau(i)}$ are never conjugate unless $\tau(i)=i$ (this is a consequence of Lemma~\ref{lemma:fundamental-domain}). Therefore $\tau=\mathrm{id}$. It follows that $g$ belongs to $\Hig_\sigma$ and has an infinite conjugacy class, so $g=\mathrm{id}$ using that $\Hig_\sigma$ is ICC (Lemma~\ref{lemma:Higman property}).
	\end{proof}

	\section{Line-preserving bijections of Baumslag--Solitar groups}\label{sec:BS}
	
	This section, which is solely about the geometry of the Cayley graph of a Baumslag--Solitar group, serves as a preparatory section towards combinatorial rigidity theorems for the Higman groups that will be proved in Section~\ref{sec:combinatorial-rigidity}. Given $m,n\in\mathbb{Z}\setminus\{0\}$, the Baumslag--Solitar group $\BS(m,n)$, introduced in \cite{BS}, is defined as $$\BS(m,n)=\langle a,t \mid ta^mt^{-1}=a^n\rangle.$$ We will only consider the case where $|m|\neq |n|$, and as $\BS(m,n)$ is always isomorphic to $\BS(n,m)$ via an isomorphism sending the stable letter $t$ of the HNN extension to $t^{-1}$, it will be harmless to assume that $|m|<|n|$ throughout. 
	
	Let $\Upsilon_{m,n}$ be the Cayley graph of $\BS(m,n)$ with respect to its standard generating set $\{a,t\}$. Edges of $\Upsilon_{m,n}$ naturally come with an orientation and a label in $\{a,t\}$. A \emph{standard line} in $\Upsilon_{m,n}$ is a subset of $\Upsilon_{m,n}$ homeomorphic to the real line which is either a concatenation of edges labeled $t$ (in which case we call it a \emph{$t$-line}) or a concatenation of edges labeled $a$ (in which case we call it an \emph{$a$-line}). Vertices of $\Upsilon_{m,n}$ in a standard line have a linear order induced by the orientation of edges in $\Upsilon_{m,n}$. Note that $t$-lines (resp.\ $a$-lines) in $\Upsilon_{m,n}$ are in 1-1 correspondence with left cosets of the subgroup $\langle t\rangle$ (resp.\ $\langle a\rangle$) in $\BS(m,n)$. We identify $\BS(m,n)$ with the vertex set $\Upsilon^{(0)}_{m,n}$ of $\Upsilon_{m,n}$. 
	
	Let now $m,n,m',n'\in\mathbb{Z}\setminus\{0\}$ with $|m|<|n|$ and $|m'|<|n'|$. A bijection $\phi:\Upsilon^{(0)}_{m,n}\to \Upsilon^{(0)}_{m',n'}$ is \emph{line-preserving} if for every standard line $\ell$ of $\Upsilon_{m,n}$, there exists a standard line $\ell'$ of $\Upsilon_{m',n'}$ such that $\phi(\ell^{(0)})=(\ell')^{(0)}$. The main result of the present section is the following.
	
	\begin{prop}\label{prop:combinatorial rigid BS}
		Suppose $|m|<|n|$ and $|m'|<|n'|$. Let $\phi:\Upsilon^{(0)}_{m,n}\to \Upsilon^{(0)}_{m',n'}$ be a line-preserving bijection. 
		
		Then $\phi$ preserves labels of standard lines, and the linear order along each $t$-line. Moreover $|m|=|m'|$ and $|n|=|n'|$.
	\end{prop}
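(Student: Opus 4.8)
The plan is to work entirely with the \emph{line-incidence data}. Since $\phi$ and $\phi^{-1}$ are both line-preserving bijections, $\phi$ induces a bijection $\Phi$ from the set of standard lines of $\Upsilon_{m,n}$ to that of $\Upsilon_{m',n'}$, and because $\phi$ is injective, two standard lines meet in a vertex if and only if their $\Phi$-images do. Thus $\Phi$ preserves the incidence relation between lines, hence every relation definable from it. The first observation is that a standard line crossing an $a$-line must be a $t$-line and vice versa, so any \emph{transversal} (a line crossing a given line $\ell$) has the opposite label; in particular an $a$-line and a $t$-line have no common transversal. I introduce the relation $\ell_1\approx\ell_2$ meaning $\ell_1\ne\ell_2$ and $\ell_1,\ell_2$ have infinitely many common transversals. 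Passing to the Bass--Serre tree $T$ of $\BS(m,n)$ (vertices $=$ $a$-lines, with $|n|$ upward and $|m|$ downward neighbours at each vertex, the orientation coming from the height homomorphism $a\mapsto 0,\ t\mapsto 1$), one checks that $t$-lines are exactly the height-monotone bi-infinite geodesics, and that $\ell_1\approx\ell_2$ means: two $a$-lines lie on a common monotone geodesic, resp.\ two $t$-lines share an infinite ray of $T$. Since transversals of any line have a single label, $\approx$ relates only lines of the same label, so it is recorded by $\Phi$.

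The heart of the argument, and the main obstacle, is to recover the partition into $a$-lines and $t$-lines purely from $\approx$ and incidence, exploiting the asymmetry $|m|<|n|$. The key computation is that two distinct $t$-lines crossing a common $a$-line never share an infinite \emph{upward} ray (because $|n|\ge 2$ makes the upward direction branch, and sharing an up-ray would force the two cosets of $\langle t\rangle$ to coincide), whereas they share an infinite \emph{downward} ray precisely when the downward direction fails to branch. When $|m|\ge 2$ this gives a clean dichotomy: the transversals of a $t$-line are pairwise $\approx$ (they all lie on one monotone geodesic), while the transversals of an $a$-line are pairwise non-$\approx$; this characterises $t$-lines and is $\Phi$-invariant. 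The delicate case is $|m|=1$, i.e.\ $\BS(1,n)$, where the downward direction does not branch, all $t$-lines share a single bottom end, and this dichotomy collapses. There I would instead use that the $t$-lines form a single $\approx$-class which is \emph{complete}, while the $\approx$-class of any $a$-line is connected but never complete (upward branching produces incomparable $a$-lines), so that $t$-lines are exactly the lines whose $\approx$-component is a complete graph. Finally, whether the source group is in the regime $|m|=1$ or $|m|\ge 2$ is itself detected by these incidence-theoretic properties, so $\Phi$ automatically matches the two regimes and sends $a$-lines to $a$-lines and $t$-lines to $t$-lines; this establishes label preservation.

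With labels preserved, I would reconstruct the oriented tree $T$ from the incidence data. Betweenness along a $t$-line $M$ is recoverable: for $a$-lines $L_1,L_3$ crossing $M$, an $a$-line $L_2$ crossing $M$ lies between them if and only if every $t$-line crossing both $L_1$ and $L_3$ also crosses $L_2$ (using $|n|\ge 2$ to branch off otherwise). This determines, up to reversal, the order of the $a$-lines along each $t$-line, hence $T$-adjacency and the tree $T$. To fix the orientation coherently I would use the uniform \emph{up-rigidity} above: distinct $t$-lines never share an infinite upward ray but may share infinite downward rays, so the ends of $T$ towards which two distinct $t$-lines share a ray are exactly the \emph{downward} ends. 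This distinguishes the two ends of every $t$-line in a $\Phi$-invariant way and orients all of $T$. Since along a $t$-line the order of the vertices coincides with the (now oriented) height order of the $a$-lines through them, $\phi$ preserves the linear order along every $t$-line.

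Reading off local degrees in the reconstructed oriented tree then gives the numerical conclusion: every $a$-line has exactly $|n|$ upward and $|m|$ downward $T$-neighbours, both recoverable from incidence and orientation, so $\Phi$ forces $|n|=|n'|$ and $|m|=|m'|$. The order along $a$-lines is left unconstrained, consistent with the partial nature of the rigidity, since an $a$-line sits inside a single height level. I expect the genuinely delicate points to be the uniform separation of the two line types---especially the $\BS(1,n)$ case where the downward direction does not branch---and the careful verification that all the auxiliary relations (transversality, $\approx$, betweenness, downward ends) are definable from incidence alone and therefore transported by $\Phi$.
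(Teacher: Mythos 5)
Your overall strategy --- encode everything in the incidence structure of standard lines, recover the labels, then reconstruct the oriented Bass--Serre tree and read off $|m|,|n|$ from local degrees --- is the same as the paper's (its graph $\Lambda_{m,n}$ is exactly your incidence data, and its proof also proceeds via type recognition, adjacency recovery in $T$, and orientation). But the execution has genuine errors, and they all stem from drawing the case division at $|m|=1$ versus $|m|\ge 2$, when the correct dividing line is $m\mid n$ versus $m\nmid n$. Your ``key computation'' asserts that two distinct $t$-lines crossing a common $a$-line share an infinite downward ray precisely when the downward direction fails to branch. This is false: in $\BS(2,4)$ the $t$-lines $\langle t\rangle$ and $a^2\langle t\rangle$ share every vertex of height $\le 0$ of their projections (since $t^ka^2t^{-k}=a^{2^{k+1}}\in\langle a\rangle$ for all $k\ge 0$), even though every vertex of $T$ has two downward neighbours. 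In general $\langle t\rangle$ and $a^s\langle t\rangle$ share a downward ray if and only if $m\mid n$ and $m\mid s$. Consequently your ``clean dichotomy'' for $|m|\ge 2$ is wrong whenever $m\mid n$: the transversals of the $a$-line $\langle a\rangle$ in $\BS(2,4)$ are not pairwise non-$\approx$. (Type recognition can be repaired for $|m|\ge 2$, but not by the argument you give; the paper's Lemma~\ref{lemma:link1} does it uniformly via a contraction estimate.) Two further unproved ingredients hide here: you never show that one common transversal forces infinitely many --- that is the paper's key arithmetic Lemma~\ref{lemma:equally-spaced} --- and your identification of $\approx$ on $a$-lines with ``lying on a common monotone geodesic'' is also false, since comparable $a$-lines may have no common transversal at all (see the remark after Lemma~\ref{lemma:link3}, again in $\BS(2,4)$).

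The later steps inherit this, and in fact your two reconstruction tools fail in complementary regimes. The betweenness criterion (``$L_2$ lies between $L_1$ and $L_3$ iff every $t$-line crossing $L_1$ and $L_3$ crosses $L_2$'') fails whenever $m\mid n$: if $v_2<v_1<v_3$ then, by the equality case $\Lambda_{13}=\Lambda_{23}$ of Corollary~\ref{cor:equally-spaced}, every $t$-line through $v_1$ and $v_3$ does pass through $v_2$, so your criterion declares $L_2$ ``between'' when it is not; concretely, in $\BS(2,4)$ every $t$-line through $\langle a\rangle$ and $t\langle a\rangle$ also passes through $t^{-1}\langle a\rangle$. Conversely, your orientation criterion (downward ends are the ends where distinct $t$-lines share rays) is vacuous whenever $m\nmid n$, because then distinct $t$-lines share no infinite ray whatsoever: the divisibility chain $p^{k}\mid s$ for all $k$ forces $s=0$ once $p=|m|/\gcd(m,n)>1$. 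So the orientation cannot be fixed by ray-sharing exactly in the regime where your betweenness does work. This is precisely why the paper needs two separate combinatorial characterizations of $T$-adjacency (Lemma~\ref{lemma:link4} for $m\nmid n$, and the more involved Lemma~\ref{lemma:link5} with auxiliary vertices $u_3,u_4$ for $m\mid n$), and why it fixes the orientation not by ray-sharing but by the counting argument $|V_1|=q>p=|V_2|$ in the proof of Proposition~\ref{prop:combinatorial rigid BS}, which works uniformly in both regimes.
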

	
	\begin{rk}
		However, in general, it is not true that every line-preserving bijection $\Upsilon^{(0)}_{m,n}\to\Upsilon^{(0)}_{m',n'}$ preserves the linear order along each $a$-line. See Example~\ref{ex:12} below. 
	\end{rk}
	
	Let $\Lambda_{m,n}$ be the graph having one vertex per standard line in $\Upsilon_{m,n}$, two vertices being joined by an edge if the associated standard lines intersect nontrivially. Note that when two standard lines intersect nontrivially, then one is an $a$-line and the other is a $t$-line, and their intersection is reduced to one point: indeed, otherwise, we would have $t^k=a^{\ell}$ for nonzero integers $k,\ell$, contradicting that $t$ is loxodromic in the Bass--Serre tree of $\BS(m,n)$ while $a$ is elliptic. The \emph{type} of a vertex in $\Lambda_{m,n}$ is the label of the associated standard line. Note that $\Lambda_{m,n}$ is bipartite: every edge of $\Lambda_{m,n}$ connects a vertex of type $t$ and a vertex of type $a$. Edges of $\Lambda_ {m,n}$ are naturally in 1-1 correspondence with vertices of $\Upsilon_{m,n}$: indeed, an edge of $\Lambda_{m,n}$ corresponds to a pair $(\ell_1,\ell_2)$, where $\ell_1$ is an $a$-line and $\ell_2$ is a $t$-line, and the associated vertex of $\Upsilon_{m,n}$ is then $\ell_1\cap\ell_2$. Using this observation, one gets a 1-1 correspondence between line-preserving bijections $\Upsilon^{(0)}_{m,n}\to \Upsilon^{(0)}_{m',n'}$ and graph isomorphisms $\Lambda_{m,n}\to\Lambda_{m',n'}$.
	
	To simplify notation, in the following discussion, we write $\Lambda$ and $\Lambda'$ instead of $\Lambda_{m,n}$ and $\Lambda_{m',n'}$ respectively. Similarly we define $\Upsilon$ and $\Upsilon'$. Let $T$ and $T'$ be the Bass--Serre trees of $\BS(m,n)$ and $\BS(m',n')$, respectively. Each vertex $v\in T^{(0)}$ corresponds to an $a$-line (which is the unique $a$-line whose setwise stabilizer for the $\BS(m,n)$-action on $\Upsilon$ is equal to the stabilizer of $v$ with respect to the $\BS(m,n)$-action on $T$). Thus vertices of $T$ are in 1-1 correspondence with type $a$ vertices of $\Lambda$. There is a natural map $\pi:\Upsilon\to T$ sending each $a$-line to its associated vertex in $T$ and sending each $t$-edge to an edge in $T$. Note that if two $t$-edges of $\Upsilon$ are sent by $\pi$ to the same edge in $T$, then they induce the same orientation on this edge of $T$. Thus edges of $T$ inherit an orientation from $\Upsilon$ via $\pi$. This gives a partial order on the vertex set of $T$ (hence on the collection of type $a$ vertices of $\Lambda$), namely: for vertices $v_1$ and $v_2$ of $T$, we let $v_1\le v_2$ if each edge in the unique geodesic path connecting $v_1$ and $v_2$ is oriented from $v_1$ to $v_2$.
	
	We equip $\Lambda$ with the path metric such that each edge has length $1$.
	For a vertex $u\in\Lambda^{(0)}$, the \emph{link} $\lk_\Lambda(u)$ is the set of all vertices of $\Lambda$ at distance $1$ from $u$ (note that link defined here has a slightly different meaning compared to the definition in Section~\ref{subsec:polyhedral complex}, thus we choose a different notation).  Our first lemma shows that every graph automorphism of $\Lambda$ preserves the types of vertices. Through the correspondence between line-preserving bijections $\Upsilon^{(0)}\to (\Upsilon')^{(0)}$ and graph isomorphisms $\Lambda\to\Lambda'$, this yields the first conclusion of Proposition~\ref{prop:combinatorial rigid BS}.
	
	\begin{lemma}
		\label{lemma:link1}
		Suppose $|m|<|n|$. Let $u\in\Lambda^{(0)}$ be a vertex. The following assertions are equivalent.
		\begin{enumerate}	
			\item[(1)] The vertex $u$ is of type $a$.
			\item[(2)] There exists a finite subset $V_u\subseteq\Lambda^{(0)}$ consisting of vertices at distance $2$ from $u$, such that $\lk_\Lambda(u)\subseteq \bigcup_{v\in V_u} \lk_\Lambda(v)$.
		\end{enumerate}
	\end{lemma}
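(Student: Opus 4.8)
The plan is to give a purely combinatorial characterization of the two types of vertices by means of the height homomorphism $h\colon\BS(m,n)\to\mathbb{Z}$ determined by $h(a)=0$ and $h(t)=1$. Every $a$-line is contained in a single level set $h^{-1}(i)$ (its \emph{height}), whereas every $t$-line meets each level set in exactly one vertex. Since properties (1) and (2) are both invariant under the left-translation action of $\BS(m,n)$ on $\Upsilon$ (which preserves vertex types and induces automorphisms of $\Lambda$), I may normalize $u$ to be $\langle a\rangle$ in one direction and $\langle t\rangle$ in the other. For $u=L_t=\langle t\rangle$ the link $\lk_\Lambda(u)$ is exactly the family $\{A_i\}_{i\in\mathbb{Z}}$ of $a$-lines $A_i=t^i\langle a\rangle$, with $A_i$ at height $i$; for $u=L_a=\langle a\rangle$ the link is the family $\{\tau_j\}_{j\in\mathbb{Z}}$ of $t$-lines $\tau_j=a^j\langle t\rangle$, with $\tau_j$ through the height-$0$ vertex $a^j$.

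For the implication $(1)\Rightarrow(2)$ I would exhibit an explicit finite cover. Take the $|n|$ ``upward'' $a$-lines $U_r=a^r t\langle a\rangle$ for $r\in\{0,\dots,|n|-1\}$; these are pairwise distinct (two coincide iff their indices agree modulo $n$), each lies at distance $2$ from $L_a$ since it shares the $t$-line $\tau_r$, and a direct computation using $t^{-1}a^s t\in\langle a\rangle\Leftrightarrow n\mid s$ shows that $\tau_j$ meets $U_r$ precisely when $j\equiv r\pmod n$. Hence $\lk_\Lambda(U_r)\cap\lk_\Lambda(L_a)=\{\tau_j: j\equiv r\bmod n\}$, and letting $r$ range over all residues gives $\lk_\Lambda(L_a)\subseteq\bigcup_r\lk_\Lambda(U_r)$, so $V_u=\{U_0,\dots,U_{|n|-1}\}$ works.

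For the converse I argue by contraposition: assuming $u=L_t$ is of type $t$, I show that no finite set of distance-$2$ vertices can cover $\lk_\Lambda(u)$. Since $\Lambda$ is bipartite, a vertex at distance $2$ from $L_t$ is a $t$-line sharing one of the common neighbors $A_{i_0}$, hence of the form $L_t'=t^{i_0}a^\ell\langle t\rangle$ with $\ell\neq 0$. Its vertex at height $i_0+s$ equals $t^{i_0}a^\ell t^s$, which lies on $A_{i_0+s}$ iff $t^{-s}a^\ell t^s\in\langle a\rangle$; iterating $t^{-1}a^n t=a^m$ gives, for $s>0$, the criterion $n^s\mid\ell$. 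Because $|m|<|n|$ forces $|n|\ge 2$, this fails once $|n|^s>|\ell|$, so the set $\{i: A_i\in\lk_\Lambda(L_t')\}$ is bounded above (by $i_0+\log_{|n|}|\ell|$). A finite union of sets each bounded above is again bounded above, hence cannot contain the whole family $\{A_i\}_{i\in\mathbb{Z}}$; this establishes $(2)\Rightarrow(1)$.

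The main obstacle, and the place requiring care, is the downward behavior in the last step. When $|m|\ge 2$ one checks symmetrically (using $t a^\ell t^{-1}\in\langle a\rangle\Leftrightarrow m\mid\ell$) that $L_t'$ meets only finitely many $A_i$ below height $i_0$, so its contribution is finite; but when $|m|=1$ the same line $L_t'$ meets $A_i$ for \emph{all} $i\le i_0$, and its contribution to the link is infinite. What nonetheless saves the argument is that this contribution is always bounded \emph{above}: matching $L_t$ in the direction of increasing height requires agreement modulo ever higher powers of $n$, which is impossible beyond finitely many steps once $|n|\ge 2$. This asymmetry between the two ends — the oriented behavior that $|m|<|n|$ imposes — is exactly what separates type $a$ from type $t$, and it is why the hypothesis $|m|<|n|$ (equivalently $|n|\ge 2$) is used rather than merely $|m|\neq|n|$. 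I would be careful to carry out this step through the group-theoretic divisibility computation above rather than by abstract branching combinatorics in the Bass--Serre tree $T$, since the relevant constraint is precisely that $\pi$-images of $t$-lines are boundedly-above-overlapping, a fact not visible from the tree structure alone.
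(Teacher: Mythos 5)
Your overall architecture is sound and in fact runs parallel to the paper's own proof: for $(1)\Rightarrow(2)$ you cover $\lk_\Lambda(L_a)$ by the $|n|$ neighboring $a$-lines one level up (the paper uses the $|m|$ lines one level down — both work), and for $(2)\Rightarrow(1)$ you show that two $t$-lines crossing a common $a$-line can only simultaneously cross $a$-lines boundedly far upward, which is exactly the mechanism of the paper's argument (phrased there via the projection to the Bass--Serre tree and the contraction ratio $|m|/|n|$, rather than via the height function and conjugation arithmetic). However, the central arithmetic claim of your $(2)\Rightarrow(1)$ step is false as stated: iterating $t^{-1}a^nt=a^m$ does \emph{not} give the criterion $n^s\mid\ell$ for $t^{-s}a^\ell t^s\in\langle a\rangle$. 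In $\BS(2,4)$ one has $t^{-2}a^{8}t^{2}=t^{-1}a^{4}t=a^{2}\in\langle a\rangle$, yet $n^2=16\nmid 8$; correspondingly your claimed bound $i_0+\log_{|n|}|\ell|$ is violated ($\log_4 8=1.5$ but the overlap reaches $s=2$). The correct criterion, forced by Britton's lemma (at each step the current exponent must be divisible by $n$, and it then gets multiplied by $m/n$), is $hq^{s}\mid\ell$, where $h=\gcd(|m|,|n|)$ and $q=|n|/h$; your version is only correct when $\gcd(m,n)=1$. This is precisely the bookkeeping carried out in the paper's Lemma~\ref{lemma:equally-spaced}, whose gap $hq^{d}$ (not $n^d$) is the true spacing.

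The gap is repairable, and the repair clarifies where the hypothesis enters: since $|m|<|n|$ forces $q>p\ge 1$, hence $q\ge 2$, the corrected criterion $hq^s\mid\ell$ still fails once $q^s>|\ell|/h$, so the overlap set $\{i:A_i\in\lk_\Lambda(L_t')\}$ remains bounded above and your contraposition goes through verbatim. Note that the operative fact is $q\ge 2$, not ``$|n|\ge 2$'' as you wrote: in $\BS(2,2)$ (excluded, but instructive) one has $|n|\ge 2$ yet $q=1$ and the overlap is unbounded, so your stated reasoning misattributes the role of $|m|<|n|$. Separately, your side remark that for $|m|\ge 2$ the downward overlap is finite is also false whenever $m\mid n$: in $\BS(2,4)$ with $2\mid\ell$ one has $t^ra^\ell t^{-r}\in\langle a\rangle$ for all $r>0$, so $L_t'$ meets $A_i$ for \emph{all} $i\le i_0$; the correct dichotomy is $p\ge 2$ (i.e.\ $m\nmid n$) versus $p=1$. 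Fortunately, as you yourself observe, only upward boundedness is used, so this second error is harmless. With the divisibility criterion corrected, your proof is complete and is essentially the paper's argument in group-theoretic rather than tree-geometric language.
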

	
	\begin{proof}
		We first assume that $u$ is of type $a$, and prove that $(2)$ holds. Let $\ell_u$ be the $a$-line of $\Upsilon$ associated to $u$. Vertices in $\lk_\Lambda(u)$ correspond to $t$-lines of $\Upsilon$ which intersect $\ell_u$ in a point. Let $V_u$ be the set consisting of all type $a$ vertices $v\in\Lambda^{(0)}$ such that $u,v$ correspond to adjacent vertices $\bar u,\bar v$ in $T$, with $\bar v<\bar u$. The set $V_u$ is finite because $T$ is locally finite. For every $v\in V_u$, let $\ell_v$ be the $a$-line of $\Upsilon$ associated with $v$. Then each $t$-line $\ell$ which intersects $\ell_u$ intersects at least one of the lines $\ell_v$ with $v\in V_u$ (as can be seen by projecting $\ell$ to $T$). Thus $\lk_\Lambda(u)\subseteq \cup_{v\in V_u} \lk_\Lambda(v)$. We also have $d(u,v)=2$ for every $v\in V_u$, because there exists a $t$-line that intersects both $\ell_u$ and $\ell_v$.
		
		We now assume that $u$ is of type $t$, and prove that $(2)$ fails. Let $v\in\Lambda^{(0)}$ with $d(u,v)=2$. Using the bipartite structure of $\Lambda$, we see that $v$ is also of type $t$. Let $\ell_u$ and $\ell_v$ be $t$-lines in $\Upsilon$ corresponding to $u$ and $v$, respectively. Recall that $\pi:\Upsilon\to T$ denotes the projection map.
		
		Since $d(u,v)=2$, there exists an $a$-line $\ell_0$ intersecting both $\ell_u$ and $\ell_v$. Let $x_0=\pi(\ell_0)$, and let $d_0=d_{\ell_0}(\ell_0\cap\ell_u,\ell_0\cap\ell_v)$ (the line $\ell_0$ has a natural simplicial structure induced by $\Upsilon$, and $d_{\ell_0}$ is the simplicial distance on $\ell_0$). Let $x_v>x_0$ be any vertex such that $$d_T(x_0,x_v)>\frac{\log(d_0)}{\log(|n|/|m|)}.$$ We claim that $\pi(\ell_u)\cap\pi(\ell_v)$ does not contain any vertex $y\ge x_v$. Indeed, assume towards a contradiction that $\pi(\ell_u)\cap\pi(\ell_v)$ contains a vertex $y\ge x_v$. Let $x_0,x_1,\dots,x_k=y$ be the vertices, aligned in this order, along the segment from $x_0$ to $y$ in $T$ (in particular $k=d_T(x_0,y)$). For every $i\in\{1,\dots,k\}$, let $\ell_i$ be the $a$-line in $\Upsilon$ corresponding to $x_i$. Notice that all $x_i$ belong to the convex set $\pi(\ell_u)\cap\pi(\ell_v)$, so $\ell_i$ intersects both $\ell_u$ and $\ell_v$. Let $d_i=d_{\ell_i}(\ell_i\cap\ell_u,\ell_i\cap\ell_v)$. Then for every $i\in\{0,\dots,k-1\}$, we have $\frac{d_{i+1}}{d_i}=\frac{|m|}{|n|}$, so  $$d_k=\left(\frac{|m|}{|n|}\right)^kd_0<1,$$ contradicting that $d_k$ is a positive integer. This proves our claim. 
		
		Now, suppose towards a contradiction that a finite set $V_u\subseteq\Lambda^{(0)}$ as in $(2)$ exists. For every $v\in V_u$, let $\ell_v$ be the $t$-line of $\Upsilon$ corresponding to $v$. As $\lk_\Lambda(u)\subseteq \cup_{v\in V_u} \lk_\Lambda(v)$, each $a$-line intersecting $\ell_u$ also intersects at least one of the lines $\ell_v$ with $v\in V_u$. Thus $\pi(\ell_u)\subseteq \cup_{v\in V_u}\pi(\ell_v)$. But the claim made in the previous paragraph implies that $\cup_{v\in V_u}\pi(\ell_v)$ does not contain any vertex $x$ of $\pi(\ell_u)$ such that $x>x_0$ and $x$ is sufficiently far away from $x_0$. This contradiction concludes our proof.
	\end{proof}
	
	\begin{lemma}
		\label{lemma:link3}
		Let $u_1,u_2\in\Lambda^{(0)}$ be vertices of type $a$. If $\lk_\Lambda(u_1)\cap \lk_\Lambda(u_2)\neq\emptyset$, then either $u_1\ge u_2$ or $u_2\ge u_1$.
	\end{lemma}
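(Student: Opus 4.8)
The plan is to translate the hypothesis about links in $\Lambda$ into a statement about the Bass--Serre tree $T$, and then use the tree's structure. Recall that type $a$ vertices of $\Lambda$ are in bijection with vertices of $T$, and I write $\bar{u}_1,\bar{u}_2\in T^{(0)}$ for the vertices corresponding to $u_1,u_2$. A vertex $w$ lying in $\lk_\Lambda(u_1)\cap\lk_\Lambda(u_2)$ must be of type $t$ (by the bipartite structure of $\Lambda$), so it corresponds to a $t$-line $\ell_w$ of $\Upsilon$ that meets both the $a$-lines $\ell_{u_1}$ and $\ell_{u_2}$. The key geometric point is that a single $t$-line $\ell_w$ projects under $\pi:\Upsilon\to T$ to a \emph{geodesic ray or line} in $T$ all of whose edges are coherently oriented: indeed, the $t$-edges along $\ell_w$ are all labeled $t$ and oriented consistently, and $\pi$ sends each $t$-edge to an edge of $T$ preserving this orientation (as noted before Lemma~\ref{lemma:link1}).

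First I would observe that since $\ell_w$ intersects both $\ell_{u_1}$ and $\ell_{u_2}$, both $\bar{u}_1$ and $\bar{u}_2$ lie on $\pi(\ell_w)$. The next step is to use that $\pi(\ell_w)$ is a monotone path in $T$: every edge of $\pi(\ell_w)$ is oriented in the same direction along the path. Consequently, the segment of $\pi(\ell_w)$ between $\bar{u}_1$ and $\bar{u}_2$ is exactly the geodesic from $\bar{u}_1$ to $\bar{u}_2$ in $T$ (since $T$ is a tree, the geodesic is the unique embedded path, and it must coincide with the subpath of $\pi(\ell_w)$ joining these two vertices). All edges of this geodesic are then coherently oriented, meaning either every edge points from $\bar{u}_1$ toward $\bar{u}_2$, or every edge points from $\bar{u}_2$ toward $\bar{u}_1$. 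By the definition of the partial order $\le$ recalled before Lemma~\ref{lemma:link1}, this says precisely that $u_1\le u_2$ or $u_2\le u_1$, which is the desired conclusion.

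The one subtlety I would be careful about is the degenerate case where $\bar{u}_1=\bar{u}_2$, i.e.\ $u_1=u_2$; but then the conclusion holds trivially (and in fact the statement assumes $u_1,u_2$ are given, with the conclusion $u_1\ge u_2$ or $u_2\ge u_1$ being reflexively satisfied). I expect the main point to verify carefully is that the projection $\pi(\ell_w)$ of a $t$-line is genuinely a coherently oriented geodesic in $T$ rather than a path that backtracks. This follows because $\ell_w$ is a left coset of $\langle t\rangle$, consecutive vertices differ by the generator $t$, and each such $t$-step crosses exactly one edge of $T$ in the positive orientation; there is no backtracking because $t$ acts as a hyperbolic (loxodromic) isometry of $T$ with $\ell_w$ mapping onto its axis. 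Once this monotonicity is in hand, the comparability of $u_1$ and $u_2$ is immediate from the definition of the order.
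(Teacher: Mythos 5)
Your proof is correct and follows essentially the same route as the paper, whose proof is a one-line appeal to "considering the images of $\ell,\ell_1,\ell_2$ under the projection $\pi:\Upsilon\to T$"; you have simply written out the details that the paper leaves implicit (that $\pi$ maps a $t$-line onto a coherently oriented geodesic of $T$ containing both $\bar u_1$ and $\bar u_2$, so the geodesic between them is consistently oriented). The justifications you supply for monotonicity (no backtracking, $t$ loxodromic with the $t$-line projecting onto its axis) are exactly the intended content of the paper's remark that edges of $T$ inherit a well-defined orientation from $\Upsilon$.
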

	
	\begin{proof}
		Let $\ell_1$ and $\ell_2$ be the $a$-lines in $\Upsilon$ corresponding to $u_1$ and $u_2$ respectively. Since $\lk_\Lambda(u_1)\cap \lk_\Lambda(u_2)\neq\emptyset$, there exists a $t$-line $\ell$ which intersects both $\ell_1$ and $\ell_2$. The lemma follows by considering the images of $\ell,\ell_1$ and $\ell_2$ under the projection $\pi:\Upsilon\to T$.
	\end{proof}
	
	\begin{rk}
		The converse of Lemma~\ref{lemma:link3} is false in general.  For example, in $\BS(2,4)$, if $u_1$ corresponds to the $a$-line passing through identity and $u_2$ corresponds to the $a$-line passing through $tat$, then  $u_1\le u_2$ but $\lk_\Lambda(u_1)\cap \lk_{\Lambda}(u_2)=\emptyset$.
	\end{rk}
	
	\begin{lemma}\label{lemma:equally-spaced}
		Suppose $|m|<|n|$, let $h=\gcd(m,n)$, let $p=|m|/h$ and $q=|n|/h$. Let $u_1,u_2\in\Lambda^{(0)}$ be vertices of type $a$, let $\bar u_1,\bar u_2$ be the corresponding vertices in $T$, and let $\ell_1$ and $\ell_2$ be the corresponding standard lines in $\Upsilon$. 
		
		Assume that there exists a $t$-line which intersects both $\ell_1$ and $\ell_2$, and let $\ell^{12}_2\subseteq\ell_2$ be the subset consisting of all intersections of the form $\ell_2\cap\ell$, where $\ell$ varies over all $t$-lines that intersect both $\ell_1$ and $\ell_2$.
		
		Then $\ell^{12}_2$ is a subset of equally spaced vertices of $\ell_2$ whose gap (measured along $\ell_2$) is equal to $hq^{d_T(\bar u_1,\bar u_2)}$ if $u_2<u_1$, and $hp^{d_T(\bar u_1,\bar u_2)}$ if $u_1<u_2$.  
	\end{lemma}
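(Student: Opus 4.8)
The plan is to work entirely through the projection $\pi:\Upsilon\to T$, using the orientation on $T$ together with one elementary scaling computation, and then to propagate that computation along the $T$-geodesic joining $\bar u_1$ and $\bar u_2$.

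\emph{Single-edge scaling.} First I would record the basic local fact. Let $\bar v\to\bar v'$ be an oriented edge of $T$ (so the $t$-coordinate increases from $\bar v$ to $\bar v'$), and let $L,L'$ be the $a$-lines of $\Upsilon$ corresponding to $\bar v,\bar v'$. Fixing a vertex $g\in L$ lying on a $t$-edge up to $L'$, the defining relation rewritten as $ga^{n}t=gt\,a^{m}$ shows that the $t$-edges of $\Upsilon$ running from $L$ up to $L'$ join exactly the vertices $\{ga^{kn}\}_{k\in\mathbb Z}$ of $L$ to the vertices $\{gt\,a^{km}\}_{k\in\mathbb Z}$ of $L'$. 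Hence these ``connecting vertices'' form an arithmetic progression of gap $|n|$ on the lower line $L$ and of gap $|m|$ on the upper line $L'$, and the induced bijection between them rescales distances by the factor $|m|/|n|=p/q$.

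\emph{Reduction to a monotone chain.} Since some $t$-line meets both $\ell_1$ and $\ell_2$, its image under $\pi$ is a monotone geodesic of $T$ passing through $\bar u_1$ and $\bar u_2$; in particular these two vertices are comparable, and \emph{any} $t$-line meeting both $\ell_1$ and $\ell_2$ must pass through every $a$-line $L_0,L_1,\dots,L_d$ attached to the vertices $w_0,\dots,w_d$ of the $T$-geodesic, where $d=d_T(\bar u_1,\bar u_2)$. I would first treat the case $u_2<u_1$, so $\ell_2=L_0$ is the lowest line and $\ell_1=L_d$ the highest; then $\ell^{12}_2$ is exactly $S_0:=\{x\in L_0:\text{the }t\text{-line through }x\text{ reaches }L_d\}$.

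\emph{The recursion.} For $0\le k\le d$ set $S_k=\{x\in L_k:\text{the }t\text{-line through }x\text{ reaches }L_d\}$. By downward induction I would prove that $S_k$ is a single arithmetic progression of gap $h\,q^{\,d-k}$ along $L_k$ for $0\le k\le d-1$ (with $S_d=L_d$). The base case $k=d-1$ is precisely the set of vertices of $L_{d-1}$ connecting up to $L_d$, of gap $|n|=hq$. For the step, a point $x\in L_{k-1}$ lies in $S_{k-1}$ iff it connects up to $L_k$ (the up-connecting set of $L_{k-1}$, of gap $hq$, mapping with scale $p/q$ to a coset $C$ of gap $hp$ on $L_k$) and its image lies in $S_k$; thus the image lies in $C\cap S_k$. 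As $\gcd(p,q)=1$ forces $\gcd(hp,hq^{\,d-k})=h$, this intersection is a coset of gap $\mathrm{lcm}(hp,hq^{\,d-k})=h\,p\,q^{\,d-k}$, and pulling it back along the scale-$q/p$ bijection multiplies the gap by $q/p$, giving $h\,p\,q^{\,d-k}\cdot(q/p)=h\,q^{\,d-(k-1)}$. Hence $S_0=\ell^{12}_2$ has gap $h\,q^{\,d}$. When $u_1<u_2$ the line $\ell_2=L_d$ is the top of the chain, $\ell^{12}_2$ equals the analogous set of points reaching $L_0$ going \emph{down}, and the same induction run upward with the roles of $|m|$ and $|n|$ (equivalently $p$ and $q$) interchanged yields gap $h\,p^{\,d}$.

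\textbf{Main obstacle.} The delicate point is the \emph{compatibility} of the nested coset intersections: one must know that at each step $C\cap S_k$ is non-empty and is genuinely a coset of gap $\mathrm{lcm}(hp,hq^{\,d-k})$. Non-emptiness is exactly where the standing hypothesis is used, namely that some $t$-line meets both $\ell_1$ and $\ell_2$ produces a point of $S_0$ and hence of every $S_k$; and the precise gap rests on the coprimality $\gcd(p,q)=1$, which forces $\gcd(hp,hq^{\,j})=h$ for all $j\ge 0$. Everything else is the elementary single-edge computation propagated along the tree geodesic.
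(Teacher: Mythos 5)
Your proposal is correct. The single-edge scaling fact, the reduction to the monotone chain $L_0,\dots,L_d$ (using that a $t$-line projects to a monotone geodesic of $T$, so any $t$-line meeting $\ell_1$ and $\ell_2$ meets every intermediate line), and the downward induction identifying each $S_k$ as a single arithmetic progression of gap $hq^{\,d-k}$ all check out, including the two points you flag as delicate: non-emptiness of $C\cap S_k$ at every level (witnessed by the $t$-line provided by the hypothesis, whose intersection with $L_k$ lies in both $C$ and $S_k$), and the computation $\mathrm{lcm}(hp,hq^{\,d-k})=hpq^{\,d-k}$ from $\gcd(p,q)=1$.

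The paper proves the statement with the same underlying geometry (projection to the Bass--Serre tree and propagation along the geodesic from $\bar u_1$ to $\bar u_2$) but with a different decomposition, into two one-sided bounds. For the inclusion of a full progression into $\ell_2^{12}$, the paper exhibits an explicit algebraic witness: from the identity $t^da^{hp^d}t^{-d}=a^{\pm hq^d}$ it constructs, for any $x\in\ell_2^{12}$, a concrete concatenation of standard lines showing $xa^{\pm hq^d}\in\ell_2^{12}$. For the minimum-gap inequality, it takes two $t$-lines meeting both $\ell_1,\ell_2$, records that their distance on the $i$-th intermediate line equals $d_1(|n|/|m|)^i$ and must be divisible by $|m|$ for $0\le i\le d-1$, writes $d_1=p^ds$, and extracts $h\mid s$ by a short gcd argument. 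Your induction replaces both halves by a single stronger statement --- the exact identification of every intermediate connecting set $S_k$ as one coset --- so the upper and lower bounds fall out simultaneously and you never need to produce the conjugation identity; the cost is that you must carry the coset structure and its non-emptiness through the induction, which the paper's two-sided argument avoids. Both arguments invoke $\gcd(p,q)=1$ at exactly one crucial arithmetic step, and your version additionally yields, at no extra cost, the structure of the connecting sets on all intermediate lines, not just on $\ell_2$.
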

	
	\begin{proof}
		We will assume that $u_2<u_1$. The case where $u_1<u_2$ is similar and left to the reader  (alternatively, it can be deduced from the case $u_2<u_1$ through the natural automorphism between $\BS(m,n)$ and $\BS(n,m)$, after observing that the present proof also works if $|n|<|m|$).  Let $d=d_T(\pi(\ell_1),\pi(\ell_2))$. 
		
		The existence of a $t$-line intersecting both $\ell_1$ and $\ell_2$ shows that $\ell^{12}_2\neq\emptyset$. We first observe that if $x\in\ell^{12}_2$ and $y\in\ell_2$ are such that  $d_{\ell_2}(x,y)=hq^{d}$, then $y\in\ell^{12}_2$. Indeed, say for instance that $y=x\cdot a^{hq^d}$ (the case where $x=y\cdot a^{hq^d}$ is similar). Observe that $t^da^{hp^d}t^{-d}=a^{\pm hq^d}$.  Assuming that $t^da^{hp^d}t^{-d}=a^{hq^d}$ (the case where it is equal to $a^{-hq^d}$ being similar), we deduce that 
		\begin{itemize}
			\item  $x,xt^d$ lie on a $t$-line, 
			\item $xt^d,xt^da^{hp^d}$ all lie on the $a$-line $\ell_1$, and
			\item $xt^da^{hp^d}, xt^da^{hp^d}t^{-d}=xa^{hq^d}=y$ lie on a $t$-line,
		\end{itemize}
		showing that $y\in\ell^{12}_2$, as desired.
		
		Now, there only remains to show that if $y\in\ell^{12}_2$, then $d_{\ell_2}(x,y)\ge hq^d$. Let $\ell$ and $\ell'$ be two $t$-lines intersecting both $\ell_1$ and $\ell_2$. For $i\in\{1,2\}$, let $d_i=d_{\ell_i}(\ell_i\cap\ell,\ell_i\cap\ell')$. Then $$d_2=d_1\left(\frac{|n|}{|m|}\right)^{d};\ \ \textrm{and}\ \ |m|\ \textrm{divides}\  d_1\left(\frac{|n|}{|m|}\right)^{i}\ \textrm{for}\ 0\le i\le d-1,$$
	 as illustrated in Figure~\ref{fig:bs1} below.
		\begin{center}
		\begin{figure}[h]
			\includegraphics[scale=1]{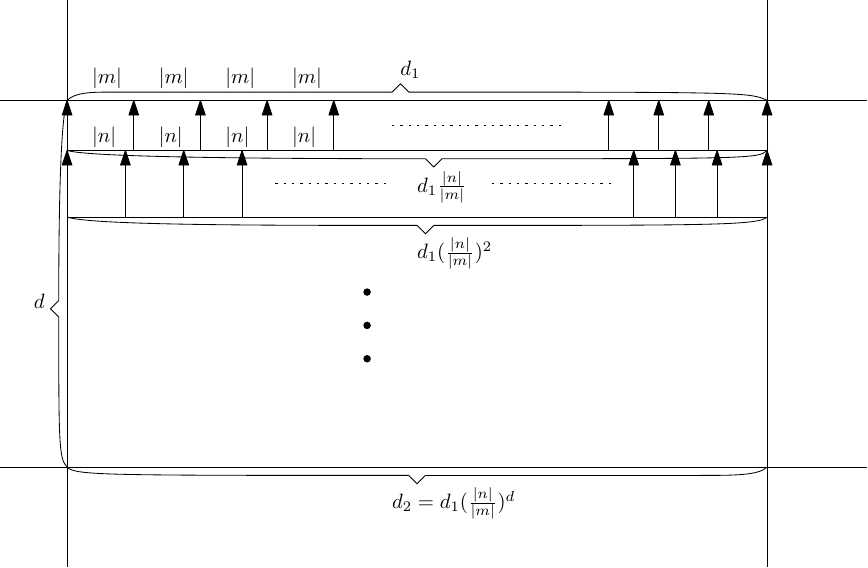}
			\caption{Illustration of the conditions in the proof of Lemma~\ref{lemma:equally-spaced}.}
			\label{fig:bs1}
		\end{figure}
		\end{center}
		From the first clause, we deduce that $d_1=p^ds$ for some positive integer $s$. Taking $i=d-1$ in the second clause, we have $ph\mid ps q^{d-1}$, hence $h\mid s q^{d-1}$. Taking $i=0$ in the second clause, we have $ph\mid s p^{d}$, hence $h\mid s p^{d-1}$. As $\gcd(p,q)=1$, we have $\gcd(s q^{d-1},s p^{d-1})=s$. Thus $h\mid s$. In particular $s\ge h$, so $d_1\ge hp^d$ and $d_2\ge hq^d$, as desired.
	\end{proof}
	
	\begin{cor}\label{cor:equally-spaced}
		Suppose $|m|<|n|$. Let $u_1,u_2,u_3\in\Lambda^{(0)}$ be  vertices of type $a$, with $u_1<u_2<u_3$. For $i,j\in\{1,2,3\}$, let $\Lambda_{ij}=\lk_\Lambda(u_i)\cap\lk_\Lambda(u_j)$. Assume that $\Lambda_{13}\neq\emptyset$. 
		
		Then $\Lambda_{13}\subsetneq\Lambda_{12}$ and $\Lambda_{13}\subseteq \Lambda_{23}$. Moreover, if $m$ does not divide $n$, then $\Lambda_{13}\subsetneq \Lambda_{23}$, otherwise $\Lambda_{13}=\Lambda_{23}$.
	\end{cor}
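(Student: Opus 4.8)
The plan is to reduce everything to the geometry of the Bass--Serre tree $T$ and then to a short computation with arithmetic progressions supplied by Lemma~\ref{lemma:equally-spaced}. First I would set up the dictionary: for a type-$a$ vertex $u_i$ write $\bar u_i=\pi(\ell_i)\in T^{(0)}$, and observe that a $t$-line $\ell$ meets $\ell_i$ if and only if $\bar u_i\in\pi(\ell)$. Since the $\pi$-image of a $t$-line is a bi-infinite geodesic of $T$, this yields the characterization
\[
\ell\in\Lambda_{ij}\iff \pi(\ell)\supseteq[\bar u_i,\bar u_j].
\]
Next I would check that $\bar u_2$ lies on the geodesic $[\bar u_1,\bar u_3]$: letting $w$ be the median of $\bar u_1,\bar u_2,\bar u_3$ in $T$, the segment $[w,\bar u_2]$ is oriented towards $\bar u_2$ (being a subsegment of $[\bar u_1,\bar u_2]$, all of whose edges point towards $\bar u_2$) and simultaneously away from $\bar u_2$ (being a subsegment of $[\bar u_2,\bar u_3]$), which forces $w=\bar u_2$. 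Hence $[\bar u_1,\bar u_2]$ and $[\bar u_2,\bar u_3]$ are both contained in $[\bar u_1,\bar u_3]$, and the displayed characterization immediately gives $\Lambda_{13}\subseteq\Lambda_{12}$ and $\Lambda_{13}\subseteq\Lambda_{23}$, together with the equality $\Lambda_{13}=\Lambda_{12}\cap\Lambda_{23}$, since a $t$-line lies in $\Lambda_{13}$ precisely when its $\pi$-image contains all of $\bar u_1,\bar u_2,\bar u_3$.

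The second step is to transport this to a statement about vertices of the single line $\ell_2$. Every $t$-line occurring in $\Lambda_{12}$, $\Lambda_{23}$ or $\Lambda_{13}$ passes through $\bar u_2$, hence meets $\ell_2$ in exactly one vertex, and the assignment $\ell\mapsto\ell\cap\ell_2$ is injective on such $t$-lines; so it suffices to compare the three image subsets of $\ell_2$. By Lemma~\ref{lemma:equally-spaced} applied to the pair $(u_1,u_2)$ with $u_1<u_2$, the image of $\Lambda_{12}$ is a bi-infinite equally spaced subset of $\ell_2$ with gap $g_1=hp^{d_1}$, where $d_1=d_T(\bar u_1,\bar u_2)$; applying the lemma to the pair $(u_3,u_2)$ with $u_2<u_3$ (so that $\ell_2$ again plays the role of the target line, with the other line now lying above), the image of $\Lambda_{23}$ has gap $g_2=hq^{d_2}$, where $d_2=d_T(\bar u_2,\bar u_3)$.

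Since $\Lambda_{13}\neq\emptyset$, these two progressions share a vertex $x_0$, so the image of $\Lambda_{13}=\Lambda_{12}\cap\Lambda_{23}$ is the progression through $x_0$ with gap $\mathrm{lcm}(g_1,g_2)$. Using $\gcd(p,q)=1$ one computes $\mathrm{lcm}(g_1,g_2)=hp^{d_1}q^{d_2}=g_1q^{d_2}=g_2p^{d_1}$. Because $u_1<u_2<u_3$ strictly we have $d_1,d_2\ge 1$, and because $|m|<|n|$ we have $q>p\ge 1$, so $q\ge 2$. Hence $\mathrm{lcm}(g_1,g_2)=g_1q^{d_2}>g_1$, which (by injectivity of the projection to $\ell_2$) gives $\Lambda_{13}\subsetneq\Lambda_{12}$. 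On the other hand $\mathrm{lcm}(g_1,g_2)=g_2p^{d_1}$, so the image of $\Lambda_{13}$ is strictly smaller than that of $\Lambda_{23}$ exactly when $p^{d_1}>1$, i.e.\ when $p>1$, while equality of images holds when $p=1$; in the latter case $\Lambda_{13}\subseteq\Lambda_{23}$ together with injectivity forces $\Lambda_{13}=\Lambda_{23}$. Finally I would observe that $p=|m|/\gcd(m,n)$ equals $1$ precisely when $|m|$ divides $n$, i.e.\ when $m$ divides $n$, matching the statement.

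I expect the only delicate point to be the bookkeeping in the second step: extracting the two gaps $hp^{d_1}$ and $hq^{d_2}$ with the correct exponents and the correct base ($p$ versus $q$) from Lemma~\ref{lemma:equally-spaced}. This hinges on orienting each pair so that $\ell_2$ is the target line, and on correctly tracking whether the other line lies above or below $u_2$ in the partial order; everything else is routine tree geometry and elementary arithmetic of arithmetic progressions.
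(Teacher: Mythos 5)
Your proof is correct, and it leans on the same key lemma as the paper (Lemma~\ref{lemma:equally-spaced}), but the bookkeeping is genuinely different. The paper never forms the intersection $\Lambda_{12}\cap\Lambda_{23}$ and never computes an lcm: it proves $\Lambda_{13}\subsetneq\Lambda_{12}$ by projecting to the \emph{outer} line $\ell_1$ (comparing nested progressions with gaps $hq^{d_T(\bar u_1,\bar u_3)}$ and $hq^{d_T(\bar u_1,\bar u_2)}$), and it handles the $\Lambda_{23}$ dichotomy by projecting to $\ell_3$ (gaps $hp^{d_T(\bar u_1,\bar u_3)}$ versus $hp^{d_T(\bar u_2,\bar u_3)}$). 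You instead work entirely on the \emph{middle} line $\ell_2$, first establishing $\Lambda_{13}=\Lambda_{12}\cap\Lambda_{23}$ and then identifying the three image sets as arithmetic progressions with gaps $g_1=hp^{d_1}$, $g_2=hq^{d_2}$ and $\mathrm{lcm}(g_1,g_2)=hp^{d_1}q^{d_2}$; both strictness claims then drop out of a single computation ($q\ge 2$ always, and $p>1$ exactly when $m$ does not divide $n$). Your route buys a unified argument and makes the structural identity $\Lambda_{13}=\Lambda_{12}\cap\Lambda_{23}$ explicit, at the cost of two verifications the paper does not need: that the image sets are \emph{full} bi-infinite progressions (not merely subsets of equally spaced sets), so that their intersection is exactly the lcm-progression, and that image-of-intersection equals intersection-of-images. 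Both are available: the first is what the proof of Lemma~\ref{lemma:equally-spaced} actually shows (closure under translation by the gap, plus the lower bound on spacing), and the paper's own proof of the corollary already reads the lemma in this strong way; the second follows from your injectivity observation, since distinct $t$-lines are disjoint left cosets of $\langle t\rangle$. Your median argument for $\bar u_2\in[\bar u_1,\bar u_3]$ also spells out what the paper compresses into the phrase ``by convexity of $T$.''
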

	
	\begin{proof}
		We adopt the same notations as in Lemma~\ref{lemma:equally-spaced} and its proof. By convexity of $T$, every $t$-line which intersects both $\ell_1$ and $\ell_3$ also intersects $\ell_2$, so $\Lambda_{13}\subseteq\Lambda_{12}$ and $\Lambda_{13}\subseteq\Lambda_{23}$. In particular $\Lambda_{12}\neq\emptyset$ and $\Lambda_{23}\neq\emptyset$.
		
		Let $\bar u_i\in T$ be the vertex corresponding to $u_i$. By Lemma~\ref{lemma:equally-spaced}, the subset $\ell^{13}_1\subseteq \ell_1$ is a subset of equally spaced vertices in $\ell_1$ with gap $hq^{d_T(\bar u_1,\bar u_3)}$. Likewise $\ell^{12}_1\subseteq \ell_1$ is a subset of equally spaced vertices in $\ell_1$ with gap $hq^{d_T(\bar u_1,\bar u_2)}$. As $d_T(\bar u_1,\bar u_3)>d_T(\bar u_1,\bar u_2)$ and $q>1$, there exists a $t$-line which intersects both $\ell_1$ and $\ell_2$ but not $\ell_3$, showing that $\Lambda_{13}\subsetneq\Lambda_{12}$.
		
		By Lemma~\ref{lemma:equally-spaced}, the subset $\ell^{13}_3\subseteq \ell_3$ is a subset of equally spaced vertices in $\ell_3$ with gap $hp^{d_T(\bar u_1,\bar u_3)}$. Likewise $\ell^{23}_3\subseteq \ell_3$ is a subset of equally spaced vertices in $\ell_3$ with gap $hp^{d_T(\bar u_2,\bar u_3)}$. If $m$ does not divide $n$, then $p>1$. Hence we deduce as before that $\Lambda_{13}\subsetneq \Lambda_{23}$. If $m\mid n$, then $p=1$. Then $hp^{d_T(\bar u_1,\bar u_3)}=hp^{d_T(\bar u_2,\bar u_3)}=h$. As $\Lambda_{13}\subseteq\Lambda_{23}$, it follows that in fact $\Lambda_{13}=\Lambda_{23}$, as desired.
	\end{proof}
	
	We will now use the above analysis to characterize adjacency along $t$-lines solely in terms of the combinatorics of $\Lambda$. We will argue separately regarding whether $m$ divides $n$ or not, and start with the case where it does not. 
	
	\begin{lemma}
		\label{lemma:link4}
		Suppose that $|m|<|n|$ and that $m$ does not divide $n$.	Let $u_1$ and $u_2$ be vertices of type $a$ in $\Lambda$ such that $u_1<u_2$ or $u_2<u_1$. Then $u_1$ and $u_2$ correspond to adjacent vertices in $T$ if and only if there does not exist any vertex $u_3\in\Lambda^{(0)}\setminus\{u_1,u_2\}$ such that $\lk_\Lambda(u_1)\cap \lk_\Lambda(u_2)\subseteq \lk_\Lambda(u_3)$.
	\end{lemma}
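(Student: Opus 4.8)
Throughout, assume without loss of generality that $u_1<u_2$, let $\bar u_1,\bar u_2\in T$ be the corresponding vertices, and recall the dictionary provided by the projection $\pi\colon\Upsilon\to T$: a vertex $v\in\Lambda_{12}:=\lk_\Lambda(u_1)\cap\lk_\Lambda(u_2)$ is exactly a $t$-line $\ell$ meeting both $a$-lines $\ell_1,\ell_2$, and since $\pi(\ell)$ is a bi-infinite geodesic of $T$ passing through $\bar u_1$ and $\bar u_2$, it must contain the whole segment $[\bar u_1,\bar u_2]$. The plan is to prove the two directions of the equivalence. For the \emph{non-adjacent} direction, suppose $d_T(\bar u_1,\bar u_2)\ge 2$, pick any interior vertex of the geodesic $[\bar u_1,\bar u_2]$; it is a type $a$ vertex $u_3\neq u_1,u_2$, and by the dictionary every $\ell\in\Lambda_{12}$ has $\pi(\ell)\supseteq[\bar u_1,\bar u_2]$, hence meets $\ell_3$. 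Thus $\Lambda_{12}\subseteq\lk_\Lambda(u_3)$, exactly the desired $u_3$. This is the routine direction (it is the convexity observation already used in Corollary~\ref{cor:equally-spaced}).

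The substance is the \emph{adjacent} direction: assuming $d_T(\bar u_1,\bar u_2)=1$, I would show no $u_3\neq u_1,u_2$ with $\Lambda_{12}\subseteq\lk_\Lambda(u_3)$ exists. First note that adjacency forces $\Lambda_{12}\neq\emptyset$: the $T$-edge $[\bar u_1,\bar u_2]$ lifts to a $t$-edge of $\Upsilon$, lying on a $t$-line meeting both $\ell_1$ and $\ell_2$. Now suppose such a $u_3$ exists. Since $\Lambda_{12}$ consists of type $t$ vertices and $\Lambda$ is bipartite, $\lk_\Lambda(u_3)$ containing a type $t$ vertex forces $u_3$ to be of type $a$. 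Combining $\Lambda_{12}\subseteq\lk_\Lambda(u_3)$ with the trivial inclusions $\Lambda_{12}\subseteq\lk_\Lambda(u_1)$ and $\Lambda_{12}\subseteq\lk_\Lambda(u_2)$ yields
\[
\emptyset\neq\Lambda_{12}\subseteq\Lambda_{13}:=\lk_\Lambda(u_1)\cap\lk_\Lambda(u_3),
\qquad
\emptyset\neq\Lambda_{12}\subseteq\Lambda_{23}:=\lk_\Lambda(u_2)\cap\lk_\Lambda(u_3).
\]
In particular $\lk_\Lambda(u_1)\cap\lk_\Lambda(u_3)$ and $\lk_\Lambda(u_2)\cap\lk_\Lambda(u_3)$ are both nonempty, so Lemma~\ref{lemma:link3} makes $u_3$ comparable to each of $u_1,u_2$.

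Since $u_1<u_2$ and $u_3$ is comparable to both, exactly three positions arise: $u_2<u_3$, or $u_3<u_1$, or $u_1<u_3<u_2$. I would first rule out the middle one: the partial order on $T^{(0)}$ is the one induced by a coherent edge-orientation, for which $u_1\le u_3\le u_2$ forces $\bar u_3$ to lie on the geodesic $[\bar u_1,\bar u_2]$ (a median/alignment argument in the tree); as $\bar u_1,\bar u_2$ are adjacent this would give $u_3\in\{u_1,u_2\}$, excluded. In the remaining two cases I would derive a contradiction from Corollary~\ref{cor:equally-spaced}. If $u_1<u_2<u_3$, the corollary (applied to this chain, with $\Lambda_{13}\neq\emptyset$) gives $\Lambda_{13}\subsetneq\Lambda_{12}$, contradicting $\Lambda_{12}\subseteq\Lambda_{13}$. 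If $u_3<u_1<u_2$, applying the corollary to the chain $u_3<u_1<u_2$ and invoking the hypothesis $m\nmid n$ gives the \emph{strict} inclusion $\lk_\Lambda(u_3)\cap\lk_\Lambda(u_2)\subsetneq\lk_\Lambda(u_1)\cap\lk_\Lambda(u_2)$, i.e.\ $\Lambda_{23}\subsetneq\Lambda_{12}$, contradicting $\Lambda_{12}\subseteq\Lambda_{23}$.

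\textbf{Main obstacle.} The delicate point is this last case $u_3<u_1<u_2$: unlike the other case, the contradiction needs the genuinely \emph{strict} inclusion, and this strictness is precisely what $m\nmid n$ buys (if $m\mid n$ the corollary only yields equality, and the argument collapses — consistent with the lemma being stated under $m\nmid n$). Tracing this back, the strictness ultimately comes from the quantitative spacing computation of Lemma~\ref{lemma:equally-spaced}: going ``down'' the tree the gaps scale by powers of $p=|m|/h$, which are nontrivial exactly when $m\nmid n$. Everything else — the $\pi$-dictionary, bipartiteness, comparability via Lemma~\ref{lemma:link3}, and the tree-order alignment ruling out the intermediate case — is structural and clean; the spacing-driven strict inclusion is where the real work sits.
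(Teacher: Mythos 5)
Your proof is correct and follows essentially the same route as the paper's: the easy direction via an interior vertex of the geodesic (the paper cites Corollary~\ref{cor:equally-spaced} where you re-derive the containment directly from the projection to $T$), and the hard direction by forcing $u_3$ to be comparable to $u_1,u_2$ via Lemma~\ref{lemma:link3} and then contradicting the inclusion $\Lambda_{12}\subseteq\Lambda_{i3}$ with the strict inclusions of Corollary~\ref{cor:equally-spaced}, using $m\nmid n$ exactly in the case where $u_3$ lies below both. The only (harmless) difference is that you spell out the median argument excluding the intermediate ordering, which the paper leaves implicit.
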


	\begin{proof}
		We assume without loss of generality that $u_2<u_1$. For every $i\in\{1,2\}$, let $\bar u_i$ be the vertex in $T$ corresponding to $u_i$ and let $\ell_i$ be the $a$-line in $\Upsilon$ corresponding to $u_i$. For $i,j\in\{1,2,3\}$, we write $\Lambda_{ij}=\lk_\Lambda(u_i)\cap \lk_\Lambda(u_j)$.
		
		We first assume that $\bar u_1$ and $\bar u_2$ are not adjacent, and show that $u_1,u_2$ fail to satisfy the condition from the lemma. As $\bar u_1$ and $\bar u_2$ are not adjacent, we can find $u_3\in\Lambda^{(0)}$ with $u_2<u_3<u_1$. If $\Lambda_{12}=\emptyset$, then the inclusion $\Lambda_{12}\subseteq\lk_\Lambda(u_3)$ is obvious, and otherwise this inclusion follows from Corollary~\ref{cor:equally-spaced}. 
		
		We now assume that $\bar u_1$ and $\bar u_2$ are adjacent, and prove that $u_1,u_2$ satisfy the condition from the lemma. In this case it is clear that $\Lambda_{12}\neq\emptyset$. We argue by contradiction and take $u_3$ such that $\Lambda_{12}\subseteq\lk_\Lambda(u_3)$. In particular $u_3$ is of type $a$, and for every $i\in\{1,2\}$, we have $\Lambda_{12}\subseteq \Lambda_{i3}.$
		Lemma~\ref{lemma:link3} thus ensures that $u_3$ is comparable to both $u_1$ and $u_2$. As $u_2<u_1$, and $\bar u_1,\bar u_2$ are adjacent, there are two possibilities left for the ordering of $\{u_1,u_2,u_3\}$. If $u_2<u_1<u_3$, then Corollary~\ref{cor:equally-spaced} ensures that $\Lambda_{23}\subsetneq \Lambda_{12}$, contradicting that $\Lambda_{12}\subseteq \Lambda_{23}$. If $u_3<u_2<u_1$, then as $m$ does not divide $n$, Corollary~\ref{cor:equally-spaced} ensures that $\Lambda_{13}\subsetneq\Lambda_{12}$, contradicting that $\Lambda_{12}\subseteq\Lambda_{13}$. 
	\end{proof}

	\begin{lemma}\label{lemma:link5}
		Suppose that $|m|<|n|$ and that $m$ divides $n$. Let $u_1$ and $u_2$ be vertices of type $a$ in $\Lambda$ such that $u_1<u_2$ or $u_2<u_1$. Then $u_1$ and $u_2$ correspond to adjacent vertices in $T$ if and only if $\lk_\Lambda(u_1)\cap\lk_\Lambda(u_2)\neq\emptyset$, and there do not exist any vertices $u_3,u_4\in\Lambda^{(0)}$ such that the following two assertions hold: 
		\begin{enumerate}
			\item we have $\lk_\Lambda(u_1)\cap \lk_\Lambda(u_2)\subseteq \lk_\Lambda(u_3)\cap \lk_\Lambda(u_4)$;
			\item there exists $i\in\{1,2\}$ such that $\lk_\Lambda(u_3)\cap \lk_\Lambda(u_4)=\lk_\Lambda(u_3)\cap \lk_\Lambda(u_i)$.
		\end{enumerate}
	\end{lemma}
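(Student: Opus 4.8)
\emph{Setup and guiding picture.} I would assume without loss of generality that $u_2<u_1$, write $\Lambda_{ij}=\lk_\Lambda(u_i)\cap\lk_\Lambda(u_j)$, let $\ell_i$ be the $a$-line attached to $u_i$ and $\bar u_i\in T$ the associated vertex, and recall $h=\gcd(m,n)=|m|$ and $q=|n|/|m|\ge 2$ (so $p=1$). The whole argument revolves around understanding the set $\{u_3:\Lambda_{12}\subseteq\lk_\Lambda(u_3)\}$ combinatorially. Geometrically, the $t$-lines comprising $\Lambda_{12}$ all traverse the segment $[\bar u_2,\bar u_1]$; by Lemma~\ref{lemma:equally-spaced} their footprint on $\ell_1$ is $|m|$-spaced while the branching of $T$ above $\bar u_1$ has modulus $|n|>|m|$, so these lines fan out immediately above $\bar u_1$; dually, their footprint on $\ell_2$ is $hq^{d}$-spaced (with $d=d_T(\bar u_1,\bar u_2)$), and because $m\mid n$ forces $|m|=h$ to divide this gap, they all descend through a single down-neighbour of $\bar u_2$ and, inductively, along one downward ray. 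Thus $\{u_3:\Lambda_{12}\subseteq\lk_\Lambda(u_3)\}$ is the vertex set of a single decreasing ray $\rho$ running from $\bar u_1$ through $\bar u_2$ and continuing downward, and $u_1,u_2$ are $T$-adjacent precisely when $\bar u_2$ is the immediate predecessor of $\bar u_1$ on $\rho$. This is exactly the phenomenon that breaks the naive criterion of Lemma~\ref{lemma:link4}: when $m\mid n$ there always exist external vertices $u_3<u_2$ on $\rho$ with $\Lambda_{12}\subseteq\lk_\Lambda(u_3)$, and conditions (1)--(2) are engineered to distinguish such external witnesses from genuine intermediate vertices. Throughout I work with $u_3,u_4\notin\{u_1,u_2\}$, since allowing $u_4\in\{u_1,u_2\}$ makes (2) automatic and is not the intended content.

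\emph{Non-adjacent $\Rightarrow$ a forbidden pair exists.} Suppose $u_1,u_2$ are comparable but not adjacent and $\Lambda_{12}\neq\emptyset$. Since $d_T(\bar u_1,\bar u_2)\ge 2$, I can pick an intermediate vertex $u_3$ with $u_2<u_3<u_1$; fixing any $\ell^\ast\in\Lambda_{12}$, I let $u_4<u_2$ be the down-neighbour of $\bar u_2$ through which $\ell^\ast$ descends, so that $u_4<u_2<u_3<u_1$ is a chain with $\Lambda_{34}\neq\emptyset$. Applying Corollary~\ref{cor:equally-spaced} to this chain gives $\Lambda_{34}=\Lambda_{23}$, which is condition (2) with $i=2$; applying it to $u_2<u_3<u_1$ gives $\Lambda_{12}\subsetneq\Lambda_{23}=\Lambda_{34}$, which is condition (1). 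Hence $(u_3,u_4)$ is forbidden and the criterion correctly reports non-adjacency. (If instead $\Lambda_{12}=\emptyset$, the criterion fails for the trivial reason.)

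\emph{Adjacent $\Rightarrow$ no forbidden pair (the main obstacle).} Now suppose $u_1,u_2$ are adjacent and some $(u_3,u_4)$ satisfies (1)--(2). From (1) I get $\Lambda_{12}\subseteq\lk_\Lambda(u_3)$ and $\Lambda_{12}\subseteq\lk_\Lambda(u_4)$, so $\Lambda_{13},\Lambda_{23},\Lambda_{14},\Lambda_{24}$ are all nonempty and Lemma~\ref{lemma:link3} makes $u_3,u_4$ comparable to both $u_1$ and $u_2$. A short case analysis via Corollary~\ref{cor:equally-spaced} rules out $u_3>u_1$ (the chain $u_2<u_1<u_3$ would force $\Lambda_{12}\subseteq\Lambda_{23}\subsetneq\Lambda_{12}$) and, by adjacency, rules out $u_2<u_3<u_1$; as $u_3\notin\{u_1,u_2\}$ this forces $u_3<u_2$, and likewise $u_4<u_2$. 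Since $\Lambda_{34}\supseteq\Lambda_{12}\neq\emptyset$, Lemma~\ref{lemma:link3} makes $u_3,u_4$ comparable, so either $u_4<u_3<u_2$ or $u_3<u_4<u_2$. The contradiction then comes from comparing footprints on $\ell_3$: by Lemma~\ref{lemma:equally-spaced}, $\Lambda_{3i}$ (with $u_3<u_i$) is $hq^{d_T(\bar u_3,\bar u_i)}$-spaced on $\ell_3$, whereas $\Lambda_{34}$ is $|m|$-spaced when $u_4<u_3$ and $hq^{d_T(\bar u_3,\bar u_4)}$-spaced when $u_3<u_4$. In the first case $|m|=h<hq^{d_T(\bar u_3,\bar u_i)}$, and in the second $d_T(\bar u_3,\bar u_4)<d_T(\bar u_3,\bar u_i)$ for both $i\in\{1,2\}$; either way the two subsets of $\ell_3$ have different gaps, so $\Lambda_{34}\neq\Lambda_{3i}$, contradicting (2).

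\emph{Expected main difficulty.} The delicate part is the adjacent direction, and within it the fact that when $m\mid n$ the downward branching of $T$ may be a single edge (as in $\BS(1,n)$), which is precisely what manufactures the external witnesses $u_3,u_4<u_2$ and forces the two-vertex formulation of the criterion. Making the footprint/gap bookkeeping of Lemma~\ref{lemma:equally-spaced} airtight across all orderings of $\{u_1,u_2,u_3,u_4\}$, and establishing rigorously that the $\Lambda_{12}$-lines funnel below $\bar u_2$ along a \emph{single} ray, is where the care is needed.
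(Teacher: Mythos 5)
Your proposal is correct and takes essentially the same route as the paper: the non-adjacent direction uses the same witnesses (an intermediate vertex $u_3$ and a vertex $u_4<u_2$ lying below $\bar u_2$ along a $t$-line crossing $\ell_1,\ell_2,\ell_3$), and the adjacent direction uses the same Lemma~\ref{lemma:link3}/Corollary~\ref{cor:equally-spaced} analysis to force $u_3,u_4<u_2<u_1$ before deriving a contradiction with condition (2). The only cosmetic differences are that you state explicitly the convention $u_3,u_4\notin\{u_1,u_2\}$ (which the paper leaves implicit, and which is indeed needed for the ``adjacent'' direction to hold as stated), and that your final contradiction compares the gaps of the footprints on $\ell_3$ via Lemma~\ref{lemma:equally-spaced} directly, whereas the paper cites the strict inclusions $\Lambda_{13}\subsetneq\Lambda_{34}$ and $\Lambda_{23}\subsetneq\Lambda_{34}$ from Corollary~\ref{cor:equally-spaced} --- the same underlying computation.
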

	
	\begin{proof}
		We assume without loss of generality that $u_2<u_1$. As a matter of convention, we will always denote by $\bar u_i$ the vertex in $T$ corresponding to the vertex $u_i\in\Lambda^{(0)}$, and by $\ell_i$ the $a$-line in $\Upsilon$ corresponding to $u_i$. We will always write $\Lambda_{ij}=\lk_\Lambda(u_i)\cap \lk_\Lambda(u_j)$. 
		
		We first assume that $\bar u_1$ and $\bar u_2$ are not adjacent, and show that $u_1,u_2$ fail to satisfy the condition from the lemma. If $\Lambda_{12}=\emptyset$, we are done, so we will assume otherwise. As $\bar u_1$ and $\bar u_2$ are not adjacent, we can find $u_3\in\Lambda^{(0)}$ with $u_2<u_3<u_1$. Corollary~\ref{cor:equally-spaced} then ensures that $\Lambda_{12}\subset\Lambda_{23}$ (in particular $\Lambda_{12}\subseteq\lk_\Lambda(u_3)$).  
		Let now $u_4$ be such that $u_4<u_2$, chosen so that $\Lambda_{14}\neq\emptyset$ and $\Lambda_{34}\neq\emptyset$: this exists, by choosing a $t$-line $\ell$ that intersects $\ell_1,\ell_2,\ell_3$, and choosing $u_4<u_2$ corresponding to an $a$-line $\ell_4$ that also intersects $\ell$. As $m$ divides $n$, Corollary~\ref{cor:equally-spaced} applied to $u_4<u_2<u_1$ ensures that $\Lambda_{14}=\Lambda_{12}$ (in particular $\Lambda_{12}\subseteq\lk_\Lambda(u_4)$). 
		Finally, as $m$ divides $n$, Corollary~\ref{cor:equally-spaced} applied to $u_4<u_2<u_3$ ensures that $\Lambda_{34}=\Lambda_{23}$. We have thus found $u_3,u_4$ as in the statement of the lemma in this case.
		
		We now assume that $\bar u_1$ and $\bar u_2$ are adjacent, and prove that $u_1,u_2$ satisfy the condition from the lemma. In this case it is clear that $\Lambda_{12}\neq\emptyset$. We argue by contradiction and take $u_3,u_4$ as in the lemma. As $\Lambda_{12}\subseteq\lk_\Lambda(u_3)$, the vertex $u_3$ is of type $a$, and for every $i\in\{1,2\}$, we have $\Lambda_{12}\subseteq \Lambda_{i3}.$
		Lemma~\ref{lemma:link3} thus ensures that $u_3$ is comparable to both $u_1$ and $u_2$. As $u_2<u_1$, and $\bar u_1,\bar u_2$ are adjacent, there are two possibilities left for the ordering of $\{u_1,u_2,u_3\}$. If $u_2<u_1<u_3$, then Corollary~\ref{cor:equally-spaced} ensures that $\Lambda_{23}\subsetneq \Lambda_{12}$, contradicting that $\Lambda_{12}\subseteq \Lambda_{23}$. 
		So we can assume that $u_3<u_2<u_1$. Likewise $u_2<u_1<u_4$ leads to $\Lambda_{14}=\Lambda_{24}\subsetneq \Lambda_{12}$, a contradiction to the equality $\Lambda_{14}=\Lambda_{12}$ established in the previous paragraph. Thus we can assume $u_4<u_2<u_1$.
		
		As $\Lambda_{12}\neq\emptyset$ and $\Lambda_{12}\subseteq\Lambda_{34}$, we have $\Lambda_{34}\neq\emptyset$. So Lemma~\ref{lemma:link3} ensures that $u_3$ and $u_4$ are comparable. 
		
		Assume first that $u_4<u_3<u_2<u_1$. The first clause in the statement of the lemma ensures that $\Lambda_{14}\neq\emptyset$ and that $\Lambda_{24}\neq\emptyset$. Then Corollary~\ref{cor:equally-spaced}, applied to $u_4<u_3<u_2$, ensures that $\Lambda_{23}=\Lambda_{24}$ (using that $m$ divides $n$), which in turn is strictly contained in $\Lambda_{34}$. And when applied to $u_4<u_3<u_1$ it ensures that $\Lambda_{13}=\Lambda_{14}$ (using that $m$ divides $n$), which in turn is strictly contained in $\Lambda_{34}$. In particular $\Lambda_{13}$ and $\Lambda_{23}$ are strictly contained in $\Lambda_{34}$, and we get a contradiction to the last clause in the statement of the lemma. 
		
		Assume now that $u_3<u_4<u_2<u_1$. The first clause in the statement of the lemma ensures that $\Lambda_{13}\neq\emptyset$ and that $\Lambda_{23}\neq\emptyset$. Then Corollary~\ref{cor:equally-spaced}, applied to $u_3<u_4<u_2$, ensures that $\Lambda_{23}\subsetneq\Lambda_{34}$, and when applied to $u_3<u_4<u_1$ it ensures that $\Lambda_{13}\subsetneq\Lambda_{34}$. Again we get a contradiction to the last clause in the statement of the lemma. This completes our proof.
	\end{proof}
	
	Now we are ready to prove Proposition~\ref{prop:combinatorial rigid BS}.
	
	\begin{proof}[Proof of Proposition~\ref{prop:combinatorial rigid BS}]
		Recall that the line-preserving bijection $\phi:\Upsilon^{(0)}\to (\Upsilon')^{(0)}$ induces a graph isomorphism $\phi_\Lambda:\Lambda\to\Lambda'$. By Lemma~\ref{lemma:link1}, $\phi$ preserves labels of standard lines, and therefore it also induces a bijection $\phi_T: T^{(0)}\to (T')^{(0)}$ (recall indeed that type $a$ vertices of $\Lambda$ are identified with vertices of $T$). Take two adjacent vertices $\bar u_1,\bar u_2$ of $T$, and let $u_1,u_2$ be the corresponding vertices of $\Lambda$. Assume for now that $m$ does not divide $n$. Then $\lk_\Lambda(u_1)\cap \lk_\Lambda(u_2)\neq\emptyset$ and $u_1,u_2$ satisfy the condition from Lemma~\ref{lemma:link4}. Hence $\lk_{\Lambda'}(\phi_\Lambda(u_1))\cap \lk_{\Lambda'}(\phi_\Lambda(u_2))\neq\emptyset$. Lemma~\ref{lemma:link3} implies that $\phi_T(\bar u_1)$ and $\phi_T(\bar u_2)$ are comparable. As $u_1$ and $u_2$ satisfy the condition of Lemma~\ref{lemma:link4}, so do $\phi_\Lambda(u_1)$ and $\phi_\Lambda(u_2)$. Lemma~\ref{lemma:link4} therefore implies that $\phi_T(\bar u_1)$ and $\phi_T(\bar u_2)$ are adjacent in $T'$. The same conclusion also holds when $m$ divides $n$, using Lemma~\ref{lemma:link5} instead of Lemma~\ref{lemma:link4}. Thus $\phi_T$ preserves adjacency, and as $T$ and $T'$ are trees, it also preserves non-adjacency, i.e.\ $\phi_T$ extends to an isomorphism from $T$ to $T'$. As $T$ has valence $|m|+|n|$ and $T'$ has valence $|m'|+|n'|$, we deduce that $|m|+|n|=|m'|+|n'|$.
		
		We claim that $\phi_T$ preserves the orientation of edges. Say that two vertices $\bar v_1,\bar v_2\in T^{(0)}$ are \emph{strongly comparable} if $\lk_\Lambda(v_1)\cap\lk_\Lambda(v_2)\neq\emptyset$. Let $\bar u_1>\bar u_2$ be two adjacent vertices of $T$, and for every $i\in\{1,2\}$, let $V_i$ be the set consisting of all vertices of $T\setminus\{\bar u_1,\bar u_2\}$ which are adjacent to $\bar u_i$ and are strongly comparable to both $\bar u_1$ and $\bar u_2$. 
		
		Write $|m|/|n|=p/q$, where $p,q\ge 1$ satisfy $\gcd(p,q)=1$. We now show that $|V_1|=q$ and $|V_2|=p$. We refer to Figure~\ref{fig:bs2} below, where  $\ell_j$ is the $a$-line corresponding to $u_j$ for $j\in\{1,2\}$, and the lines $\ell_3^i$ correspond to vertices in $V_2$.
		
		Let $v$ be a vertex in $\ell_2$ which belongs to some $t$-line that intersects $\ell_1$. For every $i\in\{0,\dots,|m|-1\}$, let $v_i$ be the vertex at distance $i$ from $v$ on $\ell_2$, going positively in the $a$-direction (in particular $v_0=v$). Then there is a natural bijection between the set of all $a$-lines $\ell_3$ corresponding to vertices $\bar u_3<\bar u_2$ of $T$ that are adjacent to $\bar u_2$, and the set $\{v_0,\dots,v_{|m|-1}\}$: this bijection sends an $a$-line $\ell_3$ to the unique vertex $v_i$ such that there is a $t$-line through $v_i$ intersecting $\ell_3$. We denote by $\ell_3^i$ the $a$-line corresponding to $v_i$. Then the existence of a $t$-line intersecting both $\ell_3^i$ and  $\ell_1$ is equivalent to the existence of a pair of integers $k_1,k_2$ such that $k_1|n|-k_2|m|=i$. In other words, the set $V_2$ is in 1-1 correspondence with the set of all integers $i\in\{0,\dots,|m|-1\}$ such that the equation  $k_1|n|-k_2 |m|=i$ has an integer solution for $k_1$ and $k_2$. This happens if and only if $i$ is a multiple of $\gcd(|m|,|n|)$, so $|V_2|=p$.   
		
		Likewise $V_1$ is in 1-1 correspondence with the set of all integers $i\in\{0,\dots,|n|-1\}$ such that the equation  $k_1|m|-k_2|n|=i$ has an integer solution for $k_1$ and $k_2$, so $|V_1|=q$.

		\begin{figure}[htb]
			\begin{center}
			\includegraphics[scale=1]{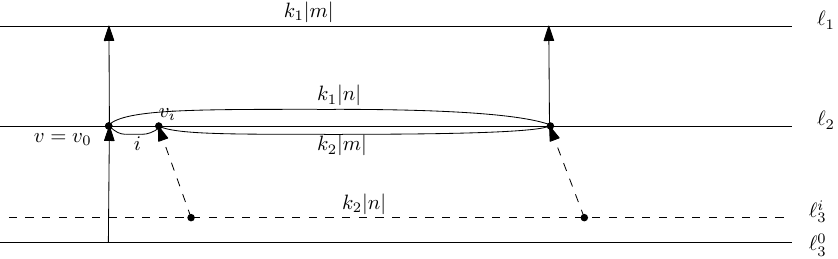}
			\caption{Illustration of the configuration in the proof of Proposition~\ref{prop:combinatorial rigid BS}.}
			\label{fig:bs2}
				\end{center}
			\end{figure}

		It follows from the definition that $\phi_T(V_i)$ is the set of all vertices of $T'$ which are adjacent to $\phi_T(\bar u_i)$ and are strongly comparable to both $\phi_T(\bar u_1)$ and $\phi_T(\bar u_2)$. We claim that $\phi_T(\bar u_1)>\phi_T(\bar u_2)$. Indeed, suppose towards a contradiction that  $\phi_T(\bar u_1)<\phi_T(\bar u_2)$. Then repeating the above computation, we have $|\phi_T(V_1)|=p'$ and $|\phi_T(V_2)|=q'$ with $p',q'$ being positive integers such that $\gcd(p',q')=1$ and $\frac{|m'|}{|n'|}=\frac{p'}{q'}$. As $|\phi_T(V_i)|=|V_i|$ for any $i\in\{1,2\}$, we deduce that $q=p'$ and $p=q'$, which is impossible as $p<q$ and $p'<q'$. Thus we must have $\phi_T(\bar u_1)>\phi_T(\bar u_2)$. Moreover, $p=p'$ and $q=q'$. This together with $|m|+|n|=|m'|+|n'|$ implies that $|m|=|m'|$ and $|n|=|n'|$. Hence Proposition~\ref{prop:combinatorial rigid BS} follows.
	\end{proof}

	\paragraph*{Some remarks, examples and questions.} Proposition~\ref{prop:combinatorial rigid BS} suggests the following more general question: given $m,n\in\mathbb{Z}$, what is the group of all line-preserving bijections of $\BS(m,n)$? Is it significantly larger than $\BS(m,n)$ itself? Let us now illustrate a possible behaviour on an example.
	
	\begin{ex}\label{ex:12}
		In the case of $\BS(1,2)$, let us now exhibit examples of line-preserving bijections of $\BS(1,2)$ which do not preserve the linear order along $a$-lines. Let $\Upsilon=\Upsilon_{1,2}$ be the Cayley graph of $\BS(1,2)$ for its usual generating set. 
		
		Let $v_0$ be the vertex of $\Upsilon$ corresponding to the identity element $e$ of $\BS(1,2)$. Let $\ell_0$ be the $t$-line through $e$, and let $\call$ be the set of all $a$-lines that intersect $\ell_0$. Let $P\subseteq\Upsilon$ be the subgraph of $\Upsilon$ spanned by all vertices that belong to a line in $\call$. Observe that every $t$-line in $\Upsilon_{1,2}$ has at least one half-line that is contained in $P$.
		
		Fix $n\ge 0$, and let $\call_{\ge n}$ be the subset of $\call$ consisting of all $a$-lines that contain $t^k$ for some $k\ge n$.
		
		We now define a map $\phi:\Upsilon^{(0)}\to\Upsilon^{(0)}$ in the following way. Say that a $t$-line is \emph{high} if it intersects a line in $\call_{\ge n}$. Say that a vertex is \emph{high} if it belongs to some high $t$-line, and \emph{low} otherwise. Observe that a $t$-line is high if and only if it is of the form $a^{k2^{n}}\ell_0$ for some $k\in\mathbb{N}$. Given a vertex $x\in\Upsilon^{(0)}$, we set $\phi(x)=x$ if $x$ is high, and $\phi(x)=a^{2^{n}}x$ if $x$ is low.
		
		Observe that $\phi$ is a bijection of $\Upsilon^{(0)}$: indeed it coincides with the identity on one orbit of $t$-lines under $\langle a^{2^{n}}\rangle$, and with the left multiplication by  $a^{2^{n}}$ on all other orbits. The map $\phi$ clearly sends every $t$-line onto a $t$-line, and we will now prove that $\phi$ sends every $a$-line onto an $a$-line. It is clear that if $x,y$ are either two high vertices on a common $a$-line, or two low vertices on a common $a$-line, then their images also belong to a common $a$-line. Let finally $x$ be a high vertex, and $y$ a low vertex, that belong to the same $a$-line. Necessarily $x$ and $y$ belong to $P$, and there exists $k<n$ such that $x$ and $y$ both belong to the $a$-line corresponding to the coset $t^k\langle a\rangle$. Write $x=t^ka^p$ and $y=t^ka^q$, with $p,q\in\mathbb{Z}$. Then $\phi(x)=t^ka^p$, while  $\phi(y)=a^{2^{n}}t^ka^q$. Using that $k<n$, we see that  $\phi(y)=t^ka^{2^{n-k}}a^q$, showing that $\phi(x)$ and $\phi(y)$ both belong to the $a$-line corresponding to the coset $t^k\langle a\rangle$. This shows that $\phi$ sends every $a$-line into an $a$-line, and arguing similarly with $\phi^{-1}$ ensures that $\phi$ actually sends bijectively $a$-line onto $a$-line. And $\phi$ does not preserve the linear order along $a$-lines.
	\end{ex}

	\section{Combinatorial rigidity of the intersection graph}\label{sec:combinatorial-rigidity}
	
	The intersection graph of the Higman group was introduced by Martin \cite[Definition~2.6]{Mar}. It can be viewed as a Higman group analogue of the curve graph for surfaces, of Crisp's fixed set graph in the context of $2$-dimensional Artin groups of hyperbolic type \cite{Cri}, or of Kim and Koberda's extension graph for right-angled Artin groups \cite{KK}. For later use, we give a definition in a more general context than generalized Higman groups.
	
	\begin{de}[Intersection graph]\label{de:intersection-graph}
		Let $G$ be a group, and let $X$ be an $M_\kappa$-polyhedral complex (for some $\kappa\in\mathbb{R}$) equipped with an action of $G$ by cellular isometries. The \emph{intersection graph} $\Theta_{G\actson X}$ of the $G$-action on $X$ is the graph with the same vertex set as $X$, in which two vertices are joined by an edge if their stabilizers have nontrivial intersection. 
		
		The \emph{intersection graph} of a generalized Higman group is the intersection graph of its action on the polyhedral complex $X$ defined in Section~\ref{sec:metric} (its developed complex). 
	\end{de}
	
	The graph $\Theta_{G\actson X}$ comes with a natural $G$-action induced by the $G$-action on $X$. Let now specify this to the case of a generalized Higman group $\Hig_\sigma$, acting on its developed complex $X_\sigma$. Recall the finite subgroup $F=F_\sigma$ of $\mathfrak{S}(\mathbb{Z}/k\mathbb{Z})$ introduced in Section~\ref{sec:finite-extension}. Notice that $F_\sigma$ acts on $X_\sigma$, by sending a vertex $gG_{\bar x_i}$ to $\tau(gG_{\bar x_i})=\tau(g)G_{\bar x_{\tau(i)}}$, sending an edge $gG_{\bar e_i}$ to $\tau(gG_{\bar e_i})=\tau(g)G_{\bar e_{\tau(i)}}$, and a $2$-cell $g\hat{K}$ to the $2$-cell $\tau(g)\hat{K}$. Through this, the action of $\Hig_\sigma$ on $X_\sigma$ extends to an action of $\widehat{\Hig}_\sigma=\Hig_\sigma\rtimes F_\sigma$. This also yields an action of $\widehat{\Hig}_\sigma$ on the intersection graph $\Theta_\sigma$ of $\Hig_\sigma$. We denote by $\Aut(X_\sigma)$ the group of all cellular automorphisms of $X_\sigma$, and by $\Aut(\Theta_\sigma)$ the group of all graph automorphisms of $\Theta_\sigma$.
	
	\begin{theo}
		\label{theo:crigidity0}
		Let $\Hig_\sigma$ be a generalized Higman group, let $X_\sigma$ be its developed complex, and let $\Theta_\sigma$ be its intersection graph. 
		
		Then the natural maps $\widehat{\Hig}_\sigma\to\Aut(X_\sigma)$ and $\widehat{\Hig}_\sigma\to \Aut(\Theta_\sigma)$ are isomorphisms.
	\end{theo}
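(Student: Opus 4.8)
The plan is to compute $\Aut(X_\sigma)$ first and then transfer the answer to $\Aut(\Theta_\sigma)$. Both natural maps are injective for soft reasons: an element of $\widehat{\Hig}_\sigma$ acting trivially fixes every $2$-cell, but $\Hig_\sigma$ acts simply transitively on $2$-cells (Lemma~\ref{lemma:fundamental-domain}) and $F_\sigma$ acts faithfully on each $k$-gon, so the kernel meets $\Hig_\sigma$ trivially and then vanishes since $\widehat{\Hig}_\sigma$ is ICC (Lemma~\ref{lemma:icc+}). Thus the whole content is surjectivity. I would first record that any $\Phi\in\Aut(X_\sigma)$ preserves the two types of vertices — those coming from vertex groups (\emph{corners}) and those coming from edge groups (\emph{midpoints}) — since these are separated by local combinatorial data (midpoints have valence $2$, corners infinite valence), and hence $\Phi$ permutes the $2$-cells.

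The engine for surjectivity is the local-to-global principle from the introduction. Fixing a corner $x$, the Baumslag--Solitar group $G_x=\Stab_G(x)$ is in canonical bijection with the set of $2$-cells containing $x$, while the edges of $X$ at $x$ correspond to the standard lines of the Cayley graph $\Upsilon_{m,n}$ of $G_x$; an automorphism fixing $x$ therefore induces a line-preserving bijection of $\Upsilon_{m,n}$. By Proposition~\ref{prop:combinatorial rigid BS} this bijection preserves the labels of standard lines — so $\Phi$ respects the intrinsic dichotomy at $x$ between the stable ($t$) direction and the commensurated ($a$) direction, which by Lemma~\ref{lemma:intersection of vertex stabilizer} is exactly the dichotomy between edges oriented into and out of $x$ — and it preserves the linear order along $t$-lines. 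The crucial point (the \emph{interlocking}) is that each edge $e$ of $X$ is a $t$-edge at one endpoint and an $a$-edge at the other, so the order of the $2$-cells along $e$ is a single intrinsic datum appearing as the \emph{rigid} $t$-order at one end even though it is the a priori floppy $a$-order at the other. Consequently $\Phi$ preserves the order of $2$-cells along \emph{every} edge, and, since the stable/commensurated dichotomy recovers the edge orientation of $X_\sigma$, it preserves that orientation globally.

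With orientation preservation in hand I would argue that $\Phi$ acts on the cyclic sequence $\bar x_1,\dots,\bar x_k$ of corner types of each $2$-cell by a rotation $\tau$ (a reflection is excluded, as it would reverse the induced cyclic orientation of the boundary), and that $\tau\in F_\sigma$: Proposition~\ref{prop:combinatorial rigid BS} forces $(|m_{\tau(i)}|,|n_{\tau(i)}|)=(|m_i|,|n_i|)$, and the preserved orientation upgrades this to the signed condition defining $F_\sigma$. After composing $\Phi$ with $\tau^{-1}\in F_\sigma$ and with an element of $\Hig_\sigma$, I may assume $\Phi$ fixes a base $2$-cell $\hat K$ together with each of its (distinctly typed) corners. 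A final propagation argument then yields $\Phi=\mathrm{id}$: working outward from $\hat K$ and alternately invoking the $t$-order rigidity at the two endpoints of each edge, one fixes every $2$-cell and corner, and since $X_\sigma$ is simply connected (Lemma~\ref{lem:sc}) this forces $\Phi=\mathrm{id}$, giving $\Aut(X_\sigma)=\widehat{\Hig}_\sigma$. I expect the genuine difficulty to lie here: because the $a$-direction is floppy, the rigidity cannot be read off at a single corner but must be transported through the interlocking pattern, which is precisely where the hypothesis $|m_i|\neq|n_i|$ is indispensable.

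To pass to $\Theta_\sigma$ I would give a combinatorial reconstruction of $X_\sigma$ from $\Theta_\sigma$: the corners of any $2$-cell form an induced $k$-cycle in $\Theta_\sigma$ (consecutive corners share an edge group, hence have intersecting stabilizers, while non-consecutive ones do not, by Lemma~\ref{lemma:intersection of vertex stabilizer}), and conversely every induced $k$-cycle arises this way. Granting this bijection between $2$-cells of $X_\sigma$ and induced $k$-cycles of $\Theta_\sigma$, any automorphism of $\Theta_\sigma$ permutes induced $k$-cycles and vertices compatibly, hence defines a cellular automorphism of $X_\sigma$; this produces an injection $\Aut(\Theta_\sigma)\hookrightarrow\Aut(X_\sigma)=\widehat{\Hig}_\sigma$ which, composed with the natural map $\widehat{\Hig}_\sigma\to\Aut(\Theta_\sigma)$, recovers the isomorphism $\widehat{\Hig}_\sigma\to\Aut(X_\sigma)$, forcing both maps to be isomorphisms. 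The main obstacle in this part is the converse direction of the correspondence — ruling out any exotic induced $k$-cycle — which I would attack with van Kampen diagrams in the $\CAT(-1)$ complex $X_\sigma$, using non-positive curvature to show that such a cycle must bound a single $2$-cell.
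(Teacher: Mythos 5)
Your proposal follows essentially the same route as the paper's proof: surjectivity onto $\Aut(X_\sigma)$ is obtained by identifying the $2$-cells at a vertex with the associated Baumslag--Solitar group, invoking Proposition~\ref{prop:combinatorial rigid BS} for the induced line-preserving bijection, and exploiting the interlocking fact that every standard line is a rigid $t$-line from one of its two endpoints, after which one extracts a rotation in $F_\sigma$ and propagates the identity outward; the passage to $\Aut(\Theta_\sigma)$ via the bijection between induced $k$-cycles and $2$-cells, established with reduced disk diagrams and curvature, is also the paper's argument. The one caveat is that for $k=4$ the complex $X_\sigma$ is only $\CAT(0)$, so the cycle-to-cell step cannot rely on strict negative curvature and instead needs the Flat Quadrilateral Theorem together with an edge-orientation argument to exclude rectangles made of several cells, which is how the paper handles that case.
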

	
	Recall from Section~\ref{subsec:Higman background} that $\calp$ denotes the poset of all cosets of the vertex groups, edge groups and the trivial subgroup of $G$. Recall also that $\calq$ denotes the poset of closed cells of $X$, ordered by containment. Then the natural map $\calp\to\calq$ is order-reversing. We start by proving the combinatorial rigidity statement for $\Aut(X_\sigma)$.
	
	\begin{prop}
		\label{prop:combinatorial rigid X}
		Let $\Hig_\sigma$ be a generalized Higman group, and let $X_\sigma$ be its developed complex. 
		
		Then the natural map $\widehat{\Hig}_\sigma\to \Aut(X_\sigma)$ is an isomorphism.
	\end{prop}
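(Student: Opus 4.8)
The plan is to establish separately that the natural homomorphism $\iota\colon\widehat{\Hig}_\sigma\to\Aut(X_\sigma)$ is injective and surjective; throughout write $G=\Hig_\sigma$ and $X=X_\sigma$. For injectivity, I would first note that $G$ acts faithfully on $X$: the kernel is a normal subgroup contained in every vertex stabilizer, hence in a common stabilizer $G_{x_1}\cap G_{x_2}\cap G_{x_3}$ of three suitably chosen vertices, which is trivial by Lemma~\ref{lemma:intersection of vertex stabilizer}. An element $(h,\tau)\in\ker\iota$ fixes the base $2$-cell $\hat K$ pointwise, hence fixes each of its vertices $\bar x_i$; as the translation $\tau$ sends the type-$i$ vertex of $\hat K$ to a type-$\tau(i)$ vertex (and distinct types give distinct vertices by Lemma~\ref{lemma:fundamental-domain}), fixing all vertices of $\hat K$ forces $\tau=\mathrm{id}$, and then $h$ lies in the trivial kernel of the $G$-action, so $h=1$.

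For surjectivity, let $\Phi\in\Aut(X)$. The essential local observation is that whenever $\Phi$ fixes a vertex $x$, it permutes the $2$-cells and the edges incident to $x$ while preserving incidence; since the $2$-cells at $x$ are in canonical bijection with the elements of the vertex group $G_x\cong\BS(m_i,n_i)$ and the incident edges with the standard lines of its Cayley graph $\Upsilon_{m_i,n_i}$, this makes $\Phi$ induce a line-preserving bijection of $\Upsilon_{m_i,n_i}$. Proposition~\ref{prop:combinatorial rigid BS} then guarantees that $\Phi$ preserves the labels ($a$-type versus $t$-type) of incident edges and the linear order of the $2$-cells along every $t$-line. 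I would combine label-preservation with the orientation convention on $X$: at any type-$i$ vertex the inward-pointing edges are exactly those of $t$-type (cosets of $\langle\alpha_{i-1}\rangle$, i.e.\ of type $\bar e_{i-1}$) and the outward-pointing ones exactly those of $a$-type (cosets of $\langle\alpha_i\rangle$, i.e.\ of type $\bar e_i$), so preservation of labels forces preservation of all edge orientations.

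Next I would reduce to a normalized situation. Since $G$ acts simply transitively on the $2$-cells of $X$, after composing $\Phi$ with an element of $G$ I may assume $\Phi(\hat K)=\hat K$. Then $\Phi|_{\hat K}$ is a symmetry of the regular $k$-gon preserving the oriented boundary cycle $\bar x_1\to\cdots\to\bar x_k\to\bar x_1$, hence a rotation $\bar x_i\mapsto\bar x_{i+\tau}$ rather than a reflection. Applying Proposition~\ref{prop:combinatorial rigid BS} at each vertex of $\hat K$ yields $|m_i|=|m_{i+\tau}|$ and $|n_i|=|n_{i+\tau}|$ for all $i$; matching the remaining sign data (which is recorded only up to a global flip in the definition of $F_\sigma$) through the orientation and the way the $2$-cells are glued shows that $\tau\in F_\sigma$. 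Composing with the corresponding element of $F_\sigma$, I may therefore assume that $\Phi$ fixes $\hat K$ pointwise.

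It then remains to propagate rigidity outward from $\hat K$. For each edge $e$ of $\hat K$, the $2$-cells containing $e$ are, viewed from the endpoint of $e$ at which $e$ is of $t$-type, exactly the vertices of a $t$-line in the corresponding Cayley graph; $\Phi$ preserves their linear order and fixes $\hat K$, and an order-preserving self-bijection of this $\mathbb{Z}$-indexed set with a fixed point is the identity, so $\Phi$ fixes every $2$-cell around $e$ setwise. As $\Phi$ fixes the two (adjacent) endpoints of $e$ and the only self-symmetry of a $k$-gon fixing two adjacent vertices is trivial, $\Phi$ fixes each such $2$-cell pointwise. Inducting on gallery distance from $\hat K$, using the connectedness of $X$ (Lemma~\ref{lem:sc}), I conclude $\Phi=\mathrm{id}$, so the original automorphism lies in $\iota(\widehat{\Hig}_\sigma)$. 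I expect the main obstacle to be the middle reduction: Proposition~\ref{prop:combinatorial rigid BS} controls only the absolute values $|m_i|,|n_i|$, so pinning the rotation of $\hat K$ down to $F_\sigma$ requires extracting the missing sign information from the global orientation and gluing data of $X$ rather than from the local link alone. Making precise the interlocking by which an edge that is only partially rigid as an $a$-line at one endpoint is a fully rigid $t$-line at the adjacent endpoint is the technical crux that guarantees order preservation around every edge.
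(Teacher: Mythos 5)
Your overall architecture — normalize so that $\Phi$ preserves $\hat K$, use the bijection between $2$-cells at a vertex $x$ and elements of the vertex group to turn $\Phi$ into a line-preserving bijection of a Baumslag--Solitar Cayley graph, invoke Proposition~\ref{prop:combinatorial rigid BS}, and then propagate pointwise rigidity outward from $\hat K$ — is the same as the paper's, and your propagation step is sound: viewing each edge $e$ from the endpoint at which it is of $t$-type, the $2$-cells around $e$ form a $t$-line whose order is preserved, an order-preserving bijection of $\mathbb{Z}$ with a fixed point is the identity, and gallery-connectedness (the $a_i$ generate $\Hig_\sigma$) completes the induction. Your injectivity argument, though more elaborate than needed, is also fine.

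The genuine gap is exactly where you flag it: the claim that the rotation $\tau$ of $\hat K$ lies in $F_\sigma$. Proposition~\ref{prop:combinatorial rigid BS} applied at the vertices of $\hat K$ only gives $|m_i|=|m_{\tau(i)}|$ and $|n_i|=|n_{\tau(i)}|$, which leaves open the mixed-sign possibility $(m_{\tau(i)},n_{\tau(i)})=(m_i,-n_i)$, excluded from $F_\sigma$; ``matching the remaining sign data through the orientation and gluing'' is not an argument, and nothing in your sketch rules this case out. The paper closes this by first proving, \emph{before} any normalization, that the induced bijection $\phi$ of the set of $2$-cells (identified with $\Hig_\sigma$) preserves the linear order along \emph{every} standard line, not just $t$-lines: each standard line of type $a_i$ sits inside the standard BS-subgraph of type $\langle a_i,a_{i+1}\rangle$, where it is a $t$-line, so Proposition~\ref{prop:combinatorial rigid BS} applies there — this is the interlocking you mention but do not carry out. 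With order preserved along all lines and $\phi(\mathrm{id})=\mathrm{id}$, the restriction $\phi_{|G_{\bar x_i}}\colon G_{\bar x_i}\to G_{\bar x_{f(i)}}$ satisfies $\phi(ga_i)=\phi(g)a_{f(i)}$ and $\phi(ga_{i-1})=\phi(g)a_{f(i)-1}$, hence is a genuine group isomorphism sending generators to generators; the classification of such isomorphisms of Baumslag--Solitar groups \cite{Mol} (or a direct Britton's-lemma computation of when $ta^{m_i}t^{-1}=a^{n_i}$ can hold in $\BS(m_{f(i)},n_{f(i)})$) then forces $(m_i,n_i)=\pm(m_{f(i)},n_{f(i)})$, i.e.\ $\tau\in F_\sigma$. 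Without this ingredient — all-lines order preservation plus the algebraic classification — your middle reduction cannot be completed, so the proposal as written does not prove surjectivity.
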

	
	\begin{proof}
	For simplicity of notation, we will write $G=\Hig_\sigma$, $\hat{G}=\widehat{\Hig}_\sigma$, and simply write $X,\Theta$ instead of $X_\sigma,\Theta_\sigma$.
	
	Injectivity of the map $\hat{G}\to\Aut(X)$ follows directly from its construction, so we focus on proving its surjectivity.
		
		Let $\Phi:X\to X$ be an automorphism of $X$. As $G$ acts transitively on the collection of 2-cells of $X$, by post-composing $\Phi$ by an element of $G$, we assume that $\Phi$ fixes the fundamental domain $\hat K$ (cf. Section~\ref{subsec:Higman background}) setwise. Through the identification $\mathcal P\to \mathcal Q$, $\Phi$ induces a bijection $\phi:G\to G$ which sends elements in a left coset of a vertex group (resp.\ edge group) bijectively to elements in a (possibly different) left coset of a vertex group (resp.\ edge group). 
		
		Let $\Upsilon_G$ be the Cayley graph of $G$ (with respect to the generating set $\{a_1,\dots,a_k\}$) with the usual edge labeling and orientation of Cayley graphs. For $i\in\mathbb{Z}/k\mathbb{Z}$, a \emph{standard BS-subgraph} of type $\langle a_i,a_{i+1}\rangle$ in $\Upsilon_G$ is the subgraph spanned by all vertices in a left coset of $\langle a_i,a_{i+1}\rangle$ (this subgraph is isomorphic to the Cayley graph of a Baumslag-Solitar group $\BS(m_i,n_i)$ for its standard generating set). A \emph{standard line} of type $a_i$ in $\Upsilon_G$ is a line in $\Upsilon_G$ spanned by all vertices in a left coset of $\langle a_i\rangle$. By the observation made in the previous paragraph, $\phi$ sends every standard BS-subgraph onto a standard BS-subgraph, and every standard line onto a standard line. We claim that $\phi$ preserves the linear order induced by the orientation of edges along each standard line. Indeed, if $\ell$ is a standard line of type $a_i$, then there is a standard BS-subgraph $\Upsilon_1\subset\Upsilon_G$ of type $\langle a_i,a_{i+1}\rangle$ such that $\ell\subset\Upsilon_1$. As $\phi_{|(\Upsilon_1)^{(0)}}$ is a line-preserving bijection between two Baumslag-Solitar groups, Proposition~\ref{prop:combinatorial rigid BS} implies that $\phi$ respects the linear order along $\ell$.
		
		It follows from the above claim that if $\Phi$ fixes a vertex $x$ of $X$ and 2-cell $K'$ of $X$ containing $x$, then $\Phi$ fixes the closed star of $x$ pointwise. Indeed, $x$ corresponds to a standard BS-subgraph $\Upsilon_x$ in $\Upsilon_G$. Moreover, $\phi_{|(\Upsilon_x)^{(0)}}$ is a line-preserving bijection to itself  which respects the linear order along each line (by the above) and label of each standard line (by Proposition~\ref{prop:combinatorial rigid BS}), and fixes one element (corresponding to $K'$), hence is the identity map. Thus $\Phi$ fixes the closed star of $x$ pointwise. We can deduce from this that if $\Phi$ fixes a 2-cell $K_1$ of $X$ pointwise, then $\Phi$ fixes $X$ pointwise, as for any other 2-cell $K_2$, there is a chain of 2-cells from $K_1$ to $K_2$ such that any two adjacent members in the chain have non-empty intersection (indeed $X$ is connected by Lemma~\ref{lem:sc}).
		
		Let $f$ be a permutation of elements of $\mathbb Z/k\mathbb Z$ such that $\Phi(\bar x_i)=\bar x_{f(i)}$ for each vertex $\bar x_i$ of $K$ (viewed as a vertex of $\hat K$). We now show that $f$ belongs to $F$. Note that $\phi_{|G_{\bar x_i}}:G_{\bar x_i}\to G_{\bar x_{f(i)}}$ is a line-preserving bijection which sends identity to identity. Thus by Proposition~\ref{prop:combinatorial rigid BS}, $\phi_{|G_{\bar x_i}}$ sends $a_i$-lines (resp.\ $a_{i-1}$-lines) to $a_{f(i)}$-lines (resp.\ $a_{f(i)-1}$-lines). In particular $f(i)-1=f(i-1)$, i.e.\ $f$ is a translation. As $\phi_{|G_{\bar x_i}}$ respects the linear order along each standard line by the previous claim, we deduce that $\phi_{|G_{\bar x_i}}$ is an isomorphism. Therefore, either $m_i=m_{f(i)}$ and $n_i=n_{f(i)}$, or $m_i=-m_{f(i)}$ and $n_i=-n_{f(i)}$ (see \cite{Mol}). Thus $f\in F$. By post-composing $\Phi$ by $f^{-1}\in F$, we can assume $\Phi$ fixes $\hat K$ pointwise. Then $\Phi$ is  the identity by the previous paragraph. Thus the map in Proposition~\ref{prop:combinatorial rigid X} is surjective, which completes our proof.  
	\end{proof}
	
	As before, we will now let $G=\Hig_\sigma$, $\hat{G}=\widehat{\Hig}_\sigma$, $X=X_\sigma$ and $\Theta=\Theta_\sigma$. Before proving the rigidity statement for $\Aut(\Theta)$, we now quickly review disk diagrams and the Gauss-Bonnet formula for application in the proof of the next lemma. We will only need the Gauss-Bonnet formula in the realm of $M_{\kappa}$-polyhedral complexes, though it is more natural to set it up in the more general context of combinatorial CW complexes. We refer to \cite[Definition 2.1]{MW} for the definition of combinatorial CW complexes and combinatorial maps between them. We recall from \cite[Definition~2.6]{MW} that a \emph{(singular) disk diagram} $D$ is a finite contractible 2-dimensional combinatorial CW complex with a fixed embedding in the plane $\mathbb R^2$. A \emph{boundary cycle} of $D$ is a combinatorial map from a polygon $P$ to $D$ whose image is an edge-path in the graph $D^{(1)}$ corresponding to going around $D$ once in the clockwise direction along the boundary of the unbounded complementary region $\mathbb R^2\setminus D$ (see also \cite[p.~150]{lyndon2015combinatorial}).
	
	Let $P \to X$ be a closed null-homotopic edge path in a 2-dimensional combinatorial CW complex $X$. A \emph{singular disk diagram in $X$ for $P$}
	is a singular disk diagram $D$ together with a map $D\to X$ such that the closed path
	$P\to X$ factors as $P\to D\to X$ where $P\to D$ is a boundary cycle of $D$.
	It is a theorem of Van Kampen that every null-homotopic closed edge path $P\to X$ is the boundary cycle of a singular disk diagram $D\to X$; moreover, we can assume this singular disk diagram is \emph{reduced}, i.e. $D-D^{(0)}\to X$ is an immersion, see \cite[V.2.1]{lyndon2015combinatorial} or \cite[Lemma~2.17]{MW}. We caution the reader that $D$ is usually not homeomorphic to a 2-dimensional disk (thus the name ``singular'' disk diagram), e.g.\ it is not if $P\to X$ is not an embedding. 
	Also even if $P\to X$ is an embedding, there might not exist a singular disk diagram for $P$ such that $D\to X$ is an embedding. If $X$ has a piecewise Riemannian metric, then we equip the singular disk diagram $D$ with the natural piecewise Riemannian metric induced by $D\to X$.
	
	We will use the following version of the Gauss-Bonnet formula for a singular disk diagram $D$ which is a special case of the Gauss-Bonnet formula in \cite[Section 2]{ballmann1996nonpositively}. We assume that $D$ has an $M_\kappa$-polyhedral complex structure. For a vertex $v\in D^{(0)}$, let $\chi(v)$ be the Euler characteristic of $\lk(v,D)$. Recall that the length of an edge of $\lk(v,D)$ is the interior angle at $v$ of the 2-cell of $D$ corresponding to this edge. Let $\alpha(v)$ be the sum of the lengths of all edges in $\lk(v,D)$. Define $\kappa(v)=(2-\chi(v))\pi-\alpha(v)$. Given a $2$-cell $C\in D^{(2)}$, we denote by $\mathrm{Area}(C)$ the area of $C$. Then 
	\begin{equation}
	\label{eq:GB}
	\sum_{v\in D^{(0)}}\kappa(v)+\kappa\sum_{C\in D^{(2)}}\mathrm{Area}(C)=2\pi.
	\end{equation}
	
	Given $n\in\mathbb{N}$, an \emph{induced $n$-cycle} in a simplicial graph $\Theta$ is a reduced closed edge path $\omega$ of length $n$ such that two vertices of $\omega$ are joined by an edge in $\Theta$ if and only if they are adjacent in $\omega$. Let $k$ be the number of generators in the standard presentation of the generalized Higman group $G$. Now we show that each induced $k$-cycle of the intersection graph $\Theta$ corresponds to a 2-cell of $X$ in the following way.	
	
	\begin{lemma}
		\label{lemma:cycle}
		Let $\omega$ be an induced $k$-cycle in $\Theta$ with consecutive vertices of $\omega$ denoted by $\{v_i\}_{i\in\mathbb Z/k\mathbb Z}$. Let $x_i$ be the vertex of $X$ corresponding to $v_i$. Then $\{x_i\}_{i\in\mathbb Z/k\mathbb Z}$ is the collection of consecutive vertices in the boundary of a 2-cell of $X$.
	\end{lemma}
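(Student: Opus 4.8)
The plan is to realize the cycle $\omega$ as the boundary of a van Kampen diagram in $X$ and then to force that diagram to be a single $2$-cell by a Gauss--Bonnet/area estimate.

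First I would turn each edge of $\omega$ into geometric data in $X$. For every $i$, the edge $v_iv_{i+1}$ means $\Stab_G(x_i)\cap\Stab_G(x_{i+1})\neq\{1\}$, so Lemma~\ref{lemma:intersection of vertex stabilizer}(2) applies: either $x_i$ and $x_{i+1}$ are joined by an edge of $X$, or they have a common neighbour $w_i$ all of whose connecting edges point away from $w_i$. In the first case I take the connecting edge; in the second, Remark~\ref{rk:X} tells me that $\overline{x_iw_i}$ and $\overline{w_ix_{i+1}}$ concatenate to a local geodesic, the angle at $w_i$ being at least $\pi$. Concatenating these segments around $\omega$ produces a closed edge-path $P$ in $X^{(1)}$ of combinatorial length at most $2k$. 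Since $X$ is simply connected (Lemma~\ref{lem:sc}), $P$ is null-homotopic.

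Next I would invoke Van Kampen's lemma to obtain a reduced singular disk diagram $\psi\colon D\to X$ with boundary cycle $P$, of minimal area, and endow $D$ with the $M_\kappa$-metric pulled back from $X$ (with $\kappa=0$ if $k=4$ and $\kappa=-1$ if $k\ge 5$). The engine of the proof is the Gauss--Bonnet formula~\eqref{eq:GB} applied to $D$. Non-positive curvature of $X$ gives two sign constraints: each interior vertex $v$ has its link in $D$ an embedded loop, of length $\ge 2\pi$ by the girth estimate on links, so that $\kappa(v)=2\pi-\alpha(v)\le 0$; and the area term $\kappa\sum_C\mathrm{Area}(C)$ is $\le 0$. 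The remaining curvature must therefore sit on $\partial D$. The intermediate vertices $w_i$ contribute $\kappa\le 0$ (their interior angle is $\ge\pi$, which by Remark~\ref{rk:X} already forces at least two incident cells), while each of the $k$ turning vertices $x_i$ is a corner of the polygons, so its interior angle is a positive multiple of $\pi/2$ and $\kappa(x_i)=\pi-\alpha(x_i)\le\pi/2$. Hence $\sum_{v}\kappa(v)\le k\pi/2$. For $k\ge 5$ this is decisive: \eqref{eq:GB} rewrites as $\mathrm{Area}(D)=\sum_v\kappa(v)-2\pi\le (k/2-2)\pi$, which is exactly the area of one right-angled regular hyperbolic $k$-gon; since every $2$-cell of $D$ has this area, $D$ has at most one $2$-cell. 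As $\omega$ is a genuine $k$-cycle, $D$ is not a tree, so it has exactly one $2$-cell, whose boundary is $P$. In particular no $w_i$ occurs (a straight passage would force a second cell), every edge of $\omega$ is of the first type, and the $x_i$ are precisely the $k$ consecutive corners of that cell.

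The step I expect to be the main obstacle is making rigorous the claim that positive boundary curvature is concentrated exactly at the $k$ vertices $x_i$, each contributing at most $\pi/2$. This is where the hypothesis that $\omega$ is \emph{induced} must be used: it is needed to guarantee that $P$ is embedded, so that $D$ is a genuine disk with no spur (a degree-one boundary vertex would contribute $+\pi$ and wreck the estimate) and with no boundary vertex visited twice. Concretely, a self-intersection or a chord of $P$ would produce, via $\psi$, a non-trivial intersection of the stabilizers of two non-consecutive $x_i,x_j$, contradicting inducedness through Lemma~\ref{lemma:intersection of vertex stabilizer}. Controlling the singular (non-embedded) nature of $D$ and the angle bookkeeping at the intermediate vertices $w_i$ requires care. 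Finally, the Euclidean case $k=4$ must be handled separately: there the area term in \eqref{eq:GB} vanishes, so the area estimate gives nothing, and one must instead run the equality case, deducing from $2\pi\le\sum_v\kappa(v)\le 2\pi$ that every interior vertex is flat and each $x_i$ lies on a single square, and then develop the resulting flat right-angled quadrilateral diagram to conclude that it consists of one cell.
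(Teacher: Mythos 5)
Your $k\ge 5$ argument is structurally the same as the paper's (geodesic segments supplied by Lemma~\ref{lemma:intersection of vertex stabilizer}(2), a reduced singular disk diagram, Gauss--Bonnet with the $\le\pi/2$ bound at corners and the area $(\tfrac{k}{2}-2)\pi$ of a single cell), but the step you yourself flag as the main obstacle is where your sketch goes wrong. What the Gauss--Bonnet bookkeeping needs is not that $P$ is \emph{embedded} but that it is a \emph{local} embedding, i.e.\ $\angle_{x_i}(x_{i-1},x_{i+1})\neq 0$ at every corner: this is what rules out spurs and, since distinct edge-directions at a vertex of $X$ are at angular distance at least $\pi/2$ in the link, gives $\alpha(v)\ge\pi/2$ at the corners (and singular, non-embedded diagrams are perfectly acceptable in the rest of the argument). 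Your proposed mechanism --- ``a self-intersection or a chord of $P$ would produce a non-trivial intersection of the stabilizers of two non-consecutive $x_i,x_j$'' --- is not correct as stated: two segments $r_i$ and $r_j$ crossing transversally at an interior point yield no stabilizer conclusion at all, so inducedness does not give embeddedness this way. The correct (and needed) statement is proved in the paper by a case analysis at each corner: if $\angle_{x_i}(x_{i-1},x_{i+1})=0$, then either $x_{i-1}=x_{i+1}$, or $x_{i-1}$ coincides with the midpoint of $r_i$ (and then $G_{x_i}\cap G_{x_{i+1}}$, which fixes $r_i$ pointwise, lies in $G_{x_{i-1}}$), or the midpoints of $r_{i-1}$ and $r_i$ coincide and Lemma~\ref{lemma:intersection of vertex stabilizer}(2b) applies; each alternative forces $G_{x_{i-1}}\cap G_{x_{i+1}}\neq\{1\}$, contradicting that $\omega$ is induced.

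The $k=4$ case is a genuine gap. The equality case of Gauss--Bonnet does \emph{not} force the diagram to be a single cell: a $1\times 2$ rectangle made of two squares, or a $2\times 2$ grid of four squares, is a flat diagram in which every interior vertex has angle $2\pi$, every non-corner boundary vertex has angle $\pi$, and each corner $x_i$ lies on exactly one square --- all of your equality constraints hold, yet the diagram is not one cell, and ``developing'' it just reproduces the same Euclidean rectangle. The paper closes this case by a different mechanism: the Flat Quadrilateral Theorem shows the loop bounds a convex flat rectangle consisting of one, two or four $2$-cells, and the two- and four-cell configurations are then excluded using the \emph{edge orientations}: a side of length $2$ must have both of its edges oriented away from its midpoint by Lemma~\ref{lemma:intersection of vertex stabilizer}(2b), while the boundary of every $2$-cell carries a consistent cyclic orientation, and these two constraints are incompatible except for a single square. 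Some input of this kind --- the orientation data coming from the Baumslag--Solitar structure, not curvature alone --- is indispensable, and your proposal does not supply it.
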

	
	\begin{proof}
		Let $r_i$ be the geodesic segment (in $X$) between $x_i$ and $x_{i+1}$. As $v_i$ and $v_{i+1}$ are adjacent in $\Theta$, the intersection $G_{x_i}\cap G_{x_{i+1}}$ is nontrivial. It thus follows from Lemma~\ref{lemma:intersection of vertex stabilizer}(2) that $d(x_i,x_{i+1})\le 2$ and $r_i$ is contained in the 1-skeleton of $X$ (see Remark~\ref{rk:X}). 
		
		We claim that $\angle_{x_i}(x_{i-1},x_{i+1})\neq 0$. This is clear if $x_i$ is adjacent to both $x_{i-1}$ and $x_{i+1}$, as $x_{i-1}\neq x_{i+1}$. If $x_i$ is adjacent to exactly one of $x_{i-1}$ and $x_{i+1}$, say $x_{i-1}$, then $x_{i-1}\in r_i$ if $\angle_{x_i}(x_{i-1},x_{i+1})=0$. As $G_{x_i}\cap G_{x_{i+1}}$ fixes $r_i$ pointwise, $G_{x_i}\cap G_{x_{i+1}}\cap G_{x_{i-1}}$ is infinite, contradicting that $\omega$ is an induced cycle. If $x_i$ is adjacent to neither $x_{i-1}$ nor $x_{i+1}$, then the midpoints of $r_{i-1}$ and $r_i$ coincide if $\angle_{x_i}(x_{i-1},x_{i+1})=0$. We denote the common midpoint by $x$. Lemma~\ref{lemma:intersection of vertex stabilizer}(2) implies $\overline{xx_j}$ is oriented away from $x$ for $i-1\le j\le i+1$, so $G_{x_{i+1}}\cap G_{x_{i-1}}$ is nontrivial, which contradicts that $\omega$ is an induced cycle. 
		
		The concatenation of the geodesic segments $r_i$ forms an edge loop in $X$, which we view as a map $f_0$ from a suitable polygon $P$ to $X^{(1)}$ sending edges to edges. By the previous paragraph, $f_0:P\to X$ is a local embedding. When $k=4$, as $\angle_{x_i}(x_{i-1},x_{i+1})\ge\pi/2$, by the Flat Quadrilateral Theorem (cf.\ \cite[Theorem II.2.11]{BH}), the map $P\to X$ is an embedding and its image must be the boundary of a convex region $R$ in $X$ which is isometric to a Euclidean rectangle. As the sides of $R$ have length at most $2$, it follows that $R$ is made of one 2-cell of $X$, or two 2-cells, or four 2-cells. As the orientation of sides of $R$ with length $2$ has to satisfy Lemma~\ref{lemma:intersection of vertex stabilizer}(2b), moreover, the boundary of each 2-cell has a consistent edge orientation, the only possibility of $R$ satisfying these two constraints is $R$ being a single 2-cell of $X$ (see Figure~\ref{fig:rectangle}), hence the lemma follows.
		\begin{figure}
			\centering
			\includegraphics[scale=1]{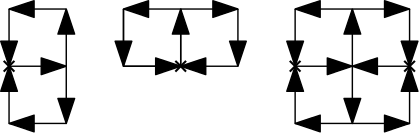}
			\caption{The vertices with a cross indicate a contradiction with Lemma~\ref{lemma:intersection of vertex stabilizer} (2).}
			\label{fig:rectangle}
		\end{figure}
		
		Suppose now that $k\ge 5$. Let $f:D\to X$ be a reduced singular disk diagram for $f_0:P\to X$ (i.e.\ $P\to X$ factors through $P\to D\to X$). Then $D$ inherits an $M_{-1}$-polyhedral complex structure from $X$ and the above Gauss-Bonnet formula (Equation~\eqref{eq:GB}) becomes \[\sum_{v\in D^{(0)}}\kappa(v)-\sum_{C\in D^{(2)}}\mathrm{Area}(C)=2\pi.\] 
		We denote the map $P\to D$ by $f_1$. Define $\partial D = \textrm{im}\ f_1$. The polygon $P$ is a concatenation of subpaths $r'_i$ which are mapped to the geodesic segments $r_i$ under $f_0$. We let $x'_i\in P$ be the intersection of $r'_{i-1}$ and $r'_i$ (with indices taken modulo $k$), and note that $f_0(x'_i)=x_i$.
		
		We now make the following observations.
		\begin{enumerate}
			\item If $v\in D^{(0)}\setminus \partial D$, then $\chi(v)=0$ (as $\lk(v,D)$ is a circle). Moreover, as the disk diagram is reduced, $f:D\to X$ induces a local embedding $\lk(v,D)\to \lk(f(v),X)$. As the length of any embedded cycle in $\lk(f(v),X)$ is at least $2\pi$  (see Lemma~\ref{lemma:link condition}), we have $\alpha(v)\ge 2\pi$, so $\kappa(v)\le 0$.
			\item Suppose $v\in D^{(0)}$ satisfies that $v\in \partial D$ and $v$ is one of the $f_1(x'_i)$. Then either $\chi(v)=2$ (i.e. $\lk(v,D)$ is disconnected) or $\chi(v)=1$ (i.e. $\lk(v,D)$ is connected). In the former case $\kappa(v)\le 0$; in the latter case, as $\angle_{x_i}(x_{i-1},x_{i+1})\ge\pi/2$ in $X$, $f$ induces a map from the path $\lk(v,D)$ to a path in $\lk(x_i,X)$ joining two vertices at distance at least $\pi/2$, implying $\alpha(v)\ge\pi/2$ and $\kappa(v)\le \pi/2$. Thus we always have $\kappa(v)\le\pi/2$.
			\item Suppose $v\in D^{(0)}$ satisfies that $v\in \partial D$ and $v$ is not one of the $f_1(x'_i)$. If $\lk(v,D)$ is disconnected, then we deduce as before that $\kappa(v)\le 0$. If $\lk(v,D)$ is connected, as $f(v)$ lies in the interior of one of the geodesic segment $r_i$, $f$ induces a map from the path $\lk(v,D)$ to a path in $\lk(f(v),X)$ joining two vertices at distance at least $\pi$, implying $\alpha(v)\ge\pi$ and $\kappa(v)\le 0$. Thus we always have $\kappa(v)\le 0$.
		\end{enumerate}
		Applying the Gauss-Bonnet formula to a single $2$-cell of $X$ shows that the area of any $2$-cell is equal to $\frac{k\pi}{2}-2\pi$. Now, when applying the Gauss-Bonnet formula to $D$, the above analysis shows that $\sum_{v\in D^{(0)}}\kappa(v)\le \frac{k\pi}{2}$, and from this we deduce that $D$ has at most one $2$-cell. The case where $D$ has no 2-cells can be ruled out using that $\angle_{x_i}(x_{i-1},x_{i+1})\ge\pi/2$. We deduce that $P$ is the boundary of a single 2-cell in $X$ (in particular $x_i$ and $x_{i+1}$ are adjacent in $X^{(1)}$) and the lemma follows.			
	\end{proof}

	\begin{rk}
		We record a geometric observation which follows from a similar use of Gauss-Bonnet formula as above. Let $Z$ be a  2-dimensional $\CAT(-1)$ piecewise hyperbolic complex with finite shapes such that every 2-cell of $Z$ is a right-angled hyperbolic polygon. Suppose $P$ is an immersed (i.e.\ locally embedded) geodesic polygon in $Z^{(1)}$ which is a concatenation of $k$ geodesic segments of $Z$, where $k$ is the minimal number of sides of  a 2-cell in $Z$. Then $P$ is embedded and $P$ is the boundary of a 2-cell of $Z$.
	\end{rk}
	
	\begin{proof}[Proof of Theorem~\ref{theo:crigidity0}]
As before, we will let $G=\Hig_\sigma$, $\hat{G}=\widehat{\Hig}_\sigma$, $X=X_\sigma$ and $\Theta=\Theta_\sigma$. The statement for $\Aut(X)$ was already proved in Proposition~\ref{prop:combinatorial rigid X}, so we focus on the statement for $\Aut(\Theta)$. Take $\alpha\in \Aut(\Theta)$. Then $\alpha$ induces a bijection $\alpha':X^{(0)}\to X^{(0)}$. If two vertices $x_1$ and $x_2$ of $X$ are adjacent in $X^{(1)}$, then there is a 2-cell $C$ containing $x_1$ and $x_2$. Vertices of the 2-cell give rise to an induced $k$-cycle $\omega$ in $\Theta$, where $x_1$ and $x_2$ correspond to adjacent vertices $v_1$ and $v_2$ in $\omega$.  As $\alpha(\omega)$ is an induced $k$-cycle in $\Theta$, Lemma~\ref{lemma:cycle} implies that $\alpha(v_1)$ and $\alpha(v_2)$ correspond to two adjacent vertices in the boundary of some 2-cell of $X$. Thus $\alpha'(x_1)$ and $\alpha'(x_2)$ are adjacent. By working with $\alpha^{-1}$ instead of $\alpha$, we also deduce that non-adjacent vertices are sent to non-adjacent vertices by $\alpha'$. Thus $\alpha'$ extends to a graph isomorphism $X^{(1)}\to X^{(1)}$. Moreover, Lemma~\ref{lemma:cycle} implies that $\alpha'$ sends boundaries of 2-cells in $X$ to boundaries of 2-cells (as they correspond to induced $k$-cycles in $\Theta$). Thus $\alpha'$ further extends to an automorphism of $X$. This defines a homomorphism $\Aut(\Theta)\to\Aut(X)$ (sending $\alpha$ to $\alpha'$), and it is injective by construction. In addition, the natural homomorphism $\hat{G}\to\Aut(X)$ factors through the natural homomorphism $\hat{G}\to\Aut(\Theta)$. Since the former is an isomorphism (Proposition~\ref{prop:combinatorial rigid X}), so is the latter, which completes our proof.
	\end{proof}

\begin{rk}
\label{rk:example}
Here we leave a remark on the assumptions of Theorem~\ref{theo:crigidity0}. This remark will not be used in the later part of the article.
Note that Theorem~\ref{theo:crigidity0} relies crucially on the interlocking pattern of the Baumslag-Solitar groups in the generalized Higman groups. If we change the pattern of how the BS subgroups fit together, then the theorem fails. For example, we can change one relator in $\Hig_5$ from $a_2a_3a^{-1}_2=a^2_3$ to $a_3a_2a^{-1}_3=a^2_2$
to obtain a new group
$$
G=\langle a_1,\dots,a_5|a_1a_2a^{-1}_1=a^2_2,a_3a_2a^{-1}_3=a^2_2,a_3a_4a^{-1}_3=a^2_4,a_4a_5a^{-1}_4=a^2_5,a_5a_1a^{-1}_5=a^2_1\rangle
$$
We can define the developed complex $X$ and the intersection graph $\Theta$ for $G$ similarly.   However, as we will now explain, elements in $\Aut(X)$ do not arise from the action of an element of $G$ in general (in fact not even from an automorphism of $G$).

Let $\Upsilon_G$ be the Cayley graph of $G$. We define standard lines and standard BS-subgraphs in $\Upsilon_G$ as before, and their intersection with $G$ are defined to be standard lines and \emph{standard BS-subsets} in $G$. 
A bijection of $G\to G$ is \emph{admissible} if it preserves standard lines and standard BS-subsets. Elements in $\Aut(X)$ are in 1-1 correspondence with admissible bijections of $G$.
Now we give an admissible bijection $G\to G$ which is not an automorphism.

Note that $G$ admits a splitting $G=H_1*_L H_2$, where $H_1=\langle a_1,a_2,a_3\rangle$, $H_2=\langle a_1,a_4,a_5,a_3\rangle$, and $L=\langle a_1,a_3\rangle$. Moreover, $H_1$ is a doubling of $\langle a_1,a_2\rangle$ in the sense that it can be thought of as the amalgamation of two identical copies of $\langle a_1,a_2\rangle$ along the subgroup generated by $a_2$. Let $f:\langle a_1,a_2\rangle\to \langle a_1,a_2\rangle$ be the line-preserving bijection as in Example~\ref{ex:12}. We can use the amalgam structure of $H_1$ to double the map $f$ to obtain an admissible bijection $g:H_1\to H_1$. Note that $g$ does not respect orders along standard lines of type $a_2$, though it does respect orders along standard lines of type $a_1$ and $a_3$. In particular, $g$ permutes the left cosets of $L$ in $H_1$, and for each such left coset $E$, there exists $h\in H_1$ (depending on $E$) such that $g(k)=hk$ for any $k\in E$.

Now we use the splitting $G=H_1*_L H_2$ to extend $g$ to an admissible bijection $G\to G$ as follows. Let $T$ be the associated Bass-Serre tree. Then each vertex of $T$ corresponds to a left coset of $H_1$ or $H_2$, and each edge of $T$ corresponds to a left coset of $L$. Let $v_0\in T$ be the vertex associated with $H_1$. Then vertices in $T$ adjacent to $v_0$ correspond to left cosets of $H_2$ that have non-empty intersection with $H_1$. Take one such left coset, denoted by $E$. As we already know $g|_{E\cap H_1}$ coincides with left multiplication with an element $h_E\in H_1$. We can thus define the extension of $g$ to $G$ by letting it coincide with the left multiplication by $h_E$ on all left cosets of $H_1$ or $H_2$ that are represented by vertices of $T$ that lie in the same connected component of $T\setminus\{v_0\}$ as $E$. This defines an admissible bijection of $G$, which does not coincide with a left translation (because it does not coincide with a left translation on $H_1$).  
\end{rk}	
	
	\section{Measure equivalence rigidity of the generalized Higman groups}\label{sec:me}
	
In this section, we prove our main theorem (Theorem~\ref{theointro:main} from the introduction). We will actually formulate a general rigidity statement for groups acting on piecewise hyperbolic $\mathrm{CAT}(-1)$ simplicial complexes, from which the case of the Higman groups is derived. Most arguments in the section are phrased in the language of measured groupoids. The reader is referred to Appendix~\ref{sec:appendix-groupoids} for background and terminology. Let us mention that in this appendix, the only new notion is that of an \emph{action-like} cocycle; however, it is harmless to simply think about the natural cocycle associated to a probability measure-preserving action.
	
	\subsection{Statements}
	
	The reader is referred to Section~\ref{sec:combinatorial-rigidity} for the notion of the intersection graph of an action. Whenever $\Theta$ is a simplicial graph on countably many vertices, the group $\Aut(\Theta)$ of all graph automorphisms of $\Theta$ is equipped with the topology of pointwise convergence, which turns it into a Polish group. 
	
	\begin{theo}\label{theo:full}
		Let $X$ be a connected $\mathrm{CAT}(-1)$ piecewise hyperbolic polyhedral complex with countably many cells in finitely many isometry types. Let $G$ be a torsion-free countable group acting by cellular isometries on $X$  without inversion, and let $\Theta$ be the intersection graph of the action $G\actson X$. Assume that
		\begin{enumerate}
			\item[1)] (Vertex stabilizers). 
			The stabilizer of every vertex of $X$ contains a commensurated infinite amenable subgroup.
			\item[2)] (Edge stabilizers). Edge stabilizers for the $G$-action on $X$ are amenable and of infinite index in the incident vertex stabilizers.
			\item[3)] (Weak acylindricity). The $G$-action on $X$ is weakly acylindrical.
			\item[4)] (Non-isolation of amenable vertex stabilizers). For each vertex $v\in X^{(0)}$ such that $\Stab_G(v)$ is amenable, there exists an infinite subgroup of $\Stab_G(v)$ which either fixes a vertex with nonamenable stabilizer, or fixes  two distinct vertices of $X$ which are different from $v$.
			\item[5)] (Chain condition). There is a bound on the size $k$ of a chain $v_1,\dots,v_k$ of vertices of $X$ such that for every $i\in\{1,\dots,k-1\}$, the pointwise stabilizer of $\{v_1,\dots,v_{i+1}\}$ has infinite index in the pointwise stabilizer of $\{v_1,\dots,v_i\}$.
		\end{enumerate}
		Then for every self measure equivalence coupling $\Sigma$ of $G$, there exists a $(G\times G)$-equivariant Borel map $\Sigma\to\Aut(\Theta)$, where the $(G\times G)$-action on $\Aut(\Theta)$ is via $(g_1,g_2)\cdot f(x)=g_1f(g_2^{-1}x)$.
	\end{theo}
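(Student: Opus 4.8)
The plan is to follow the two-cocycle strategy sketched in the introduction, working throughout in the language of measured groupoids. First I would invoke the standard dictionary between self measure equivalence couplings and stable orbit equivalence to reduce the statement as follows: writing the coupling over a base probability space $Y$ carrying a measured equivalence relation $\calr$, one obtains two action-like cocycles $\rho_1,\rho_2:\calr\to G$, and constructing a $(G\times G)$-equivariant Borel map $\Sigma\to\Aut(\Theta)$ is equivalent to producing a Borel map $\varphi:Y\to\Aut(\Theta)$ conjugating $\rho_1$ to $\rho_2$, that is $\rho_2(x,y)=\varphi(y)\rho_1(x,y)\varphi(x)^{-1}$ for a.e.\ $(x,y)\in\calr$. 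The whole difficulty is then to recover the combinatorics of $\Theta$ from $\calr$ alone, in a way simultaneously visible to both cocycles.

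Second, I would establish the amenable dichotomy relative to a single cocycle $\rho$. Combining Adams' argument with the $\CAT(-1)$ geometry of $X$ and the amenability of the boundary action $G\actson\partial_\infty X$ (from the authors' earlier work), I would show that, after a countable Borel partition of $Y$, every amenable subgroupoid $\cala\le\calr$ is either \emph{elliptic} (admitting an $(\cala,\rho)$-equivariant map $Y\to X^{(0)}$) or \emph{$\partial_\infty X$-elementary} (admitting such a map to the nonempty subsets of $\partial_\infty X$ of cardinality at most $2$). Weak acylindricity (assumption~3) forces these alternatives to be mutually exclusive, through the barycenter argument that rules out three boundary fixed points coexisting with a fixed point inside $X$.

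Third, and this is where assumptions~1 and~4 are essential, I would characterize maximal elliptic amenable subgroupoids with \emph{no reference to the cocycle}. At the group level, ellipticity of a maximal amenable subgroup is detected in two ways: when the ambient vertex stabilizer is nonamenable, the commensurated infinite amenable subgroup of assumption~1 exhibits a nonamenable quasi-normalizer, giving an analogue of Kida's Dehn-twist criterion; when the relevant vertex stabilizers are amenable, assumption~4 yields non-isolation, either via adjacency to a vertex with nonamenable stabilizer (reducing to the first case) or via a vertex fixing two further vertices, so that a triple-intersection argument (three pairwise non-commensurable maximal amenable subgroups with infinite common intersection cannot all be $\partial_\infty X$-elementary, by the barycenter obstruction) certifies ellipticity. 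Promoting this to the groupoid setting, using the chain condition (assumption~5) to guarantee existence of maximal elliptic subgroupoids and to bound chains, and carefully tracking Borel measurability of all subgroupoids involved, produces an intrinsic description. Consequently a maximal elliptic subgroupoid for $\rho_1$ is automatically elliptic for $\rho_2$, and conversely.

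Finally, I would assemble the map. Each maximal elliptic amenable subgroupoid determines, for each cocycle, a measurable family of vertices of $X$; the cocycle-free characterization makes this assignment compatible between $\rho_1$ and $\rho_2$, giving a measurable matching of $\rho_1$-vertices with $\rho_2$-vertices, while recording which such subgroupoids intersect in an infinite subgroupoid recovers precisely the adjacency of $\Theta$. Thus the matching is fiberwise a graph automorphism, yielding $\varphi:Y\to\Aut(\Theta)$, and unwinding the groupoid reduction gives the desired $(G\times G)$-equivariant Borel map $\Sigma\to\Aut(\Theta)$. The main obstacle is the third step: by Ornstein--Weiss all amenable subgroupoids are abstractly isomorphic, so elliptic and $\partial_\infty X$-elementary ones can only be told apart through their intersection and normalization pattern inside $\calr$; correctly handling the mixed case where amenable and nonamenable vertex stabilizers coexist, and upgrading the group-level triple-intersection and quasi-normalizer arguments to measurable statements about subgroupoids, is the delicate technical core.
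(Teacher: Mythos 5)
Your high-level architecture does match the paper's: the reduction to proving that two action-like cocycles $\rho_1,\rho_2:\calg\to G$ on a measured groupoid are $\Aut(\Theta)$-cohomologous (Remark~\ref{rk:coupling-cocycle}), the Adams-type dichotomy between stably $X$-elliptic and $\partial_\infty X$-elementary amenable subgroupoids (Lemma~\ref{lemma:elliptic-loxodromic}), mutual exclusivity via weak acylindricity (Lemma~\ref{lemma:elliptic vs elementary}), and the triple-intersection/barycenter argument exploiting assumption~4 (Lemma~\ref{lemma:intersection of four gropuoids}). However, your third and fourth steps contain a genuine gap: you characterize \emph{maximal elliptic amenable} subgroupoids and then claim each one determines a measurable family of vertices, from which you build the fiberwise automorphism of $\Theta$. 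This works when every vertex stabilizer is amenable (it is essentially Lemma~\ref{lemma:alternative-version}, used for Theorem~\ref{theo:combination}), but it breaks down exactly in the added generality of the present theorem, where some $\Stab_G(v)$ is nonamenable. There the correspondence between maximal amenable elliptic subgroupoids and vertices of $\Theta$ is many-to-many: a nonamenable vertex stabilizer contains many distinct maximal amenable subgroups, all elliptic at the same vertex, and conversely an amenable subgroup such as an edge group (e.g.\ $\langle a_{i+1}\rangle$ in a generalized Higman group, which is commensurated in $\langle a_i,a_{i+1}\rangle$ but also fixes the vertex of $\langle a_{i+1},a_{i+2}\rangle$) is elliptic at two distinct vertices. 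So your matching $Y\times\Theta^{(0)}\to\Theta^{(0)}$ is not well-defined: nothing in the proposal attaches, to each vertex $v$ and each cocycle, a canonical subgroupoid, which is what the endgame (Lemma~\ref{lemma:endgame}) requires.

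The paper's resolution is to characterize the vertex-stabilizer subgroupoids $\rho^{-1}(\Stab_G(v))$ themselves — which are nonamenable whenever $\Stab_G(v)$ is — rather than their amenable pieces. This is the role of the QNA notion (containing an infinite-type amenable subgroupoid that is stably quasi-normalized): vertex-stabilizer-type subgroupoids are exactly the \emph{stably maximal QNA} subgroupoids satisfying the non-isolation conditions (Lemma~\ref{lemma:characterization-vertex}), and by assumption~2 they are in bijection with vertices (Lemma~\ref{lemma:uniqueness-vertex}), so the vertex maps exist and the assembly goes through. The technical core that your sketch omits is Lemma~\ref{lemma:nonamenable-BS-elliptic}: every everywhere nonamenable QNA subgroupoid is stably $X$-elliptic. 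Its proof needs both the Kida-style argument that a quasi-normalized amenable subgroupoid of an everywhere nonamenable groupoid cannot be $\partial_\infty X$-elementary (Lemma~\ref{lemma:loxodromicity-quasi-normalizer}) and the machinery of canonical vertex sets (Lemmas~\ref{lemma:canonical-vertex-set} and~\ref{lemma:canonical-vertex-set-quasi-normalizer}, Corollary~\ref{cor:ellipticity-quasi-normalizer}); it is precisely in Lemma~\ref{lemma:canonical-vertex-set} that the chain condition (assumption~5) is used — not, as you suggest, to "guarantee existence of maximal elliptic subgroupoids." Your appeal to a Kida-type quasi-normalizer criterion points in the right direction, but to make it work one must pass from the amenable subgroupoid to the quasi-normalizing groupoid and prove that the latter is itself elliptic with a well-controlled (bounded, hence single-cell) fixed vertex set; that passage is the missing ingredient.
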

	
	\begin{rk}\label{rk:coupling-cocycle}
		In fact, in this section, we will prove the following. Let $\calg$ be a measured groupoid over a standard finite measure space $Y$ with two action-like cocycles $\rho_1,\rho_2:\calg\to G$ (see the paragraph above Lemma~\ref{lemma:action-like} in Appendix~\ref{sec:appendix-groupoids} for the definition). Then the two cocycles are \emph{$\Aut(\Theta)$-cohomologous}, i.e.\ there exists a Borel map $\varphi:Y\to\Aut(\Theta)$ and a conull Borel subset $Y^*\subseteq Y$ such that for every $g\in\calg_{|Y^*}$, one has $\rho_2(g)=\varphi(r(g))\rho_1(g)\varphi(s(g))^{-1}$. Theorem~\ref{theo:full} as stated then follows for instance from \cite[Proposition~5.11(2)]{HH1}, which relies on techniques that originate in work of Furman \cite{Fur-oe}. To be precise, notice that \cite[Proposition~5.11(2)]{HH1} was stated with action-type (and not action-like) cocycles, but its proof only involves natural cocycles associated to restrictions of actions, which are action-like by Lemma~\ref{lemma:action-like}.
	\end{rk}

\begin{rk}\label{rk:other-assumptions-0}
The last assumption (Chain condition) is in fact unnecessary in the case where all vertex groups are amenable, which is why it does not appear in Theorem~\ref{theo:combination} from the introduction. The changes that need to be made in order to prove Theorem~\ref{theo:combination} will be explained in Section~\ref{sec:other-assumptions}.
\end{rk}
	
	Before we prove Theorem~\ref{theo:full}, let us explain how our main theorems from the introduction follow. 
	
	\begin{proof}[Proof of Theorem~\ref{theointro:main-2}]
		As observed at the beginning of Section~\ref{subsec:Higman background}, in proving Theorem~\ref{theointro:main}, we do not lose any generality by assuming that $|m_i|<|n_i|$ for each $i$ (instead of just $|m_i|\neq |n_i|$). Let $G=\Hig_\sigma$, and denote by $X$ the developed complex of $\Hig_\sigma$, constructed in Section~\ref{sec:metric}. As $k\ge 5$, the complex $X$,  equipped with the metric defined in Section~\ref{sec:metric}, is connected (Lemma~\ref{lem:sc}), piecewise hyperbolic and $\CAT(-1)$, and it has countably many cells in finitely many isometry types.
		Let $\Theta$ be the intersection graph of the $G$-action on $X$. The group $\Hig_\sigma$ is torsion-free by Lemma~\ref{lemma:Higman property}, and we claim that the action of $\Hig_\sigma$ on $X$ satisfies all assumptions from Theorem~\ref{theo:full}. First, this action is without inversion by construction. Assumptions~1 and~2 are satisfied because all vertex stabilizers are Baumslag-Solitar groups, and edge stabilizers are infinite cyclic. Assumption~3 (weak acylindricity) comes from Lemma~\ref{lemma:higman acylindrical}. By Lemma~\ref{lemma:intersection of vertex stabilizer}(3), for each vertex $v\in X^{(0)}$, there exist countably many mutually different vertices $\{v_i\}_{i=1}^\infty$ such that $\Stab_G(v)\cap \Stab_G(v_i)$ is infinite, thus Assumption~4 follows. Assumption~5 follows from Lemma~\ref{lemma:intersection of vertex stabilizer}(4). Therefore, Theorem~\ref{theo:full} ensures that for every self measure equivalence coupling $\Sigma$ of $G$, there exists a $(G\times G)$-equivariant Borel map $\Sigma\to\Aut(\Theta)$. This is exactly what we want, since $\Aut(\Theta)$ is isomorphic to $\widehat{\Hig}_\sigma$ (Theorem~\ref{theo:crigidity0}).
	\end{proof}
	
We now complete the proof of the measure equivalence rigidity of $\Hig_\sigma$.

\begin{proof}[Proof of Theorem~\ref{theointro:main}]
As $\Hig_\sigma$ is ICC (Lemma~\ref{lemma:Higman property}),  Theorem~\ref{theointro:main} follows from Theorem~\ref{theointro:main-2} in view of e.g.\ \cite[Corollary~5.4]{HH1}. 
\end{proof}
	
	The rest of the section is devoted to the proof of Theorem~\ref{theo:full}. 
	
	\paragraph*{Standing assumption.} From now on $G$ and $X$ are fixed once and for all to satisfy the assumptions of Theorem~\ref{theo:full}.

	\subsection{Elliptic and $\partial_\infty X$-elementary subgroupoids}
	
	We denote by $\partial_\infty X$ the visual boundary of $X$, equipped with the cone topology. Let $\calp_{\le 2}(\partial_\infty X)$ be the space of all  nonempty subsets of $\partial_\infty X$ of cardinality at most $2$: this is naturally identified to $(\partial_\infty X)^2/\mathfrak{S}_2$, and is equipped with the quotient topology of the product topology on $(\partial_\infty X)^2$. Let $\calp_{<\infty}(\partial_\infty X)$ denote the collection of all nonempty finite subsets of $\partial_\infty X$. This also has a natural topology, by taking the direct limit, as $n$ goes to $+\infty$, of the sets of all nonempty finite subsets of cardinality at most $n$. We also denote by $\calp_{=2}(\partial_\infty X)$ and $\calp_{\ge 3,<\infty}(\partial_\infty X)$ the subspace made of subsets of cardinality equal to $2$, and at least $3$, respectively.
	
	\begin{de}
		Let $\calh$ be a measured groupoid over a standard finite measure space $Y$, and let $\rho:\calh\to G$ be a strict cocycle. We say that $(\calh,\rho)$ is 
		\begin{itemize}
			\item \emph{$X$-elliptic} if there exist a conull Borel subset $Y^*\subseteq Y$ and a vertex $v\in X^{(0)}$ such that $\rho(\calh_{|Y^*})\subseteq\Stab_G(v)$;
			\item \emph{stably $X$-elliptic} if there exists a countable Borel partition $Y=\dunion_{i\in I}Y_i$ such that for every $i\in I$, $(\calh_{|Y_i},\rho)$ is $X$-elliptic;
			\item \emph{$\partial_\infty X$-elementary} if there exists an $(\calh,\rho)$-equivariant Borel map $Y\to \calp_{\le 2}(\partial_\infty X)$. 
		\end{itemize}
	\end{de}
	
	The following lemma says that up to a countable Borel partition of the base space, amenable groupoids equipped with a cocycle towards $G$ fall into one of the above categories. 
	
	\begin{lemma}\label{lemma:elliptic-loxodromic}
		Let $\cala$ be an amenable measured groupoid over a standard finite measure space $Y$, equipped with a strict cocycle $\rho:\cala\to G$. Then there exists a Borel partition $Y=Y_1\dunion Y_2$ such that 
		\begin{enumerate}
			\item $(\cala_{|Y_1},\rho)$ is stably $X$-elliptic, and 
			\item $(\cala_{|Y_2},\rho)$ is $\partial_\infty X$-elementary, and in fact there exists a stably $(\cala_{|Y_2},\rho)$-equivariant Borel map $\varphi:Y_2\to\calp_{\le 2}(\partial_\infty X)$ such that for every positive measure Borel subset $V\subseteq Y_2$, every stably $(\cala_{|V},\rho)$-equivariant Borel map $\psi:V\to\calp_{<\infty}(\partial_\infty X)$, and almost every $y\in V$, one has $\psi(y)\subseteq\varphi(y)$. 
		\end{enumerate}
	\end{lemma}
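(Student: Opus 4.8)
The plan is to follow Adams' strategy of pushing the amenable groupoid to the boundary and then separating the two regimes by the size of the support of the resulting boundary measure. First I would use that $\cala$ is amenable and that $\partial_\infty X$ is a compact metrizable space carrying a continuous $G$-action (through $\rho$): the space $\Prob(\partial_\infty X)$ is then a nonempty convex weak-$*$ compact $(\cala,\rho)$-space, so amenability in the sense of the appendix yields an $(\cala,\rho)$-equivariant Borel map $\eta\colon Y\to\Prob(\partial_\infty X)$. Writing $S_y=\mathrm{supp}\,\eta(y)$, the assignment $y\mapsto S_y$ is a Borel $(\cala,\rho)$-equivariant map into closed subsets of $\partial_\infty X$, and since $G$ acts by homeomorphisms the function $y\mapsto\min(|S_y|,3)$ is $\cala$-invariant. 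I would then let $Y_1\subseteq Y$ be (a Borel representative of) the \emph{maximal} $\cala$-invariant subset on which $(\cala,\rho)$ is stably $X$-elliptic, obtained by the usual exhaustion (stable $X$-ellipticity is preserved under countable unions of invariant pieces, by refining partitions), and set $Y_2=Y\setminus Y_1$.

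The geometric heart of the argument, and the step I expect to be the main obstacle, is the \emph{barycenter claim}: if $|S_y|\ge 3$ on a positive-measure invariant set $W$, then $(\cala_{|W},\rho)$ is stably $X$-elliptic. To prove it I would produce a canonical $G$-equivariant center map on boundary measures of support at least $3$. For $\mu\in\Prob(\partial_\infty X)$ I would consider a proper convex potential built from distances to geodesics, such as
\[
f_\mu(x)=\int_{\partial_\infty X\times\partial_\infty X}\theta\big(d(x,\gamma_{\xi,\xi'})\big)\,d\mu(\xi)\,d\mu(\xi'),
\]
where $\gamma_{\xi,\xi'}$ is the bi-infinite geodesic with endpoints $\xi,\xi'$ (the integrand set to $0$ on the diagonal) and $\theta$ is an increasing unbounded function chosen to guarantee integrability. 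Since $X$ is $\CAT(-1)$, hence proper with finitely many isometry types of cells, $d(\cdot,\gamma_{\xi,\xi'})$ blows up at every boundary point different from $\xi,\xi'$; when $\mathrm{supp}\,\mu$ has at least three points the pairs avoiding any fixed boundary point carry positive $\mu\otimes\mu$-measure, so by Fatou $f_\mu$ is proper and attains its infimum on a nonempty bounded set, whose circumcenter $c(\mu)\in X$ is canonical and therefore $G$-equivariant. Composing with the equivariant carrier map (the open cell whose interior contains $c(\mu)$) and partitioning $W$ by carrier cell — of which there are only countably many — gives stable $X$-ellipticity, since on each such piece $\rho(g)$ preserves a fixed cell and hence, the action being without inversion, fixes each of its vertices. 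The delicate point is the properness \emph{and} finiteness of $f_\mu$ for measures of possibly infinite support (the $SO(2)$-type examples show one cannot reduce to a finite equivariant boundary set), which I would control using the uniform thinness of ideal triangles in $\CAT(-1)$; this is exactly where the ``$\CAT(-1)$ geometry'' enters and where I expect most of the technical work to lie.

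It then follows that $|S_y|\le 2$ for almost every $y\in Y_2$: any positive-measure piece of $Y_2$ with $|S_y|\ge 3$ would be stably $X$-elliptic by the previous step and hence absorbed into $Y_1$. Thus $(\cala_{|Y_2},\rho)$ is $\partial_\infty X$-elementary, witnessed by $y\mapsto S_y$. To upgrade this to the minimal map $\varphi$ required in (2), I would record the \emph{barycenter bound}: any stably $(\cala_{|V},\rho)$-equivariant map $\psi\colon V\to\calp_{<\infty}(\partial_\infty X)$ with $|\psi(y)|\ge 3$ on a positive-measure set would, by applying the center map to the uniform (finite-support, hence unproblematic) measure on $\psi(y)$, be stably $X$-elliptic there, contradicting $V\subseteq Y_2$; so every such $\psi$ takes values in $\calp_{\le 2}(\partial_\infty X)$ almost everywhere. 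Since the pointwise union of two equivariant finite-set maps is again equivariant and finite, the family of equivariant maps $Y_2\to\calp_{\le 2}(\partial_\infty X)$ is directed under union with cardinalities bounded by $2$; choosing $\varphi$ to maximize the essential supremum of $|\varphi(\cdot)|$, a maximality argument gives that for any equivariant $\psi$ as above the union $\varphi\cup\psi$ equals $\varphi$ almost everywhere, i.e.\ $\psi(y)\subseteq\varphi(y)$. This is precisely the minimality in (2).

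Two remarks on the bookkeeping. The existence and Borel measurability of the maximal stably elliptic set $Y_1$, of the center map $\mu\mapsto c(\mu)$, and of the maximizer $\varphi$ all rest on routine measurable-selection and essential-supremum arguments in the measured-groupoid framework, which I would phrase through countable Borel partitions as in the appendix. Finally, I emphasize that weak acylindricity is \emph{not} used in this lemma: it is needed only afterwards to show that the two alternatives are mutually exclusive, whereas here a given piece of $Y$ is a priori allowed to be both elliptic and elementary, and the partition $Y=Y_1\dunion Y_2$ simply commits to one of them.
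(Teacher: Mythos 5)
Your proposal has a genuine gap at its very first step, and it propagates through the whole argument. You assert that $\partial_\infty X$ is a compact metrizable space (and later that $X$ is ``proper with finitely many isometry types of cells''), so that amenability of $\cala$ yields an $(\cala,\rho)$-equivariant map $Y\to\Prob(\partial_\infty X)$. But in the setting of this lemma $X$ has finite shapes without being locally compact: for the Higman complex, the link of a vertex is an infinite bipartite graph (cosets of the cyclic edge groups in a Baumslag--Solitar vertex group), so $X$ is not proper, and its visual boundary with the cone topology is not compact (compare the boundary of an infinite-valence tree, which is an infinite discrete set). Consequently the fixed-point property of amenable groupoids, which requires a \emph{compact} metrizable target, cannot be applied to $\Prob(\partial_\infty X)$. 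This is exactly the difficulty the paper's proof is built around: it replaces $\partial_\infty X$ by the horofunction compactification $\overline{X}_\Delta^h$ of the barycentric subdivision (compact and metrizable even for non-proper spaces), gets an equivariant map $Y\to\Prob(\overline{X}_\Delta^h)$, and then uses the Maher--Tiozzo-style equivariant Borel partition $\overline{X}_\Delta^h=(\overline{X}_\Delta^h)_\infty\dunion(\overline{X}_\Delta^h)_{\bdd}$, with maps to $\partial_\infty X$ on one piece and to the \emph{countable} set of simplices $\cals(X_\Delta)$ on the other. Mass on the bounded part is converted into stable $X$-ellipticity by taking the finite set of atoms of maximal measure and a circumcenter; your argument has no counterpart to this step because you put all the mass on the boundary from the start.

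The same non-properness undermines your ``barycenter claim''. Your potential $f_\mu$ presupposes that any two distinct boundary points are joined by a bi-infinite geodesic and that sublevel sets behave as in proper $\CAT(-1)$ spaces; neither is guaranteed here, and the integrability/properness issues you flag are symptoms of attacking a harder problem than necessary. The paper never constructs a barycenter for an arbitrary measure with support of size $\ge 3$: it only needs the Borel barycenter map for \emph{triples} of pairwise distinct boundary points (\cite[Proposition~3.5]{HH1} in the paper's notation), and reduces a measure $\mu_y$ with $|\mathrm{supp}\,\mu_y|\ge 3$ to triples via the product measure $\mu_y^{\otimes 3}$, which charges the set of distinct triples; pushing forward lands in $\Prob(\cals(X_\Delta))$, a measure on a countable set, where maximal-mass atoms again give ellipticity. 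Your closing step also has a minor flaw: maximizing the \emph{essential supremum} of $|\varphi(\cdot)|$ does not pin down $\varphi$ (that supremum is just $1$ or $2$); the paper instead takes a Borel subset $U\subseteq Y_2$ of \emph{maximal measure} carrying a stably equivariant map to $\calp_{=2}(\partial_\infty X)$ and glues. That part is repairable bookkeeping, but the compactness issue and the barycenter construction are essential obstructions to your route as written.
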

	
\begin{rk}
In the following proof and throughout the section, we will work with spaces of probability measures. Given a separable metrizable space $X$, we equip the space $\Prob(X)$ of all Borel probability measures on $X$ with the topology generated by the maps $\mu\mapsto\int fd\mu$, where $f$ varies over all real-valued continuous bounded functions on $X$. When $X$ is compact, this topology coincides with the weak-$*$ topology coming from the identification of $\Prob(X)$ to a subspace of the dual of $C(X;\mathbb{R})$ (the space of real-valued continuous functions on $X$), by the Riesz--Markov--Kakutani theorem. When $X$ is countable, the above topology on $\Prob(X)$ coincides with the topology of pointwise convergence. By \cite[Theorem~17.24]{Kec}, the Borel $\sigma$-algebra on $\Prob(X)$ is generated by the maps $\mu\mapsto\mu(A)$, where $A$ varies among all Borel subsets of $X$. The reader is refered to \cite[Section~17]{Kec} for more information, justifying the measurability of all maps involving spaces of probability measures in the following proof.
\end{rk}	
	
	\begin{proof}
		The proof is based on an argument of Adams \cite{Ada}. We let $Y_1$ be a Borel subset of $Y$ of maximal measure such that $(\cala_{|Y_1},\rho)$ is stably $X$-elliptic: this exists because if $(Y_{1,n})_{n\in\mathbb{N}}$ is a measure-maximizing sequence of such subsets, then their union again has the property. Let $Y_2=Y\setminus Y_1$. Our goal is to show that $Y_2$ satisfies the conclusion from the lemma. Notice that, by construction, there does not exist any positive measure Borel subset $U\subseteq Y_2$ such that $(\cala_{|U},\rho)$ is $X$-elliptic. 
		
		Let $X_\Delta$ be the second barycentric subdivision of $X$ (cf.\ \cite[Chapter I.7]{BH}): this is a simplicial complex, which is still $\mathrm{CAT}(-1)$ and piecewise hyperbolic, with countably many simplices in finitely many isometry types, and on which $G$ acts with amenable edge stabilizers. The visual boundaries $\partial_\infty X$ and $\partial_\infty X_\Delta$ are canonically ($G$-equivariantly) isomorphic. Fix a basepoint $x_0\in X_\Delta$, and let $\overline{X}_\Delta^h$ be the horocompactification of $X_\Delta$, defined as the closure of the image of the embedding 
		\begin{displaymath}
		\begin{array}{cccc}
		X_\Delta & \to & \mathcal{C}(X_\Delta,\mathbb{R}) \\
		x & \mapsto & d(x,\cdot)-d(x,x_0)
		\end{array}
		\end{displaymath}
		where $\mathcal{C}(X_\Delta,\mathbb{R})$ denotes the space of all real-valued continuous functions on $X_\Delta$, equipped with the compact-open topology. Denote by $\mathcal{S}(X_\Delta)$ the countable set of all simplices of $X_\Delta$, equipped with the discrete topology. By \cite[Theorem~I.7.19]{BH}, the space $X_\Delta$ is a complete metric space, and it is also separable because it contains countably many simplices. We can therefore apply \cite[Proposition~3.3]{HH1} (based on earlier work of Maher and Tiozzo \cite{MT}) and deduce that there exist a Borel partition $\overline{X}_\Delta^h=(\overline{X}_{\Delta}^h)_\infty\dunion (\overline{X}_{\Delta}^h)_{\mathrm{bdd}}$ together with $G$-equivariant Borel maps $\theta_\infty:(\overline{X}^h_\Delta)_\infty\to\partial_\infty X$ and $\theta_{\mathrm{bdd}}:(\overline{X}_\Delta^h)_{\mathrm{bdd}}\to \mathcal{S}(X_\Delta)$.
		
		The space $\overline{X}_\Delta^h$ is compact and metrizable (see \cite[Proposition~3.1]{MT}). Since $\cala$ is amenable, it follows (using \cite[Proposition~4.14]{Kid-survey}) that there exists an $(\cala,\rho)$-equivariant Borel map $\nu:Y\to\Prob(\overline{X}_\Delta^h)$. In the rest of the proof, we will usually write $\nu_y$ instead of $\nu(y)$.
		
		We claim that for almost every $y\in Y_2$, one has $\nu_y((\overline{X}^h_\Delta)_{\infty})=1$. Indeed, assume towards a contradiction that there exists a positive measure Borel subset $U\subseteq Y_2$ such that for every $y\in U$, the probability measure $\nu_y$ gives positive measure to $(\overline{X}^h_\Delta)_{\bdd}$. By restricting $\nu_y$ to $(\overline{X}^h_\Delta)_{\bdd}$, renormalizing it to a probability measure, and pushforwarding it through the Borel map $\theta_\bdd$, we deduce an $(\cala_{|U},\rho)$-equivariant Borel map $U\to\Prob(\cals(X_\Delta))$. As $\cals(X_\Delta)$ is countable, there is a natural $G$-equivariant Borel map $\Prob(\cals(X_\Delta))\to\calp_{<\infty}(\cals(X_\Delta))$, sending a probability measure $\mu$ to the nonempty finite subset of $\cals(X_\Delta)$ consisting of all simplices with maximal $\mu$-measure. There is also a $G$-equivariant \emph{circumcenter map} $\calp_{<\infty}(\cals(X_\Delta))\to \cals(X_\Delta)$, defined as follows: given a nonempty finite set $F$ of simplices of $X_\Delta$, the set $\hat{F}$ of all vertices of simplices in $F$, viewed as a subset of $X_\Delta$, has a unique circumcenter $c$ (see e.g.\ \cite[Proposition~II.2.7]{BH}), and we map $F$ to the smallest simplex of $X_\Delta$ that contains $c$. Finally, denoting by  $\mathrm{Cell}(X)$ the countable set consisting of all cells of $X$, there is a $G$-equivariant map $\mathcal{S}(X_\Delta)\to\mathrm{Cell}(X)$, sending a simplex to the smallest cell of $X$ that contains it. Combining these maps yields an $(\cala_{|U},\rho)$-equivariant Borel map $U\to \mathrm{Cell}(X)$. As the $G$-action on $X$ is without inversion, this implies that $(\cala_{|U},\rho)$ is stably $X$-elliptic, a contradiction to the definition of $Y_2$. This proves our claim.
		
		By restricting the probability measures $\nu_y$ to $(\overline{X}^h_\Delta)_\infty$ and pushforwarding them through the Borel map $\theta_\infty$, we obtain an $(\cala_{|Y_2},\rho)$-equivariant Borel map $\alpha:Y_2\to\Prob(\partial_\infty X)$.
		
		We now claim that for every positive measure Borel subset $U\subseteq Y_2$, every $(\cala_{|U},\rho)$-equivariant Borel map $\mu:U\to\Prob(\partial_\infty X)$ and almost every $y\in U$, the support of $\mu_y$ has cardinality at most $2$. Indeed, assume towards a contradiction that there exists a positive measure Borel subset  $V\subseteq U$ such that for every $y\in V$, the support of $\mu_y$ has cardinality at least $3$. Then for every $y\in V$, the probability measure  $\mu(y)\otimes\mu(y)\otimes\mu(y)$ on $(\partial_\infty X)^3$ gives positive measure to the Borel subset $(\partial_\infty X)^{(3)}$ consisting of pairwise distinct triples. This yields an $(\cala_{|V},\rho)$-equivariant map $V\to\Prob((\partial_\infty X)^{(3)})$, and this map is Borel as $\mu$ is Borel. Using the existence of a Borel \emph{barycenter map} $(\partial_\infty X)^{(3)}\to \cals(X_\Delta)$ (see \cite[Proposition~3.5]{HH1}), we derive an $(\cala_{|V},\rho)$-equivariant Borel map $V\to\Prob(\cals(X_\Delta))$. As in the previous paragraph, this implies that $(\cala_{|V},\rho)$ is stably $X$-elliptic, a contradiction. 
		
		Notice in particular that the above claim has the following consequence (which is proved by first mapping every nonempty finite set to the uniform probability measure on this set):
		
		\medskip
		\noindent\textbf{Fact:} For every positive measure Borel subset $U\subseteq Y_2$, every $(\cala_{|U},\rho)$-equivariant Borel map $U\to\calp_{<\infty}(\partial_\infty X)$ essentially takes its values in $\calp_{\le 2}(\partial_\infty X)$.
		\medskip
		
		By considering the supports of the measures $\alpha_y$, we see that there exists an $(\cala_{|Y_2},\rho)$-equivariant Borel map $\varphi':Y_2\to\calp_{\le 2}(\partial_\infty X)$, showing that $(\cala_{|Y_2},\rho)$ is $\partial_\infty X$-elementary. Let now $U\subseteq Y_2$ be a Borel subset of maximal measure such that there exists a stably $(\cala_{|U},\rho)$-equivariant Borel map $\varphi_U:U\to\calp_{=2}(\partial_\infty X)$. 
		
		Given $\varphi_U$ as above, we finally prove that the map $\varphi$ which coincides with $\varphi_U$ on $U$ and with $\varphi'$ on $Y_2\setminus U$ satisfies the conclusion of the lemma. Indeed, let $V\subseteq Y_2$ be a Borel subset of positive measure, and let $\psi:V\to\calp_{<\infty}(\partial_\infty X)$ be a stably $(\cala_{|V},\rho)$-equivariant Borel map. Assume that there exists a Borel subset $W\subseteq V$ of positive measure such that for every $y\in W$, one has $\psi(y)\nsubseteq\varphi(y)$. Then the stably $(\cala_{|W},\rho)$-equivariant Borel map $\psi\cup\varphi$ either takes essentially its values in $\calp_{\ge 3,<\infty}(\partial_\infty X)$ (which would contradict the above fact), or it takes its values in $\calp_{=2}(\partial_\infty X)$ on a positive measure subset of $Y_2\setminus U$, which contradicts the maximality of $U$. This completes our proof.  
	\end{proof} 
	
	\subsection{Exploiting amenable subgroupoids}
	
	In the following lemma, we will crucially exploit the weak acylindricity of the $G$-action on $X$.
	
	\begin{lemma}
		\label{lemma:elliptic vs elementary}
		Let $\calh$ be a measured groupoid over a standard finite measure space $Y$, let $\rho:\calh\to G$ be a strict cocycle with trivial kernel, and assume that $\calh$ is nowhere trivial.
		
		Then $(\calh,\rho)$ cannot be both stably $X$-elliptic and $\partial_\infty X$-elementary.
	\end{lemma}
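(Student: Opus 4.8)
The plan is to argue by contradiction, reducing to a single vertex stabilizer and then exploiting Lemma~\ref{lemma:weakly acylindrical} through a recurrence argument. Suppose $(\calh,\rho)$ is both stably $X$-elliptic and $\partial_\infty X$-elementary. Using the partition witnessing stable $X$-ellipticity together with the hypothesis that $\calh$ is nowhere trivial, I would first restrict to a positive measure Borel subset $Y_0\subseteq Y$ on which $\calh$ is still nowhere trivial and $\rho(\calh_{|Y_0})\subseteq\Stab_G(v)$ for a single vertex $v\in X^{(0)}$. Restricting the map coming from $\partial_\infty X$-elementarity gives an $(\calh_{|Y_0},\rho)$-equivariant Borel map $\varphi:Y_0\to\calp_{\le 2}(\partial_\infty X)$.

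The key consequence of weak acylindricity I would extract is that no nontrivial element of $\Stab_G(v)$ can stabilize a subset $F\subseteq\partial_\infty X$ with $1\le|F|\le 2$. Indeed, by Lemma~\ref{lemma:weakly acylindrical}, for every $\xi\in\partial_\infty X$ there is a neighborhood $U_{\xi,v}$ with $gU_{\xi,v}\cap U_{\xi,v}=\emptyset$ for all nontrivial $g\in\Stab_G(v)$; in particular no nontrivial element of $\Stab_G(v)$ fixes a boundary point, and since $G$ is torsion-free an element stabilizing a two-point set would have its square fixing each of the two points, again a contradiction. Equivalently, the $G$-stabilizer of any pair $(v,F)$ with $|F|\le 2$ is trivial, so $\Psi:=(v,\varphi):Y_0\to X^{(0)}\times\calp_{\le 2}(\partial_\infty X)$ is an equivariant Borel map into the free part of this $G$-space. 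A first dividend is that the isotropy of $\calh_{|Y_0}$ is trivial almost everywhere: a nontrivial isotropy element $g$ would give $\rho(g)\in\Stab_G(\Psi(s(g)))=\{1\}$, contradicting that $\rho$ has trivial kernel; thus $\calh_{|Y_0}$ is essentially a (still nowhere trivial) measured equivalence relation.

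To produce the contradiction I would run a recurrence argument. After shrinking $Y_0$ by Luzin's theorem so that $\varphi$ is continuous on a compact set of nearly full measure, finiteness of the measure together with infiniteness of the orbits (this is where nowhere triviality is used) yields, on a positive measure set, infinitely many pairwise distinct nontrivial elements $\gamma_n\in\Stab_G(v)$ with $\gamma_n\varphi(y)\to\varphi(y)$ in $\calp_{\le 2}(\partial_\infty X)$. If $\varphi(y)=\{\eta\}$ this is immediate to refute: $\gamma_n\eta\to\eta$ forces $\gamma_n\eta\in U_{\eta,v}$ for large $n$, contradicting $\gamma_n U_{\eta,v}\cap U_{\eta,v}=\emptyset$. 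For $\varphi(y)=\{\eta_1,\eta_2\}$ I would pass to the bi-infinite geodesic $\ell=[\eta_1,\eta_2]$ and its projection point $p=\mathrm{proj}_\ell(v)$; since $\gamma_n$ fixes $v$, one gets $\gamma_n\ell\to\ell$ and $\gamma_n p\to p$, and choosing $w\in\ell$ with $d(v,w)>L$ (the weak acylindricity constant) far from $p$ on both sides, $\gamma_n w$ converges to $w$ or to its mirror $w'$, both at distance $>L$ from $v$. In the first case $d(\gamma_n w,w)\to 0$ already contradicts weak acylindricity; in the swapping case one has $\gamma_n w\to w'$ and $\gamma_n w'\to w$, and since each $\gamma_n$ is an isometry, $d(\gamma_n\gamma_m w,w)\le d(\gamma_m w,w')+d(\gamma_n w',w)\to 0$ with $\gamma_n\gamma_m$ nontrivial (chosen so), again contradicting weak acylindricity at the point $w$ with $d(v,w)>L$.

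The step I expect to be the main obstacle is the recurrence input in the groupoid setting: turning ``nowhere trivial over a finite measure space'' into the existence of infinitely many distinct nontrivial $\gamma_n\in\Stab_G(v)$ with $\gamma_n\varphi(y)\to\varphi(y)$. This is exactly where infiniteness of the orbits is essential --- with finite orbits one only sees finitely many group elements near a given $y$ and the argument collapses --- so the precise content of nowhere triviality (equivalently, aperiodicity of the underlying relation, which holds for the action-like cocycles arising from the coupling) must be used carefully. The secondary delicate point is the two-point case above, where the possible interchange of $\eta_1$ and $\eta_2$ is handled by composing two recurrence times and using that the $\gamma_n$ act by isometries, rather than by invoking any equicontinuity of their inverses.
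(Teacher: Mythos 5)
Your strategy is genuinely different from the paper's, and its skeleton can be completed, but the step you yourself flag as the main obstacle --- the recurrence claim --- rests on a justification that fails as stated. ``Finiteness of the measure together with infiniteness of the orbits'' does not yield recurrence when the measure is merely quasi-invariant, which is the setting of the lemma: pushing the translation action $\mathbb{Z}\actson\mathbb{R}$ forward to a finite quasi-invariant measure gives a relation with all orbits infinite that nevertheless admits a positive measure Borel set meeting every orbit exactly once, so no recurrence holds there. Relatedly, your parenthetical claim that nowhere triviality is \emph{equivalent} to aperiodicity is false in the quasi-invariant setting (the same example is aperiodic but not nowhere trivial), and the appeal to ``action-like cocycles arising from the coupling'' is inadmissible: the lemma is later applied to subgroupoids such as $\cala_1\cap\cala_2\cap\cala_3$ (Lemma~\ref{lemma:intersection of four gropuoids}) and to restrictions appearing in Lemma~\ref{lemma:characterization-vertex}, for which the cocycle is only known to have trivial kernel. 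What saves your approach is that the recurrence you need \emph{does} follow from nowhere triviality used as a conservativity hypothesis: after your freeness observation makes $\calh_{|Y_0}$ principal, if for some basic open $V\subseteq\calp_{\le 2}(\partial_\infty X)$ a positive measure set of points of $\varphi^{-1}(V)$ had only finitely many classmates in $\varphi^{-1}(V)$, then a Borel transversal of the finite-class part of the restricted relation would be a positive measure set $T$ with $\calh_{|T}$ trivial, a contradiction; running this over a countable basis gives, for a.e.\ $y$, infinitely many distinct classmates whose $\varphi$-values converge to $\varphi(y)$, which is your sequence $\gamma_n$ (distinctness follows from triviality of the kernel). Note that this argues directly on $\varphi$-values, so the Luzin step is unnecessary.

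Two further points. First, your two-point case uses facts that are not free here because $X$ is complete but not proper (links in the Higman complex are infinite graphs): the existence of the bi-infinite geodesic $[\eta_1,\eta_2]$ and the continuity of projection under convergence of geodesics require proof in non-proper $\CAT(-1)$ spaces. This is avoidable: since each $\gamma_n$ fixes $v$, cone-topology convergence $\gamma_n\eta_i\to\eta_j$ says exactly that the rays $\gamma_n[v,\eta_i)=[v,\gamma_n\eta_i)$ converge uniformly on compact sets to $[v,\eta_j)$, so you can place $w,w'$ on the two rays from $v$ at parameter $>L$ and run your composition trick $d(\gamma_n\gamma_m w,w)\le d(\gamma_m w,w')+d(\gamma_n w',w)$ verbatim; also, ``contradicts weak acylindricity'' needs the uniform gap $\epsilon>0$ with $d(gw,w)>\epsilon$ for all nontrivial $g\in\Stab_G(v)$ (coming from finiteness of shapes, as in the proof of Lemma~\ref{lemma:weakly acylindrical}), since $d(\gamma_n w,w)\to 0$ alone does not contradict finiteness of $\Stab_G(\{v,w\})$. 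Second, for comparison, the paper's proof is much shorter because it upgrades Lemma~\ref{lemma:weakly acylindrical} to a neighborhood $U_\xi$ of each $\xi\in\calp_{\le 2}(\partial_\infty X)$ satisfying $gU_\xi\cap U_\xi=\emptyset$ simultaneously for all nontrivial $g\in\Stab_G(v)$ (your squaring trick is exactly what handles the swap case in that upgrade), then takes $\xi$ in the support of the pushforward measure: on the positive measure set $\theta^{-1}(U_\xi)$ a \emph{single} non-unit element of the groupoid already forces $\rho$ to vanish on it, and one non-unit is precisely what nowhere triviality supplies --- no recurrence and no geometry of pairs of boundary points is needed.
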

	
	\begin{proof}
		Assume towards a contradiction that $(\calh,\rho)$ is both stably $X$-elliptic and $\partial_\infty X$-elementary. Up to replacing $Y$ by a positive measure Borel subset, we can assume that there exists $v\in X^{(0)}$ such that $\rho(\calh)\subseteq\Stab_G(v)$. Let also $\theta:Y\to\calp_{\le 2}(\partial_\infty X)$ be an $(\calh,\rho)$-equivariant Borel map. Up to replacing $Y$ by a conull Borel subset, we can assume that for every $h\in\calh$, one has $\theta(r(h))=\rho(h)\theta(s(h))$.
		
		Since the $G$-action on $X$ is weakly acylindrical, by Lemma~\ref{lemma:weakly acylindrical}, every element $\xi\in\calp_{\le 2}(\partial_\infty X)$ has an open neighborhood $U_\xi\subseteq\calp_{\le 2}(\partial_\infty X)$ such that for every nontrivial element $g\in\Stab_G(v)$, one has $gU_\xi\cap U_\xi=\emptyset$. Let $\nu$ be the finite Borel measure on $\calp_{\le 2}(\partial_\infty X)$ obtained by pushforwarding the probability measure on $Y$ through the Borel map $\theta$. Let $\xi\in\calp_{\le 2}(\partial_\infty X)$ be a point in the support of $\nu$. Then the Borel set $Y_\xi\subseteq Y$ consisting of all points $y\in Y$ such that $\theta(y)\in U_\xi$ has positive measure. But for every $h\in\calh_{|Y_{\xi}}$, the element $\rho(h)$ must both fix $v$ and send an element in $U_\xi$ to another element in $U_\xi$, so by the above $\rho(h)=1$. As $\rho$ has trivial kernel, it follows that the groupoid $\calh_{|Y_\xi}$ is trivial, contradicting our assumption that $\calh$ is nowhere trivial.      
	\end{proof}
	
	We say that a measured subgroupoid $\calh\subseteq\calg$ is \emph{stably maximal} with respect to some property if it satisfies the property, and for every measured subgroupoid $\hat\calh\subseteq\calg$ satisfying the property, if $\calh$ is stably contained in $\hat\calh$, then they are stably equal.
	
	\begin{lemma}
		\label{lemma:maximal amenable}
		Let $\calg$ be a measured groupoid over a standard finite measure space $Y$, let $\rho:\calg\to G$ be a strict cocycle with trivial kernel, and let $\cala$ be a stably maximal amenable measured subgroupoid of $\calg$.
		
		Then there exists a Borel partition $Y=Y_1\dunion Y_2$ such that $(\cala_{|Y_1},\rho)$ is stably $X$-elliptic, and there exists a Borel map $\varphi:Y_2\to\calp_{\le 2}(\partial_\infty X)$ such that $\cala_{|Y_2}$ is stably equal to the $(\calg_{|Y_2},\rho)$-stabilizer of $\varphi$.
	\end{lemma}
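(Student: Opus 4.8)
The plan is to apply Lemma~\ref{lemma:elliptic-loxodromic} to the amenable groupoid $\cala$, equipped with the restriction of $\rho$. This yields a Borel partition $Y=Y_1\dunion Y_2$ together with a stably $(\cala_{|Y_2},\rho)$-equivariant Borel map $\varphi:Y_2\to\calp_{\le 2}(\partial_\infty X)$ satisfying the maximality property of that lemma. On $Y_1$ the pair $(\cala_{|Y_1},\rho)$ is stably $X$-elliptic, which is already the first half of the conclusion, so the real work concerns $Y_2$. There I set $\cals$ to be the $(\calg_{|Y_2},\rho)$-stabilizer of $\varphi$, that is the measured subgroupoid of $\calg_{|Y_2}$ consisting of those $g$ with $\rho(g)\varphi(s(g))=\varphi(r(g))$; this is Borel and closed under the groupoid operations since $\rho$, $\varphi$ and the $G$-action on $\calp_{\le 2}(\partial_\infty X)$ are Borel. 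Since $\varphi$ is stably $(\cala_{|Y_2},\rho)$-equivariant, the groupoid $\cala_{|Y_2}$ is stably contained in $\cals$, and everything reduces to upgrading this containment to a stable equality.

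The key step is to prove that $\cals$ is amenable. By construction $\varphi$ is $(\cals,\rho)$-equivariant, so $(\cals,\rho)$ carries an equivariant Borel map to the $G$-space $\calp_{\le 2}(\partial_\infty X)$. The crucial input is that the $G$-action on $\partial_\infty X$ is Borel amenable, by \cite[Theorem~1]{HH1}; amenability of actions being preserved under taking products, the diagonal action on $(\partial_\infty X)^2$ is amenable, and passing through the $G$-equivariant finite-to-one quotient map $(\partial_\infty X)^2\to\calp_{\le 2}(\partial_\infty X)=(\partial_\infty X)^2/\mathfrak{S}_2$ shows that the $G$-action on $\calp_{\le 2}(\partial_\infty X)$ is Borel amenable as well. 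A measured groupoid carrying a cocycle to $G$ together with an equivariant Borel map to a Borel amenable $G$-space is itself amenable; applied to $(\cals,\rho)$ and $\varphi$, this gives the amenability of $\cals$.

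It then remains to invoke the stable maximality of $\cala$. First I would record that the partition $Y=Y_1\dunion Y_2$ can be taken $\cala$-invariant: indeed $Y_1$ is (up to measure) the saturation under $\cala$ of any maximal stably $X$-elliptic subset, so no positive-measure family of arrows of $\cala$ crosses between $Y_1$ and $Y_2$ (otherwise one could transport the vertex-valued equivariant map across such arrows and enlarge $Y_1$, contradicting maximality). Consequently the subset $\cala_{|Y_1}\cup\cals\subseteq\calg$ is a genuine measured subgroupoid $\cals'$, and it is amenable because it splits over the Borel partition $\{Y_1,Y_2\}$ of its base into the amenable pieces $\cala_{|Y_1}$ and $\cals$. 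By $\cala$-invariance we have $\cala=\cala_{|Y_1}\dunion\cala_{|Y_2}$, whence $\cala$ is stably contained in $\cals'$; stable maximality of $\cala$ among amenable subgroupoids of $\calg$ then forces $\cala$ and $\cals'$ to be stably equal, and restricting to $Y_2$ yields that $\cala_{|Y_2}$ is stably equal to $\cals$, as desired. The main obstacle is the amenability of $\cals$: it genuinely rests on the boundary amenability \cite[Theorem~1]{HH1} and its transfer to $\calp_{\le 2}(\partial_\infty X)$, whereas the $\cala$-invariance of the partition and the final maximality gluing are routine once the amenability is in hand.
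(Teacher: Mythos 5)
Your proposal is correct and takes essentially the same route as the paper: partition via Lemma~\ref{lemma:elliptic-loxodromic}, prove amenability of the $(\calg_{|Y_2},\rho)$-stabilizer of $\varphi$ from Borel amenability of the $G$-action on $\calp_{\le 2}(\partial_\infty X)$ (using, as you should make explicit, that $\rho$ has trivial kernel, via \cite[Proposition~3.38]{GH}), and conclude by stable maximality. Your preliminary step making the partition $\cala$-invariant is harmless but unnecessary: stable containment of $\cala$ in the union subgroupoid $\cala_{|Y_1}\cup\cals$ is tested only on restrictions to pieces of a countable Borel partition, so arrows of $\cala$ crossing between $Y_1$ and $Y_2$ never intervene, and with that observation your union-subgroupoid argument is exactly the maximality step the paper leaves implicit.
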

	
	\begin{proof}
	 Lemma~\ref{lemma:elliptic-loxodromic} ensures that there exists a Borel partition $Y=Y_1\dunion Y_2$ such that $(\cala_{|Y_1},\rho)$ is stably $X$-elliptic, and $\cala_{|Y_2}$ is contained in the $(\calg_{|Y_2},\rho)$-stabilizer of some Borel map $\varphi:Y_2\to\calp_{\le 2}(\partial_\infty X)$. It is therefore enough to prove that the $(\calg_{|Y_2},\rho)$-stabilizer of $\varphi$ is amenable, as the stable maximality assumption will then conclude the proof.
	
		As in the proof of Lemma~\ref{lemma:elliptic-loxodromic}, let $X_\Delta$ be the second barycentric subdivision of $X$. This is a connected, piecewise hyperbolic $\mathrm{CAT}(-1)$ simplicial complex with countably many simplices in finitely many isometry types, on which $G$ acts by simplicial isometries. As edge stabilizers of the $G$-action on $X$ are amenable (Assumption~2 from Theorem~\ref{theo:full}), so are edge stabilizers for the $G$-action on $X_\Delta$. The visual boundaries $\partial_\infty X$ and $\partial_\infty X_\Delta$ are naturally identified. It thus follows from  \cite[Theorem~1(2)]{HH1} that the $G$-action on $\partial_\infty X$ is Borel amenable, and therefore so is the $G$-action on $\calp_{\le 2}(\partial_\infty X)$ (see e.g.\ \cite[Lemma~6.14]{HH1}). As $\rho$ has trivial kernel, it follows from \cite[Proposition~3.38]{GH} (recasting \cite[Proposition~4.33]{Kid-memoir}) that for every Borel subset $U\subseteq Y$ of positive measure, the $(\calg_{|U},\rho)$-stabilizer of every Borel map $U\to\calp_{\le 2}(\partial_\infty X)$ is amenable. This concludes our proof.  
	\end{proof}
	
	\begin{lemma}
		\label{lemma:intersection of four gropuoids}
		Let $\calg$ be a measured groupoid over a standard finite measure space $Y$, and let $\rho:\calg\to G$ be a strict cocycle with trivial kernel. Let  $\cala_1,\cala_2,\cala_3$ be three stably maximal amenable subgroupoids of $\calg$, and assume that there do not exist $i\neq j$ and a Borel subset $U\subseteq Y$ of positive measure such that $(\cala_i)_{|U}$ has a finite-index subgroupoid contained in $(\cala_j)_{|U}$. Let $\cala=\cala_1\cap\cala_2\cap\cala_3$.
		
		Then $(\cala,\rho)$ is stably $X$-elliptic. 
	\end{lemma}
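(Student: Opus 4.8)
The plan is to feed each $\cala_i$ into Lemma~\ref{lemma:maximal amenable}, and then play the resulting three boundary maps against one another, invoking the standing hypothesis precisely in the degenerate case where these maps almost coincide. First I would record that $\cala=\cala_1\cap\cala_2\cap\cala_3$, being a subgroupoid of the amenable groupoid $\cala_1$, is itself amenable. For each $i\in\{1,2,3\}$, Lemma~\ref{lemma:maximal amenable} produces a Borel partition $Y=Y_1^i\dunion Y_2^i$ such that $((\cala_i)_{|Y_1^i},\rho)$ is stably $X$-elliptic and $(\cala_i)_{|Y_2^i}$ is stably equal to the $(\calg_{|Y_2^i},\rho)$-stabilizer of a Borel map $\varphi_i:Y_2^i\to\calp_{\le 2}(\partial_\infty X)$. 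Since $\cala\subseteq\cala_i$, on $Y_1^i$ the restriction of $\cala$ is again stably $X$-elliptic; hence $(\cala,\rho)$ is stably $X$-elliptic on $Y_1^1\cup Y_1^2\cup Y_1^3$, and it only remains to analyze $Y_0:=Y_2^1\cap Y_2^2\cap Y_2^3$.

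On $Y_0$, each $\cala_i$ stably stabilizes $\varphi_i$ (as $\cala\subseteq\cala_i$ is stably contained in $\Stab(\varphi_i)$), so the map $\Xi:=\varphi_1\cup\varphi_2\cup\varphi_3:Y_0\to\calp_{<\infty}(\partial_\infty X)$ is stably $(\cala_{|Y_0},\rho)$-equivariant. I would then split $Y_0$ according to the cardinality of $\Xi$. On the part $Y_0^{\ge 3}$ where $|\Xi|\ge 3$, apply Lemma~\ref{lemma:elliptic-loxodromic} to $\cala_{|Y_0^{\ge 3}}$: by the domination property in its second conclusion, on the $\partial_\infty X$-elementary part $\Xi$ would be dominated by a map into $\calp_{\le 2}(\partial_\infty X)$, which is absurd; hence that part is null and $\cala$ is stably $X$-elliptic on $Y_0^{\ge 3}$. (Equivalently, one can feed $\Xi$ into the barycenter map $(\partial_\infty X)^{(3)}\to\cals(X_\Delta)$ and the circumcenter/cell maps exactly as in the proof of Lemma~\ref{lemma:elliptic-loxodromic}, obtaining an equivariant map into $\mathrm{Cell}(X)$.)

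The crux is the region $Y_0^{\le 2}$ where $|\Xi|\le 2$, which I claim is null. Suppose it has positive measure. After a further Borel partition I may assume the combinatorial type of $(\varphi_1,\varphi_2,\varphi_3)$ inside the at-most-two-element set $\Xi$ is constant. If two of the maps agree on a positive measure subset $U$, say $\varphi_i=\varphi_j$, then $(\cala_i)_{|U}$ and $(\cala_j)_{|U}$ are stably equal, so $(\cala_i)_{|U}$ is (an index-$1$, hence finite-index, subgroupoid of itself) contained in $(\cala_j)_{|U}$, which is forbidden. Otherwise the three maps are pairwise distinct; since a two-element set has only three nonempty subsets, we must have $|\Xi|=2$ and, after relabeling, exactly one map $\varphi_{i_0}$ equals the full pair $\Xi$ while some $\varphi_{j_0}$ is a singleton contained in it. The pointwise stabilizer of the two-element-valued map $\varphi_{i_0}$ is the kernel of the $\mathbb{Z}/2$-valued cocycle recording whether $\rho$ swaps the two points, hence an index-$\le 2$ subgroupoid of $\cala_{i_0}=\Stab(\varphi_{i_0})$; since it fixes the point $\varphi_{j_0}$, it is contained in $\cala_{j_0}=\Stab(\varphi_{j_0})$. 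Thus $(\cala_{i_0})_{|U}$ has a finite-index subgroupoid contained in $(\cala_{j_0})_{|U}$ with $i_0\neq j_0$, again forbidden. Either way we contradict the standing hypothesis, so $Y_0^{\le 2}$ is null.

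Combining the three regions, $(\cala,\rho)$ is stably $X$-elliptic, as desired. I expect the main obstacle to be precisely this last degenerate case: one must convert the merely set-theoretic containment ``singleton $\subseteq$ pair'' of boundary-valued maps into a genuine finite-index containment of the corresponding stabilizer groupoids. This is the groupoid incarnation of the elementary group-theoretic fact that the pointwise stabilizer of a pair of points in $\partial_\infty X$ has index at most two in its setwise stabilizer, and it is exactly where the hypothesis forbidding finite-index containments among the $\cala_i$ is used.
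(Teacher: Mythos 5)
Your proof is correct, and it rests on the same three pillars as the paper's argument: Lemma~\ref{lemma:maximal amenable} to realize each $\cala_i$ stably as the $(\calg,\rho)$-stabilizer of a Borel map $\varphi_i:Y\to\calp_{\le 2}(\partial_\infty X)$, the index-$\le 2$ ``swap'' subgroupoid to convert a nesting $\varphi_{j_0}\subsetneq\varphi_{i_0}$ into a forbidden finite-index containment, and the barycenter/circumcenter machinery (equivalently, the domination property in Lemma~\ref{lemma:elliptic-loxodromic}) once three boundary points are available. What is genuinely different is the decomposition. The paper first reduces to the case where $\cala$ is nowhere trivial, applies Lemma~\ref{lemma:elliptic-loxodromic} to $\cala$ itself, and then invokes Lemma~\ref{lemma:elliptic vs elementary} --- hence weak acylindricity --- to force the elliptic part of each $\cala_i$'s dichotomy to be null on the elementary region; the pairwise non-nesting of the $\varphi_i$ then makes $|\varphi_1\cup\varphi_2\cup\varphi_3|\ge 3$ automatic there, so the small-cardinality case never arises. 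You instead run Lemma~\ref{lemma:maximal amenable} on each $\cala_i$ over all of $Y$, absorb their elliptic parts by the observation that a subgroupoid of a stably $X$-elliptic groupoid is stably $X$-elliptic, and on the common elementary region split by the cardinality of $\Xi$, invoking the standing hypothesis only to kill the region where $|\Xi|\le 2$. Your organization buys a small economy: it never uses Lemma~\ref{lemma:elliptic vs elementary} (so, within this lemma, no appeal to weak acylindricity) and needs no nowhere-triviality reduction, at the cost of an explicit case analysis over the combinatorial types of $(\varphi_1,\varphi_2,\varphi_3)$. One point you should make explicit: the $\cala_i$ are only \emph{stably} equal to the stabilizers of the $\varphi_i$, so in both degenerate cases you must first pass to a positive-measure Borel piece $V$ on which the equalities between $(\cala_{i})_{|V}$ and the restricted stabilizers hold on the nose before exhibiting the forbidden finite-index containment; this is routine, and the paper's own proof elides the same point.
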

	
	\begin{proof}
		Without loss of generality, we can (and shall) assume that $\cala$ is nowhere trivial. Indeed, if $U\subseteq Y$ is a Borel subset of maximal measure such that $\cala_{|U}$ is stably trivial, then $(\cala_{|U},\rho)$ is stably $X$-elliptic, and $\cala_{|Y\setminus U}$ is nowhere trivial.
		
	 The groupoid $\cala$ is amenable. Therefore, by Lemma~\ref{lemma:elliptic-loxodromic}, there exists a Borel partition $Y=Y_1\dunion Y_2$ such that $(\cala_{|Y_1},\rho)$ is stably $X$-elliptic, and there exists an $(\cala_{|Y_2},\rho)$-equivariant Borel map $\varphi:Y_2\to\calp_{\le 2}(\partial_\infty X)$. By Lemma~\ref{lemma:elliptic vs elementary}, for every Borel subset $U\subseteq Y_2$ of positive measure, $(\cala_{|U},\rho)$ is not $X$-elliptic (so in particular $((\cala_i)_{|U},\rho)$ is not $X$-elliptic for any $i\in\{1,2,3\}$). Therefore, Lemma~\ref{lemma:maximal amenable} ensures that there exist three Borel maps $\varphi_1,\varphi_2,\varphi_3:Y_2\to\calp_{\le 2}(\partial_\infty X)$ such that for every $i\in\{1,2,3\}$, the groupoid $(\cala_i)_{|Y_2}$ is stably equal to the $(\calg_{|Y_2},\rho)$-stabilizer of $\varphi_i$. 
	 
 Our assumptions on the subgroupoids $\cala_i$ ensure that there do not exist any Borel subset $V\subseteq Y_2$ of positive measure and $i\neq j$ such that $(\varphi_i)_{|V}=(\varphi_j)_{|V}$. More generally, we claim that there do not exist any Borel subset $V\subseteq Y_2$ of positive measure and $i\neq j$ such that $(\varphi_i)_{|V}\subseteq (\varphi_j)_{|V}$. Indeed, otherwise, we can assume without loss of generality that $(\varphi_j)_{|V}$ takes its values in $\calp_{=2}(\partial_\infty X)$, that $(\varphi_i)_{|V}$ takes its values in $\calp_{=1}(\partial_\infty X)$, and we let $\varphi'_i(y)=\varphi_j(y)\setminus\varphi_i(y)$. Then observe that the stabilizer of the Borel map $((\varphi_i)_{|V},(\varphi'_i)_{|V})$ has index at most $2$ (almost everywhere) in $(\cala_j)_{|V}$. Thus, a finite index subgroupoid of $(\cala_j)_{|V}$ is contained in $(\cala_i)_{|V}$, contradicting our assumption. 
	 
	 In other words, we have just shown that there exists a conull Borel subset $Y_2^*\subseteq Y_2$ such that for every $y\in Y_2^*$ and any two distinct $i,j\in\{1,2,3\}$, we have $\varphi_i(y)\nsubseteq\varphi_j(y)$. It follows that the stably $(\cala_{|Y_2^*},\rho)$-equivariant map $\varphi_1\cup\varphi_2\cup\varphi_3$ takes its values in $\calp_{\ge 3,<\infty}(\partial_\infty X)$. Notice that there is a Borel map $\calp_{\ge 3,<\infty}(\partial_\infty X)\to\calp_{<\infty}((\partial_\infty X)^{(3)})$, sending a finite set $F$ to the (finite, nonempty) set of all triples of elements in $F$. There is also a Borel $G$-equivariant \emph{barycenter map} $\calp_{<\infty}((\partial_\infty X)^{(3)})\to\calp_{<\infty}(\cals(X_\Delta))$ (as in the proof of Lemma~\ref{lemma:elliptic-loxodromic}), and a Borel $G$-equivariant \emph{circumcenter map} $\calp_{<\infty}(\cals(X_\Delta))\to\cals(X_\Delta)$ (again, as in the proof of Lemma~\ref{lemma:elliptic-loxodromic}). Combining all these maps together, we derive a stably $(\cala_{|Y_2^*},\rho)$-equivariant Borel map $Y_2^*\to\cals(X_\Delta)$, which shows that $(\cala_{|Y_2},\rho)$ is stably $X$-elliptic. This completes our proof.  
	\end{proof}
	
	\subsection{Exploiting quasi-normalized amenable subgroupoids}

	\begin{de}
		A measured groupoid $\calg$ is  \emph{QNA}\footnote{standing for Quasi-Normalizing an Amenable subgroupoid} if it contains an amenable subgroupoid $\cala$ of infinite type, such that $\cala$ is stably quasi-normalized by $\calg$. 
	\end{de}
	
	Notice that this notion is stable under restriction and stabilization. More precisely, if $\calh$ is a QNA measured subgroupoid of $\calg$ and $U\subseteq Y$ is a positive measure Borel subset, then $\calh_{|U}$ is a QNA measured subgroupoid of $\calg_{|U}$. And if $Y^*=\dunion Y_i$ is a countable Borel partition of a conull Borel subset $Y^*\subseteq Y$, such that for every $i\in I$, the subgroupoid $\calh_{|Y_i}$ of $\calg_{|Y_i}$ is QNA, then $\calh$ is QNA. Notice also that every amenable groupoid of infinite type is QNA. Our goal here is to prove the following.
	
	\begin{lemma}\label{lemma:nonamenable-BS-elliptic}
		Let $\calh$ be a measured groupoid over a standard finite measure space $Y$, and let $\rho:\calh\to G$ be a strict cocycle with trivial kernel.
		
		If $\calh$ is everywhere nonamenable and QNA, then $(\calh,\rho)$ is stably $X$-elliptic.
	\end{lemma}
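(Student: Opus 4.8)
The plan is to exploit the infinite amenable subgroupoid $\cala$ that is stably quasi-normalized by $\calh$, and to run the elliptic/elementary dichotomy on it. First I would apply Lemma~\ref{lemma:elliptic-loxodromic} to $(\cala,\rho)$, producing a Borel partition $Y=Y_1\dunion Y_2$ with $(\cala_{|Y_1},\rho)$ stably $X$-elliptic and $(\cala_{|Y_2},\rho)$ being $\partial_\infty X$-elementary, together with the maximal equivariant boundary map $\varphi:Y_2\to\calp_{\le 2}(\partial_\infty X)$ of item (2) of that lemma. The two pieces are treated separately, the goal on each being to show $(\calh,\rho)$ is stably $X$-elliptic there; on $Y_2$ this will hold vacuously, by showing $Y_2$ is null.

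On $Y_2$ I would argue that $\calh_{|Y_2}$ stabilizes $\varphi$, which by the argument in the proof of Lemma~\ref{lemma:maximal amenable} (Borel amenability of $G\actson\partial_\infty X$) forces $\calh_{|Y_2}$ to be amenable, contradicting everywhere-nonamenability, so that $Y_2$ is null. The mechanism is that, since $\cala$ is stably quasi-normalized by $\calh$, for the relevant $h\in\calh_{|Y_2}$ the conjugate $h\cala h^{-1}$ shares with $\cala$ a finite-index subgroupoid $\calb$ of infinite type; then both $\varphi$ and its $h$-transport $\rho(h)\cdot(\varphi\circ s)$ are $(\calb,\rho)$-equivariant boundary maps, and the maximality/containment property of $\varphi$ together with the Fact from the proof of Lemma~\ref{lemma:elliptic-loxodromic} (that equivariant finite-set-valued maps essentially land in $\calp_{\le 2}(\partial_\infty X)$) forces $\rho(h)\varphi(s(h))=\varphi(r(h))$, i.e.\ $\calh$-invariance of $\varphi$.

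On $Y_1$, where $\cala$ is stably elliptic, I would build an \emph{exactly} $(\calh,\rho)$-equivariant Borel map to the cells of $X$. Since $\cala$ is of infinite type and elliptic and $G$ is torsion-free, weak acylindricity (Lemma~\ref{lemma:weakly acylindrical}, via the consequence that two points fixed by a common nontrivial element lie within distance $L$) shows $\fix(\rho(\cala))$ has diameter $<L$; moreover for $h\in\calh$ the finite-index subgroupoid $\cala\cap h\cala h^{-1}$ is still of infinite type, so its fixed set, which contains both $\phi(r(h))$ and $\rho(h)\phi(s(h))$ for any Borel selection $\phi$ of a point (e.g.\ a circumcenter) of $\fix(\rho(\cala))$, again has diameter $<L$. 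Hence the sets $S(y)=\{\rho(h)\phi(s(h)):h\in\calh,\ r(h)=y\}$ have diameter $<2L$ and satisfy $\rho(g)S(s(g))=S(r(g))$. Taking the $\CAT(-1)$ circumcenter of $S(y)$ yields an exactly $(\calh_{|Y_1},\rho)$-equivariant map to $X$, which I then push to the smallest cell containing it; as the action is without inversion, $\rho(\calh)$ fixes a vertex on each piece of a countable partition, so $(\calh_{|Y_1},\rho)$ is stably $X$-elliptic.

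The main obstacle will be the measured-groupoid bookkeeping in both steps: making precise, from the definition of stable quasi-normalization, that $\cala\cap h\cala h^{-1}$ is finite-index and of infinite type on the appropriate restrictions; verifying that all the data ($\varphi$, the family $y\mapsto\fix(\rho(\cala))$, the selection $\phi$, the sets $S(y)$, and the circumcenters) depend Borel-measurably on the base point; and ensuring that the transport-of-maximality argument on $Y_2$ genuinely upgrades mere containment to $\calh$-invariance of $\varphi$. The geometric inputs—bounded fixed sets from weak acylindricity, existence of circumcenters in $\CAT(-1)$ spaces, and maximality of $\varphi$—are exactly those already assembled, so the remaining content lies in deploying them uniformly over $Y$.
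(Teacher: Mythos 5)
Your proof is correct in outline, and its two halves have very different status relative to the paper. The $Y_2$ half is essentially the paper's own argument (Lemma~\ref{lemma:loxodromicity-quasi-normalizer}): there too, $\varphi$ is transported through each quasi-normalizing bisection $B$ with constant $\rho$-image, and the step you compress into ``maximality plus the Fact'' is carried out via Lemma~\ref{lemma:fi} --- since both the Fact and the maximality property of $\varphi$ quantify only over $(\cala_{|V},\rho)$-equivariant maps, you must first convert the transport, which is merely equivariant under the finite-index intersection $(B\cala_{|U}B^{-1})\cap\cala_{|V}$, into a stably $(\cala_{|V},\rho)$-equivariant finite-set-valued map containing it, and then deduce equality from the two containments obtained by running the argument for $B$ and for $B^{-1}$. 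The $Y_1$ half, by contrast, genuinely departs from the paper: instead of arguing directly, the paper invokes Corollary~\ref{cor:ellipticity-quasi-normalizer}, whose proof rests on canonical vertex sets (Lemmas~\ref{lemma:canonical-vertex-set} and~\ref{lemma:canonical-vertex-set-quasi-normalizer}), i.e.\ on a choice-free invariant of $(\cala,\rho)$ that is automatically preserved by the quasi-normalizer --- and the existence of canonical vertex sets is exactly where the chain condition (Assumption~5 of Theorem~\ref{theo:full}) enters. Your bounded-orbit construction replaces canonicity by equivariance-by-construction, and the key bound is sound: on a piece where $\rho(\cala)\subseteq\Stab_G(v)$ one may take $\phi\equiv v$ (or the circumcenter of $\fix(\rho(\cala))$, which does lie in that set because fixed-point sets of isometries of a $\CAT(0)$ space are closed and convex), and for $h$ in a quasi-normalizing bisection the nontrivial $\rho$-image of the finite-index intersection fixes both $\phi(r(h))$ and $\rho(h)\phi(s(h))$, so torsion-freeness (finite common stabilizers are trivial subgroups) together with weak acylindricity gives $d(\rho(h)\phi(s(h)),\phi(r(h)))<L$; the circumcenter of the equivariant bounded set $S(y)$, its carrier cell, and the no-inversion hypothesis then finish as you say. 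Two bookkeeping points for the write-up: the bound only needs to be proved for $h$ whose source and range lie in a common piece of a combined countable partition (refining the quasi-normalization pieces, the ellipticity pieces, and the bisections so that $\rho$ is constant on each), which is all that stable $X$-ellipticity requires; and the fact that two points fixed by a common nontrivial element are at distance $<L$ follows from the definition of weak acylindricity plus torsion-freeness, not from Lemma~\ref{lemma:weakly acylindrical}. What each approach buys: the paper's canonical vertex sets make the invariance under the quasi-normalizer automatic but cost Assumption~5, whereas your argument, once written out, proves the lemma without the chain condition at all --- and since the paper uses that assumption only at this point (see Section~\ref{sec:other-assumptions}), this is a genuine simplification.
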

	
	\begin{proof}
		This is a consequence of Lemma~\ref{lemma:loxodromicity-quasi-normalizer} and Corollary~\ref{cor:ellipticity-quasi-normalizer} below.
	\end{proof}
	
	\subsubsection{Quasi-normalized amenable subgroupoids are stably elliptic.}

	\begin{lemma}\label{lemma:loxodromicity-quasi-normalizer}
		Let $\calh$ be a measured groupoid over a standard finite measure space $Y$, and let $\rho:\calh\to G$ be a strict cocycle with trivial kernel. Assume that $\calh$ is everywhere nonamenable, and let $\cala$ be an amenable subgroupoid of $\calh$ which is stably quasi-normalized by $\calh$.
		
		Then $(\cala,\rho)$ is stably $X$-elliptic.
	\end{lemma}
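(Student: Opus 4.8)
The plan is to apply Lemma~\ref{lemma:elliptic-loxodromic} to $\cala$ and to show that the part of $Y$ on which $\cala$ is $\partial_\infty X$-elementary must be null, by arguing that on this part the maximal boundary map produced by the lemma is invariant under the whole quasi-normalizer $\calh$, which would force $\calh$ to be amenable there. First I would discard the part of $Y$ where $\cala$ is of finite type: since $\rho$ has trivial kernel and $G$ is torsion-free, on that part $\rho(\cala)$ is a finite subgroup of $G$, hence trivial, so $\cala$ is stably trivial and thus stably $X$-elliptic there; I may therefore assume $\cala$ is of infinite type everywhere. Lemma~\ref{lemma:elliptic-loxodromic} then yields a Borel partition $Y=Y_1\dunion Y_2$ with $(\cala_{|Y_1},\rho)$ stably $X$-elliptic together with a maximal $(\cala_{|Y_2},\rho)$-equivariant Borel map $\varphi:Y_2\to\calp_{\le 2}(\partial_\infty X)$. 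Assuming towards a contradiction that $Y_2$ has positive measure, Lemma~\ref{lemma:elliptic vs elementary} ensures that $(\cala_{|U},\rho)$ is not $X$-elliptic for any positive measure $U\subseteq Y_2$; in particular every infinite-type subgroupoid of $\cala_{|Y_2}$ remains $\partial_\infty X$-elementary and nowhere elliptic.

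The technical heart is the following claim: if $\cala'\subseteq\cala_{|Y_2}$ is a finite-index subgroupoid, then every $(\cala',\rho)$-equivariant Borel map $\psi:Y_2\to\calp_{<\infty}(\partial_\infty X)$ satisfies $\psi\subseteq\varphi$ almost everywhere. To prove it I would \emph{induce} $\psi$ up to $\cala$: since $\cala'$ has finite index, for almost every $y$ the set $\Psi(y)=\bigcup_b\rho(b)\,\psi(s(b))$, where $b$ runs over a finite Borel transversal of $\cala'$ in $\cala$ at $y$, is finite and defines a $(\cala_{|Y_2},\rho)$-equivariant Borel map; by the \emph{Fact} established within Lemma~\ref{lemma:elliptic-loxodromic} together with the maximality of $\varphi$, this gives $\psi\subseteq\Psi\subseteq\varphi$. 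I would then exploit the quasi-normalizing structure: writing $\calh_{|Y_2}$ as generated, after a countable Borel partition and up to stabilization, by Borel bisections $g$ each of which quasi-normalizes $\cala$, the subgroupoid $\cala'_g:=\cala\cap g\cala g^{-1}$ has finite index both in $\cala$ and in $g\cala g^{-1}$. Applying the claim to the inclusion $\cala'_g\subseteq\cala$ to the transported map $g\cdot\varphi$ (which is $(g\cala g^{-1})$-equivariant, hence $\cala'_g$-equivariant) yields $g\cdot\varphi\subseteq\varphi$; applying it to the inclusion $\cala'_g\subseteq g\cala g^{-1}$, and using that $g\cdot\varphi$ is the maximal $(g\cala g^{-1})$-equivariant map by transport of maximality, yields $\varphi\subseteq g\cdot\varphi$. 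Hence $\varphi$ is $g$-invariant for every generating bisection, so $\varphi$ is $(\calh_{|Y_2},\rho)$-equivariant.

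Finally, exactly as in the proof of Lemma~\ref{lemma:maximal amenable}, the $G$-action on $\calp_{\le 2}(\partial_\infty X)$ is Borel amenable (via \cite[Theorem~1(2)]{HH1} and \cite[Lemma~6.14]{HH1}), so since $\rho$ has trivial kernel the $(\calh_{|Y_2},\rho)$-stabilizer of $\varphi$ is amenable by \cite[Proposition~3.38]{GH}. As $\calh_{|Y_2}$ stabilizes $\varphi$, it is amenable, contradicting that $\calh$ is everywhere nonamenable (using that $Y_2$ has positive measure). Therefore $Y_2$ is null and $(\cala,\rho)$ is stably $X$-elliptic. The step I expect to be the main obstacle is making the induction rigorous at the level of measured groupoids---selecting a Borel transversal of the finite-index subgroupoid $\cala'$ and verifying Borel measurability and equivariance of the induced map $\Psi$---together with pinning down the exact meaning of \emph{stably quasi-normalized} as a countable family of Borel bisections, so that invariance of $\varphi$ under generators genuinely propagates to all of $\calh_{|Y_2}$.
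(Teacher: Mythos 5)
Your proposal is correct and follows essentially the same route as the paper's proof: extract the maximal $\partial_\infty X$-elementary map $\varphi$ from Lemma~\ref{lemma:elliptic-loxodromic}, prove $\varphi$ is invariant under every quasi-normalizing bisection by inducing equivariant maps up the finite-index inclusions and invoking maximality, then use Borel amenability of the $G$-action on $\calp_{\le 2}(\partial_\infty X)$ together with \cite[Proposition~3.38]{GH} to conclude that $\calh_{|Y_2}$ is amenable, contradicting everywhere nonamenability. The induction step you flag as the main obstacle is exactly the paper's Lemma~\ref{lemma:fi} (Kida's \cite[Lemma~3.8]{Kid-BS}), so it needs only to be cited rather than re-proved, and your ``transport of maximality'' giving the reverse inclusion is precisely the paper's argument run on the inverse bisection $B^{-1}$.
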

	
	\begin{proof}
		We follow an argument of Kida \cite[Theorem~5.1]{Kid-BS}, which is itself a variation over an argument of Adams \cite{Ada} (see also \cite[Theorem~4.1]{HR}).
		
		Assume towards a contradiction that $(\cala,\rho)$ is not stably $X$-elliptic. By Lemma~\ref{lemma:elliptic-loxodromic}, there exists a positive measure Borel subset $U\subseteq Y$ such that $(\cala_{|U},\rho)$ is $\partial_\infty X$-elementary; in fact $U$ can be chosen so that there exists a stably $(\cala_{|U},\rho)$-equivariant Borel map $\varphi:U\to\calp_{\le 2}(\partial_\infty X)$ such that for every positive measure Borel subset $V\subseteq U$, every (stably) $(\cala_{|V},\rho)$-equivariant Borel map $\psi:V\to\calp_{<\infty}(\partial_\infty X)$, and almost every $y\in V$, one has $\psi(y)\subseteq\varphi(y)$. 
		
		Up to replacing $U$ by a positive measure Borel subset, we will assume that $\cala$ is quasi-normalized by $\calh$ (not just stably quasi-normalized).
		
		We now claim that $\varphi$ is  $(\calh_{|U},\rho)$-equivariant. Let $B\subseteq\calh_{|U}$ be a bisection which determines an isomorphism $\theta:V\to W$ between two Borel subsets $V,W\subseteq U$, and such that $\cala$ is quasi-$B$-invariant. Up to subdividing $B$ into countably many Borel subsets and working on each of these subsets separately, we will assume that the $\rho$-image of $B$ contains a single element $g\in G$, and prove that for almost every $x\in V$, one has $\varphi(\theta(x))=g\varphi(x)$. As $\calh$ is covered by countably many bisections that all leave $\cala$ quasi-invariant, this will be enough to prove our claim.
		
		The map $\psi:W\to\calp_{\le 2}(\partial_\infty X)$ sending $y\in W$ to $g\varphi(\theta^{-1}(y))$ is $(B\cala_{|V}B^{-1},\rho)$-equivariant. As $\cala_{|W}\cap (B\cala_{|V}B^{-1})$ has finite index in $\cala_{|W}$, by applying Lemma~\ref{lemma:fi} with  $\Delta=\calp_{\le 2}(\partial_\infty X)$, we know that there exists a stably $(\cala_{|W},\rho)$-equivariant Borel map $\Psi:W\to\calp_{<\infty}(\partial_\infty X)$ such that for every $y\in W$, one has $\psi(y)\subseteq\Psi(y)$. By definition of $\varphi$, we also have $\Psi(y)\subseteq\varphi(y)$ for almost every $y\in W$, so $\psi(y)\subseteq\varphi(y)$. Thus for almost every $y\in W$, one has $\varphi(\theta^{-1}(y))\subseteq g^{-1}\varphi(y)$. By working with the bisection $B^{-1}$ instead of $B$ (and exchanging the roles of $V$ and $W$), we know that for almost every $x\in V$, one has $\varphi(\theta(x))\subseteq g\varphi(x)$. Thus for almost $y\in W$, one has $\varphi(y)\subseteq g\varphi(\theta^{-1}(y))$, i.e.\ $g^{-1}\varphi(y)\subseteq \varphi(\theta^{-1}(y))$. We have proved both inclusions, so for almost every $y\in W$, one has $g^{-1}\varphi(y)=\varphi(\theta^{-1}(y))$, so $\varphi$ is  $(B,\rho)$-equivariant. This finishes the proof of our claim.
		
		As the $G$-action on the second barycentric subdivision $X_\Delta$ has amenable edge stabilizers, it follows from \cite[Theorem~1]{HH1} that the $G$-action on $\partial_\infty X$ is Borel amenable, and therefore so is the $G$-action on $\calp_{\le 2}(\partial_\infty X)$. As $\rho$ has trivial kernel, the existence of an  $(\calh_{|U},\rho)$-equivariant Borel map $U\to\calp_{\le 2}(\partial_\infty X)$ ensures that $\calh_{|U}$ is amenable by \cite[Proposition~3.38]{GH} (recasting \cite[Proposition~4.33]{Kid-memoir}). This contradiction finishes our proof. 
	\end{proof}
	
	\subsubsection{Ellipticity, canonical vertex sets, quasi-normality}
	
	The following is a version of \cite[Section~3.3]{HH2}, inspired from Kida's work for surface mapping class groups \cite[Chapter~4, Section~5]{Kid-memoir}, in the presence of quasi-normalized (instead of just normalized) subgroupoids.
	
	\begin{de}[Canonical vertex set]\label{de:cvs}
		Let $\calh$ be a measured groupoid over a standard finite measure space $Y$, and let $\rho:\calh\to G$ be a strict cocycle. A \emph{canonical vertex set} for $(\calh,\rho)$ is a (possibly infinite) set  $\mathsf{V}\subseteq X^{(0)}$ such that 
		\begin{enumerate}
			\item every vertex in $\mathsf{V}$ is $(\calh,\rho)$-quasi-invariant;
			\item for every positive measure Borel subset $U\subseteq Y$, no vertex in $X^{(0)}\setminus \mathsf{V}$ is $(\calh_{|U},\rho)$-quasi-invariant. 
		\end{enumerate}
	\end{de}
	
	A canonical vertex set, if it exists, is unique. The following lemma, which crucially relies on the chain condition on vertices of $X$ (Assumption~5 from Theorem~\ref{theo:full}), gives the existence of canonical vertex sets after a countable partition of the base space $Y$.
	
	\begin{lemma}\label{lemma:canonical-vertex-set}
		 Let $\calh$ be a measured groupoid over a standard finite measure space $Y$, and let $\rho:\calh\to G$ be a strict cocycle.
		
		Then there exists a countable Borel partition $Y=\dunion_{i\in I}Y_i$ such that for every $i\in I$, there exists a subset $\mathsf{V}_i\subseteq X^{(0)}$ which is a canonical vertex set for $(\calh_{|Y_i},\rho)$.
	\end{lemma}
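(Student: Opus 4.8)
The plan is to follow the strategy of \cite[Section~3.3]{HH2} (itself modelled on \cite[Chapter~4, Section~5]{Kid-memoir}), the role played there by the bounded topological complexity of a surface being played here by the chain condition (Assumption~5 of Theorem~\ref{theo:full}). For a positive measure Borel subset $U\subseteq Y$, write
\[
\mathsf{QI}(U)=\{v\in X^{(0)} : v \text{ is } (\calh_{|U},\rho)\text{-quasi-invariant}\}.
\]
The first observation, immediate from the definition, is that quasi-invariance of a vertex is inherited by positive measure Borel subsets: if $v\in\mathsf{QI}(U)$ and $\emptyset\neq U'\subseteq U$ is Borel of positive measure, then $v\in\mathsf{QI}(U')$, so that $U'\subseteq U$ forces $\mathsf{QI}(U)\subseteq\mathsf{QI}(U')$. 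Comparing with Definition~\ref{de:cvs}, one checks that a set $\mathsf{V}\subseteq X^{(0)}$ is a canonical vertex set for $(\calh_{|U},\rho)$ if and only if $\mathsf{V}=\mathsf{QI}(U)$ and $\mathsf{QI}(U')=\mathsf{QI}(U)$ for every positive measure Borel $U'\subseteq U$; that is, exactly when the monotone, $X^{(0)}$-valued assignment $U\mapsto\mathsf{QI}(U)$ is \emph{stable} under restriction. By a routine exhaustion (extract a maximal, hence countable by finiteness of the measure, family of disjoint positive measure pieces on which stability holds; their union is conull), it suffices to prove the local claim that \emph{every} positive measure Borel subset of $Y$ contains a further positive measure Borel subset on which $U\mapsto\mathsf{QI}(U)$ is stable.

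To prove the local claim I would argue by contradiction, converting a failure of stability into a forbidden stabilizer chain. For a finite set $S\subseteq X^{(0)}$ of quasi-invariant vertices, let $\calh_S$ denote the $(\calh,\rho)$-stabilizer of $S$; quasi-invariance means precisely that $\calh_S$ has finite index in the ambient groupoid. If stability failed on every positive measure subset of some $U_0$, then repeatedly passing to positive measure subsets yields a decreasing sequence $U_0\supseteq U_1\supseteq\cdots$ and vertices $v_1,v_2,\dots$ with $v_n\in\mathsf{QI}(U_n)\setminus\mathsf{QI}(U_{n-1})$. At each step, either on a positive measure subset the stabilizer $\calh_{\{v_1,\dots,v_n\}}$ has infinite index in $\calh_{\{v_1,\dots,v_{n-1}\}}$ (a \emph{genuine} refinement), or this index is finite. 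Genuine refinements are exactly the infinite-index jumps governed by Assumption~5: over a typical fibre the $\rho$-images produce a chain of vertices of $X$ whose successive pointwise $G$-stabilizers have infinite index in one another, so the chain condition gives a uniform bound on the number of genuine refinements. Hence, after finitely many steps and after passing to a positive measure subset $U^*$, every further quasi-invariant vertex gives only a \emph{finite}-index refinement of the common stabilizer $\calh^{\mathrm{qi}}$ of $\mathsf{QI}(U^*)$.

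It then remains to rule out the finite-index regime, which is where the main difficulty lies. On $U^*$, any newly quasi-invariant vertex $v$ satisfies that $\calh_{\{v\}\cup\mathsf{QI}(U^*)}$ has finite index in $\calh^{\mathrm{qi}}$, so $\rho(\calh^{\mathrm{qi}})$ virtually fixes $v$; all such $v$ therefore lie in the single countable set of vertices commensurably fixed by the (commensurability class of the) fixed subgroupoid $\calh^{\mathrm{qi}}$. Since this class is unchanged under further restriction in the finite-index regime, this candidate set is itself restriction-invariant, and one concludes, after one more countable Borel partition, that $\mathsf{QI}$ is genuinely constant there — contradicting the perpetual appearance of the new vertices $v_n$. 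The hard part is precisely this last step: the chain condition controls only the infinite-index jumps, and one must independently preclude an endless production of new quasi-invariant vertices on ever smaller sets while the common stabilizer remains commensurable. This demands careful measurable bookkeeping — organizing vertices by the commensurability class of their $\rho$-stabilizers and running an exhaustion over the countably many candidate vertices, as in \cite[Section~3.3]{HH2} — together with the verification that quasi-invariance and the relevant finite-versus-infinite index comparisons are Borel conditions, so that all the subsets and vertices produced along the way can be selected measurably.
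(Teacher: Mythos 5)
Your reduction of the lemma to stability of the assignment $U\mapsto\mathsf{QI}(U)$ is correct, and your treatment of the infinite-index jumps is essentially sound: since the $\rho$-preimage of a finite-index subgroup is a finite-index subgroupoid (\cite[Lemma~3.7]{Kid-BS}, persisting under restriction by \cite[Lemma~3.5(ii)]{Kid-BS}), a groupoid-level infinite-index jump does force a group-level infinite-index drop of the pointwise stabilizers (though you would still need a small extraction argument deleting the finite-index steps from your mixed chain before Assumption~5 of Theorem~\ref{theo:full} can be invoked). The genuine gap is the one you flag yourself: the finite-index regime, and the argument you sketch for it does not work. Lemma~\ref{lemma:canonical-vertex-set} is stated for an \emph{arbitrary} strict cocycle, and is indeed applied (via Corollary~\ref{cor:ellipticity-quasi-normalizer}) to restrictions of $\rho$ to subgroupoids such as the amenable $\cala$, which are not action-like. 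For such cocycles the inference from ``$\calh_{\{v\}\cup\mathsf{QI}(U^*)}$ has finite index in $\calh^{\mathrm{qi}}$'' to ``$\rho(\calh^{\mathrm{qi}})$ virtually fixes $v$'' is unjustified: groupoid-level commensuration carries no group-level information (for the trivial groupoid, every vertex has groupoid stabilizer equal to the whole groupoid, while the $G$-stabilizers of two vertices may intersect in infinite index). Moreover, your candidate set of ``groupoid-commensurably fixed'' vertices is not restriction-invariant: the local index $[\calg:\calh]_y$ can only decrease when one restricts to a smaller positive measure subset, so infinite index can become finite index under restriction, and the candidate set can strictly grow as you shrink $U$ --- which is exactly the endless production you are trying to preclude. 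Worse, since the cocycle need not be action-like, your finite-index regime may contain vertices whose group-level stabilizer drop \emph{is} of infinite index; under your dichotomy these can neither be absorbed into a stable candidate set nor fed to the chain condition.

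The idea you are missing, and the way the paper's proof proceeds, is to make the dichotomy at the group level from the outset. Having found a finite chain of witness vertices $v_1,\dots,v_m$ (each quasi-invariant on the current piece, each outside the candidate set of the previous stage), the paper takes as candidate canonical vertex set the set of all vertices of $X$ fixed by a \emph{finite-index subgroup of} $\Stab_G(v_1)\cap\dots\cap\Stab_G(v_m)$. Every vertex of this set is automatically quasi-invariant by the good direction of the index lemmas (\cite[Lemmas~3.5(ii), 3.6 and 3.7]{Kid-BS}); and any quasi-invariant vertex outside this set forces, tautologically by the definition of the set, an infinite-index drop of the pointwise $G$-stabilizer, and is appended to the witness chain. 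Thus the finite-index case never arises as a separate regime --- it is absorbed into the candidate set --- and Assumption~5 of Theorem~\ref{theo:full} bounds the depth of the recursion outright. The measurable bookkeeping is then handled by maximality-of-measure exhaustions at each stage (the class of subsets with the relevant property is closed under countable unions, so a measure-maximizing sequence yields a maximal one), with no need to verify that quasi-invariance is a Borel condition in the vertex variable.
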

	
	\begin{proof}
		Let $U\subseteq Y$ be a Borel subset of maximal measure such that there exist a countable Borel partition $U=\dunion_{j\in J}U_j$ and for every $j\in J$, a vertex $v_j\in X^{(0)}$ which is $(\calh_{|U_j},\rho)$-quasi-invariant. This exists because if $(U_n)_{n\in\mathbb{N}}$ is a measure-maximizing sequence of such sets, then their union again has the property. We now observe that $\emptyset$ is a canonical vertex set for $(\calh_{|Y\setminus U},\rho)$. Indeed, otherwise, there exist a positive measure Borel subset $V\subseteq Y\setminus U$ and a vertex $v\in X^{(0)}$ which is $(\calh_{|V},\rho)$-quasi-invariant, but then the set $U\cup V$ contradicts the maximality of the measure of $U$ with the above property.
		
		Let now $j\in J$. Let $\mathsf{V}_{j}$ be the set of all vertices of $X$ which are fixed by a finite-index subgroup of $\Stab_G(v_j)$ (notice that the second assertion of Theorem~\ref{theo:full} ensures that $\mathsf{V}_j=\{v_j\}$, but we prefer to denote it this way to make the resemblance with the rest of this proof more transparent). All vertices in $\mathsf{V}_{j}$ are $(\calh_{|U_{j}},\rho)$-quasi-invariant. 
		Let $U'_j\subseteq U_j$ be a Borel subset of maximal measure such that there exist a countable Borel partition $U'_j=\dunion_{k\in K}U_{j,k}$ and for every $k\in K$, a vertex $v_{j,k}\in X^{(0)}\setminus \mathsf{V}_j$ which is $(\calh_{|U_{j,k}},\rho)$-quasi-invariant. We observe that $\mathsf{V}_j$ is a canonical vertex set for $(\calh_{|U_j\setminus U'_j},\rho)$. Indeed, it verifies the first point of Definition~\ref{de:cvs} by the above (and \cite[Lemma~3.5(ii)]{Kid-BS}). And the second point of Definition~\ref{de:cvs} is checked as follows: if some vertex $w\in X^{(0)}\setminus \mathsf{V}_j$ were  $(\calh_{|V},\rho)$-quasi-invariant for some positive measure Borel subset $V\subseteq U_j\setminus U'_j$, then $U'_j\cup V$ would contradict the maximality of the measure of $U'_j$. Finally, we note that for every $k\in K$, the intersection $\Stab_G(v_j)\cap\Stab_G(v_{j,k})$ is infinite index in $\Stab_G(v_j)$, otherwise $v_{j,k}\in \mathsf{V}_j$.
		
		Let now $k\in K$. Repeating the above, let $\mathsf{V}_{j,k}$ be the set of all vertices of $X$ which are fixed by a finite-index subgroup of $\Stab_G(v_j)\cap\Stab_G(v_{j,k})$. Notice that all vertices in $\mathsf{V}_{j,k}$ are $(\calh_{|U_{j,k}},\rho)$-quasi-invariant, using \cite[Lemma~3.7]{Kid-BS}.
		 Let $U'_{j,k}\subseteq U_{j,k}$ be a Borel subset of maximal measure such that there exist a countable Borel partition $U'_{j,k}=\dunion_{\ell\in L}U_{j,k,\ell}$ and for every $\ell\in L$, a vertex $v_{j,k,\ell}\in X^{(0)}\setminus \mathsf{V}_{j,k}$ which is $(\calh_{|U_{j,k,\ell}},\rho)$-quasi-invariant. Then $\mathsf{V}_{j,k}$ is a canonical vertex set for $(\calh_{|U_{j,k}\setminus U'_{j,k}},\rho)$. Notice also that the pointwise stabilizer of $\{v_j,v_{j,k},v_{j,k,\ell}\}$ has infinite index in the pointwise stabilizer of $\{v_j,v_{j,k}\}$. We denote by $\mathsf{V}_{j,k,\ell}$ the set of all vertices of $X$ which are fixed by a finite-index subgroup of $\Stab_G(v_j)\cap\Stab_G(v_{j,k})\cap\Stab_G(v_{j,k,\ell})$.
		
		Repeating this argument, we build a sequence $v_j,v_{j,k},v_{j,k,\ell},\dots$ such that the common stabilizer of the first $k+1$ vertices always has infinite index in the common stabilizer of the first $k$ vertices. Our chain condition (Assumption~5 from Theorem~\ref{theo:full}) implies that this process has to stop after finitely many steps, and at the end we have constructed a countable Borel partition of $Y$ with the required property.  
	\end{proof}
	
	\begin{lemma}\label{lemma:canonical-vertex-set-quasi-normalizer}
		Let $\calh$ be a measured groupoid over a standard finite measure space $Y$, and let $\rho:\calh\to G$ be a strict cocycle. Let $\cala$ be a subgroupoid of $\calh$ which is quasi-normalized by $\calh$, and assume that $(\cala,\rho)$ has a canonical vertex set $\mathsf{V}$.
		
		Then $\mathsf{V}$ is $(\calh,\rho)$-invariant.
	\end{lemma}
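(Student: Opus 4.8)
The plan is to prove directly that $\rho(h)\mathsf{V}=\mathsf{V}$ for almost every $h\in\calh$, which is exactly the assertion that $\mathsf{V}$ is $(\calh,\rho)$-invariant. Since $\cala$ is quasi-normalized by $\calh$, the groupoid $\calh$ is covered by countably many bisections $B$ such that $\cala$ is quasi-$B$-invariant (this is the cover used in the proof of Lemma~\ref{lemma:loxodromicity-quasi-normalizer}). Subdividing each such bisection along the countably many values of the Borel map $\rho$, I may further assume that $\rho(B)=\{g\}$ for a single $g\in G$; restricting a bisection to a Borel subset of its domain preserves both finite index and quasi-$B$-invariance, so this reduction is harmless. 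It therefore suffices to fix one bisection $B$, determining a Borel isomorphism $\theta\colon V\to W$ between positive measure subsets of $Y$ with $\rho(B)=\{g\}$, and to show that $g\mathsf{V}=\mathsf{V}$.

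Fix $v\in\mathsf{V}$. By the first condition in Definition~\ref{de:cvs}, $v$ is $(\cala,\rho)$-quasi-invariant, hence also $(\cala_{|V},\rho)$-quasi-invariant. The key step is a conjugation computation: every element of $B\cala_{|V}B^{-1}$ has the form $\beta a\beta'^{-1}$ with $\beta,\beta'\in B$ and $a\in\cala_{|V}$, so its $\rho$-image is $\rho(\beta)\rho(a)\rho(\beta')^{-1}=g\,\rho(a)\,g^{-1}$. Combined with the identity $g\Stab_G(v)g^{-1}=\Stab_G(gv)$ (equivalently $g\Comm_G(\Stab_G(v))g^{-1}=\Comm_G(\Stab_G(gv))$), this shows that $gv$ is $(B\cala_{|V}B^{-1},\rho)$-quasi-invariant. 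Now quasi-$B$-invariance of $\cala$ says precisely that $\cala_{|W}\cap B\cala_{|V}B^{-1}$ has finite index in both $\cala_{|W}$ and $B\cala_{|V}B^{-1}$. Feeding this into the stability of quasi-invariance under restriction to, and under finite-index overgroupoids of, a given subgroupoid (the facts from \cite[Lemmas~3.5--3.7]{Kid-BS} already invoked in the proof of Lemma~\ref{lemma:canonical-vertex-set}), I first restrict to deduce that $gv$ is $(\cala_{|W}\cap B\cala_{|V}B^{-1},\rho)$-quasi-invariant, and then pass up the finite-index inclusion to conclude that $gv$ is $(\cala_{|W},\rho)$-quasi-invariant.

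Since $W$ has positive measure, the second condition in Definition~\ref{de:cvs} — no vertex of $X^{(0)}\setminus\mathsf{V}$ is quasi-invariant on any positive measure subset — forces $gv\in\mathsf{V}$. As $v\in\mathsf{V}$ was arbitrary, this gives $g\mathsf{V}\subseteq\mathsf{V}$. Running the same argument with the bisection $B^{-1}$ (for which $\rho(B^{-1})=\{g^{-1}\}$, and for which $\cala$ is again quasi-invariant, by conjugating the finite-index relations by $B^{-1}$) yields $g^{-1}\mathsf{V}\subseteq\mathsf{V}$, hence $\mathsf{V}\subseteq g\mathsf{V}$, so that $g\mathsf{V}=\mathsf{V}$. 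Assembling this conclusion over the countable cover of $\calh$ by bisections shows that $\rho(h)\mathsf{V}=\mathsf{V}$ for almost every $h\in\calh$, as desired.

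I expect the only delicate point to be the bookkeeping in the second paragraph: transporting quasi-invariance of $gv$ across the conjugated subgroupoid $B\cala_{|V}B^{-1}$ and then across the two finite-index inclusions supplied by quasi-$B$-invariance, while keeping careful track of source/range restrictions and of null sets. The conjugation identity on stabilizers is formal and the reduction to single-valued $\rho$ on bisections is routine; the substance lies in checking that these finite-index relations compose correctly in the measured-groupoid framework, for which I would rely on the cited structural lemmas of Kida on quasi-invariant subsets.
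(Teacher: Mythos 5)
Your proposal is correct and follows essentially the same route as the paper's proof: reduce to bisections $B$ with constant $\rho$-image $g$, conjugate the finite-index witness of quasi-invariance of $v$ to show $gv$ is quasi-invariant for $\cala$ restricted to the range of $B$, invoke the second condition of Definition~\ref{de:cvs} to get $gv\in\mathsf{V}$, and symmetrize with $B^{-1}$. The only cosmetic difference is that the paper explicitly names the finite-index subgroupoid $\cala^0$ and intersects its conjugate with $\cala_{|V_n}$, whereas you transport quasi-invariance abstractly through the same finite-index inclusions; your aside about commensurators is unnecessary but harmless.
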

	
	\begin{proof}
		Write $\calh$ as a union of countably many bisections $B_n$, with source $U_n$ and range $V_n$, such that $(B_n\cala_{|U_n}B_n^{-1})\cap\cala_{|V_n}$ has finite index in both $B_n\cala_{|U_n}B_n^{-1}$ and $\cala_{|V_n}$. Without loss of generality, we can (and will) assume that the $\rho$-image of each $B_n$ is a single element $g_n$. It suffices to show that $g_n\in\Stab_G(\mathsf{V})$ for each $n$.
		
		Let $v\in \mathsf{V}$. Let $\cala^0$ be a finite-index subgroupoid of $\cala$ such that $v$ is $(\cala^0,\rho)$-invariant. Thus $g_nv$ is $(B_n\cala^0_{|U_n}B_n^{-1},\rho)$-invariant. Then $\cala'_n=(B_n\cala^0_{|U_n}B_n^{-1})\cap\cala_{|V_n}$ is a finite-index subgroupoid of $\cala_{|V_n}$ such that $g_nv$ is $(\cala'_n,\rho)$-invariant. It follows that $g_nv\in \mathsf{V}$, showing that $g_n\mathsf{V}\subseteq \mathsf{V}$. Working with the bisection $B_n^{-1}$ gives $g^{-1}_n\mathsf{V}\subseteq \mathsf{V}$, hence $g_n\mathsf{V}=\mathsf{V}$, i.e.\ $g_n\in\Stab_G(\mathsf{V})$. 
	\end{proof}
	
	In the following corollary, we use the weak acylindricity assumption on the $G$-action on $X$ to deduce that canonical vertex sets are bounded.
	
	\begin{cor}\label{cor:ellipticity-quasi-normalizer}
		Let $\calh$ be a measured groupoid over a standard finite measure space $Y$, and let $\rho:\calh\to G$ be a strict cocycle with trivial kernel. Let $\cala$ be a measured subgroupoid of $\calh$ of infinite type, and which is quasi-normalized by $\calh$.
		
		If $(\cala,\rho)$ is stably $X$-elliptic, then so is $(\calh,\rho)$.
	\end{cor}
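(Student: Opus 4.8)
The plan is to transport to the groupoid setting the elementary group‑theoretic fact that an infinite elliptic subgroup that is quasi‑normalized forces the ambient group to be elliptic. Since stable $X$‑ellipticity of $\calh$ is tested against countable Borel partitions of $Y$, it suffices to work on a single piece at a time and reassemble at the end. First I would use that $(\cala,\rho)$ is stably $X$‑elliptic to pass, after a countable partition, to pieces on which $\cala$ fixes a vertex quasi‑invariantly; relabel such a piece as $Y$. Applying Lemma~\ref{lemma:canonical-vertex-set} to $\cala$ (refining the partition once more), I may assume $(\cala,\rho)$ admits a canonical vertex set $\mathsf{V}\subseteq X^{(0)}$. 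This set is nonempty: the vertex $v$ coming from ellipticity, with $\rho(\cala)\subseteq\Stab_G(v)$, is $(\cala,\rho)$‑quasi‑invariant, hence must lie in $\mathsf{V}$ by the second clause of Definition~\ref{de:cvs}. Because quasi‑normalization is stable under restriction, $\cala$ is still quasi‑normalized by $\calh$ on this piece, so Lemma~\ref{lemma:canonical-vertex-set-quasi-normalizer} shows that $\mathsf{V}$ is $(\calh,\rho)$‑invariant; as $\mathsf{V}$ is a fixed subset of $X^{(0)}$, this means $\rho(\calh)\subseteq\Stab_G(\mathsf{V})$.

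The key step — where weak acylindricity enters — is to show that $\mathsf{V}$ is bounded. Suppose $v_1,v_2\in\mathsf{V}$ satisfy $d(v_1,v_2)\ge L$, where $L$ is the weak acylindricity constant of the $G$‑action on $X$. Each $v_i$ is $(\cala,\rho)$‑quasi‑invariant, so there is a finite‑index subgroupoid $\cala^0\subseteq\cala$ for which both $v_1$ and $v_2$ are $(\cala^0,\rho)$‑invariant, whence $\rho(\cala^0)\subseteq\Stab_G(v_1)\cap\Stab_G(v_2)$. By weak acylindricity this intersection has at most $N$ elements, and since $G$ is torsion‑free every finite subgroup of $G$ is trivial, so $\rho(\cala^0)=\{1\}$. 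As $\rho$ has trivial kernel, $\cala^0$ is then stably trivial. But $\cala^0$ has finite index in the infinite‑type groupoid $\cala$, hence is itself of infinite type, in particular nowhere trivial — a contradiction. Therefore any two vertices of $\mathsf{V}$ are at distance less than $L$, so $\mathsf{V}$ has diameter at most $L$.

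Finally, $\mathsf{V}$ is a nonempty bounded subset of $X^{(0)}$ with $\rho(\calh)\subseteq\Stab_G(\mathsf{V})$. The bounded set $\mathsf{V}$ has a unique circumcenter $c$ in the complete $\CAT(-1)$ space $X$, which is fixed by $\Stab_G(\mathsf{V})$; since the $G$‑action on $X$ is without inversion, $\Stab_G(\mathsf{V})$ then fixes every vertex of the smallest cell containing $c$. Choosing any such vertex $v$ yields $\rho(\calh)\subseteq\Stab_G(v)$ on this piece, so $(\calh,\rho)$ is $X$‑elliptic there; reassembling over the countable partition shows that $(\calh,\rho)$ is stably $X$‑elliptic, as desired. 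I expect the main obstacle to be precisely the boundedness step: phrasing, in the measured‑groupoid language, that a quasi‑invariant pair of far‑apart vertices would force a finite‑index (hence infinite‑type) subgroupoid to have trivial $\rho$‑image, which contradicts triviality of the kernel. The other ingredients — the canonical vertex set and its $(\calh,\rho)$‑invariance, and the circumcenter argument — are supplied by the preceding lemmas and by standard $\CAT(-1)$ geometry together with the absence of inversions.
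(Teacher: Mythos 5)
Your proposal is correct and follows essentially the same route as the paper's proof: reduce via Lemma~\ref{lemma:canonical-vertex-set} to a canonical vertex set, which is nonempty by stable ellipticity, bounded by weak acylindricity (combined with torsion-freeness, triviality of the kernel of $\rho$, and the infinite type of $\cala$), and $(\calh,\rho)$-invariant by Lemma~\ref{lemma:canonical-vertex-set-quasi-normalizer}, then conclude with the circumcenter and the absence of inversions. The only differences are cosmetic (order of the steps, and your spelling out of the finite-index subgroupoid $\cala^0$ in the boundedness argument, which the paper leaves terse).
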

	
	\begin{proof}
		By Lemma~\ref{lemma:canonical-vertex-set}, up to a countable Borel partition of the base space $Y$, we can (and will) assume that $(\cala,\rho)$ has a canonical vertex set $\mathsf{V}$. As $(\cala,\rho)$ is stably $X$-elliptic, $\mathsf{V}$ is non-empty.
		
		We first claim that $\mathsf{V}$ is bounded in $X$. Indeed, as the $G$-action on $X$ is weakly acylindrical (Assumption~3 from Theorem~\ref{theo:full}), there exists $L>0$ such that any two vertices of $X$ at distance at least $L$ have a finite common stabilizer (whence trivial since $G$ is torsion-free). So if $\mathrm{diam}_X(\mathsf{V})>L$, then $\cala$ would contain the trivial subgroupoid as a finite-index subgroupoid (using that $\rho$ has trivial kernel),  contradicting that $\cala$ is of infinite type. This proves our claim. 
		
		Therefore $\mathsf{V}$ has a circumcenter $c$ (cf.\ \cite[Proposition II.2.7]{BH}), which is contained in a unique cell $P$ of minimal dimension. By Lemma~\ref{lemma:canonical-vertex-set-quasi-normalizer}, the set $\mathsf{V}$ is $(\calh,\rho)$-invariant, and therefore so is $P$. As the $G$-action on $X$ is without inversion, every vertex of $P$ is $(\calh,\rho)$-invariant, showing that $(\calh,\rho)$ is (stably) $X$-elliptic. 
	\end{proof}
	
	\subsection{Characterization of vertex stabilizers}
	\label{subsec:vertex}
	
	\begin{lemma}\label{lemma:uniqueness-vertex}
		Let $\calg$ be a measured groupoid over a standard finite measure space $Y$, and let $\rho:\calg\to G$ be a strict action-like cocycle. Let $v,v'\in X^{(0)}$ be distinct vertices.
		
		Then for every positive measure Borel subset $U\subseteq Y$, no finite index subgroupoid of $\rho^{-1}(\Stab_G(v))_{|U}$ is contained in  $\rho^{-1}(\Stab_G(v'))_{|U}$.
	\end{lemma}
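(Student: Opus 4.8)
The plan is to argue by contradiction, reducing the statement to a purely group-theoretic fact about intersections of distinct vertex stabilizers, and then transferring that fact to the groupoid level through the action-like hypothesis on $\rho$. Suppose that for some positive measure Borel subset $U\subseteq Y$ there is a finite index subgroupoid $\calk\subseteq\rho^{-1}(\Stab_G(v))_{|U}$ with $\calk\subseteq\rho^{-1}(\Stab_G(v'))_{|U}$. Since $\calk$ is already contained in $\rho^{-1}(\Stab_G(v))_{|U}$, we get $\calk\subseteq\rho^{-1}(\Stab_G(v)\cap\Stab_G(v'))_{|U}$. Consequently $\rho^{-1}(\Stab_G(v)\cap\Stab_G(v'))_{|U}$ contains a finite index subgroupoid of $\rho^{-1}(\Stab_G(v))_{|U}$, and therefore has itself finite index in $\rho^{-1}(\Stab_G(v))_{|U}$. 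The goal is to show this is impossible.

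First I would establish the group-theoretic input: for distinct vertices $v\neq v'$, the subgroup $H=\Stab_G(v)\cap\Stab_G(v')$ has infinite index in $\Stab_G(v)$. Indeed, $X$ being $\CAT(-1)$, the geodesic segment $[v,v']$ is unique and is fixed pointwise by $H$. Let $C$ be the cell whose relative interior the geodesic enters upon leaving $v$; since $v\neq v'$ this cell is positive-dimensional and $v$ is one of its vertices. As $H$ fixes a point in the relative interior of $C$ and $G$ acts without inversion, $H$ stabilizes $C$ and fixes it pointwise; in particular $H$ fixes an edge $e$ of $C$ incident to $v$. Thus $H\subseteq\Stab_G(e)\subseteq\Stab_G(v)$, and since edge stabilizers have infinite index in the incident vertex stabilizers (Assumption~2 of Theorem~\ref{theo:full}), the tower $H\subseteq\Stab_G(e)\subseteq\Stab_G(v)$ yields $[\Stab_G(v):H]=\infty$.

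Finally I would transfer this infinite index statement to the groupoid level. Restricting $\rho$ gives a cocycle $\rho'\colon\rho^{-1}(\Stab_G(v))_{|U}\to\Stab_G(v)$, and the subgroupoid $\rho^{-1}(\Stab_G(v)\cap\Stab_G(v'))_{|U}$ is precisely the $\rho'$-preimage of the infinite index subgroup $H\leq\Stab_G(v)$. Here I would invoke the defining feature of action-like cocycles recorded in the appendix (Lemma~\ref{lemma:action-like}): passing to preimages faithfully reflects the index of a subgroup, so that over any positive measure base the preimage of an infinite index subgroup cannot contain a finite index subgroupoid of the preimage of the ambient group. Applied to $H\leq\Stab_G(v)$, this contradicts the conclusion of the first paragraph, and the lemma follows.

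The main obstacle is this last transfer step: the entire role of the \emph{action-like} hypothesis is to guarantee that group-theoretic indices are reflected faithfully by the groupoid-theoretic indices of subgroup preimages, so the crux is to isolate and invoke the correct index-preservation property from the appendix and to verify that it applies in the relative form used above (equivalently, that the corestriction $\rho'$ inherits action-likeness from $\rho$). By contrast, the group-theoretic fact is routine once one observes that the intersection of two distinct vertex stabilizers is trapped inside an edge stabilizer, together with the infinite-index property of edge stabilizers.
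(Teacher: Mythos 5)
Your proof is correct and follows essentially the same route as the paper: the paper's proof consists precisely of noting that Assumption~2 of Theorem~\ref{theo:full} forces $\Stab_G(v)\cap\Stab_G(v')$ to have infinite index in $\Stab_G(v)$ (the geodesic-fixing argument you spell out is exactly the implicit justification), and then applying the definition of an action-like cocycle. One clarification: the index-reflection property you need is not Lemma~\ref{lemma:action-like} (which only asserts that natural cocycles of measure-preserving actions are action-like) but the \emph{definition} of an action-like cocycle itself, which is already stated for an arbitrary infinite-index inclusion $H_1\subseteq H_2$ of subgroups of $G$; applied with $H_1=\Stab_G(v)\cap\Stab_G(v')$ and $H_2=\Stab_G(v)$ it gives exactly what you need, so the ``relative form''/corestriction issue you flag as the main obstacle does not arise.
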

	
	\begin{proof}
		Assumption~2 from Theorem~\ref{theo:full} ensures that $\Stab_G(v)\cap\Stab_G(v')$ has infinite index in $\Stab_G(v)$. The lemma therefore follows from the second point in the definition of an action-like cocycle. 
	\end{proof}
	
	Let $\calg$ be a measured groupoid over a standard finite measure space $Y$, and let $\rho:\calg\to G$ be a strict cocycle. We say that a measured subgroupoid $\calh$ of $\calg$ is \emph{of vertex stabilizer type with respect to $\rho$} if there exist a conull Borel subset $Y^*\subseteq Y$ and a partition $Y^*=\dunion_{i\in I}Y_i$ into at most countably many Borel subsets such that for every $i\in I$, there exists a vertex $v_i\in X^{(0)}$ such that $\calh_{|Y_i}$ is equal to the $(\calg_{|Y_i},\rho)$-stabilizer of $v_i$, i.e.\  $\calh_{|Y_i}=(\rho^{-1}(\Stab_G(v_i)))_{|Y_i}$. 
	
	A subgroupoid $\calh\subseteq\calg$ is \emph{stably maximal QNA} if it is QNA, and any QNA subgroupoid $\hat\calh$ of $\calg$ which stably contains $\calh$ is stably equal to $\calh$. It is \emph{stably maximal amenable} if it is amenable, and any amenable subgroupoid $\hat\calh$ which stably contains $\calh$ is stably equal to $\calh$. Notice that since amenable groupoids of infinite type are QNA, every amenable subgroupoid of $\calg$ which is stably maximal QNA, is also stably maximal amenable.
	
	In the following statement, we give a purely groupoid-theoretic characterization of subgroupoids of $\calg$ of vertex stabilizer type with respect to $\rho$, without any reference to the action-like cocycle $\rho$. The reader is referred to Section~\ref{subsec:groupoid-argument} in the introduction for the intuition behind the statement, in particular the comparison with the group-theoretic situation. Roughly, in the statement below, Assertion~2.(a) deals with non-amenable Baumslag-Solitar subgroups, and Assertion~2.(b) deals with the amenable ones, and states their ``non-isolation''.
	
	\begin{lemma}\label{lemma:characterization-vertex}
		Let $\calg$ be a measured groupoid over a standard finite measure space $Y$, and let $\rho:\calg\to G$ be a strict  action-like cocycle. Let $\calh$ be a measured subgroupoid of $\calg$. The following assertions are equivalent.
		\begin{enumerate}
			\item The subgroupoid $\calh$ is of vertex stabilizer type with respect to $\rho$. 
			\item The subgroupoid $\calh$ is a stably maximal QNA subgroupoid of $\calg$, and there exists a Borel partition $Y=Y_1\dunion Y_2$ such that
			\begin{enumerate}
				\item there exists an everywhere nonamenable QNA subgroupoid $\calh'$ of $\calg_{|Y_1}$ such that $\calh_{|Y_1}\cap\calh'$ is nowhere trivial, and 
				\item the groupoid $\calh_{|Y_2}$ is amenable, and there exist two stably maximal amenable measured subgroupoids $\calh_1,\calh_2$ of $\calg_{|Y_2}$ such that letting $\calh_0=\calh_{|Y_2}$, the following two assertions hold: 
				\begin{enumerate}
					\item given any two distinct $i,j\in\{0,1,2\}$ and any Borel subset $U\subseteq Y$ of positive measure, no finite index subgroupoid of $(\calh_i)_{|U}$  is contained in $(\calh_j)_{|U}$;
					\item the groupoid $\calh_0\cap\calh_1\cap\calh_2$ is nowhere trivial.
				\end{enumerate}
			\end{enumerate} 
		\end{enumerate}
	\end{lemma}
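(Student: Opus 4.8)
The plan is to prove the two implications separately, in both cases reducing everything to stable $X$-ellipticity and then invoking stable maximality together with Lemma~\ref{lemma:uniqueness-vertex}. Throughout I use that the action-like cocycle $\rho$ has trivial kernel (so that the lemmas of the preceding subsections apply) and that for an infinite subgroup $H\le G$ the subgroupoid $\rho^{-1}(H)$ is of infinite type, in particular nowhere trivial. I also record one fact used repeatedly: by Assumption~1 of Theorem~\ref{theo:full}, for every vertex $v$ the subgroupoid $\rho^{-1}(\Stab_G(v))$ is QNA, since it stably quasi-normalizes $\rho^{-1}(A_v)$ for $A_v$ a commensurated infinite amenable subgroup of $\Stab_G(v)$.

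For $(2)\Rightarrow(1)$, the first and main goal is to show that $(\calh,\rho)$ is stably $X$-elliptic. On $Y_2$, I set $\cala_1=\calh_{|Y_2}=\calh_0$ and $\cala_2=\calh_1$, $\cala_3=\calh_2$; these are three stably maximal amenable subgroupoids (here I use that $\calh$ being stably maximal QNA forces $\calh_{|Y_2}$, which is amenable, to be stably maximal amenable), and assertion~2(b)(i) is exactly the no-finite-index-containment hypothesis of Lemma~\ref{lemma:intersection of four gropuoids}. That lemma then shows that $\calh_0\cap\calh_1\cap\calh_2$ is stably $X$-elliptic, while it is nowhere trivial by~2(b)(ii). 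If $\calh_0$ failed to be stably $X$-elliptic, Lemma~\ref{lemma:elliptic-loxodromic} would produce a positive measure subset on which $\calh_0$ is $\partial_\infty X$-elementary; restricting the triple intersection there would make it simultaneously stably $X$-elliptic and $\partial_\infty X$-elementary while still nowhere trivial, contradicting Lemma~\ref{lemma:elliptic vs elementary}. Hence $\calh_{|Y_2}$ is stably $X$-elliptic. On $Y_1$, Lemma~\ref{lemma:nonamenable-BS-elliptic} gives that $\calh'$ is stably $X$-elliptic, so after a partition it fixes a vertex and $\calk:=\calh_{|Y_1}\cap\calh'$ is stably $X$-elliptic and nowhere trivial. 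Splitting $\calh_{|Y_1}$ into its everywhere nonamenable part (stably $X$-elliptic by Lemma~\ref{lemma:nonamenable-BS-elliptic}, since it is QNA as a restriction of the QNA groupoid $\calh$) and its amenable part (to which I apply Lemma~\ref{lemma:elliptic-loxodromic}), I rule out a $\partial_\infty X$-elementary part exactly as above, using that $\calk$ lies inside both that part and the elliptic $\calh'$. Thus $(\calh,\rho)$ is stably $X$-elliptic, and after a countable partition one has $\rho(\calh_{|Y_i})\subseteq\Stab_G(u_i)$, so $\calh_{|Y_i}\subseteq\rho^{-1}(\Stab_G(u_i))_{|Y_i}$; the latter is QNA, so stable maximality of $\calh$ (inherited on $Y_i$) forces equality, which is precisely the assertion that $\calh$ is of vertex stabilizer type.

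For $(1)\Rightarrow(2)$, QNA-ness of $\calh$ is immediate from the recorded fact and stability under countable partitions. To prove stable maximality, let $\hat\calh\supseteq\calh$ be QNA; I claim $\hat\calh$ is stably $X$-elliptic, after which Lemma~\ref{lemma:uniqueness-vertex} (distinct vertices give non-nested stabilizer subgroupoids) forces $\hat\calh=\calh$ stably. On the part where $\hat\calh$ is everywhere nonamenable, an amenable infinite-type subgroupoid quasi-normalized by $\hat\calh$ is stably $X$-elliptic by Lemma~\ref{lemma:loxodromicity-quasi-normalizer}, whence $\hat\calh$ is stably $X$-elliptic by Corollary~\ref{cor:ellipticity-quasi-normalizer}; on the part where $\hat\calh$ is amenable, a $\partial_\infty X$-elementary part would contain the nowhere trivial, stably $X$-elliptic subgroupoid $\calh$, again contradicting Lemma~\ref{lemma:elliptic vs elementary}. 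This establishes stable maximality QNA; in particular, when $\Stab_G(w)$ is amenable, $\rho^{-1}(\Stab_G(w))$ is even stably maximal amenable, as any amenable subgroupoid stably containing it is QNA of infinite type. For the partition, I process each piece $Y_i$ on which $\calh_{|Y_i}=\rho^{-1}(\Stab_G(v_i))_{|Y_i}$: if $\Stab_G(v_i)$ is nonamenable I place $Y_i$ in $Y_1$ and take $\calh'=\calh_{|Y_i}$; if $\Stab_G(v_i)$ is amenable, Assumption~4 provides an infinite $S\le\Stab_G(v_i)$ fixing either a nonamenable-stabilizer vertex $w$ or two distinct vertices $w_1,w_2\neq v_i$. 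In the first situation (which I also invoke whenever one of the $w_j$ has nonamenable stabilizer) I place $Y_i$ in $Y_1$ with $\calh'=\rho^{-1}(\Stab_G(w))_{|Y_i}$, which is everywhere nonamenable and QNA and meets $\calh$ along the nowhere trivial $\rho^{-1}(S)$; otherwise both $w_1,w_2$ have amenable stabilizers, and I place $Y_i$ in $Y_2$ with $\calh_1=\rho^{-1}(\Stab_G(w_1))_{|Y_i}$, $\calh_2=\rho^{-1}(\Stab_G(w_2))_{|Y_i}$, which are stably maximal amenable by the fact just derived. Then~2(b)(i) follows from Lemma~\ref{lemma:uniqueness-vertex} applied to the pairwise distinct vertices $v_i,w_1,w_2$, and~2(b)(ii) holds because the triple intersection contains $\rho^{-1}(S)$.

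The hardest parts will be the bookkeeping ensuring that all the above countable partitions assemble into the single partition $Y=Y_1\dunion Y_2$ demanded by the statement, and the verification that the QNA property and stable maximality pass correctly between $\calg$ and its restrictions $\calg_{|U}$. The conceptual crux, however, is the routing in $(1)\Rightarrow(2)$: recognizing that a case-(ii) witness fixing a nonamenable-stabilizer vertex is really a case-(i) witness is what guarantees that the two subgroupoids $\calh_1,\calh_2$ on $Y_2$ can be chosen to be genuine \emph{amenable} vertex-stabilizer subgroupoids, so that the non-nesting condition~2(b)(i) becomes a direct consequence of Lemma~\ref{lemma:uniqueness-vertex} rather than requiring an ad hoc analysis of maximal amenable extensions of $\rho^{-1}(S)$.
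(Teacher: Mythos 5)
Your proposal is correct and follows essentially the same route as the paper's proof: both directions reduce to stable $X$-ellipticity via Lemmas~\ref{lemma:elliptic-loxodromic}, \ref{lemma:elliptic vs elementary}, \ref{lemma:intersection of four gropuoids} and~\ref{lemma:nonamenable-BS-elliptic} (which you merely unpack into Lemma~\ref{lemma:loxodromicity-quasi-normalizer} and Corollary~\ref{cor:ellipticity-quasi-normalizer}), maximality is converted to equality via Lemma~\ref{lemma:uniqueness-vertex}, and your Assumption-4 routing—sending a witness whose fixed vertex has nonamenable stabilizer to case~2(a)—is exactly the paper's case split for amenable $\Stab_G(v)$. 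The remaining differences (order of the two implications, deriving stably-maximal-amenability of amenable vertex stabilizers as a standalone fact) are purely presentational.
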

	
	\begin{proof}
		We first show that $(1)\Rightarrow (2)$, so let us assume that $(\calh,\rho)$ is of vertex stabilizer type with respect to $\rho$. Up to a countable partition of the base space and passing to a conull Borel subset, we can (and will) assume that there exists $v\in X^{(0)}$ such that $\calh=\rho^{-1}(\Stab_G(v))$.
		
		We first prove that $\calh$ is a stably maximal QNA subgroupoid of $\calg$. By assumption (Assumption~1 from Theorem~\ref{theo:full}), the group $\Stab_G(v)$ contains a quasi-normalized infinite amenable subgroup $A$. Letting $\cala=\rho^{-1}(A)$, the groupoid $\cala$ is amenable (because $\rho$ has trivial kernel), of infinite type (using that $\rho$ is action-like) and quasi-normalized by $\calh$. It remains to prove the stable maximality of $\calh$ as a QNA subgroupoid of $\calg$. So let $\hat\calh$ be a QNA subgroupoid of $\calg$ such that $\calh$ is stably contained in $\hat\calh$. Up to passing to a conull Borel subset of $Y$ and taking a countable partition, we will assume that $\calh\subseteq\hat\calh$. 
		
		We aim to prove that $(\hat\calh,\rho)$ is stably $X$-elliptic: this will be enough to deduce the stable maximality of $\calh$ in view of Lemma~\ref{lemma:uniqueness-vertex}. Let $Y=Y_1\dunion Y_2$ be a Borel partition such that $\hat\calh_{|Y_1}$ is everywhere nonamenable, and $\hat\calh_{|Y_2}$ is amenable (this exists by choosing for $Y_2$ a Borel subset of maximal measure such that $\hat\calh_{|Y_2}$ is amenable). Lemma~\ref{lemma:nonamenable-BS-elliptic} ensures that $(\hat\calh_{|Y_1},\rho)$ is stably $X$-elliptic. If $(\hat\calh_{|Y_2},\rho)$ is not stably $X$-elliptic, then Lemma~\ref{lemma:elliptic-loxodromic} ensures that there exists a Borel subset $U\subseteq Y_2$ of positive measure such that there exists an $(\hat\calh_{|U},\rho)$-equivariant Borel map $\varphi:U\to\calp_{\le 2}(\partial_\infty X)$. But $(\calh_{|U},\rho)$ is then both $X$-elliptic and $\partial_\infty X$-elementary (since $\calh_{|U}\subseteq\hat\calh_{|U}$). Lemma~\ref{lemma:elliptic vs elementary} thus ensures that there exists a positive measure Borel subset $V\subseteq U$ such that $\calh_{|V}$ is trivial, which contradicts the assumption that $\rho$ is an action-like cocycle. This proves the stable maximality of $\calh$.
		
		 If $\Stab_G(v)$ is nonamenable, then since $\rho$ is action-like, 
		 the groupoid $\calh$ is everywhere nonamenable. So $Y=Y_1$ and  $\calh'=\calh$ show that $(2)$ holds. So assume that $\Stab_G(v)$ is amenable (in particular $\calh$ is amenable). If an infinite subgroup of $\Stab_G(v)$ fixes another vertex $v'$ with nonamenable stabilizer, then taking $Y_1=Y$ and  $\calh'=\rho^{-1}(\Stab_G(v'))$ shows that $(2)$ holds. Otherwise, by assumption (Assumption~4 from Theorem~\ref{theo:full}), we can find two distinct vertices $v_1,v_2$ of $X$ with amenable stabilizers, both different from $v$, such that the common stabilizer of $v,v_1,v_2$ is infinite.
		Let $v_0=v$, and for $j\in\{0,1,2\}$, let $\calh_j=\rho^{-1}(\Stab_G(v_j))$. By the argument handling  $\hat\calh_{|Y_2}$ in the previous paragraph, we know that the subgroupoids $\calh_j$ are stably maximal amenable subgroupoids of $\calg$. As $v,v_1,v_2$ are mutually distinct, Lemma~\ref{lemma:uniqueness-vertex} ensures that Condition~$(2b.i)$ holds. As the common stabilizer of $v,v_1,v_2$ is infinite and $\rho$ is action-like, we know that $(\calh_0\cap\calh_1\cap\calh_2)_{|U}$ is of infinite type for any positive measure Borel subset $U$, thus Condition~$(2b.ii)$ holds true.
		
		We now prove that $(2)\Rightarrow (1)$. We will prove that $(\calh,\rho)$ is stably $X$-elliptic; the stable maximality condition, together with $(1)\Rightarrow (2)$, will then show that $\calh$ is of vertex stabilizer type with respect to $\rho$. 
		
		By Lemma~\ref{lemma:nonamenable-BS-elliptic},  $(\calh',\rho)$ is stably $X$-elliptic, so $\calh_{|Y_1}\cap\calh'$ is  stably $X$-elliptic. Let $Y_1=Y'_1\dunion Y''_1$ be a Borel partition such that $\calh_{|Y'_1}$ is everywhere nonamenable, and $\calh_{|Y''_1}$ is amenable. By Lemma~\ref{lemma:nonamenable-BS-elliptic}, $(\calh_{|Y'_1},\rho)$ is stably $X$-elliptic. And $(\calh_{|Y''_1},\rho)$ is also stably $X$-elliptic: otherwise, by Lemma~\ref{lemma:elliptic-loxodromic}, there would exist a positive measure Borel subset $U\subseteq Y''_1$ such that $(\calh_{|U},\rho)$ is $\partial_\infty X$-elementary; but then  $(\calh\cap\calh')_{|U}$ is nowhere trivial and both stably $X$-elliptic and $\partial_\infty X$-elementary, contradicting Lemma~\ref{lemma:elliptic vs elementary}. 
		
		Notice that $\calh_{|Y_2}$ is a stably maximal amenable subgroupoid of $\calg_{|Y_2}$. We claim that $(\calh_{|Y_2},\rho)$ is stably $X$-elliptic. Indeed, otherwise, by Lemma~\ref{lemma:elliptic-loxodromic}, there exists a positive measure Borel subset $U\subseteq Y_2$ such that  $(\calh_{|U},\rho)$ is $\partial_\infty X$-elementary. Let now $\calh_0=\calh_{|Y_2}$, and  $\cala=\calh_0\cap\calh_1\cap\calh_2$. Lemma~\ref{lemma:intersection of four gropuoids} implies that $(\cala_{|U},\rho)$ is stably $X$-elliptic. But it is also nowhere trivial (Condition~$(2b.ii)$) and $\partial_\infty X$-elementary, contradicting Lemma~\ref{lemma:elliptic vs elementary}. Therefore $(\calh_{|Y_2},\rho)$ is also stably $X$-elliptic, which completes our proof. 
	\end{proof}

	Keeping the above notations, saying that $\calh$ is a measured subgroupoid of vertex stabilizer type with respect to $\rho$ means that there exists a countable Borel partition $Y^*=\dunion_{i\in I}Y_i$ of a conull Borel subset $Y^*\subseteq Y$ such that for every $i\in I$, one has $\calh_{|Y_i}=\rho^{-1}(\Stab_G(v_i))_{|Y_i}$ for some vertex $v_i\in X^{(0)}$. Of course, this Borel partition is not unique: one can always pass to a finer partition. But we observe that the map $Y\to X^{(0)}$ sending a point $y\in Y$ to the vertex $v_i$ is entirely determined by the pair $(\calh,\rho)$, up to changing its value on a null set. Indeed, otherwise, there would exist two distinct vertices $v,v'\in X^{(0)}$ and a positive measure Borel subset $U\subseteq Y$ such that $\rho^{-1}(\Stab_G(v))_{|U}=\rho^{-1}(\Stab_G(v'))_{|U}$, which would contradict  Lemma~\ref{lemma:uniqueness-vertex}. 
	The above map $Y\to X^{(0)}$ is called the \emph{vertex map} of $(\calh,\rho)$. We insist that, although being of vertex stabilizer type is a notion that does not depend on the choice of an action-like cocycle $\rho:\calg\to G$ (as a consequence of Lemma~\ref{lemma:characterization-vertex}), the vertex map of $(\calh,\rho)$ depends on $\rho$ in an essential way.
	
	\subsection{Conclusion}
	
The main theorem of the section (Theorem~\ref{theo:full}) will be a consequence of our next lemma. In its statement, contrary to the previous sections, we do not need to impose that the $G$-action on $X$ satisfies all the assumptions from Theorem~\ref{theo:full}. Notice that the notion of a measured subgroupoid of $\calg$ of vertex stabilizer type with respect to a cocycle $\rho$ still makes sense in this broader context. 
	
	\begin{lemma}
		\label{lemma:endgame}
		Let $G$ be a torsion-free countable group and let $X$ be an $M_\kappa$-polyhedral complex with countably many cells where $G$ acts by cellular automorphisms. Let $\Theta$ be the associated intersection graph. Assume that
		\begin{enumerate}
			\item for any two distinct vertices $v,v'\in X^{(0)}$, $\Stab_G(v)\cap \Stab_G(v')$ is infinite index in both $\Stab_G(v)$ and $\Stab_G(v')$;
			\item for any measured groupoid $\calg$ over a standard finite measure space $Y$ equipped with a pair of strict action-like cocycles $\rho_1:\calg\to G$ and $\rho_2:\calg\to G$, a measured subgroupoid of $\calg$ is of vertex stabilizer type with respect to $\rho_1$ if and only if it is of vertex stabilizer type with respect to $\rho_2$.
		\end{enumerate}
		Then for every self measure equivalence coupling $\Sigma$ of $G$, there exists a $(G\times G)$-equivariant Borel map $\Sigma\to\Aut(\Theta)$, where the $(G\times G)$-action on $\Aut(\Theta)$ is via $(g_1,g_2)\cdot f(x)=g_1f(g_2^{-1}x)$.
	\end{lemma}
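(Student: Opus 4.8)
The plan is to reduce the coupling statement to a cohomological assertion about cocycles, exactly in the spirit of Remark~\ref{rk:coupling-cocycle}. By that remark together with \cite[Proposition~5.11(2)]{HH1}, it suffices to prove that whenever $\calg$ is a measured groupoid over a standard finite measure space $Y$ equipped with two strict action-like cocycles $\rho_1,\rho_2:\calg\to G$ with trivial kernel, the two cocycles are \emph{$\Aut(\Theta)$-cohomologous}: there is a Borel map $\varphi:Y\to\Aut(\Theta)$ with $\rho_2(g)=\varphi(r(g))\rho_1(g)\varphi(s(g))^{-1}$ for a.e.\ $g\in\calg$. I would fix such a $\calg$; this is precisely the data to which Assumption~(2) applies. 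Since $G$ is torsion-free, Assumption~(1) lets me reuse the conclusion of Lemma~\ref{lemma:uniqueness-vertex}: for distinct $v,v'\in X^{(0)}$, any positive measure $U\subseteq Y$ and each $i\in\{1,2\}$, no finite-index subgroupoid of $\rho_i^{-1}(\Stab_G(v))_{|U}$ is contained in $\rho_i^{-1}(\Stab_G(v'))_{|U}$. In particular the vertex map of any subgroupoid of vertex stabilizer type is well defined up to null sets.

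The construction of $\varphi$ then goes vertex by vertex. For each of the countably many $v\in X^{(0)}$, the subgroupoid $\calh_v:=\rho_1^{-1}(\Stab_G(v))$ is of vertex stabilizer type with respect to $\rho_1$ (with constant vertex map $v$). By Assumption~(2), $\calh_v$ is then of vertex stabilizer type with respect to $\rho_2$, so it carries a $\rho_2$-vertex map, uniquely defined a.e.\ by the previous paragraph; I would denote its value by $\varphi(y)(v)$. Assembling these over all $v$ gives a Borel map $\varphi$ from $Y$ into the space of self-maps of the countable set $X^{(0)}$, and it remains to check that a.e.\ value lies in $\Aut(\Theta)$ and that $\varphi$ intertwines the two cocycles.

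To see that $\varphi(y)$ is an automorphism of $\Theta$, I would proceed in two steps. Bijectivity follows by symmetry: exchanging the roles of $\rho_1$ and $\rho_2$ produces a map $\psi(y)$, and viewing $\calh_v$ through $\rho_2$ one reads off $\psi(y)(\varphi(y)(v))=v$ (and symmetrically), so $\psi(y)=\varphi(y)^{-1}$ a.e. For adjacency, the key point is that $v$ and $v'$ span an edge of $\Theta$ exactly when $\Stab_G(v)\cap\Stab_G(v')$ is nontrivial, hence infinite (as $G$ is torsion-free), which by action-likeness is equivalent to the \emph{intrinsic}, cocycle-independent property that $\calh_v\cap\calh_{v'}$ is of infinite type. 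Reading this property off through $\rho_1$ gives $v\sim v'$, while reading it off through $\rho_2$ (where $\calh_v\cap\calh_{v'}$ is locally $\rho_2^{-1}(\Stab_G(\varphi(y)v)\cap\Stab_G(\varphi(y)v'))$) gives $\varphi(y)v\sim\varphi(y)v'$; thus $\varphi(y)$ both preserves and reflects adjacency and lies in $\Aut(\Theta)$.

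Finally, for the cocycle relation I would conjugate by bisections. Given $g\in\calg$ with $s(g)=y$, $r(g)=y'$, choose a bisection $B$ through $g$ on which $\rho_1$ and $\rho_2$ are constant, equal to $g_1=\rho_1(g)$ and $g_2=\rho_2(g)$. Then $B\calh_v B^{-1}=\rho_1^{-1}(\Stab_G(g_1 v))=\calh_{g_1 v}$ locally, while through $\rho_2$ it equals $\rho_2^{-1}(\Stab_G(g_2\varphi(y)v))$; comparing the two $\rho_2$-vertex maps (and using uniqueness) yields $\varphi(y')(g_1 v)=g_2\,\varphi(y)(v)$ for a.e.\ $g$ and all $v$, that is $\varphi(r(g))\rho_1(g)=\rho_2(g)\varphi(s(g))$ in $\Aut(\Theta)$, which rearranges to the desired cohomology relation. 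I expect the main obstacle to be purely measure-theoretic bookkeeping rather than conceptual: the vertex maps are defined only after countable Borel partitions of $Y$ and only up to null sets, so the identities in the last two paragraphs must be established on a conull set obtained by intersecting over the countably many vertices and pairs and by passing to common refinements of the relevant partitions; and one must confirm that $\varphi:Y\to\Aut(\Theta)$ is Borel for the Polish topology of pointwise convergence, which reduces to the Borel-ness of each evaluation $y\mapsto\varphi(y)(v)$ already granted by the vertex maps.
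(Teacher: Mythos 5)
Your proposal is correct and follows essentially the same route as the paper's proof: the same reduction via Remark~\ref{rk:coupling-cocycle} to showing the two action-like cocycles are $\Aut(\Theta)$-cohomologous, the same construction of $\varphi(y)(v)$ as the $\rho_2$-vertex map of $\rho_1^{-1}(\Stab_G(v))$, and the same bisection-transport argument for the cohomology relation. Your two cosmetic variations — packaging bijectivity as a two-sided inverse by symmetry rather than proving injectivity and surjectivity separately, and detecting adjacency via the intrinsic property ``$\calh_v\cap\calh_{v'}$ is of infinite type'' instead of ``nowhere trivial'' — are both valid and rest on exactly the same ingredients (Lemma~\ref{lemma:uniqueness-vertex}, torsion-freeness, and action-likeness).
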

	
		Theorem~\ref{theo:full} is an immediate consequence of Lemma~\ref{lemma:endgame}, as Assumption~1 of Lemma~\ref{lemma:endgame} follows from Assumption~2 of Theorem~\ref{theo:full}, and Assumption 2 of Lemma~\ref{lemma:endgame} follows from Lemma~\ref{lemma:characterization-vertex}. So there only remains to prove Lemma~\ref{lemma:endgame}. Our proof is inspired from Kida's proof of measure equivalence rigidity of surface mapping class groups \cite{Kid-me}, and has become a common argument in proofs of measure equivalence rigidity. 
	
	\begin{proof}[Proof of Lemma~\ref{lemma:endgame}]
		Using \cite[Proposition~5.11(2)]{HH1} (see Remark~\ref{rk:coupling-cocycle}), it is enough to consider a measured groupoid $\calg$ over a standard finite measure space $Y$, and two action-like cocycles $\rho_1,\rho_2:\calg\to G$, and prove that they are $\Aut(\Theta)$-cohomologous, i.e.\ there exists a Borel map $\varphi:Y\to\Aut(\Theta)$ and a conull Borel subset $Y^*\subseteq Y$ such that for every $g\in\calg_{|Y^*}$, one has $\rho_2(g)=\varphi(r(g))\rho_1(g)\varphi(s(g))^{-1}$.
		
		Note that Assumption 1 ensures that for every action-like cocycle $\rho:\calg\to G$, and every measured subgroupoid $\calh$ of $\calg$ of vertex stabilizer type with respect to $\rho$, the pair $(\calh,\rho)$ has a well-defined vertex map as in the discussion at the end of Section~\ref{subsec:vertex}.
		
		\begin{claim}\label{claim}
			Let $\rho:\calg\to G$ be an action-like cocycle. Let $\calh,\calh'$ be two measured subgroupoids of $\calg$ of vertex stabilizer type with respect to $\rho$, and let $v_\calh,v_{\calh'}:Y\to X^{(0)}$ be the vertex maps of $(\calh,\rho)$ and $(\calh',\rho)$, respectively. Then the following conditions are equivalent.
			\begin{enumerate}
				\item For a.e.\ $y\in Y$, the vertices $v_\calh(y)$ and $v_{\calh'}(y)$ are either equal or adjacent in $\Theta$.
				\item The groupoid $\calh\cap\calh'$ is nowhere trivial.
			\end{enumerate}
		\end{claim}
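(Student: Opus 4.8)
The plan is to reduce everything to a common countable Borel partition of $Y$ on which both vertex maps are constant, and then to translate the combinatorial condition (1) into a statement about intersections of vertex stabilizers in $G$, exploiting that $G$ is torsion-free together with the defining properties of action-like cocycles.

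First I would record the basic algebraic identity for the intersection of two subgroupoids of vertex stabilizer type. Since $\calh$ and $\calh'$ are of vertex stabilizer type with respect to $\rho$, each is realized by a countable Borel partition of a conull subset of $Y$; passing to a common refinement $Y^*=\dunion_{k} Z_k$, I may assume that on each $Z_k$ there are vertices $v_k,v'_k\in X^{(0)}$ with $\calh_{|Z_k}=\rho^{-1}(\Stab_G(v_k))_{|Z_k}$ and $\calh'_{|Z_k}=\rho^{-1}(\Stab_G(v'_k))_{|Z_k}$, and moreover $v_\calh\equiv v_k$ and $v_{\calh'}\equiv v'_k$ almost everywhere on $Z_k$. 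Using that $\rho^{-1}(A)\cap\rho^{-1}(B)=\rho^{-1}(A\cap B)$ for any subgroups $A,B\le G$, and that restriction commutes with intersection, I obtain
\[
(\calh\cap\calh')_{|Z_k}=\rho^{-1}\bigl(\Stab_G(v_k)\cap\Stab_G(v'_k)\bigr)_{|Z_k}.
\]

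The second ingredient is the combinatorial dictionary. By the definition of the intersection graph $\Theta$, two vertices $v,v'\in X^{(0)}$ are equal or adjacent in $\Theta$ precisely when $\Stab_G(v)\cap\Stab_G(v')$ is nontrivial. Since $G$ is torsion-free, every nontrivial subgroup of $G$ is infinite; hence $v,v'$ are equal or adjacent in $\Theta$ if and only if $\Stab_G(v)\cap\Stab_G(v')$ is infinite, and otherwise this intersection is trivial. With this at hand, both implications follow from the properties of action-like cocycles (Lemma~\ref{lemma:action-like}): namely that $\rho$ has trivial kernel, and that the preimage $\rho^{-1}(H)_{|U}$ of any infinite subgroup $H\le G$ is of infinite type on every positive-measure $U$. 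For $(1)\Rightarrow(2)$, assuming (1), the set of $y$ where $v_\calh(y),v_{\calh'}(y)$ are neither equal nor adjacent is null, so after discarding the corresponding null pieces $Z_k$ I may assume $\Stab_G(v_k)\cap\Stab_G(v'_k)$ is infinite for every $k$; the displayed identity and the action-like property then show that $(\calh\cap\calh')_{|U}$ is of infinite type, in particular nontrivial, for every positive-measure $U\subseteq Z_k$, so $\calh\cap\calh'$ is nowhere trivial. For $(2)\Rightarrow(1)$ I argue by contraposition: if (1) fails, some $Z_k$ of positive measure has $v_k,v'_k$ neither equal nor adjacent, so $\Stab_G(v_k)\cap\Stab_G(v'_k)=\{1\}$, and the displayed identity gives $(\calh\cap\calh')_{|Z_k}=\rho^{-1}(\{1\})_{|Z_k}$, which is trivial because $\rho$ has trivial kernel; hence $\calh\cap\calh'$ is not nowhere trivial, contradicting (2).

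The only genuinely delicate point is the passage between the combinatorial adjacency condition in $\Theta$ and the groupoid-theoretic (non)triviality of $\calh\cap\calh'$, and this is exactly where torsion-freeness of $G$ (to upgrade \emph{nontrivial} to \emph{infinite}) and the defining features of action-like cocycles (to pass from infinite, respectively trivial, subgroups of $G$ to infinite-type, respectively trivial, subgroupoids) are used. Everything else is routine bookkeeping with countable Borel partitions, which I would keep implicit by systematically passing to common refinements and discarding null sets.
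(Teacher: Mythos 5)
Your proof is correct and follows essentially the same route as the paper's: reduce via a countable Borel partition to pieces where both subgroupoids are pullbacks of single vertex stabilizers, translate equality/adjacency in $\Theta$ into nontriviality---hence, by torsion-freeness, infiniteness---of $\Stab_G(v)\cap\Stab_G(v')$, and conclude using the trivial kernel (for the reverse direction) and the infinite-index property (for the forward direction) of the cocycle. The only nitpick is that these two properties come from the \emph{definition} of an action-like cocycle rather than from Lemma~\ref{lemma:action-like}, which merely asserts that natural cocycles of measure-preserving actions are action-like.
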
	
		
		\begin{proof}[Proof of the claim]
			Up to passing to a conull Borel subset of $Y$ and taking a countable Borel partition, we will assume that the maps $v_\calh$ and $v_{\calh'}$ are constant, with respective values $v,v'\in X^{(0)}$, and that $\calh=\rho^{-1}(\Stab_G(v))$ and $\calh'=\rho^{-1}(\Stab_G(v'))$. 
			
			If $v$ and $v'$ are either equal or adjacent in $\Theta$, then $\Stab_G(v)\cap\Stab_G(v')$ is nontrivial (by definition of $\Theta$), whence infinite because $G$ is torsion-free. Using that $\rho$ is action-like, it follows that $\calh\cap\calh'$ is nowhere trivial. 
			
			Conversely, if $v$ and $v'$ are neither equal nor adjacent in $\Theta$, then $\Stab_G(v)\cap\Stab_G(v')$ is trivial. As $\rho$ has trivial kernel, it follows that $\calh\cap\calh'$ is the trivial groupoid in this case.
		\end{proof}
		
		For every vertex $v$ of $\Theta$ (which can also be viewed as a vertex of $X$), we let $\calh_{1,v}=\rho_1^{-1}(\Stab_G(v))$. Then $\calh_{1,v}$ is a measured subgroupoid of $\calg$ of vertex stabilizer type with respect to $\rho_1$, so Assumption 2 of the lemma ensures that it is also of vertex stabilizer type with respect to $\rho_2$. Let $\phi_{2,v}:Y\to \Theta^{(0)}$ be the vertex map of $(\calh_{1,v},\rho_2)$. We now define a Borel map $\theta:Y\times \Theta^{(0)}\to \Theta^{(0)}$ by letting $\theta(y,v)=\phi_{2,v}(y)$ for every $y\in Y$ and every $v\in \Theta^{(0)}$. 
		
		We first observe that for a.e.\ $y\in Y$, the map $v\mapsto\theta(y,v)$ is injective. Otherwise, as $\Theta^{(0)}$ is countable, there exist a positive measure Borel subset $U\subseteq Y$ and two distinct vertices $v,v'\in \Theta^{(0)}$ such that for a.e.\ $y\in U$, one has $\theta(y,v)=\theta(y,v')$. Up to restricting to a positive measure Borel subset of $U$, we can assume that the maps $\theta(\cdot,v)$ and $\theta(\cdot,v')$ are constant, with value a common vertex $w$. Then $\rho_1^{-1}(\Stab_G(v))_{|U}$ and $\rho_1^{-1}(\Stab_G(v'))_{|U}$ are both stably equal to $\rho_2^{-1}(\Stab_G(w))_{|U}$, contradicting Assumption~1 (as in Lemma~\ref{lemma:uniqueness-vertex}).
		
		Second, for a.e.\ $y\in Y$, the map $v\mapsto\theta(y,v)$ is surjective. Indeed, let $w\in\Theta^{(0)}$, and let  $\calh_{2,w}=\rho_2^{-1}(\Stab_G(w))$. Then $\calh_{2,w}$ is a measured subgroupoid of $\calg$ of vertex stabilizer type with respect to $\rho_2$, so Assumption 2 ensures that it is also of vertex stabilizer type with respect to $\rho_1$. In other words, there exist a partition $Y^*=\dunion_{i\in I} Y_i$ of a conull Borel subset $Y^*\subseteq Y$ into at most countably many Borel subsets and for every $i\in I$, a vertex $v_i\in\Theta^{(0)}$, such that $(\calh_{2,w})_{|Y_i}=\rho_1^{-1}(\Stab_G(v_i))_{|Y_i}$. We then have $\theta(y,v_i)=w$ for almost every $y\in Y_i$. It follows that there exists a conull Borel subset $Y^{**}\subseteq Y$ such that for every $y\in Y^{**}$, the vertex $w$ is in the image of $\theta(y,\cdot)$. As $\Theta^{(0)}$ is countable, we deduce that $\theta(y,\cdot)$ is surjective for a.e.\ $y\in Y$, as desired. 
		
		Third, Claim~\ref{claim} above ensures that for any two distinct vertices $v,v'\in\Theta^{(0)}$, and a.e.\ $y\in Y$, the vertices $\theta(y,v)$ and $\theta(y,v')$ are adjacent if and only if $v$ and $v'$ are. As $\Theta^{(0)}$ is countable, it follows that for a.e.\ $y\in Y$, the map $\theta(y,\cdot)$ preserves both adjacency and non-adjacency. 
		
		In summary, we have proved that there exists a conull Borel subset $Y^*\subseteq Y$ such that for every $y\in Y^*$, the map $\theta(y,\cdot):\Theta^{(0)}\to\Theta^{(0)}$ induces an automorphism of $\Theta$. This yields a Borel map $\varphi:Y^*\to\Aut(\Theta)$, defined by letting $\varphi(y)=\theta(y,\cdot)$ for every $y\in Y^*$. 
		
		We are left with proving that the cocycles $\rho_1$ and $\rho_2$ are cohomologous via $\varphi$. Let $B$ be a bisection of $\calg$ which induces an isomorphism $\alpha:U\to V$ between two Borel subsets of $Y$, and such that $\rho_1$ and $\rho_2$ are both constant in restriction to $B$, with respective values $g_1,g_2\in G$. We will prove that for every $v\in \Theta^{(0)}$ and almost every $y\in U$, one has $\varphi(\alpha(y))(g_1v)=g_2(\varphi(y)(v))$. This will be enough to conclude as $\calg$ can be written as a countable union of such bisections.
		
		Up to a countable Borel partition of $U$, we can assume that $\varphi(y)(v)$ is constant on $U$, with value $w$. This means that, up to replacing $U$ by a conull Borel subset, we have  $\rho_2^{-1}(\Stab_G(w))_{|U}=\rho_1^{-1}(\Stab_G(v))_{|U}$. Transporting this through the bisection $B$, we deduce that $\rho_2^{-1}(\Stab_G(g_2w))_{|V}=\rho_1^{-1}(\Stab_G(g_1v))_{|V}$. This implies that $\varphi(\alpha(y))(g_1v)=g_2w$ for almost every $y\in U$, as desired.
	\end{proof}

\subsection{The case of amenable vertex groups}\label{sec:other-assumptions}

In this section, we explain how to adapt our proof of Theorem~\ref{theo:full} in order to prove the version given in the introduction (Theorem~\ref{theo:combination}), where all vertex stabilizers are assumed to be amenable, but no chain condition is assumed. This section will not be used in the sequel of the paper.

Notice that throughout Section~\ref{sec:me}, the chain condition (Assumption~5 from Theorem~\ref{theo:full}) was only used in the proof of Lemma~\ref{lemma:nonamenable-BS-elliptic}: more precisely, it occurs in the proof of Lemma~\ref{lemma:canonical-vertex-set}, which is one of the key steps towards Lemma~\ref{lemma:nonamenable-BS-elliptic}. Lemma~\ref{lemma:characterization-vertex} has to be replaced by the following statement.

\begin{lemma}\label{lemma:alternative-version}
Let $G$ and $X$ be as in Theorem~\ref{theo:combination}. Let $\calg$ be a measured groupoid over a standard finite measure space $Y$, and let $\rho:\calg\to G$ be a strict action-like cocycle. Let $\calh$ be a measured subgroupoid of $\calg$. The following assertions are equivalent.
\begin{enumerate}
\item The subgroupoid $\calh$ is of vertex stabilizer type with respect to $\rho$.
\item The subgroupoid $\calh$ is a stably maximal amenable subgroupoid of $\calg$, and there exist two stably maximal amenable subgroupoids $\calh_1,\calh_2$ of $\calg$ such that letting $\calh_0=\calh$, the following two assertions hold:
\begin{enumerate}
\item given any two distinct $i,j\in\{0,1,2\}$ and any Borel subset $U\subseteq Y$ of positive measure, no finite index subgroupoid of $(\calh_i)_{|U}$ is contained in $(\calh_j)_{|U}$;
\item the groupoid $\calh_0\cap\calh_1\cap\calh_2$ is nowhere trivial.
\end{enumerate}
\end{enumerate}
\end{lemma}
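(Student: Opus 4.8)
The plan is to follow the proof of Lemma~\ref{lemma:characterization-vertex} almost verbatim, while exploiting the fact that under the hypotheses of Theorem~\ref{theo:combination} \emph{every} vertex stabilizer is amenable. This is what allows the entire non-amenable \textsf{QNA} machinery (Lemma~\ref{lemma:nonamenable-BS-elliptic}, Lemma~\ref{lemma:canonical-vertex-set} and Corollary~\ref{cor:ellipticity-quasi-normalizer}) to be discarded, and since that machinery was the only place where the chain condition (Assumption~5 of Theorem~\ref{theo:full}) was ever used, no chain condition is needed here. The two ellipticity inputs I will rely on are Lemma~\ref{lemma:elliptic vs elementary} (no nowhere-trivial groupoid is simultaneously stably $X$-elliptic and $\partial_\infty X$-elementary, which is where weak acylindricity enters) and Lemma~\ref{lemma:intersection of four gropuoids} (the intersection of three stably maximal amenable subgroupoids with no mutual finite-index containment is stably $X$-elliptic).

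For the implication $(1)\Rightarrow(2)$, I would first reduce, after a countable Borel partition and passage to a conull subset, to the case $\calh=\rho^{-1}(\Stab_G(v))$ for a fixed vertex $v$. Since $\Stab_G(v)$ is amenable and infinite and $\rho$ is action-like with trivial kernel, $\calh$ is amenable, of infinite type, and hence nowhere trivial. To prove stable maximality, I take any amenable $\hat\calh$ stably containing $\calh$; applying Lemma~\ref{lemma:elliptic-loxodromic} splits the base into a stably $X$-elliptic part and a $\partial_\infty X$-elementary part, and on the latter $\calh$ would be simultaneously $X$-elliptic (it fixes $v$) and $\partial_\infty X$-elementary, contradicting Lemma~\ref{lemma:elliptic vs elementary}. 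Thus $(\hat\calh,\rho)$ is stably $X$-elliptic, and Lemma~\ref{lemma:uniqueness-vertex} forces $\hat\calh$ to fix $v$ as well, so $\hat\calh$ is stably equal to $\calh$. Finally, Assumption~4 of Theorem~\ref{theo:combination} produces two distinct vertices $v_1,v_2\neq v$ with $\Stab_G(\{v,v_1,v_2\})$ infinite; setting $\calh_j=\rho^{-1}(\Stab_G(v_j))$ (so $\calh_0=\calh$), each $\calh_j$ is stably maximal amenable by the argument just given, condition~$(2a)$ is Lemma~\ref{lemma:uniqueness-vertex}, and condition~$(2b)$ holds because $\calh_0\cap\calh_1\cap\calh_2=\rho^{-1}(\Stab_G(\{v,v_1,v_2\}))$ is of infinite type, hence nowhere trivial.

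For $(2)\Rightarrow(1)$, I would first show that $(\calh,\rho)$ is stably $X$-elliptic, arguing by contradiction: if not, Lemma~\ref{lemma:elliptic-loxodromic} yields a positive-measure $U$ on which $(\calh_{|U},\rho)$ is $\partial_\infty X$-elementary. Condition~$(2a)$ allows me to apply Lemma~\ref{lemma:intersection of four gropuoids} to the triple $\calh_0,\calh_1,\calh_2$, giving that $\cala=\calh_0\cap\calh_1\cap\calh_2$ is stably $X$-elliptic, while condition~$(2b)$ says $\cala$ is nowhere trivial. Then $\cala_{|U}\subseteq\calh_{|U}$ is both stably $X$-elliptic and $\partial_\infty X$-elementary, contradicting Lemma~\ref{lemma:elliptic vs elementary}. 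Hence $(\calh,\rho)$ is stably $X$-elliptic, so after a countable partition $\calh_{|Y_i}\subseteq\rho^{-1}(\Stab_G(w_i))_{|Y_i}$ for vertices $w_i$. Gluing the amenable subgroupoids $\rho^{-1}(\Stab_G(w_i))_{|Y_i}$ gives an amenable subgroupoid stably containing $\calh$, and stable maximality upgrades these inclusions to equalities, exhibiting $\calh$ as being of vertex stabilizer type.

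The only genuine content beyond bookkeeping is the replacement of the quasi-normalizer/canonical-vertex-set route to stable $X$-ellipticity by the direct intersection argument of Lemma~\ref{lemma:intersection of four gropuoids}: once all vertex stabilizers are amenable, stable $X$-ellipticity of a candidate vertex-stabilizer groupoid can be read off purely from the intersection pattern of three maximal amenable subgroupoids, without any canonical vertex set and hence without the chain condition. I expect the delicate point to be purely organizational, namely checking that the repeated reductions ``up to a countable Borel partition'' and ``up to restricting to a positive-measure (or conull) subset'' interact correctly with the stable maximality hypotheses, exactly as in the proof of Lemma~\ref{lemma:characterization-vertex}.
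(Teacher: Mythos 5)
Your proposal is correct and follows essentially the same route as the paper's proof: the same reduction to $\calh=\rho^{-1}(\Stab_G(v))$, the same stable-maximality argument via Lemma~\ref{lemma:elliptic-loxodromic}, Lemma~\ref{lemma:elliptic vs elementary} and Lemma~\ref{lemma:uniqueness-vertex}, the same use of Assumption~4 to produce $\calh_1,\calh_2$, and the same contradiction via Lemma~\ref{lemma:intersection of four gropuoids} for $(2)\Rightarrow(1)$. Your framing remark is also the paper's: the chain condition entered only through the QNA/canonical-vertex-set machinery, which becomes unnecessary once all vertex stabilizers are amenable.
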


The proof of Lemma~\ref{lemma:alternative-version} is in fact a simplified version of the proof of Lemma~\ref{lemma:characterization-vertex}. We provide it for completeness, and also to properly highlight how the various assumptions of Theorem~\ref{theo:combination} are used.

\begin{proof}
We first prove that $(1)\Rightarrow (2)$. Up to a countable partition of $Y$ and passing to a conull Borel subset, we will assume that $\calh=\rho^{-1}(\Stab_G(v))$ for some $v\in X^{(0)}$.

By assumption (Assumption~1 from Theorem~\ref{theo:combination}), $\Stab_G(v)$ is amenable. As $\rho$ has trivial kernel, it follows that $\calh$ is amenable. We now prove that $\calh$ is a stably maximal amenable subgroupoid of $\calg$. So let $\hat\calh$ be an amenable subgroupoid of $\calg$ which stably contains $\calh$. Up to passing to a conull subset of $Y$ and taking a countable Borel partition, we will assume that $\calh\subseteq\hat\calh$.  

In view of Lemma~\ref{lemma:uniqueness-vertex} (whose proof uses Assumption~2 from Theorem~\ref{theo:combination}), it is enough to prove that $(\hat\calh,\rho)$ is stably $X$-elliptic. Otherwise, Lemma~\ref{lemma:elliptic-loxodromic} ensures that there exist a positive measure Borel subset $U\subseteq Y$ such that $(\hat\calh_{|U},\rho)$ is $\partial_\infty X$-elementary. Then $(\calh_{|U},\rho)$ is both $X$-elliptic and $\partial_\infty X$-elementary. Lemma~\ref{lemma:elliptic vs elementary} (which exploited the weak acylindricity assumption, which is Assumption~3 from Theorem~\ref{theo:combination}) ensures that there exists a positive measure Borel subset $V\subseteq U$ such that $\calh_{|V}$ is trivial. This contradicts the fact that $\rho$ is an action-like cocycle, and proves the stable maximality of $\calh$.

By Assumption~4 from Theorem~\ref{theo:combination}, we can find two distinct vertices $v_1,v_2\in X^{(0)}\setminus\{v\}$, such that the common $G$-stabilizer of $v,v_1,v_2$ is infinite. Let $v_0=v$, and for $j\in\{0,1,2\}$, let $\calh_j=\rho^{-1}(\Stab_G(v_j))$. By the above paragraph, all subgroupoids $\calh_j$ are stably maximal amenable. And Lemma~\ref{lemma:uniqueness-vertex} ensures that for $i\neq j$, and for every positive measure Borel subset $U\subseteq Y$, no finite index subgroupoid of $(\calh_i)_{|U}$ is contained in $(\calh_j)_{|U}$. Using that $\rho$ is action-like, we also see that $\calh_0\cap\calh_1\cap\calh_2$ is nowhere trivial.

We now prove that $(2)\Rightarrow (1)$. We will prove that $(\calh,\rho)$ is stably $X$-elliptic, which will be enough to conclude in view of the stable maximality condition and $(1)\Rightarrow (2)$.

Assume towards a contradiction that $(\calh,\rho)$ is not stably $X$-elliptic. Then Lemma~\ref{lemma:elliptic-loxodromic} ensures that there exists a positive measure Borel subset $U\subseteq Y$ such that $(\calh_{|U},\rho)$ is $\partial_\infty X$-elementary. Let $\cala=\calh\cap\calh_1\cap\calh_2$, which is nowhere trivial by assumption. Lemma~\ref{lemma:intersection of four gropuoids} ensures that $(\cala,\rho)$ is stably $X$-elliptic. Then $(\cala_{|U},\rho)$ is both stably $X$-elliptic and $\partial_\infty X$-elementary, contradicting Lemma~\ref{lemma:elliptic vs elementary}. This contradiction completes our proof.
\end{proof}

\begin{proof}[Proof of Theorem~\ref{theo:combination}]
Theorem~\ref{theo:combination} is a direct consequence of Lemma~\ref{lemma:endgame}. Indeed, Assumption~1 from Lemma~\ref{lemma:endgame} follows from Assumption~2 of Theorem~\ref{theo:combination}, and Assumption~2 of Lemma~\ref{lemma:endgame} follows from Lemma~\ref{lemma:alternative-version}.
\end{proof}

	\subsection{Some consequences}
	
	In this section, we prove Corollary~\ref{corintro:lattice} from the introduction. For the first part, we also recall that the \emph{abstract commensurator} $\Comm(G)$ of a group $G$ is the group of equivalence classes of isomorphisms between finite-index subgroups of $G$, where two such isomorphisms are equivalent if they coincide in restriction to some common finite-index subgroup of their domain. 
	
	\begin{cor}\label{cor:consequences}
		Let $k\ge 5$, and let $\sigma=((m_1,n_1),\dots,(m_k,n_k))$ be a $k$-tuple of pairs of non-zero integers, where for every $i\in\{1,\dots,k\}$, one has $|m_i|<|n_i|$.
		\begin{enumerate}
			\item The natural homomorphisms $\widehat{\Hig}_\sigma\to\Aut(\Hig_\sigma)$ and  $\widehat{\Hig}_\sigma\to\Comm(\Hig_\sigma)$ are isomorphisms.
			\item For every locally compact second countable group $G$ and every lattice embedding $\alpha:\Hig_\sigma\to G$, there exists a continuous homomorphism $\pi:G\to\widehat{\Hig}_\sigma$ with compact kernel such that $\pi\circ\alpha=\mathrm{id}$.
			\item For every finite generating set $S$ of $\Hig_\sigma$, the inclusion $\Hig_\sigma\hookrightarrow\widehat{\Hig}_\sigma$ extends to an injective homomorphism $\Aut(\Cay(\Hig_\sigma,S))\hookrightarrow \widehat{\Hig}_\sigma$. 
		\end{enumerate}
	\end{cor}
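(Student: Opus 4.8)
The plan is to deduce all three statements from the rigidity of self-couplings (Theorem~\ref{theointro:main-2}), following the template of Furman \cite{Fur} and Kida \cite{Kid-me}: each statement produces a concrete self measure equivalence coupling of $G=\Hig_\sigma$, to which Theorem~\ref{theointro:main-2} associates a Borel $(G\times G)$-equivariant map to $\widehat{G}=\widehat{\Hig}_\sigma$, and the extra symmetry of the coupling is then transported to $\widehat{G}$. Injectivity of the natural maps in all three parts rests on the fact that $\widehat{G}$ is ICC (Lemma~\ref{lemma:icc+}): if $w\in\widehat{G}$ acts trivially as an automorphism (resp.\ commensuration) of $G$, then $w$ centralizes a finite-index subgroup of $\widehat{G}$, hence has finite conjugacy class, hence is trivial. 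So throughout, the work is in the surjectivity/realization direction.

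I would begin with Part~1, which is the cleanest because the relevant couplings are \emph{discrete} and no measure-theoretic straightening is needed. Given $\psi\in\Aut(G)$, form the self-coupling $\Sigma=G$ with counting measure and $G\times G$-action $(g_1,g_2)\cdot x=g_1x\psi(g_2)^{-1}$; both factors act freely with a single point as fundamental domain, so this is a genuine coupling. Theorem~\ref{theointro:main-2} yields an equivariant $\Phi:G\to\widehat{G}$, and setting $w=\Phi(e)$ forces $\Phi(y)=yw$ and then $\psi(g)=wgw^{-1}$ for all $g$, so $\psi$ is realized by $w\in\widehat{G}$. For a commensuration $\phi:\Gamma_1\to\Gamma_2$ between finite-index subgroups I would instead use the induced coupling $\Sigma=(G\times G)/\Delta$ with $\Delta=\{(\gamma,\phi(\gamma)):\gamma\in\Gamma_1\}$, whose two factors act freely with fundamental domains of finite size $[G:\Gamma_2]$ and $[G:\Gamma_1]$; evaluating the equivariant map at the coset $\Delta$ gives $w\in\widehat{G}$ with $\phi=\mathrm{ad}_{w^{-1}}$ on $\Gamma_1$. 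Combined with the injectivity above, this proves $\widehat{G}\cong\Aut(G)\cong\Comm(G)$.

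For Part~2, given a lattice embedding $\alpha:G\to H$, the coupling is $\Sigma=H$ with Haar measure and action $(g_1,g_2)\cdot h=\alpha(g_1)h\alpha(g_2)^{-1}$; both fundamental domains have finite measure because $\alpha(G)$ is a lattice. After decomposing into ergodic components, Theorem~\ref{theointro:main-2} gives an a.e.-equivariant Borel map $\Phi:H\to\widehat{G}$. The left $H$-translation action commutes with the right $G$-action, and transporting it through $\Phi$ should produce a measurable homomorphism $\pi:H\to\widehat{G}$ with $\pi\circ\alpha=\mathrm{id}$. Once $\pi$ is known to be a measurable homomorphism it is automatically continuous, and its kernel $K$ is compact: continuity into the discrete group $\widehat{G}$ makes $K$ open, while a covolume computation $\mathrm{covol}_H(\alpha(G))=[\pi(H):G]\cdot\mathrm{vol}(K)<\infty$ (with $[\pi(H):G]\le[\widehat{G}:G]<\infty$) forces $\mathrm{vol}(K)<\infty$, so the open subgroup $K$ has finite Haar measure and is therefore compact. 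Part~3 then follows by applying Part~2 to $H=\Aut(\Cay(G,S))$: with the topology of pointwise convergence this is a totally disconnected lcsc group in which $G$ sits as a uniform lattice via left translations (vertex stabilizers are compact open since $S$ is finite, and $G$ acts freely and transitively on vertices), and the triviality of the resulting compact kernel---giving the desired injective homomorphism---follows from the faithfulness of the $H$-action on the vertex set together with the construction of $\pi$ (cf.\ \cite[Section~6]{dlST}).

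The main obstacle is the straightening step in Part~2: upgrading the merely a.e.-equivariant Borel map $\Phi:H\to\widehat{G}$ to an honest homomorphism $\pi:H\to\widehat{G}$. The naive candidate $h\mapsto\Phi(h_0h)\Phi(h)^{-1}$ is only right-$G$-invariant but need not be essentially constant, precisely because left $H$-translation does not commute with the \emph{left} $G$-action, so the two-sided equivariance of $\Phi$ cannot be exploited directly for a fixed $h_0$. The rigorous construction requires Furman's cocycle-straightening --- viewing $\Phi$ as trivializing the measure equivalence cocycle of the coupling and reading off $\pi$ as the induced cocycle of the left $H$-action --- and crucially uses the ergodicity of $G\actson H/\alpha(G)$, which is why one must first pass to ergodic components of the coupling. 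This is the one genuinely measure-theoretic point; by contrast Part~1 sidesteps it entirely, since with counting measure the equivariant map is honestly equivariant and pinned down by transitivity.
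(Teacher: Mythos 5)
Your global strategy --- reduce all three statements to rigidity of self-couplings and transfer via Furman-type arguments --- is the same as the paper's, but the implementation is genuinely different. The paper never builds couplings by hand: it observes (Remark~\ref{rk:coupling-cocycle}) that Section~\ref{sec:me} actually establishes that any two action-like cocycles to $\Hig_\sigma$ are $\widehat{\Hig}_\sigma$-cohomologous, i.e.\ rigidity in the sense of \cite[Definition~4.1]{GH}; it then gets injectivity in part~1 from the ICC property (Lemma~\ref{lemma:icc+}) exactly as you do, gets surjectivity of $\widehat{\Hig}_\sigma\to\Comm(\Hig_\sigma)$ by viewing a commensuration as a pair of finite-index-image cocycles over a one-point base space and applying cocycle rigidity, and simply quotes \cite[Theorem~4.7]{GH} and \cite[Corollary~4.8(2)]{GH} for parts~2 and~3. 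Your part~1 is a complete, correct and self-contained coupling-theoretic rendering of the same idea: your coupling $(G\times G)/\Delta$ is exactly the coupling hidden behind the paper's two cocycles over a point, and you are right that atomicity makes all almost-everywhere subtleties vanish there. For parts~2 and~3 you are re-deriving by hand what the paper outsources to \cite{GH}; your setups (the Haar-measure coupling on the ambient group, and $\Aut(\Cay(\Hig_\sigma,S))$ as a totally disconnected locally compact second countable group containing $\Hig_\sigma$ as a uniform lattice) are the standard and correct ones.

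Two points in your sketch need repair. First, the straightening step in part~2 is misdescribed. Writing $H$ for the ambient group as you do, the action $\Hig_\sigma\actson H/\alpha(\Hig_\sigma)$ can genuinely fail to be ergodic: take $H=\Hig_\sigma\times K$ with $K$ compact and nontrivial, where the action on $H/\alpha(\Hig_\sigma)\cong K$ is trivial, yet the conclusion of part~2 still holds. Moreover, passing to ergodic components of the coupling does not repair this, because left and right $H$-translations conjugate the $(\Hig_\sigma\times\Hig_\sigma)$-action rather than commute with it, hence do not permute its ergodic components; the homomorphisms produced componentwise could not be patched along the route you describe. What Furman's argument \cite{Fur} --- and \cite[Theorem~4.7]{GH}, which packages it --- uses instead of ergodicity is the essential uniqueness of the equivariant map $\Sigma\to\widehat{\Hig}_\sigma$, which follows from the ICC property of $\widehat{\Hig}_\sigma$ (for countable groups this is equivalent to the strong ICC condition that $\delta_e$ is the only conjugation-invariant probability measure); so Lemma~\ref{lemma:icc+} is needed twice in your scheme, not only for injectivity. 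Second, in part~3, faithfulness of the action on vertices is not by itself enough to kill the compact kernel of $\pi$: the standard argument shows that the kernel's orbit of the identity vertex is a finite subgroup of $\Hig_\sigma$ (using normality of the kernel and vertex-transitivity), and one then needs torsion-freeness of $\Hig_\sigma$ (Lemma~\ref{lemma:Higman property}) to conclude that this subgroup, hence every orbit, is trivial --- this is precisely why the paper invokes torsion-freeness when citing \cite[Corollary~4.8(2)]{GH}. Neither issue is fatal, since both are resolved in the references you and the paper lean on, but as written your part~2 argument would not go through.
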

	
	\begin{proof}
		Let $\Theta_\sigma$ be the intersection graph of $\Hig_\sigma$. Recall that $\Hig_\sigma$ has finite index in $\widehat{\Hig}_\sigma$, which is isomorphic to $\Aut(\Theta_\sigma)$ by Theorem~\ref{theo:crigidity0}. Thus, as explained in Remark~\ref{rk:coupling-cocycle}, we have proved that $\widehat{\Hig}_\sigma$ is rigid with respect to action-like cocycles in the sense of \cite[Definition~4.1]{GH} (after changing action-type cocycles to action-like cocycles in \cite[Definition~4.1]{GH} -- all results recorded in \cite[Section~4]{GH} only involve working with natural cocycles associated to group actions and therefore hold if one works with this version of \cite[Definition~4.1]{GH}, as mentioned in Remark~\ref{rk:action-type-action-like}). The injectivity of the natural homomorphism $\widehat{\Hig}_\sigma\to\Comm(\Hig_\sigma)$  (and $\widehat{\Hig}_\sigma\to\Aut(\Hig_\sigma)$) is a consequence of the fact that $\widehat{\Hig}_\sigma$ is ICC (Lemma~\ref{lemma:icc+}). We now prove its surjectivity. Take a commensuration of $\Hig_\sigma$, represented by an isomorphism $\theta:H\to\theta(H)$ between two finite-index subgroups of $\Hig_\sigma$. This gives rise to two monomorphisms $\rho_1:H\to \Hig_\sigma\subseteq \widehat{\Hig}_\sigma$ (given by the inclusion) and $\rho_2:H\to \Hig_\sigma\subseteq \widehat{\Hig}_\sigma$ (given by $\theta$), and both $\rho_1$ and $\rho_2$ have finite index image. We view $H$ as a groupoid over a point, and view $\rho_1$ and $\rho_2$ as two cocycles towards $\widehat{\Hig}_\sigma\approx\Aut(\Theta_\sigma)$. These cocycles are action-like because $\rho_1$ and $\rho_2$ have finite-index image. Now the rigidity with respect to action-like cocycles (as in \cite[Definition~4.1]{GH}) implies that $\rho_1$ and $\rho_2$ are conjugate in $\widehat{\Hig}_\sigma$. Thus $\widehat{\Hig}_\sigma\to\Comm(\Hig_\sigma)$ is surjective, which concludes the proof of the first assertion.
		
		The second assertion follows from \cite[Theorem~4.7]{GH}, and the third from \cite[Corollary~4.8(2)]{GH}, using that $\Hig_\sigma$ is torsion-free (Lemma~\ref{lemma:Higman property}). 
	\end{proof}

	\section{$W^*$-superrigidity of ergodic actions of generalized Higman groups}\label{sec:w}
	
	As a consequence of  Theorem~\ref{theo:full}, we derive the orbit equivalence rigidity of free, ergodic, probability measure-preserving actions of generalized Higman groups, via e.g.\ \cite[Lemma~4.18]{Fur-survey} -- see also \cite[Theorems~1.1 and~1.2(ii)]{Kid-oe} and their proof.
	
	\begin{cor}\label{cor:oe}
		Let $k\ge 5$, and let $\sigma=((m_1,n_1),\dots,(m_k,n_k))$ be a $k$-tuple of pairs of non-zero integers, where for every $i\in\{1,\dots,k\}$, one has $|m_i|<|n_i|$. Let  $\Hig_\sigma\actson X$ be a free, ergodic, measure-preserving action of $\Hig_\sigma$ on a standard Borel probability space $X$. Let $\Gamma\actson Y$ be a free, ergodic, probability measure-preserving action of a countable group $\Gamma$ on a standard probability space $Y$.
		\begin{enumerate}
		\item If the actions $\Hig_\sigma\actson X$ and $\Gamma\actson Y$ are stably orbit equivalent, then they are virtually conjugate.
		\item If the action $\Hig_\sigma\actson X$ is further assumed to be aperiodic (i.e.\ every finite-index subgroup acts ergodically), and if the actions $\Hig_\sigma\actson X$ and $\Gamma\actson Y$ are orbit equivalent, then they are conjugate. \qed
		\end{enumerate}
	\end{cor}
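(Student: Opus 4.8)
The plan is to derive Corollary~\ref{cor:oe} from the self-coupling rigidity already established, treating it as a formal consequence via the standard dictionary between stable orbit equivalence and measure equivalence. The single substantive input from the body of the paper is the statement recorded in Remark~\ref{rk:coupling-cocycle}: for any measured groupoid $\calg$ over a standard finite measure space equipped with two action-like cocycles towards $\Hig_\sigma$, the two cocycles are $\Aut(\Theta_\sigma)$-cohomologous. Since $\Aut(\Theta_\sigma) \cong \widehat{\Hig}_\sigma$ by Theorem~\ref{theo:crigidity0}, this says precisely that $\Hig_\sigma$ is rigid with respect to action-like cocycles in the sense of \cite[Definition~4.1]{GH}, with associated Polish group $\widehat{\Hig}_\sigma$. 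First I would record this reformulation, noting that the natural cocycles attached to the p.m.p.\ actions under consideration are action-like by Lemma~\ref{lemma:action-like}, so that the hypotheses of the general machinery are met.

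Next, given a stable orbit equivalence between $\Hig_\sigma \actson X$ and $\Gamma \actson Y$, Furman's observation \cite{Fur-oe} produces a measure equivalence coupling of $\Hig_\sigma$ and $\Gamma$; equivalently, restricting both actions to the common orbit relation of the stable orbit equivalence yields a single measured equivalence relation $\calr$ carrying two action-like cocycles, one valued in $\Hig_\sigma$ and one in $\Gamma$. By Theorem~\ref{theointro:main} the group $\Gamma$ is already known to be virtually isomorphic to $\Hig_\sigma$, hence virtually isomorphic to a finite-index subgroup of $\widehat{\Hig}_\sigma$; transporting the $\Gamma$-cocycle through such a virtual isomorphism turns the pair into two action-like cocycles $\calr \to \widehat{\Hig}_\sigma$. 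The cocycle rigidity above then untwists them, producing a measurable conjugacy of the two cocycles inside $\widehat{\Hig}_\sigma$. This is exactly the input that the conversion results \cite[Lemma~4.18]{Fur-survey} and \cite[Theorems~1.1 and~1.2(ii)]{Kid-oe} require in order to output a \emph{virtual conjugacy} of the two actions; the remaining hypotheses -- that $\Hig_\sigma$ is ICC and torsion-free (Lemma~\ref{lemma:Higman property}) and that $\widehat{\Hig}_\sigma$ is ICC (Lemma~\ref{lemma:icc+}) -- are in place. This proves part~(1).

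For part~(2) the task is to upgrade virtual conjugacy to genuine conjugacy under the extra hypotheses that the equivalence is an honest orbit equivalence (compression constant $1$) and that $\Hig_\sigma \actson X$ is aperiodic. The virtual conjugacy provides finite-index subgroups and finite normal subgroups together with an isomorphism of the resulting subquotients intertwining the restricted, quotiented actions. The finite normal subgroup on the $\Hig_\sigma$ side is trivial because $\Hig_\sigma$ is torsion-free and ICC, so no collapsing occurs there. Aperiodicity -- ergodicity of every finite-index subgroup -- plays the role that the absence of proper finite-index subgroups plays for the classical $\Hig_k$ (compare Corollary~\ref{cor:oe-w}(2)): it forces the finite-index subgroup appearing in the virtual conjugacy to be the whole group, since the compression constant of a genuine orbit equivalence is $1$ and the index parameters must balance. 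The residual ambiguity of the intertwining isomorphism by an element of the finite group $F_\sigma$ is harmless, as it merely pre-composes the isomorphism $\Hig_\sigma \to \Gamma$ with an automorphism of $\Hig_\sigma$; one thus obtains a genuine conjugacy, which is the content of \cite[Theorem~1.2(ii)]{Kid-oe}.

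The main obstacle is not in the measure-theoretic analysis -- that was carried out in Theorem~\ref{theo:full} -- but in the careful bookkeeping of the conversion from cohomologous action-like cocycles into $\widehat{\Hig}_\sigma$ to an actual intertwining of the two group actions. In particular, one must keep precise track of the finite extension $\widehat{\Hig}_\sigma = \Hig_\sigma \rtimes F_\sigma$ and of the compression and index constants throughout, and verify at each stage that the hypotheses of the general theorems of Furman and Kida are genuinely satisfied for $\Hig_\sigma$; the delicate point specific to part~(2) is the use of aperiodicity to eliminate the passage to a finite-index subgroup, which is exactly where the classical Higman groups $\Hig_k$ behave better than a general $\Hig_\sigma$.
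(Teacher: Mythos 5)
Your overall skeleton is the paper's own proof: establish that $\Hig_\sigma$ is rigid with respect to action-like cocycles in the sense of \cite[Definition~4.1]{GH} (via Remark~\ref{rk:coupling-cocycle} and Theorem~\ref{theo:crigidity0}), note that $\widehat{\Hig}_\sigma$ is ICC, and feed this into the general conversion machinery of \cite[Lemma~4.18]{Fur-survey} and \cite[Theorems~1.1 and~1.2(ii)]{Kid-oe}. Your first paragraph and your final appeals to those results are exactly what the paper does, and part~(2) is indeed handled by \cite[Theorem~1.2(ii)]{Kid-oe} using aperiodicity, with torsion-freeness and the ICC property killing the finite normal subgroup on the $\Hig_\sigma$ side.

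However, the middle step of your argument is both unnecessary and incorrect as stated. You invoke Theorem~\ref{theointro:main} to obtain an abstract virtual isomorphism between $\Gamma$ and $\Hig_\sigma$ and claim that transporting the $\Gamma$-valued cocycle through it ``turns the pair into two action-like cocycles $\calr\to\widehat{\Hig}_\sigma$.'' A virtual isomorphism is only a homomorphism from a finite-index subgroup $\Gamma_0\le\Gamma$ with finite kernel $F_1$: composing gives a cocycle defined merely on the subgroupoid $\rho_\Gamma^{-1}(\Gamma_0)$, and its kernel contains $\rho_\Gamma^{-1}(F_1)$, so it is not an action-like cocycle on $\calr$ (and not even a cocycle on all of $\calr$). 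This is not a vacuous worry: $\Gamma=\Hig_\sigma\times\mathbb{Z}/2\mathbb{Z}$ is virtually isomorphic, hence measure equivalent, to $\Hig_\sigma$, so in the setting of part~(1) the finite kernel $F_1$ can genuinely be nontrivial. Moreover, the conversion results you cite do not take ``an already-untwisted pair of cocycles obtained after some abstract identification'' as their input; they take the coupling rigidity itself as input and construct the homomorphism $\Gamma\to\widehat{\Hig}_\sigma$ from the coupling (composing the $(\Hig_\sigma,\Gamma)$-coupling with its opposite to form a self-coupling, applying the equivariant map, and using the ICC property of $\widehat{\Hig}_\sigma$), which is precisely what guarantees that the resulting isomorphism intertwines the two given actions. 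The fix is simply to delete the detour: your first paragraph already supplies exactly the hypothesis those theorems require, so no appeal to Theorem~\ref{theointro:main} (which is itself deduced from the same rigidity statement) is needed.
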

	
	Notice that when $\Hig_\sigma=\Hig_k$ is one of the groups considered by Higman in \cite{Hig}, all ergodic actions of $\Hig_k$ are automatically aperiodic, simply because $\Hig_k$ has no proper finite-index subgroup.
	
	In this section, we will obtain a $W^*$-superrigidity statement, by combining the above corollary with the uniqueness of the Cartan subalgebra $L^\infty(X)$ inside $L(\Hig_\sigma\actson X)$ up to unitary conjugacy. This Cartan-rigidity statement will be derived from a theorem of Ioana \cite[Theorem~1.1]{Ioa}; in order to apply it, we will need the following lemma. 
	
	\begin{lemma}\label{lemma:von-neumann}
		Let $G$ be a generalized Higman group whose standard generating set has at least $5$ elements. Then $G$ admits a splitting as an amalgamated product $G=G_1*_{A} G_2$, with $A$ of infinite index in both $G_1$ and $G_2$, and such that there exists $g\in G$ such that $A\cap gA g^{-1}=\{1\}$. 
	\end{lemma}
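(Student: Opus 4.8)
The plan is to exhibit the splitting explicitly and then verify the two extra properties directly. Set $G_1=\langle a_1,a_2,a_3\rangle$, $G_2=\langle a_3,a_4,\dots,a_k,a_1\rangle$ and $A=\langle a_1,a_3\rangle$, as suggested by the cut of the defining polygon used in Remark~\ref{rk:example}. Since the relation $R_i:\,a_ia_{i+1}^{m_i}a_i^{-1}=a_{i+1}^{n_i}$ involves only $a_i,a_{i+1}$, the relations $R_1,R_2$ involve only the generators of $G_1$ and $R_3,\dots,R_k$ only those of $G_2$, while $a_1,a_3$ are the generators common to both. Hence the abstract amalgam $G_1*_AG_2$ has exactly the standard presentation of $G$, and the whole claim reduces to checking that the map $A\to G_i$ is injective, i.e.\ that $\langle a_1,a_3\rangle$ is free of rank $2$ in each factor. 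For $G_1$, I would write $G_1=\BS(m_1,n_1)*_{\langle a_2\rangle}\BS(m_2,n_2)$ (here $a_2$ is the base letter of the first factor and the stable letter of the second); since $a_1^e\notin\langle a_2\rangle$ and $a_3^f\notin\langle a_2\rangle$ for $e,f\neq 0$, every alternating word $a_1^{e_1}a_3^{f_1}\cdots$ is a reduced sequence in the amalgam, so $\langle a_1,a_3\rangle\cong F_2$. For $G_2$, which is an iterated amalgam of the $\BS$ groups $V_i=\langle a_i,a_{i+1}\rangle$ over the path from $a_3$ to $a_1$, I would pass to its Bass--Serre tree $T_2$: here $a_3$ is elliptic fixing a single vertex $w_3$ (the $V_3$-vertex) and $a_1$ is elliptic fixing a single vertex $w_k$ (the $V_k$-vertex), with $d_{T_2}(w_3,w_k)=k-3\ge 2$. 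A standard Bass--Serre ping-pong then gives $\langle a_1,a_3\rangle=\langle a_1\rangle*\langle a_3\rangle\cong F_2$, acting on its minimal subtree $Y_A$ as the Bass--Serre tree of this free product (vertex stabilizers $\langle a_1\rangle,\langle a_3\rangle$, trivial edge stabilizers). I will reuse this description in the last step.

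For the infinite index, note that $A\cong F_2$ is free, whereas $G_1$ contains $\BS(m_1,n_1)$ and $G_2$ contains $\BS(m_3,n_3)$, each with $|m_i|<|n_i|$. Such a Baumslag--Solitar group is not hyperbolic (its base cyclic subgroup is exponentially distorted), hence not virtually free; since every subgroup of a virtually free group is virtually free, neither $G_1$ nor $G_2$ is virtually free. A free subgroup of finite index would make the ambient group virtually free, so $[G_1:A]=[G_2:A]=\infty$.

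The crux is the last assertion, producing $g$ with $A\cap gAg^{-1}=\{1\}$, and I plan to carry it out entirely inside the simplicial tree $T_2$. Because $A$ has infinite index in $G_2$, its minimal subtree satisfies $Y_A\subsetneq T_2$ with $\partial Y_A\subsetneq\partial T_2$; as $G_2$ acts cocompactly on $T_2$, hyperbolic fixed points are dense, so I can choose a hyperbolic $t_0\in G_2$ whose two endpoints lie off $\partial Y_A$. Then the axis of $t_0$ meets $Y_A$ in a bounded set, and for $n$ large enough $g:=t_0^{\,n}$ satisfies $gY_A\cap Y_A=\emptyset$; note $g\in G_2\subseteq G$ and, since $A,gAg^{-1}\subseteq G_2$, the intersection may be computed in $G_2$. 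Now take $1\neq h\in A\cap gAg^{-1}$. If $h$ is hyperbolic on $T_2$, its axis lies in $Y_A$ (as $h\in A$) and in $gY_A$ (as $h\in gAg^{-1}$), hence in $Y_A\cap gY_A=\emptyset$, which is absurd. If $h$ is elliptic, then by the free-product structure of $A$ it is $A$-conjugate into $\langle a_1\rangle$ or $\langle a_3\rangle$; writing $h=wa_i^{p}w^{-1}$ and $g^{-1}hg=w'a_j^{p'}w'^{-1}$ with $w,w'\in A$, the element $u:=w^{-1}gw'$ conjugates $a_j^{p'}$ to $a_i^{p}$. Comparing the (single) fixed vertices of these elliptic elements forces $u$ to send the fixed vertex of $a_j^{p'}$ to that of $a_i^{p}$, and unwinding this identity shows that $g$ maps a vertex of the orbit $A\cdot w_3\cup A\cdot w_k\subseteq Y_A$ into $Y_A$, contradicting $gY_A\cap Y_A=\emptyset$. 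Therefore $A\cap gAg^{-1}=\{1\}$.

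The main obstacle is precisely this last step, and the key decision is \emph{where} to argue it. It is natural to try a limit-set/ping-pong argument for $A$ acting on the $\CAT(-1)$ complex $X$, but this is treacherous: $X$ is not locally finite and the generators $a_1,a_3$ are elliptic, so $A$ is infinitely distorted in $X$ and its limit set in $\partial_\infty X$ is hard to control. The point of the plan above is that, by working in the Bass--Serre tree $T_2$ of the \emph{factor} $G_2$ rather than in $X$ (or in the amalgam tree of $G=G_1*_AG_2$, where $a_1,a_3$ are no longer elliptic), $A$ becomes cocompact on its minimal subtree and all the relevant dynamics are those of a free group on a locally finite tree, so both the hyperbolic and the elliptic cases close cleanly without any properness or distortion issues. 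I do not expect the verification of the Bass--Serre facts used (single fixed vertices, conjugacy of elliptics into the factors, $t_0^{\,n}Y_A\cap Y_A=\emptyset$ for large $n$) to present difficulties, as they are standard for actions on simplicial trees.
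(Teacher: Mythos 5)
Your route is genuinely different from the paper's. The paper produces the same splitting (up to relabelling: $G_1=\langle a_1,a_2,a_3\rangle$, $G_2=\langle a_3,\dots,a_k,a_1\rangle$, $A=\langle a_1,a_3\rangle$) geometrically, by cutting the developed complex along a $G$-invariant family of trees, and it proves the conjugation statement using Gromov hyperbolicity of that complex together with boundedness of common fixed sets (Lemma~\ref{lemma:intersection of vertex stabilizer}(2)). You instead split the presentation and run Bass--Serre theory in the tree $T_2$ of the factor $G_2$. Most of your argument is correct: the amalgam identification, the ping-pong proofs that $A\cong F_2$ in each factor, the infinite-index argument via non-virtual-freeness of $\BS(m,n)$, and the hyperbolic/elliptic dichotomy that closes the proof once you have $g$ with $gY_A\cap Y_A=\emptyset$.

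The genuine gap is the assertion that \emph{because $A$ has infinite index in $G_2$, its minimal subtree satisfies $Y_A\subsetneq T_2$ with $\partial Y_A\subsetneq\partial T_2$}. This implication is false in general: an infinite-index subgroup can have full minimal subtree. Indeed, inside $G_2$ itself, let $N$ be the kernel of the retraction $G_2\to\mathbb{Z}$ sending $a_3\mapsto 1$ and all other generators to $0$ (well defined, since each relation abelianizes to $(m_i-n_i)x_{i+1}=0$); then $N$ is normal of infinite index and contains hyperbolic elements (e.g.\ $a_1a_4$: its two factors are elliptic with disjoint fixed sets, as $a_4\notin V_k$ while $a_1$ fixes only $w_k$), so its minimal subtree is a nonempty $G_2$-invariant subtree, hence equals $T_2$ by minimality of the $G_2$-action. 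So properness of $Y_A$ requires a genuine argument, and it is precisely here that the Baumslag--Solitar arithmetic must enter. The argument is available from your own ping-pong description of $Y_A$: the edges of $Y_A$ incident to $w_3$ are exactly the cosets $a_3^f\langle a_4\rangle$, $f\in\mathbb{Z}$ (so $Y_A$ is, incidentally, not locally finite), whereas the edges of $T_2$ at $w_3$ are all cosets $v\langle a_4\rangle$ with $v\in V_3$; hence $Y_A=T_2$ would force $V_3=\langle a_3\rangle\langle a_4\rangle$. This fails by Britton's lemma: if $a_4a_3=a_3^fa_4^m$, then applying the retraction $V_3\to\mathbb{Z}$ gives $f=1$, so $a_3^{-1}a_4a_3=a_4^m$, which is impossible since $a_3^{-1}a_4a_3$ is Britton-reduced (a pinch would require $n_3\mid 1$, while $|n_3|\ge 2$). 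Note that this is exactly where $|m_i|\neq|n_i|$ is used: in $\BS(1,1)=\mathbb{Z}^2$ one does have $\mathbb{Z}^2=\langle t\rangle\langle a\rangle$, and the claim would be false. Once $Y_A\subsetneq T_2$ is known, take an edge $e\notin Y_A$ with one endpoint in $Y_A$ and extend it to a ray (every vertex of $T_2$ has infinite degree); convexity of $Y_A$ shows the end of this ray lies outside the closed set $\partial Y_A$, and the remainder of your argument (density of hyperbolic endpoint pairs, the power trick, the two cases) then goes through. With this repair the proof is complete.
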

	
	\begin{proof}
		 Let $X_\Delta$ be as in Section~\ref{subsec:Higman background}, i.e.\ $X_\Delta$ is the barycentric subdivision of the developed complex of $\Hig_\sigma$.
		A vertex of $X_\Delta$ \emph{has rank $r$} if this vertex is the barycenter of an $r$-dimensional cell of $X$. If an edge of $X_\Delta$ joins a rank 0 vertex and a rank 2 vertex, then there are exactly two triangles of $X_\Delta$ containing this edge. We group all such pairs of triangles into squares. These squares cover $X_\Delta$, and their interiors have pairwise empty intersection. This gives a square complex, which we denote by $X_\square$. We endow $X_\square$ with the piecewise Euclidean metric such that each square is isometric to a unit square in the Euclidean plane. This makes $X_\square$ a $\CAT(0)$ square complex by  Lemma~\ref{lemma:link condition} (see the argument in Section~\ref{sec:metric} for why links of rank $0$ vertices do not contain any cycle of length smaller than $2\pi$). 
		Let $K_\Delta$ be the first barycentric subdivision of $K$, and let $K_\square$ be a cubical subdivision of $K$ obtained from $K_\Delta$ by combining pairs of triangles into squares in a similar fashion as above.
		Then $G\backslash X_\square$ is isomorphic to $K_\square$. Let $q:X_\square\to K_\square$ be the quotient map. 
		
		Let $\bar U_{i,i+2}\subset K_\square$ be the edge path from $b(\bar e_i)$ to $b(K)$, then to $b(\bar e_{i+2})$, where $b(K)$ and $b(\bar e_i)$ denote the barycenters of $K$ and $\bar e_i$ respectively. Then $\bar U_{i,i+2}$ cuts $K_\square$ into two connected components, the closures of which are denoted by $K_1$ and $K_2$. Choices are made so that $K_1$ contains the vertices $\bar x_{i+1}$ and $\bar x_{i+2}$, and $K_2$ contains all the remaining vertices of $K$. For $j\in\{1,2\}$, let $L_j=q^{-1}(K_j)$, and let $U_{i,i+2}=q^{-1}(\bar U_{i,i+2})$. 
		For a rank $2$ vertex $x\in U_{i,i+2}$, there are exactly two edges of $U_{i,i+2}$ containing $x$ forming an angle of $\pi$ at $x$; for a rank $1$ vertex $x\in U_{i,i+2}$, there are countably many edges of $U_{i,i+2}$ containing $x$, and any two of them form an angle of $\pi$ at $x$. Thus by the moreover part of Lemma~\ref{lemma:link condition}, each connected component of $U_{i,i+2}$ is a convex subcomplex of $X_\square$. Hence this component is contractible, i.e.\ it is a tree. 
		We call any such tree a \emph{tree of type $(i,i+2)$}. Similarly, each connected component of $L_1$ or $L_2$ is a convex subcomplex of $X_\square$.
		It follows from the construction that each tree $T$ of type $(i,i+2)$ has a small open neighborhood $O_T$ in $X$ which is an open interval bundle over $T$. This bundle is trivial as $T$ is contractible, and $O_T$ is homeomorphic to $T\times(-\epsilon,\epsilon)$. This together with the simple connectedness of $X_\square$ imply that $T$ separates $X_\square$ into exactly two connected components.
		
		We define a graph $T_i$ as follows. Let $\{T_\lambda\}_{\lambda\in\Lambda}$ be the collection of all trees of type $(i,i+2)$ in $X_\square$. Vertices of $T_i$ are in 1-1 correspondence with connected components of $X_\square\setminus (\cup_{\lambda\in\Lambda}T_\lambda)$. Let $C$ be one such connected component and let $\bar C$ be the closure of $C$ in $X_\square$. Note that $q(\bar C)$ is contained in $K_1$ or $K_2$. Thus $\bar C$ is a connected component of $L_1$ or $L_2$. In particular $\bar C$ is a convex subcomplex of $X_\square$.
		Then the previous paragraph implies that $\bar C\setminus C$ is a disjoint union of trees of type $(i,i+2)$, each of these trees is called a \emph{side} of $C$. Moreover, convexity of $\bar C$ implies that two components of $X_\square\setminus (\cup_{\lambda\in\Lambda}T_\lambda)$ can share at most one side.
		Edges of $T_i$ are in 1-1 correspondence with trees of type $(i,i+2)$ in $X_\square$. 
		An edge connects two vertices of $T_i$ if the tree of type $(i,i+2)$ associated with this edge is a side of both connected components associated with $v_1$ and $v_2$. By the previous discussion, there is at most one edge between two different vertices of $T_i$.
		
		Note that $T_i$ is a tree as the midpoint of each edge of $T_i$ separates $T_i$. Indeed, take an edge $e\in T_i$ and let $T$ be the tree of type $(i,i+2)$ corresponding to $e$. Take two vertices $v_1$ and $v_2$ corresponding to two components $C_1$ and $C_2$ of $X_\square\setminus (\cup_{\lambda\in\Lambda}T_\lambda)$ separated by $T$. Then any edge path in $T_i$ from $v_1$ and $v_2$ must pass through $e$, otherwise we can produce a path in $X_\square$ from a point in $C_1$ to a point in $C_2$ avoiding $T$.
		
		The group $G$ acts on $T_i$ without inversions. The quotient $G\backslash T_i$ has a single edge and two different vertices (note that connected components of $X_\square\setminus (\cup_{\lambda\in\Lambda}T_\lambda)$ are mapped to components of $K_\square\setminus \bar U_{i,i+2}$ under $q:X_\square\to K_\square$, and $G$-orbits of vertices of $T_i$ are in 1-1 correspondence with components of $X_\square\setminus U_{i,i+2}$). This gives a splitting $G=G_1*_{A} G_2$, where stabilizers of components of $L_i$ are conjugates of $G_i$ (for $i\in\{1,2\}$) and stabilizers of trees of type $(i,i+2)$ are conjugates of $A$. Note that if $g\in\langle a_{i+1}\rangle$ is a nontrivial element, then $g$ does not stabilize any tree of type $(i,i+2)$, but $g$ stabilizes a component of $L_1$. Thus $A$ is infinite index in $G_1$. Similarly $A$ is infinite index in $G_2$. It also follows that $T_i$ has infinite diameter.
		We are done if we can find two trees of type $(i,i+2)$, denoted $T_{\lambda}$ and $T_{\lambda'}$, such that the intersection of their stabilizers is trivial. 
		
		The space $X_{\square}$ is quasi-isometric to $X$, and both are geodesic metric spaces, so $X_{\square}$ is Gromov hyperbolic. Let $Y_{\lambda}=\{x\in T_{\lambda}\mid d(x,T_{\lambda'})=d(T_{\lambda},T_{\lambda'})\}$ and $Y_{\lambda'}=\{x\in T_{\lambda'}\mid d(x,T_{\lambda})=d(T_{\lambda},T_{\lambda'})\}$. 
		It follows from \cite[Lemma~2.10]{Hua} (adapting \cite[Lemma~2.3]{BKS}) that $Y_{\lambda}$ and  $Y_{\lambda'}$ are nonempty and they bound convex subset isometric to $Y_{\lambda}\times [0,d(Y_\lambda,Y_{\lambda'})]$; hyperbolicity of $X_\square$ thus ensures that $Y_{\lambda}$ and  $Y_{\lambda'}$ are bounded as long as $d(T_{\lambda},T_{\lambda'})$ is large enough. Note that $G':=\Stab_G(T_{\lambda})\cap \Stab_G(T_{\lambda'})$ stabilizes both $Y_{\lambda}$ and $Y_{\lambda'}$. As $Y_{\lambda}$ and $Y_{\lambda'}$ are bounded, $G'$ fixes the circumcenters $c,c'$ of $Y_{\lambda},Y_{\lambda'}$ (cf.\ \cite[Proposition II.2.7]{BH}). Thus $G'$ fixes the minimal cells of $X_{\square}$ that contain $c$ and $c'$ respectively.
		By Lemma~\ref{lemma:intersection of vertex stabilizer}(2), as long as $d(Y_{\lambda},Y_{\lambda'})=d(T_{\lambda}, T_{\lambda'})$ is large enough, $G'$ is trivial.  Moreover, it is possible to find two trees of type $(i,i+2)$ whose distance in $X_\square$ is arbitrarily large. Indeed, we can take two such trees $T_{\lambda},T_{\lambda'}$ corresponding to two edges $e$ and $e'$ of $T_i$ that are far away (this is possible as $T_i$ is unbounded), and any path from $T_{\lambda}$ to $T_{\lambda'}$ has to pass through trees of type $(i,i+2)$ corresponding to edges of $T_i$ that separate $e$ from $e'$.
	\end{proof}
	
Recall that a \emph{Cartan subalgebra} in a von Neumann algebra $M$ is a maximal abelian subalgebra whose normalizer generates $M$. By combining Lemma~\ref{lemma:von-neumann} and \cite[Theorem~1.1]{Ioa}, we reach the following corollary.
	
	\begin{cor}
		Let $k\ge 5$, and let $\sigma=((m_1,n_1),\dots,(m_k,n_k))$ be a $k$-tuple of pairs of non-zero integers, where for every $i\in\{1,\dots,k\}$, one has $|m_i|<|n_i|$. Let $\Hig_\sigma\actson X$ be a free, ergodic, probability measure-preserving action of the generalized Higman group $\Hig_\sigma$ on a standard Borel probability space $X$.
		
		Then $L(\Hig_\sigma\actson X)$ contains $L^\infty(X)$ as its unique Cartan subalgebra, up to unitary conjugacy. \qed
	\end{cor}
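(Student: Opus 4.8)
The plan is to read the statement off as a direct application of Ioana's uniqueness theorem for Cartan subalgebras of amalgamated free product II$_1$ factors \cite[Theorem~1.1]{Ioa}, fed with the amalgam decomposition of $\Hig_\sigma$ produced in Lemma~\ref{lemma:von-neumann}. First I would record that, since $\Hig_\sigma$ is an infinite countable group and the action $\Hig_\sigma\actson X$ is free, ergodic and probability measure-preserving, the crossed product $M=L(\Hig_\sigma\actson X)=L^\infty(X)\rtimes\Hig_\sigma$ is a II$_1$ factor in which $L^\infty(X)$ sits as a Cartan subalgebra: freeness makes $L^\infty(X)$ maximal abelian, and its normalizer generates $M$. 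Thus the whole content of the corollary is the \emph{uniqueness} of this Cartan subalgebra, up to unitary conjugacy.

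Next I would invoke Lemma~\ref{lemma:von-neumann}: writing $G=\Hig_\sigma=G_1*_A G_2$ with $A$ of infinite index in both $G_1$ and $G_2$, and with some $g\in G$ such that $A\cap gAg^{-1}=\{1\}$. The hypotheses of \cite[Theorem~1.1]{Ioa} are then immediate to check. The index conditions are satisfied because $[G_1:A]$ and $[G_2:A]$ are both infinite, and $A$ is \emph{weakly malnormal} in $G$ in the required sense, since already the two conjugates $A$ and $gAg^{-1}$ have trivial (in particular finite) intersection. Applying \cite[Theorem~1.1]{Ioa} to this amalgamated free product decomposition together with the free ergodic pmp action $G\actson X$ then yields that $L^\infty(X)$ is the unique Cartan subalgebra of $M$ up to unitary conjugacy, which is exactly the assertion of the corollary.

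The only genuinely substantive ingredient is Lemma~\ref{lemma:von-neumann} itself, whose proof is the geometric construction of the splitting via the trees of type $(i,i+2)$ inside the developed complex; once that splitting is in hand, the corollary reduces to verifying that its amalgam subgroup meets the (mild) malnormality and index requirements of Ioana's theorem, so there is no real further obstacle. The one point I would double-check carefully is the precise form of the hypotheses in \cite[Theorem~1.1]{Ioa} --- in particular that weak malnormality is required only up to a finite intersection of finitely many conjugates, a condition comfortably supplied here by the trivial intersection $A\cap gAg^{-1}=\{1\}$ established in Lemma~\ref{lemma:von-neumann}.
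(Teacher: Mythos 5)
Your proposal is correct and is exactly the paper's argument: the corollary is stated there as an immediate consequence of combining Lemma~\ref{lemma:von-neumann} with \cite[Theorem~1.1]{Ioa}, with the infinite-index and trivial-intersection-of-conjugates conditions of that lemma supplying precisely the index and weak malnormality hypotheses of Ioana's theorem, just as you verify. Nothing further is needed.
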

	
	Finally, by combining the orbit equivalence rigidity statement with the uniqueness of the Cartan subalgebra up to unitary conjugacy, we reach a $W^*$-rigidity statement for all actions of a generalized Higman group. Indeed, as was observed by Singer \cite{Sin}, two free ergodic measure-preserving actions of countable groups on standard probability spaces $X,Y$ are orbit equivalent if and only if there exists an isomorphism between their von Neumann algebras sending $L^\infty(X)$ to $L^\infty(Y)$.
	
	\begin{cor}\label{cor:w}
		Let $k\ge 5$, and let $\sigma=((m_1,n_1),\dots,(m_k,n_k))$ be a $k$-tuple of pairs of non-zero integers, where for every $i\in\{1,\dots,k\}$, one has $|m_i|<|n_i|$. Let $\Hig_\sigma\actson X$ be a free, ergodic, measure-preserving action of $\Hig_\sigma$ on a standard Borel probability space $X$. Let $\Gamma\actson Y$ be a free, ergodic, probability measure-preserving action of a countable group $\Gamma$ on a standard probability space $Y$.
		\begin{enumerate}
		\item If the actions $\Hig_\sigma\actson X$ and $\Gamma\actson Y$ are stably $W^*$-equivalent, then they are virtually conjugate. 
		\item If the action $\Hig_\sigma\actson X$ is aperiodic, and if the actions $\Hig_\sigma\actson X$ and $\Gamma\actson Y$ are $W^*$-equivalent, then they are conjugate.
		\end{enumerate}
	\end{cor}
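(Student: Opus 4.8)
The plan is to deduce both statements from the orbit equivalence rigidity already recorded in Corollary~\ref{cor:oe}, by combining it with the uniqueness of the Cartan subalgebra $L^\infty(X)$ in $L(\Hig_\sigma\actson X)$ up to unitary conjugacy (the preceding corollary) and with Singer's criterion: an isomorphism between group-measure-space von Neumann algebras is implemented by an orbit equivalence precisely when it carries the one distinguished Cartan subalgebra onto the other. Thus the entire task reduces to \emph{promoting} a bare $W^*$-equivalence into one that respects the Cartan subalgebras; once this is done, Corollary~\ref{cor:oe} supplies (virtual) conjugacy.

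For the second statement, I would start from an isomorphism $\Psi:L(\Hig_\sigma\actson X)\to L(\Gamma\actson Y)$. Since $L^\infty(Y)$ is a Cartan subalgebra of $L(\Gamma\actson Y)$, the pullback $\Psi^{-1}(L^\infty(Y))$ is a Cartan subalgebra of $L(\Hig_\sigma\actson X)$. By the uniqueness corollary above, there is a unitary $u\in L(\Hig_\sigma\actson X)$ with $uL^\infty(X)u^{*}=\Psi^{-1}(L^\infty(Y))$. The composite $\Phi=\Psi\circ\mathrm{Ad}(u)$ is then an isomorphism $L(\Hig_\sigma\actson X)\to L(\Gamma\actson Y)$ satisfying $\Phi(L^\infty(X))=L^\infty(Y)$. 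By Singer's criterion the actions $\Hig_\sigma\actson X$ and $\Gamma\actson Y$ are orbit equivalent, and because $\Hig_\sigma\actson X$ is aperiodic, Corollary~\ref{cor:oe}(2) upgrades this to conjugacy.

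For the first statement, stable $W^*$-equivalence furnishes, after normalizing, some $t>0$ and an isomorphism $\Psi:L(\Hig_\sigma\actson X)^{t}\to L(\Gamma\actson Y)$ between an amplification and $L(\Gamma\actson Y)$. The argument runs in parallel: $\Psi^{-1}(L^\infty(Y))$ is a Cartan subalgebra of the amplification $L(\Hig_\sigma\actson X)^{t}$, one conjugates it onto the amplified Cartan subalgebra $(L^\infty(X))^{t}$ by a unitary of $L(\Hig_\sigma\actson X)^{t}$, and composing yields an isomorphism carrying $(L^\infty(X))^{t}$ onto $L^\infty(Y)$. By the stable form of Singer's criterion this means $\Hig_\sigma\actson X$ and $\Gamma\actson Y$ are stably orbit equivalent, and Corollary~\ref{cor:oe}(1) then gives virtual conjugacy.

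The step I expect to be the main obstacle is precisely the use of Cartan uniqueness \emph{inside an amplification}: the preceding corollary is stated only for $L(\Hig_\sigma\actson X)$ itself, whereas the stable case requires that $(L^\infty(X))^{t}$ be the unique Cartan subalgebra of $L(\Hig_\sigma\actson X)^{t}$ up to unitary conjugacy. This transfer holds because the amplification $L(\Hig_\sigma\actson X)^{t}$ is again of the form $L(\calr^{t})$ for the amplified equivalence relation $\calr^{t}$, with $(L^\infty(X))^{t}$ a Cartan subalgebra, so that the hypotheses of Ioana's theorem \cite{Ioa} (via the amalgamated product decomposition of Lemma~\ref{lemma:von-neumann}) persist; equivalently, uniqueness of Cartan subalgebras up to unitary conjugacy is an amplification-invariant property, intertwiners being transported along the amplification. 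With this amplified uniqueness in hand, both assertions follow as sketched above.
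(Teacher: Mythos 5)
Your overall strategy coincides with the paper's: combine the orbit equivalence rigidity of Corollary~\ref{cor:oe} with the Cartan-uniqueness corollary (obtained from Lemma~\ref{lemma:von-neumann} and Ioana's theorem \cite{Ioa}) and with Singer's observation \cite{Sin}. Your argument for assertion (2) is exactly the paper's argument, and it is complete. The difference, and the problem, lies in assertion (1), precisely at the step you yourself flag as the main obstacle. You place the amplification on the $\Hig_\sigma$ side, writing $L(\Hig_\sigma\actson X)^{t}\cong L(\Gamma\actson Y)$, and you then need uniqueness of the Cartan subalgebra up to unitary conjugacy \emph{inside the amplification} $L(\Hig_\sigma\actson X)^{t}$ --- a statement that the Cartan-uniqueness corollary does not give. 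Your first justification for this transfer is not correct as stated: Ioana's theorem \cite{Ioa} applies to crossed products $L^\infty(X)\rtimes\Gamma$ arising from free ergodic probability measure-preserving actions of amalgamated free products, whereas $L(\calr^{t})$ is the von Neumann algebra of an amplified equivalence relation, which is not presented as such a crossed product (the relation $\calr^{t}$ need not be the orbit relation of any free action of $\Hig_\sigma$, so the ``hypotheses of Ioana's theorem'' do not literally persist). Your second justification --- that uniqueness of Cartan subalgebras up to unitary conjugacy is an amplification-invariant property --- is a true fact, but it is not automatic: one needs to know that Cartan subalgebras compress and amplify compatibly and then invoke Popa's intertwining criterion for unitary conjugacy of Cartan subalgebras; asserting it without proof or reference leaves a genuine gap.

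The paper's proof is arranged exactly so as to avoid this issue: it places the amplification on the $\Gamma$ side, writing $L(\Hig_\sigma\actson X)\cong L(\Gamma\actson Y)^{t}$, and realizes $L(\Gamma\actson Y)^{t}$ as $L(\calr)$, where $\calr$ is the restriction, to a positive measure subset $U$, of the orbit equivalence relation of the product action $\Gamma\times\mathbb{Z}/n\mathbb{Z}\actson Y\times\{1,\dots,n\}$, with canonical Cartan subalgebra $L^\infty(U)$. Since uniqueness of the Cartan subalgebra up to unitary conjugacy is manifestly an isomorphism invariant, the (un-amplified) Cartan-uniqueness corollary for $L(\Hig_\sigma\actson X)$ transports through the isomorphism to $L(\calr)$; hence $L^\infty(U)$ and the image of $L^\infty(X)$ are unitarily conjugate, Singer's observation yields stable orbit equivalence of $\Hig_\sigma\actson X$ and $\Gamma\actson Y$, and Corollary~\ref{cor:oe}(1) concludes. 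The simplest repair of your write-up is therefore to use the symmetry of stable $W^*$-equivalence ($L(\Hig_\sigma\actson X)^{t}\cong L(\Gamma\actson Y)$ if and only if $L(\Hig_\sigma\actson X)\cong L(\Gamma\actson Y)^{1/t}$) so that the amplification sits on the $\Gamma$ side, after which your own argument runs verbatim and needs nothing beyond the corollary as stated.
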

	
	\begin{proof}
	   The fact that the two actions are stably $W^*$-equivalent means that there exists $t>0$ such that $L(\Hig_\sigma\actson X)$ is isomorphic to $L(\Gamma\actson Y)^t$. The von Neumann algebra $L(\Gamma\actson Y)^t$ is also the von Neumann algebra of a measured equivalence relation $\mathcal{R}$ obtained by restricting the (product) measure-preserving action of $\Gamma\times\mathbb{Z}/n\mathbb{Z}$ on $Y\times\{1,\dots,n\}$, for some $n\in\mathbb{N}$, to a positive measure Borel subset $U$. As $L^\infty(X)$ is, up to unitary conjugacy, the unique Cartan subalgebra of $L(\Hig_\sigma\actson X)$, it follows that $L^\infty(U)$ is, up to unitary conjugacy, the unique Cartan subalgebra of $L(\mathcal{R})$. Therefore, there exists an isomorphism between $L(\Hig_\sigma\actson X)$ and $L(\mathcal{R})$ sending $L^\infty(X)$ to $L^\infty(U)$. By an observation of Singer \cite{Sin}, it follows that the actions $\Hig_\sigma\actson X$ and $\Gamma\times\mathbb{Z}/n\mathbb{Z}\actson Y\times\{1,\dots,n\}$ are stably orbit equivalent, and therefore so are the actions $\Hig_\sigma\actson X$ and $\Gamma\actson Y$. The first conclusion therefore follows from Corollary~\ref{cor:oe}. The same reasoning also shows that if the two actions are $W^*$-equivalent, then they are orbit equivalent, and the second conclusion thus follows from the second conclusion of Corollary~\ref{cor:oe}. 
	\end{proof}
	
\appendix

\section{$M_\kappa$-polyhedral complexes}\label{subsec:polyhedral complex}	
	
	The following definitions are taken from \cite[Definitions~I.7.34 and~I.7.37]{BH}, to which the reader is referred for precisions. Given a real number $\kappa$, a metric space $C$ is a \emph{convex $M_{\kappa}$-polyhedral cell} if there exists $n\in\mathbb{N}$ such that $C$ is isometric to the convex hull of a finite set of points in the complete simply connected $n$-dimensional Riemannian manifold $M$ with constant curvature $\kappa$ (when $\kappa>0$, one adds the extra condition that $C$ is contained in an open ball of $M$ of radius $\frac{\pi}{2\sqrt{\kappa}}$). An \emph{$M_{\kappa}$-polyhedral complex} is a pseudo-metric space obtained from the disjoint union of a collection of convex $M_\kappa$-polyhedral cells by gluing some of the cells along isometric faces. An $M_{0}$-polyhedral complex is also called a \emph{piecewise Euclidean} complex, and an $M_{-1}$-polyhedral complex is also called a \emph{piecewise hyperbolic} complex. An $M_{\kappa}$-polyhedral complex $X$ has \emph{finite shapes} if the collection of convex polyhedra we use to build the complex has only finitely many isometry types -- in this case $X$ turns out to be a complete geodesic metric space \cite[Theorem~I.7.50]{BH}. 
	
	From now on, we let $X$ be a 2-dimensional $M_{\kappa}$-polyhedral complex with finite shapes. We will always denote by $X^{(0)}$ the vertex set of $X$, and by $X^{(1)}$ its edge set. Let $x\in X^{(0)}$ be a vertex. We denote the \emph{link} of $x$ in $X$ by $\lk(x,X)$ (cf.\ \cite[I.7.15]{BH}). Topologically $\lk(x,X)$ is homeomorphic to an $\epsilon$-sphere $S_\epsilon$ around $x$ for $\epsilon$ small. 
	Moreover, $\lk(x,X)$ has the structure of a metric graph whose vertices correspond to intersections of $S_\epsilon$ with edges of $X$ emanating from $x$, and two vertices of $\lk(x,X)$ are joined by an edge $e$ of length $\theta$ if they correspond to a corner of a 2-cell $C$ of $X$ with angle $\theta$ (and the edge $e$ is naturally isometric to the unit sphere of the tangent space of $C$ at $x$). 
	
	A possible description of the link $\lk(x,X)$ is as the space of directions at $x$, as follows. Define two geodesic segments emanating from $x$ to be \emph{equivalent} if they share a common initial nondegenerate subsegment. A \emph{direction} at $x$ is an equivalence class of geodesic segments emanating from $x$. Then points in $\lk(x,X)$ are in 1-1 correspondence with directions at $x$.
	The \emph{angle} between two geodesics $\overline{xx_1}$ and $\overline{xx_2}$ emanating from $x$, denoted by $\angle_x(x_1,x_2)$, is defined to be $\inf\{d(z_1,z_2),\pi\}$ where $z_i\in\lk(x,X)$ corresponds to the direction of $\overline{xx_i}$ and the distance on $\lk(x,X)$ is the length metric induced by the metric graph structure discussed above. Note that $\inf\{d(z_1,z_2),\pi\}$ gives another metric on $\lk(x,X)$ (different from the length metric), which we will refer to as the \emph{angular metric}. Note that $\overline{xx_1}$ and $\overline{xx_2}$ fit together to be a local geodesic at $x$ if and only if $\angle_x(x_1,x_2)=\pi$. If $X$ is also CAT$(\kappa)$ for $\kappa\le 0$, then $\overline{xx_1}\cup \overline{xx_2}$ is a geodesic segment, as locally geodesic segments are geodesic segments in CAT$(\kappa)$ spaces with $\kappa\le 0$ \cite[Proposition~II.1.4(2)]{BH}.
	We will often use the following standard fact.
	
	\begin{lemma}
		\label{lemma:link condition}
		Let $\kappa\le 0$ and let $X$ be a 2-dimensional $M_{\kappa}$-polyhedral complex with finite shapes. Then $X$ is $\mathrm{CAT}(\kappa)$ if and only if $X$ is simply connected, and for every vertex $x\in X^{(0)}$, the link $\lk(x,X)$ does not contain any embedded cycle of length smaller than $2\pi$.
		
		Moreover, if $X$ is $\mathrm{CAT}(\kappa)$ and if $Y\subset X$ is a connected subcomplex such that for each vertex $x\in Y$, the natural embedding $\lk(x,Y)\to\lk(x,X)$ is isometric with respect to the angular metrics on $\lk(x,Y)$ and $\lk(x,X)$, then $Y$ is a convex subset of $X$ and $Y$ is also $\CAT(\kappa)$.
	\end{lemma}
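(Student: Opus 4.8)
The plan is to derive both assertions from two standard pillars of non-positive curvature available in \cite{BH}: the Link Condition, which detects local $\CAT(\kappa)$ geometry of a polyhedral complex through its vertex links, and the Cartan--Hadamard theorem, which upgrades a local curvature bound to a global one when $\kappa\le 0$. First I would record that $X$, having finite shapes, is a complete geodesic space \cite[Theorem~I.7.50]{BH}. The Link Condition \cite[Theorem~II.5.2]{BH} asserts that $X$ is locally $\CAT(\kappa)$ if and only if $\lk(x,X)$ is $\CAT(1)$ for every vertex $x\in X^{(0)}$. Since $X$ is $2$-dimensional, each $\lk(x,X)$ is a metric graph whose edges have length the corresponding interior angle, and a $1$-dimensional $M_1$-complex is $\CAT(1)$ precisely when it contains no isometrically embedded closed geodesic of length smaller than $2\pi$, i.e.\ no embedded cycle of length $<2\pi$. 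Combining these, the link hypothesis in the statement is \emph{equivalent} to $X$ being locally $\CAT(\kappa)$. The Cartan--Hadamard theorem \cite[Theorem~II.4.1]{BH}, valid because $\kappa\le 0$, then says a complete locally $\CAT(\kappa)$ geodesic space is globally $\CAT(\kappa)$ if and only if it is simply connected; for the forward implication one additionally uses that a globally $\CAT(\kappa)$ space with $\kappa\le 0$ is contractible, hence simply connected. This gives the first assertion.

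For the ``moreover'' part, the strategy is to prove that $Y$ is \emph{locally convex} in $X$ and then invoke that a complete, connected, locally convex subset of a $\CAT(\kappa)$ space with $\kappa\le 0$ is convex. The mechanism is the angle-preservation hypothesis: at a vertex $x\in Y$, the isometry $\lk(x,Y)\to\lk(x,X)$ for the angular metrics says exactly that the Alexandrov angle at $x$ between two directions pointing into $Y$ agrees whether measured in $Y$ or in $X$. Hence a concatenation of two $Y$-geodesic segments meeting at $x$ is a local geodesic of $Y$ (angle $\pi$) if and only if it is one of $X$; at interior points of edges and $2$-cells the same holds because there $Y$ either locally coincides with $X$ or its directions sit angularly isometrically inside those of $X$, a fact controlled by the vertex links of the incident cells. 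Therefore every local geodesic of $Y$ is a local geodesic of $X$, and since $X$ is $\CAT(\kappa)$ with $\kappa\le 0$, every local geodesic of $X$ is in fact a geodesic \cite[Proposition~II.1.4(2)]{BH}. So for nearby $p,q\in Y$ the $Y$-geodesic joining them is the unique $X$-geodesic; this shows simultaneously that the inclusion is a local isometry and that $X$-geodesics between nearby points of $Y$ remain in $Y$, i.e.\ $Y$ is locally convex. As $Y$ is a subcomplex with finite shapes it is complete, and it is connected by hypothesis, so local convexity promotes to global convexity, and a convex subset of a $\CAT(\kappa)$ space is itself $\CAT(\kappa)$ in the restricted metric.

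The genuinely delicate step, which I would treat most carefully, is the passage from local to global convexity: one must check the hypotheses of the relevant ``locally convex $\Rightarrow$ convex'' principle, namely completeness and connectedness of $Y$ together with unique geodesics in $X$ (which is where $\kappa\le 0$ is used). The secondary subtlety is transferring the angle condition, stated only at vertices, to interior points of cells; here the essential input is that $Y$ is an honest subcomplex, so that its local geometry around any non-vertex point is governed by the links at the vertices of the cells containing that point. Once these two points are settled, the remainder is a matter of quoting the Link Condition, the Cartan--Hadamard theorem, and the elementary graph characterization of $\CAT(1)$.
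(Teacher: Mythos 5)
Your proof of the first assertion is essentially the paper's own: the paper simply cites \cite[Theorem~II.5.4 and Lemma~II.5.6]{BH}, which are the packaged form of exactly the ingredients you spell out (completeness from finite shapes, the link condition, the Cartan--Hadamard theorem, and the characterization of $\CAT(1)$ metric graphs as those with no embedded cycle of length $<2\pi$). That part needs no further comment.

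For the ``moreover'' part, your core mechanism is sound: the angular-metric isometry on vertex links, together with the automatic agreement at non-vertex points of a subcomplex, shows that every local geodesic of $Y$ is a local geodesic of $X$, and local geodesics of $X$ are geodesics since $X$ is $\CAT(\kappa)$ with $\kappa\le 0$ \cite[Proposition~II.1.4(2)]{BH}. The gap is the final step. The ``complete connected locally convex $\Rightarrow$ convex'' principle cannot be quoted with the hypotheses you list: uniqueness of geodesics in $X$ is not the relevant input, and for general closed connected locally convex subsets of complete $\CAT(0)$ spaces the implication genuinely fails without further hypotheses. The usable version is essentially \cite[Proposition~II.4.14]{BH}, i.e.\ the Cartan--Hadamard argument applied to a local isometric embedding of a complete connected length space, and it involves the simple-connectedness bookkeeping you have skipped. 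In other words, the step you yourself flag as ``genuinely delicate'' is left unproved, and as stated it is not a theorem.

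The fix is already contained in your argument, and it removes any need for a local-to-global principle. Since $Y$ is a subcomplex of $X$, it is itself a $2$-dimensional $M_\kappa$-polyhedral complex with finite shapes, hence complete and geodesic in its intrinsic length metric by \cite[Theorem~I.7.50]{BH}. Now take arbitrary (not just nearby) points $p,q\in Y$ and join them by a $Y$-geodesic: by your link argument it is a local geodesic of $X$, hence a geodesic of $X$ by \cite[Proposition~II.1.4(2)]{BH}, hence the unique $X$-geodesic from $p$ to $q$ by \cite[Proposition~II.1.4(1)]{BH}. Thus the $X$-geodesic between any two points of $Y$ lies in $Y$, which is global convexity, and $Y$ is then $\CAT(\kappa)$ because convex subsets of $\CAT(\kappa)$ spaces are $\CAT(\kappa)$. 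For comparison, the paper's route is parallel but goes through the quotable result: finite shapes plus the link hypothesis make the inclusion $Y\to X$ a local isometric embedding; \cite[Proposition~II.4.14(1)]{BH} transfers simple connectedness from $X$ to $Y$; \cite[Proposition~II.4.14(2)]{BH} upgrades the inclusion to a global isometric embedding; and convexity then follows from uniqueness of geodesics.
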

	
	The first statement follows from \cite[Theorem~II.5.4, Lemma~II.5.6]{BH}. To see the moreover part, equip $Y$ with its intrinsic length metric. Since $X$ has finite shapes, having isometric embeddings of links implies that the inclusion map $Y\to X$ is locally an isometric embedding. As $X$ is simply connected (as it is $\CAT(\kappa)$ for $\kappa\le 0$), we know that $Y$ is also simply connected by \cite[Proposition~II.4.14(1)]{BH}. Hence the inclusion map $Y\to X$ is an isometric embedding by \cite[Proposition~II.4.14(2)]{BH}. The convexity of $Y$ in $X$ now follows from the fact that any two points in a $\CAT(\kappa)$ space with $\kappa\le 0$ are joined by a unique geodesic \cite[Proposition~II.1.4(1)]{BH}. Now $Y$ is $\CAT(\kappa)$ by the first part of the lemma.

\section{Measured groupoids}\label{sec:appendix-groupoids}
	
	We review some background on measured groupoids. The reader is referred to  \cite[Section~2.1]{AD}, \cite{Kid-survey} or \cite[Section~3]{GH} for more information.
	
	\paragraph*{Borel and measured groupoids, subgroupoids and restrictions.} Let $Y$ be a standard Borel space. A \emph{Borel groupoid} $\mathcal{G}$ over $Y$ is a standard Borel space which comes equipped with a source map and a range map $s,r:\calg\to Y$, both Borel (we think of every element $g\in\calg$ as an arrow with source $s(g)$ and range $r(g)$), with a Borel composition law, a Borel inverse map, and a neutral element $e_y$ for every $y\in Y$, satisfying the axioms of groupoids (see e.g.\ \cite[Definition~2.10]{Kid-survey}). All Borel groupoids considered in the present paper are \emph{discrete}, i.e.\ there are at most countably many elements of $\calg$ with a given source (or range). By a theorem of Lusin and Novikov (see e.g.\ \cite[Theorem~18.10]{Kec}), every discrete Borel groupoid $\calg$ is covered by a countable union of \emph{bisections}, i.e.\ Borel subsets $B\subseteq\calg$ such that $s_{|B},r_{|B}$ are Borel isomorphisms to Borel subsets $s(B),r(B)\subseteq Y$.  When $Y$ is equipped with a $\sigma$-finite Borel measure $\mu$, a \emph{measured groupoid} $\calg$ over $Y$ is a Borel groupoid over $Y$ such that $\mu$ is $\calg$-quasi-invariant (i.e.\ for every bisection $B\subseteq\calg$, one has $\mu(s(B))=0$ if and only if $\mu(r(B))=0$). There is a natural notion of a measured subgroupoid of $\mathcal{G}$, as well as a notion of restriction $\mathcal{G}_{|U}$ of a measured groupoid $\mathcal{G}$ to a Borel subset $U\subseteq Y$, defined by only considering elements $g\in\calg$ such that $s(g),r(g)\in U$. 
	
	We say that $\calg$ is \emph{trivial} if $\calg=\{e_y|y\in Y\}$, and \emph{stably trivial} if there exist a conull Borel subset $Y^*\subseteq Y$ and a partition $Y^*=\dunion_{i\in I} Y_i$ into at most countably many Borel subsets such that for every $i\in I$, the groupoid $\calg_{|Y_i}$ is trivial. On the contrary, it is \emph{nowhere trivial} if there does not exist any positive measure Borel subset $U\subseteq Y$ such that $\calg_{|U}$ is trivial.
	
	An important example of measured groupoid comes from  (quasi-)measure-preserving group actions. Let $G$ be a countable group, acting by Borel automorphisms in a quasi-measure-preserving way on a standard finite measure space $Y$. Then $G\times Y$ has a natural structure of a measured groupoid over $Y$, denoted by $G\ltimes Y$: the source and range maps are given by $s(g,y)=y$ and $r(g,y)=gy$, the composition law by $(g,hy)(h,y)=(gh,y)$, the inverse map by $(g,y)^{-1}=(g^{-1},gy)$, and the neutral elements are $e_y=(e,y)$.
	
	\paragraph*{Stable containment and stable equality of subgroupoids.} Given two measured subgroupoids $\calh,\calh'\subseteq\calg$, we say that $\calh$ is \emph{stably contained} in $\calh'$ if there exist a conull Borel subset $Y^*\subseteq Y$ and a partition  $Y^*=\dunion_{i\in I}Y_i$ into at most countably many Borel subsets such that for every $i\in I$, one has $\calh_{|Y_i}\subseteq\calh'_{|Y_i}$. We say that $\calh$ and $\calh'$ are \emph{stably equal} if each of them is stably contained in the other. In other words $\calh$ and $\calh'$ are stably equal if and only if there exist a conull Borel subset $Y^*\subseteq Y$ and a partition  $Y^*=\dunion_{i\in I}Y_i$ into at most countably many Borel subsets such that for every $i\in I$, one has $\calh_{|Y_i}=\calh'_{|Y_i}$. 
	
	\paragraph*{Finite-index subgroupoids.}
	
	The notion of a finite-index subgroupoid was introduced by Kida in \cite{Kid-BS}, generalizing a notion due to Feldman, Sutherland and Zimmer \cite{FSZ} for equivalence relations. Let $\calh\subseteq\calg$ be a measured subgroupoid. Given $y\in Y$, the \emph{local index} of $\calh$ in $\calg$ at $y$, denoted by $[\calg:\calh]_y$, is defined as the number of classes of the equivalence relation $\sim_\calh$ on $s^{-1}(y)$ given by $g\sim_\calh h$ if and only if $hg^{-1}\in\calh$. The subgroupoid $\calh$ has \emph{finite index} in $\calg$ if for almost every $y\in Y$, one has $[\calg:\calh]_y<+\infty$. 
	
	A measured groupoid $\calg$ is \emph{of infinite type} if there does not exist any positive measure Borel subset $U\subseteq Y$ such that the trivial subgroupoid has finite index in $\calg_{|U}$. In other words $\calg$ is of infinite type if and only if for every positive measure Borel subset $U\subseteq Y$, and for a.e.\ $y\in U$, there are infinitely many elements $g\in\calg_{|U}$ such that $s(g)=y$. Notice that restricting a measured groupoid of infinite type to a positive measure Borel subset again yields a measured groupoid of infinite type.
	
	The intersection of two finite-index subgroupoids of $\calg$ is again finite-index \cite[Lemma~3.6(i)]{Kid-BS}. Also, if $\calh$ has finite index in $\calg$, and if $U\subseteq Y$ is a Borel subset of positive measure, then $\calh_{|U}$ has finite index in $\calg_{|U}$ by \cite[Lemma~3.5(ii)]{Kid-BS}. 
	
	\paragraph*{Cocycles and equivariant maps.} Let $\calg$ be a measured groupoid over a standard measure space $Y$, and let $G$ be a countable group. A Borel map $\rho:\calg\to G$ is a \emph{strict cocycle} if for all $g_1,g_2\in\calg$ with $s(g_1)=r(g_2)$, one has $\rho(g_1g_2)=\rho(g_1)\rho(g_2)$. The strict cocycle $\rho$ \emph{has trivial kernel} if the only elements $g\in\calg$ such that $\rho(g)=e$ are the units $e_y$. 
	
As a crucial example, when $\calg=G\ltimes Y$ is a measured groupoid associated to a quasi-measure-preserving group action on a standard finite measure space $Y$, the map $\rho:\calg\to G$ defined by $\rho(g,y)=g$ is a strict cocycle. We call it the \emph{natural cocycle} of $G\ltimes Y$; it has trivial kernel.
	
 If $G^0\subseteq G$ is a finite-index subgroup, then $\rho^{-1}(G^0)$ is a finite-index subgroupoid of $\calg$ by \cite[Lemma~3.7]{Kid-BS}.
	
	Now assume that $G$ acts on a standard Borel space $\Delta$ by Borel automorphisms. A Borel map $\varphi:Y\to\Delta$ is \emph{$(\calg,\rho)$-equivariant} if there exists a conull Borel subset $Y^*\subseteq Y$ such that for all $g\in\calg_{|Y^*}$, one has $\varphi(r(g))=\rho(g)\varphi(s(g))$. It is \emph{stably $(\calg,\rho)$-equivariant} if there exists a partition $Y=\dunion_{i\in I}Y_i$ into at most countably many Borel subsets such that for every $i\in I$, the map $\varphi_{|Y_i}$ is $(\calg_{|Y_i},\rho)$-equivariant. In general, the \emph{$(\calg,\rho)$-stabilizer} of $\varphi$ is $\{g\in\calg|\varphi(r(g))=\rho(g)\varphi(s(g))\}$, a measured subgroupoid of $\calg$. An element $\delta\in\Delta$ is \emph{$(\calg,\rho)$-invariant} if the constant map with value $\delta$ is $(\calg,\rho)$-equivariant (in other words, if there exists a conull Borel subset $Y^*\subseteq Y$ such that $\rho(\calg_{|Y^*})\subseteq \Stab_G(\delta)$). We also say that $\delta$ is \emph{$(\calg,\rho)$-quasi-invariant} if there exists a finite index subgroupoid $\calg^0\subseteq\calg$ such that $\delta$ is $(\calg^0,\rho)$-invariant.
	
	In the following lemma, we denote by $\calp_{<\infty}(\Delta)$ the set of all nonempty finite subsets of $\Delta$, equipped with its natural Borel structure induced from that of $\Delta$.
	
	\begin{lemma}[{Kida \cite[Lemma~3.8]{Kid-BS}}]\label{lemma:fi}
		Let $\calg$ be a measured subgroupoid over a standard finite measure space $Y$, let $G$ be a countable group, and let $\rho:\calg\to G$ be a strict cocycle. Let $\Delta$ be a standard Borel space equipped with a $G$-action by Borel automorphisms. Let $\calh\subseteq\calg$ be a measured subgroupoid of finite index.
		
		If there exists a stably $(\calh,\rho)$-equivariant Borel map $\varphi:Y\to \Delta$, then there exists a stably $(\calg,\rho)$-equivariant Borel map $\Phi:Y\to \calp_{<\infty}(\Delta)$ such that for every $y\in Y$, one has $\varphi(y)\in\Phi(y)$.
	\end{lemma}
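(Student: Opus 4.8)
The plan is to imitate the classical ``induction'' of an $H$-equivariant map to a $G$-equivariant map into finite subsets, transposed to the groupoid setting. First I would treat the case where $\varphi$ is genuinely $(\calh,\rho)$-equivariant, say on a conull Borel subset $Y^*\subseteq Y$, and only afterwards deal with the stable qualifier. The central object is the Borel function $F:\calg\to\Delta$ defined by $F(g)=\rho(g)^{-1}\varphi(r(g))$. Using the cocycle identity together with the equivariance of $\varphi$, one checks that $F$ is constant along the fibers of the source map modulo the relation $\sim_\calh$: if $g'=hg$ with $h\in\calh$ (so that $g\sim_\calh g'$), then $F(g')=\rho(g)^{-1}\rho(h)^{-1}\varphi(r(h))=\rho(g)^{-1}\varphi(s(h))=F(g)$, since $s(h)=r(g)$ and $\rho(h)^{-1}\varphi(r(h))=\varphi(s(h))$. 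I would then set $\Phi(y)=\{F(g):g\in s^{-1}(y)\}$.

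Three verifications follow. Finiteness: since $F$ only depends on the $\sim_\calh$-class of $g$ within $s^{-1}(y)$, one has $|\Phi(y)|\le [\calg:\calh]_y$, which is finite for a.e.\ $y$ by the finite-index hypothesis, so $\Phi$ essentially takes values in $\calp_{<\infty}(\Delta)$. Containment: the unit $e_y\in s^{-1}(y)$ has $\rho(e_y)=e$ and $r(e_y)=y$, hence $F(e_y)=\varphi(y)$, so $\varphi(y)\in\Phi(y)$. Equivariance: given $g_0$ with $s(g_0)=y$ and $r(g_0)=z$, right translation $g\mapsto gg_0$ is a bijection $s^{-1}(z)\to s^{-1}(y)$, and $F(gg_0)=\rho(g_0)^{-1}F(g)$ by the cocycle identity; applying $F$ to both sides gives $\Phi(y)=\rho(g_0)^{-1}\Phi(z)$, i.e.\ $\Phi(r(g_0))=\rho(g_0)\Phi(s(g_0))$, which is exactly $(\calg,\rho)$-equivariance (and note that I obtain genuine, not merely stable, equivariance here). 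For Borel measurability I would cover $\calg$ by countably many bisections $B_n$; over $s(B_n)$ the unique element $g_n(y)\in B_n$ with $s(g_n(y))=y$ depends on $y$ in a Borel fashion, so each $y\mapsto F(g_n(y))$ is a partial Borel map, and $\Phi(y)=\bigcup_n\{F(g_n(y))\}$ is the range of countably many Borel partial functions, which yields a Borel map $Y\to\calp_{<\infty}(\Delta)$.

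Finally I would remove the genuineness assumption. If $Y=\dunion_{i\in I}Y_i$ is the countable Borel partition witnessing stable $(\calh,\rho)$-equivariance of $\varphi$, then on each $Y_i$ the restriction $\varphi_{|Y_i}$ is genuinely $(\calh_{|Y_i},\rho)$-equivariant, and $\calh_{|Y_i}$ still has finite index in $\calg_{|Y_i}$ (restriction preserves finite index, as recorded in the appendix). Applying the construction above to $\calg_{|Y_i}$ produces a Borel map $\Phi_i:Y_i\to\calp_{<\infty}(\Delta)$ that is $(\calg_{|Y_i},\rho)$-equivariant and satisfies $\varphi(y)\in\Phi_i(y)$; gluing the $\Phi_i$ gives the desired $\Phi$, which is stably $(\calg,\rho)$-equivariant by construction. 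The only genuinely delicate points are the measurability bookkeeping---checking that the set-valued map $\Phi$ is Borel and essentially finite-valued---and being careful that the relation $F(hg)=F(g)$ may fail when $h$ joins two different pieces of the partition, which is precisely why the reduction to pieces (rather than a single global application) is needed. I expect the measurability argument, handled through the bisection cover, to be the main technical obstacle.
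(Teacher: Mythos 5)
Your proof is correct, but it takes a different route from the paper, which in fact gives no direct argument at all: the paper simply invokes Kida's Lemma~3.8 from \cite{Kid-BS} and adds a remark explaining how to remove Kida's extra hypothesis that the local index $[\calg:\calh]_y$ is constant (namely, partition $Y$ into the Borel sets $Y_n=\{y\in Y\mid [\calg:\calh]_y=n\}$, which are $\calg$-invariant, and apply Kida's statement on each piece). Your construction --- setting $F(g)=\rho(g)^{-1}\varphi(r(g))$, checking that $F$ is constant on $\sim_\calh$-classes of $s^{-1}(y)$, and taking $\Phi(y)=F(s^{-1}(y))$ --- is the standard ``induction of equivariant maps'' argument, carried out from scratch; all three verifications (the bound $|\Phi(y)|\le[\calg:\calh]_y$, the containment $\varphi(y)=F(e_y)\in\Phi(y)$, and the equivariance $\Phi(r(g_0))=\rho(g_0)\Phi(s(g_0))$ via right translation on fibers) are sound. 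What your approach buys is self-containedness, and it makes the paper's remark unnecessary: since finiteness of $\Phi(y)$ is checked pointwise and $\calp_{<\infty}(\Delta)$ accommodates sets of varying cardinality, the constant-index hypothesis never enters, whereas the citation-based route must explicitly reduce to it. Two bookkeeping points you should make explicit: when $\varphi$ is only equivariant on a conull Borel subset $Y^*$, the set $\Phi(y)$ must be defined using only arrows $g\in s^{-1}(y)\cap\calg_{|Y^*}$ (otherwise the identity $F(hg)=F(g)$ can fail at arrows leaving $Y^*$) --- note that restricting to $\calg_{|Y^*}$ does not disturb the index bound, since any $h\in\calh$ connecting two arrows of $\calg_{|Y^*}$ automatically lies in $\calh_{|Y^*}$; and the Borel structure on $\calp_{<\infty}(\Delta)$ should be named (e.g.\ the one generated by the sets $\{A : A\cap B\neq\emptyset\}$ for $B\subseteq\Delta$ Borel), for which your bisection-by-bisection description of $\Phi$ immediately gives measurability. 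Neither point is a gap, just care to be taken in the write-up.
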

	
	\begin{rk}
		Notice that \cite[Lemma~3.8]{Kid-BS} is stated with the extra assumption that the local index $[\calg:\calh]_y$ is constant. However, this assumption is not required for the proof. Indeed, we can always partition $Y$ into at most countably many Borel subsets $Y_n$, where $Y_n=\{y\in Y|[\calg:\calh]_y=n\}$ -- that these sets are Borel follows from \cite[Lemma~3.5(i)]{Kid-BS}. All the subsets $Y_n$ are $\calg$-invariant, because $[\calg:\calh]_y=[\calg:\calh]_x$ whenever there exists $g\in\calg$ such that $s(g)=x$ and $r(g)=y$, see \cite[Lemma~3.5(i)]{Kid-BS}.
	\end{rk}
	
	\paragraph*{Action-like cocycles.}
	
	 A strict cocycle $\rho:\calg\to G$ is \emph{action-like} if
	\begin{enumerate}
	    \item $\rho$ has trivial kernel, 
	    \item whenever $H_1\subseteq H_2$ is an infinite index inclusion of subgroups of $G$, for every positive measure Borel subset $U\subseteq Y$, the subgroupoid $\rho^{-1}(H_1)_{|U}$ does not have finite index in $\rho^{-1}(H_2)_{|U}$, and
	    \item for every non-amenable subgroup $H\subseteq G$ and every positive measure Borel subset $U\subseteq X$, the subgroupoid $\rho^{-1}(H)_{|U}$ is non-amenable.
	    \end{enumerate}
	    This notion is slightly stronger than the notion of \emph{action-type} cocycle from \cite[Definition~3.20]{GH}, which corresponds to only requiring the first assumption, and the second assumption with $H_1=\{1\}$. Our main examples of action-like cocycles come from the following lemma. We mention that in an earlier version of this article, we were only making the first two assumptions, however adding the third assumption enabled us to remove an unnecessary assumption in the statement of Theorem~\ref{theo:full}. The idea of adding the last assumption comes from work of Escalier and the first-named author \cite{EH}.
	
	\begin{lemma}\label{lemma:action-like}
		Let $G\actson (Y,\mu)$ be a measure-preserving action of a countable group $G$ on a standard finite measure space $(Y,\mu)$.
		
		Then the natural cocycle $\rho:G\ltimes Y\to G$ is action-like.
	\end{lemma}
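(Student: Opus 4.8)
The plan is to verify the two defining properties of an action-like cocycle separately. The trivial kernel property is immediate: for the natural cocycle $\rho(g,y)=g$ one has $\rho(g,y)=e$ exactly when $g=e$, i.e.\ only for the units $e_y=(e,y)$. For the index property, I would first reduce to the case $H_2=G$. Given an infinite-index inclusion $H_1\subseteq H_2$, restrict $\rho$ to $\rho^{-1}(H_2)=H_2\ltimes Y$, which is precisely the natural cocycle of the (still measure-preserving, finite-measure) action $H_2\actson Y$, now containing the infinite-index subgroup $H_1$. So it suffices to prove: if $H\le G$ has infinite index and $U\subseteq Y$ has positive measure, then $\rho^{-1}(H)_{|U}$ does not have finite index in $(G\ltimes Y)_{|U}$.

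Next I would compute the local index. Writing $HU=\bigcup_{h\in H}hU$ for the $H$-saturation, a direct unwinding of the equivalence relation defining the local index shows that for $y\in U$,
\[
[(G\ltimes Y)_{|U}:\rho^{-1}(H)_{|U}]_y \;=\; N(y):=\bigl|\{\,Hg\in H\backslash G : gy\in HU\,\}\bigr|,
\]
since a right coset $Hg$ is represented among the arrows with source $y$ and range in $U$ exactly when some element $hg$ sends $y$ into $U$, i.e.\ when $gy\in HU$. The key structural observation is that $N$ is in fact $G$-\emph{invariant}: because $HU$ is $H$-invariant, the summand $\mathbf 1_{HU}(gy)$ depends only on the coset $Hg$, and replacing $y$ by $ay$ amounts to precomposing with the bijection $Hg\mapsto H(ga)$ of $H\backslash G$, leaving the total count unchanged. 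Moreover $N$ vanishes off the $G$-saturation $GU$, since for $y\notin GU$ the orbit $Gy$ is disjoint from $HU\subseteq GU$.

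The heart of the argument is then a finiteness/measure-preservation contradiction. Having finite index means $N<\infty$ almost everywhere on $U$; by $G$-invariance of $N$ and the containment of its support in $GU$, this forces $N<\infty$ almost everywhere on all of $Y$. Since $N\ge 1$ on $U$ (take the coset $He$), some level set $Y_k:=\{N=k\}$ with $k\ge 1$ has positive measure and satisfies $\mu(HU\cap Y_k)>0$. The set $Y_k$ is $G$-invariant, so I can compute $\int_{Y_k}N\,d\mu$ in two ways. On one hand it equals $k\,\mu(Y_k)<\infty$. On the other hand, by Tonelli and the fact that each $g\in G$ preserves $\mu$,
\[
\int_{Y_k}N\,d\mu=\sum_{Hg\in H\backslash G}\mu\bigl(Y_k\cap g^{-1}HU\bigr)=\sum_{Hg\in H\backslash G}\mu\bigl(HU\cap Y_k\bigr)=[G:H]\cdot\mu\bigl(HU\cap Y_k\bigr)=\infty,
\]
using $[G:H]=\infty$ and $\mu(HU\cap Y_k)>0$. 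This contradiction shows $N=\infty$ almost everywhere on $U$, so $\rho^{-1}(H)_{|U}$ cannot have finite index, completing the reduction and hence the proof.

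The main obstacle to anticipate is precisely this last step: a naive global integral such as $\int_U N$ or $\int_Y N$ is inconclusive, because an infinite integral is perfectly compatible with $N$ being finite almost everywhere. The resolution is to integrate over a single $G$-invariant level set $\{N=k\}$, on which almost-everywhere finiteness pins the integral to the finite value $k\,\mu(Y_k)$, so that the measure-preserving coset count can contradict it. I would also emphasize that no freeness of the action is needed, since $N$ counts cosets in $H\backslash G$ rather than orbits and stabilizers never enter; finiteness of $\mu$ together with $G$-invariance of this finite measure are the only ingredients used beyond the coset bookkeeping.
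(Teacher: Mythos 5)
Your proof is correct, and it takes a genuinely different route from the paper's. The paper fixes representatives $\{g_n\}$ of the left $H_1$-cosets in $H_2$, considers the set $V$ of points lying in infinitely many translates $g_nU$, and shows $\mu(V)\ge\mu(U)$ by intersecting the decreasing sets $V_n=\bigcup_{i\ge n}g_iU$ (this is where finiteness of $\mu$ enters); it then pulls a positive-measure piece of $V$ back into $U$ and, for each point $y$ of that piece, exhibits infinitely many explicit arrows $(g_{\sigma(n)}^{-1}g_{i_0},y)$ with source $y$, range in $U$, and pairwise distinct $H_1$-cosets, so the local index is infinite pointwise on a positive-measure subset. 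You instead reduce to $H_2=G$ (a clean simplification the paper does not make), identify the local index with the counting function $N(y)=\bigl|\{Hg\in H\backslash G: gy\in HU\}\bigr|$, observe that $N$ is $G$-invariant, and derive a contradiction by computing $\int_{Y_k}N\,d\mu$ over an invariant level set in two ways, via Tonelli and measure-preservation. The coset bookkeeping checks out: the identification of the local index with $N$, the invariance $N(ay)=N(y)$, and the equality $\mu(Y_k\cap g^{-1}HU)=\mu(gY_k\cap HU)=\mu(Y_k\cap HU)$ are all valid. Your approach is global and integral where the paper's is pointwise and constructive; it buys the slightly stronger conclusion that the index is infinite almost everywhere on $U$, and isolates a reusable principle (an invariant, a.e.\ finite counting function on a finite invariant measure space has finite integral on each level set, which an infinite coset sum contradicts), at the modest cost of checking measurability of $N$, which is routine since it is a countable sum of indicators. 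One simplification you could make: the step spreading $N<\infty$ from $U$ to all of $Y$ is superfluous, because $N\equiv k$ on $Y_k=\{N=k\}$ by definition, so the finite side of the double count $\int_{Y_k}N\,d\mu=k\,\mu(Y_k)<\infty$ needs nothing beyond finiteness of $\mu$; all the contradiction hypothesis must supply is one level set $Y_k$ with $1\le k<\infty$ meeting $U$ (hence $HU$) in positive measure.
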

	
	\begin{proof}
		The fact that $\rho$ has trivial kernel is clear from its definition. 
		
	 We will now only check the second condition from the definition of an action-like cocycle, and refer to \cite[Lemma~4.12]{EH} for the third one. Let $H_1\subseteq H_2$ be an infinite index inclusion of subgroups of $G$, and let $U\subseteq Y$ be a positive measure Borel subset. Let $\{g_n\}_{n\in\mathbb{N}}$ be a set of representatives of the left $H_1$-cosets in $H_2$.  Let $V$ be the Borel subset of $Y$ consisting of all elements that belong to infinitely many translates $g_n U$. Notice that $V$ has positive measure at least $\mu(U)$: indeed, letting $V_n$ be the union of all $g_i U$ with $i\ge n$, then all subsets $V_n$ have measure at least $\mu(g_nU)=\mu(U)$, they form a non-increasing sequence, and $V$ is equal to their intersection, which implies that $\mu(V)\ge\mu(U)$ using that $\mu$ is a finite measure.  Let $V'\subseteq V$ be a Borel subset of positive measure contained in a single translate $g_{i_0} U$, and let $U'=g_{i_0}^{-1}V'$, a positive measure Borel subset of $U$. We will prove that the local index $[\rho^{-1}(H_2)_{|U}:\rho^{-1}(H_1)_{|U}]_y$ is infinite for every $y\in U'$, which will conclude the proof. So let $y\in U'$, and let $(g_{\sigma(n)})_{n\in\mathbb{N}}$ be an infinite subsequence such that for every $n\in\mathbb{N}$, one has $g_{i_0}y\in g_{\sigma(n)}U$. Then all elements of the form $(g_{\sigma(n)}^{-1}g_{i_0},y)$ satisfy $s(g_{\sigma(n)}^{-1}g_{i_0},y)=y$ and $r(g_{\sigma(n)}^{-1}g_{i_0},y)\in U$, and their pairwise differences do not belong to $\rho^{-1}(H_1)$.  
	\end{proof}
	
	\begin{rk}\label{rk:action-type-action-like}
	Working with the slightly more restrictive class of action-like cocycles, instead of action-type cocycles as in \cite{GH}, is harmless in this paper. For instance, all statements given in \cite[Section~4]{GH} are phrased with a cocycle rigidity assumption regarding all possible action-type cocycles from a measured groupoid towards a given group $G$, but their proofs only requires considering action-type cocycles coming from restrictions of group actions. Lemma~\ref{lemma:action-like} thus shows that, in \cite[Definition~4.1]{GH}, if rigidity with respect to action-type cocycles is replaced by rigidity with respect to action-like cocycles, then the statements from \cite[Section~4]{GH} remain valid. 
	\end{rk}

	\paragraph*{Quasi-normalized subgroupoids.}
	
	Let $\calg$ be a measured groupoid, and let $\calh,\calh'\subseteq\calg$ be two measured subgroupoids. Given a bisection $B\subseteq\calg$ with source $U$ and range $V$, we say that $\calh$ is \emph{$B$-quasi-invariant} if $(B\calh_{|U}B^{-1})\cap\calh_{|V}$ has finite index in both $\calh_{|V}$ and $B\calh_{|U}B^{-1}$. Following Kida \cite[Definition~3.15]{Kid-BS}, we say that $\calh$ is \emph{quasi-normalized} by $\calh'$ if $\calh'$ can be covered by countably many bisections $B_n$, in such a way that $\calh$ is $B_n$-quasi-invariant for every $n$. We say that $\calh$ is \emph{stably quasi-normalized} by $\calh'$ if there exists a countable Borel partition $Y=\dunion_{i\in I}Y_i$ such that for every $i\in I$, the groupoid $\calh_{|Y_i}$ is quasi-normalized by $\calh'_{|Y_i}$.
	
	Given a countable group $G$, a subgroup $H\subseteq G$ is \emph{quasi-normalized} (or \emph{commensurated}) by $G$ if for every $g\in G$, the intersection $gHg^{-1}\cap H$ has finite index in both $H$ and $gHg^{-1}$ -- notice that this coincides with the above definition, by viewing $G$ as a measured groupoid over a point. Now, if $\rho:\calg\to G$ is a strict cocycle, and if $H\subseteq G$ is quasi-normalized by $G$, then $\rho^{-1}(H)$ is quasi-normalized by $\calg$.

	\paragraph*{Amenable subgroupoids.}
	We refer to \cite{Kid-survey} for the definition of amenability of a measured groupoid, and here we only mention the properties that are used in the paper. 
	
	Assume that we have a strict cocycle $\rho:\calg\to G$, where $G$ is a countable group, and that $G$ acts by homeomorphisms on a compact metrizable space $K$. If $\calg$ is amenable, then by \cite[Proposition~4.14]{Kid-survey}, there exists a $(\calg,\rho)$-equivariant Borel map $Y\to\Prob(K)$. 
	
	Every measured subgroupoid of an amenable measured groupoid is amenable \cite[Theorem~4.16]{Kid-survey}, and likewise every restriction of an amenable groupoid is amenable. Also, if $Y^*\subseteq Y$ is a conull Borel subset of the base space and $Y^*=\dunion_{i\in I} Y_i$ is a partition into at most countably many Borel subsets, and if for every $i\in I$, the groupoid $\calg_{|Y_i}$ is amenable, then $\calg$ is amenable (see e.g.\ the discussion in \cite[Section~3.3.1]{GH}). 
	
	An important source of amenable subgroupoids is the following. Assume that we have a strict cocycle $\rho:\calg\to G$ with trivial kernel. If $A\subseteq G$ is an amenable subgroup, then $\rho^{-1}(A)$ is an amenable subgroupoid of $\calg$, see e.g.\ \cite[Corollary~3.39]{GH}. More generally, assume that $G$ acts by Borel automorphisms on a Polish space $\Delta$, and that the action is \emph{Borel amenable} in the sense that there exists a sequence of Borel maps $\mu_n:\Delta\to\Prob(G)$ such that for every $\delta\in\Delta$ and every $g\in G$, one has $||\mu_n(g\cdot\delta)-g\cdot\mu_n(\delta)||_1\to 0$ (as $n$ goes to $+\infty$). If there exists a $(\calg,\rho)$-equivariant Borel map $Y\to\Delta$, then $\calg$ is amenable (\cite[Proposition~3.38]{GH}, adapting \cite[Proposition~4.33]{Kid-memoir}).
	
	We say that $\calg$ is \emph{everywhere nonamenable} if for every positive measure Borel subset $U\subseteq Y$, the groupoid $\calg_{|U}$ is nonamenable. If $\rho:\calg\to G$ is action-like and $G$ contains a nonabelian free subgroup, then $\rho^{-1}(G)$ is everywhere nonamenable, as follows from \cite[Lemma~3.20]{Kid-me} (or, more precisely, from its proof, see e.g.\ \cite[Remark~3.3]{HH2}).

	\footnotesize
	
	\bibliographystyle{alpha}
	\bibliography{higman-bib}

	\begin{flushleft}
		Camille Horbez\\
		Universit\'e Paris-Saclay, CNRS,  Laboratoire de math\'ematiques d'Orsay, 91405, Orsay, France \\
		\emph{e-mail:~}\texttt{camille.horbez@universite-paris-saclay.fr}\\[4mm]
	\end{flushleft}
	
	\begin{flushleft}
		Jingyin Huang\\
		Department of Mathematics\\
		The Ohio State University, 100 Math Tower\\
		231 W 18th Ave, Columbus, OH 43210, U.S.\\
		\emph{e-mail:~}\texttt{huang.929@osu.edu}\\
	\end{flushleft}

\end{document}